\newtheorem{definition}{Definition}
\theoremstyle{remark}
\newtheorem{remark}{Remark}
\theoremstyle{theorem}
\newtheorem{theorem}{Theorem}
\theoremstyle{theorem}
\newtheorem{lemma}{Lemma}
\newtheorem{corollary}{Corollary}
\newtheorem{assumption}{Assumption}
\pgfplotsset{
	log x ticks with fixed point/.style={
		xticklabel={
			\pgfkeys{/pgf/fpu=true}
			\pgfmathparse{exp(\tick)}%
			\pgfmathprintnumber[fixed relative, precision=3]{\pgfmathresult}
			\pgfkeys{/pgf/fpu=false}
		}
	},
	log y ticks with fixed point/.style={
		yticklabel={
			\pgfkeys{/pgf/fpu=true}
			\pgfmathparse{exp(\tick)}%
			\pgfmathprintnumber[fixed relative, precision=3]{\pgfmathresult}
			\pgfkeys{/pgf/fpu=false}
		}
	}
}
\renewcommand{\d}{\textup{div}}
\renewcommand{\c}{\textup{curl}}
\renewcommand{\o}{\Omega}
\newcommand{\s}{\mathbb{S}}
\newcommand{\h}{\hat{\partial}}
\newcommand{\p}{\textbf}
\newcommand{\f}{}
\renewcommand\t[1]{{\mathcal{#1}}}
\newcommand\norm[1]{\left\lVert#1\right\rVert}
\newcommand\n[1]{\left\lVert#1\right\rVert}
	\title{Mixed Isogeometric Methods for Linear Elasticity with Weakly Imposed Symmetry }
	\providecommand{\keywords}[1]
	{
		\small	
		\textbf{\textit{Keywords---}} #1
	}
	\author{Jeremias Arf \footnotemark[2]}
\begin{document}
		
		\maketitle
		\keywords{elasticity, de Rham complex, mixed method, isogeometric analysis}
		\renewcommand{\thefootnote}{\fnsymbol{footnote}}
		
		\footnotetext[2]{ TU Kaiserslautern, Dept. of Mathematics, Gottlieb-Daimler-Str. 48, 67663 Kaiserslautern,\\ Germany ( E-mail: {\tt arf@mathematik.uni-kl.de}).}
		
		\begin{abstract}
		We consider and discretize a mixed formulation for linear elasticity with weakly imposed symmetry in two and three dimensions. Whereas existing methods mainly deal with simplicial or  polygonal meshes, we take advantage of isogeometric analysis (IGA) and consequently allow for shapes with curved boundaries. To introduce the discrete spaces we use   isogeometric discrete  differential forms defined by  proper B-spline spaces. For the proposed schemes a proof of well-posedness and an error estimate are  given. Further we discuss our ansatz by means of different numerical examples.
	\end{abstract}
\normalsize
	\section{Introduction}
In general it is hard to define  numerical methods for partial differential equations (PDEs)  which guarantee convergence and stability such that the approximate solution depends continuously on (initial-)boundary conditions as well as on source functions. Thus, it is remarkable that  Arnold,  Falk and Winther established with the Finite Element Exterior Calculus (FEEC) \cite{Arnold2010FiniteEE,ArnoldBook} an abstract guideline for the design of  several mixed formulations in order to have both, stability and convergence.   The underlying structure is related to the so-called Hodge-Laplacians associated to some   Hilbert complex and the structure-preserving discretization of such complexes. In \cite{ArnoldBook} the needed key properties for finite-dimensional ansatz and test function spaces are derived and moreover the polynomial differential forms on simplicial meshes are introduced. Latter spaces are fundamental objects in the development of mixed methods. However, as already indicated the underlying approach of FEEC is quite abstract and the results allow for other discrete spaces.   For instance, Buffa et al.  \cite{Buffa2011IsogeometricDD} utilized isogeometric analysis (IGA) to define  structure-preserving discrete de Rham chains. With such discrete isogeometric differential forms one obtains stable approximations e.g. within the field of Maxwell's equations, where curved geometries can be handled due to the IGA ansatz. Although we find in literature different Hilbert complexes corresponding to interesting PDEs, like the \emph{Hessian complex} in linearized General Relativity (see \cite{QuennevilleBlair2015ANA}), it is not always easy to apply directly the steps from \cite{ArnoldBook}. Especially the definition of discrete subcomplexes equipped with bounded cochain projections might be complicated.
This point becomes evident if complexes with higher derivatives are studied.  One well-known example is the \emph{elasticity complex } in which second derivatives appear; cf. \cite{Pauly_2022_2}. Admittedly, suitable Finite Elements for this chain in $2D$ were introduced, e.g. in \cite{Arnold_rectangle_1,Arnold2002MixedFE_1}, but one needs a lot of degrees of freedom per element and the basis definition is not straightforward. Another way to use FEEC  is the ansatz  \emph{complexes from complexes}, where two or more Hilbert complexes are  used to discretize PDEs. This approach can be seen in \cite{QuennevilleBlair2015ANA}, but also in \cite{Arnold2007MixedFE} in the context of linear elasticity. Due to the increased number of complexes the authors in the last two publications are able to reduce the discretization  problem to classical de Rham sequences, which are well-understood and easy to approximate.\\
We want to follow this idea of coupled complexes in this article. To be more precise, we use the connection between de Rham chains and  linear elasticity as described in \cite{Arnold2007MixedFE,Falk} and combine the  underlying formulation with weakly imposed symmetry with the concept of isogeometric differential forms from \cite{Buffa2011IsogeometricDD}. Whereas already existing methods like in \cite{Arnold2007MixedFE,Rettung,Falk} mainly use  Finite Elements on   simplicial or polygonal meshes, we want to exploit IGA to incorporate computational domains with curved boundaries in $2D$ and $3D$.
Even if we are mainly interested in  linear elasticity, we start the article with a rather abstract setting, since it improves clarity. The document has the following structure.\\
Before we specify the abstract problem in Sec.  \ref{Sec:abstract_framework}, we briefly introduce  some notation. In Sec. \ref{Sec:application_lin_elas} we apply the ideas of Sec. \ref{Sec:abstract_framework} to the example of linear elasticity. For this purpose we define suitable B-spline spaces and prove the well-posedness of the scheme with weak symmetry. Key part of the well-posedness proof is an inf-sup stability result which is verified utilizing the \emph{macroelement technique}.
In  Sec. \ref{Sec:Numerical_examples} we consider several numerical examples for the linear elasticity application. \\
We finish with a short conclusion at the end of the document in Sec. \ref{Sec:Conclusion} .\\
Our main references are \cite{Arnold2007MixedFE,Falk,Bressan,BressanandSangalli,Buffa2011IsogeometricDD} and we use results from them at different places in the article.
 
 \subsection*{Notation}

 	In the section we introduce some notation and define several spaces. \\
 For some bounded Lipschitz domain $D \subset \mathbb{R}^d, \ d \in \mathbb{N}$ we write for the standard Sobolev spaces $H^0(D)=L^2(D),$ $ H^k(D), \ k \in \mathbb{N}$, where $L^2(D)$ stands for the Hilbert space of square-integrable functions endowed with the inner product $\langle \cdot , \cdot \rangle= \langle \cdot , \cdot \rangle_{L^2}$. The norms $\norm{\cdot}_{H^k}$ denote the classical Sobolev norms in $H^k(D)$, where  $\n{\cdot}_{H^0}= \n{\cdot}_{L^2}$. In case of vector- or matrix-valued mappings we can define Sobolev spaces, too, by requiring the component functions to be in suitable Sobolev spaces. We  use the following notation:  for ${v} \coloneqq (v_1,\dots , v_d), \ {v} \in {H}^k(D, \mathbb{R}^d) \colon \Leftrightarrow \ v_i \in H^k(D), \, \forall i  $ and for ${M} \coloneqq \big(M_{ij}\big)_{i,j=1}^{d,d}, \ {M} \in {H}^k(D,\mathbb{M}) \colon \Leftrightarrow \ M_{ij} \in H^k(D), \, \forall i, \forall j$. In particular, depending on the situation we have  $\mathbb{M}=\mathbb{R}^{2 \times 2}$ or $\mathbb{M}=\mathbb{R}^{3 \times 3}$ in the document below.  
 The inner product $\langle \cdot , \cdot \rangle_{L^2}$ introduces straightforwardly an inner product on ${L}^2(D,\mathbb{R}^d)$. For the definitions of the next  spaces and norms we follow partly \cite{Pauly2016OnCA} to introduce further notation. First, let us consider vector-valued mappings.  Then if $d=3$ we set 
 \begin{alignat*}{3}
 {H}(D,\textup{curl}) &\coloneqq \{ {v} \in {L}^2(D,\mathbb{R}^3) \ | \ \nabla \times {v} \in {L}^2(D,\mathbb{R}^3)  \}, \  \hspace{0.3cm} && \norm{ {v}}_{\c}^2  \coloneqq \norm{{v}}^2_{L^2} + \norm{\nabla \times {v}}^2_{L^2},\\
 \textup{and} \ \ {H}(D,\textup{div}) &\coloneqq \{ {v} \in {L}^2(D,\mathbb{R}^d) \ | \ \nabla \cdot {v} \in {L}^2(D)  \}, \   && \norm{ {v}}_{\d}^2  \coloneqq \norm{{v}}^2_{L^2} + \norm{\nabla \cdot {v}}^2_{L^2} \ \ \ \textup{if} \ d \in \mathbb{N}.
 \end{alignat*}
 \normalsize
 Above we wrote $\nabla$  for the classical nabla operator. The definitions for ${H}(D,\textup{curl}) , \ {H}(D,\textup{div}), $ and the corresponding norms can be generalized to the matrix setting by requiring that all the rows (as vector-valued mappings) are in the respective spaces. Here the curl $\nabla \times$ and divergence $\nabla \cdot$ act row-wise, too.  We  write $ {H}(D,\textup{curl},\mathbb{M}), \ {H}(D,\textup{div},\mathbb{M})$ and
  \begin{alignat*}{2}
 {H}(D,\textup{div},\mathbb{S}) &\coloneqq   {L}^2(D,\mathbb{S}) \cap {H}(D,\textup{div},\mathbb{M}), \    
 \end{alignat*} 
 where we denote with ${L}^2(D,\mathbb{S})$ the space of square-integrable symmetric matrix fields. If we write $H_0^1(D,\mathbb{R}^d)$ we mean the space of $H^1$ functions with component-wise zero boundary values in the sense of the trace theorem; see \cite{steinbach} for more information. But if we have a subscript '$0$' of the form $L^2_0(D,\mathbb{R}^d)$ it stands for the $L^2$ functions with zero mean.\\
  Later the next operators become useful
 \begin{align*}
  \textup{Skew}(\begin{bmatrix}
m_{11}& m_{12} \\ m_{21} &  m_{22} 
\end{bmatrix}) \coloneqq m_{21}-m_{12}, \ \  \ \ \ 
\textup{Skew}(\begin{bmatrix}
m_{11}& m_{12} & m_{13} \\ m_{21} &  m_{22} & m_{23} \\ m_{31} & m_{32} & m_{33}
\end{bmatrix}) \coloneqq \begin{bmatrix}
m_{32}-m_{23} \\ m_{13}-m_{31} \\ m_{21}-m_{12}
\end{bmatrix},
 \end{align*}
 which can be applied to matrix fields.
 
 Further, for a matrix $\f{M}=(M_{ij})$ we use an upper index $\f{M}^j$ to denote the j-th column and a lower index $\f{M}_i$ for the $i$-th row. Finally, we introduce the $2D$ curl operator $$\textup{curl}(v) \coloneqq (\partial_2v,-\partial_1 v)^T,  \ \ \ \ \textup{curl}((v_1,v_2)^T) \coloneqq \begin{bmatrix}
 \partial_2v_1 & -\partial_1 v_1 \\ \partial_2v_2 & -\partial_1 v_2
 \end{bmatrix},$$ i.e.
 $\partial_i$ is the partial derivative w.r.t. the $i$-th coordinate.
 
 After stating some basic notation we proceed  with the explanation of the abstract problem.

 \section{Abstract framework}
 \label{Sec:abstract_framework}
 In this section we study a special class of variational formulations derived from coupled Hilbert complexes. Although we are interested in the case of linear elasticity later, we choose here an abstract notion  for reasons of clearness.\\
For the theoretical background we use and recommend the publications \cite{ArnoldBook,Falk}. 
\begin{definition}[Closed Hilbert complexes]
A closed Hilbert complex is a sequence of Hilbert spaces $(W^k)_k$  together with closed densely defined linear operators $d^k \colon V^k \subset W^k \rightarrow W^{k+1}$, where $V^k$ is the domain of $d^k$ and further we have closed ranges $\mathfrak{R}(d^k) \subset \mathfrak{Ker}(d^{k+1}) \subset W^{k+1}$. This means  $d^{k+1} \circ d^{k} = 0, \ \forall k$.
\end{definition}
The domain spaces  $(V^k,d^k)$ define the corresponding domain complex which determines a bounded Hilbert complex, where the $V^k$ are equipped with the graph inner product $\langle v,w  \rangle_V \coloneqq \langle v,w  \rangle + \langle d^kv,d^kw  \rangle $. We write for the associated norms $\n{v }_V^2 \coloneqq \n{v }^2 + \n{d^k v}^2$. Here and in the following $\n{\cdot}$ stands for the norm induced by the underyling Hilbert spaces $W^k$ and we write just $\langle \cdot, \cdot \rangle= \langle \cdot, \cdot \rangle_{W^k}$. Such complexes are key objects within the theory of FEEC, where mixed formulations of Hodge Laplacians are considered. Especially the stable discretization  of various mixed problems are closely related to Hilbert complexes. That is why we will exploit this notion, too, in order to define an abstract mixed weak formulation below.

\subsection{Continuous problem}
Now  we restrict ourselves to  a special case of Hilbert complexes as explained in the assumption below.

\begin{assumption}[Coupled Hilbert complexes ]
	\label{Assumption:1}
	Let $(V^k,d^k)$ and $(\bar{V}^k,\bar{d}^k)$ be the domain complexes associated to the closed Hilbert complexes  $( W^k,d^k)$ and $( \bar{W}^k,\bar{d}^k)$ s.t.:
	\begin{itemize}
		\item[I.] $\bar{d}^{n-1} \colon \bar{V}^{n-1} \rightarrow \bar{V}^{n}$ and $d^{n-1} \colon V^{n-1} \rightarrow V^n$  are surjective.
		\item[II.] There are bounded mappings $S_{k} \colon V^{k} \rightarrow \bar{V}^{k+1}, \ k \in \{n-2,n-1 \}$ with $S_{n-1} {d}^{n-2}= \bar{d}^{n-1} S_{n-2}$.
		\item[III.] The mapping   $\bar{d}^{n-1}S_{n-2} \colon {V}^{n-2} \rightarrow \bar{V}^n$ is surjective.
	\end{itemize}
\end{assumption}

	 	The next lemma adapted from \cite{Arnold2015} connects an inf-sup condition with the existence of a bounded right inverse. This will show us that assumption point  \emph{III} is fulfilled if a special inf-sup condition is valid.
\begin{lemma}
	\label{Lemma:right-inverse}
	Let $R, Q$ be two Hilbert spaces.
	Let $T \colon R  \rightarrow Q  $ be a  bounded linear mapping   w.r.t the norms $\n{\cdot}_{R} , \ \n{\cdot}_{Q}$ induced by the inner products. Further assume 
	\begin{align*}
	\underset{{q} \in Q }{\inf} \ \underset{{w} \in R}{\sup}   \ \ \frac{ \langle  T {w}, {q}\rangle_Q}{\n{{w}}_{R} \ \n{{q}}_{Q}} \geq C_{\dag} >0.
	\end{align*} 
	Then there is for all ${q} \in Q$ an element ${w} \in R$ s.t. $T {w} = {q}$ and $\n{{w}}_{R} \leq \frac{1}{C_{\dag}} \n{{q}}_{Q}$.
\end{lemma}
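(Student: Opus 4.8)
The plan is to dualize: introduce the Hilbert-space adjoint $T^{*}\colon Q\rightarrow R$ of $T$ and reduce everything to inverting the self-adjoint operator $TT^{*}$. First I would note that, by the Riesz representation theorem, there is a bounded linear $T^{*}$ with $\langle T{w},{q}\rangle_{Q}=\langle {w},T^{*}{q}\rangle_{R}$ for all ${w}\in R$, ${q}\in Q$, and that for fixed ${q}$ one has $\sup_{{w}\neq 0}\frac{\langle {w},T^{*}{q}\rangle_{R}}{\n{{w}}_{R}}=\n{T^{*}{q}}_{R}$ (the supremum being attained at ${w}=T^{*}{q}$). Hence the assumed inf-sup condition is equivalent to the lower bound
\begin{align*}
\n{T^{*}{q}}_{R}\geq C_{\dag}\,\n{{q}}_{Q},\qquad \forall\, {q}\in Q.
\end{align*}

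Next, consider $A\coloneqq TT^{*}\colon Q\rightarrow Q$. It is bounded and self-adjoint, and the previous bound gives $\langle A{q},{q}\rangle_{Q}=\n{T^{*}{q}}_{R}^{2}\geq C_{\dag}^{2}\,\n{{q}}_{Q}^{2}$, i.e. $A$ is coercive on $Q$. By the Lax--Milgram lemma (or simply Riesz--Fr\'echet, since $A$ is symmetric and positive definite) $A$ is boundedly invertible; moreover $\n{A{p}}_{Q}\,\n{{p}}_{Q}\geq\langle A{p},{p}\rangle_{Q}\geq C_{\dag}^{2}\n{{p}}_{Q}^{2}$ yields $\n{A^{-1}{q}}_{Q}\leq C_{\dag}^{-2}\n{{q}}_{Q}$ for all ${q}\in Q$.

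Finally, given ${q}\in Q$, I would set ${p}\coloneqq A^{-1}{q}$ and ${w}\coloneqq T^{*}{p}$. Then $T{w}=TT^{*}{p}=A{p}={q}$, so ${w}$ is the desired preimage, and
\begin{align*}
\n{{w}}_{R}^{2}=\langle T^{*}{p},T^{*}{p}\rangle_{R}=\langle TT^{*}{p},{p}\rangle_{Q}=\langle {q},{p}\rangle_{Q}\leq\n{{q}}_{Q}\,\n{{p}}_{Q}\leq C_{\dag}^{-2}\n{{q}}_{Q}^{2},
\end{align*}
so that $\n{{w}}_{R}\leq C_{\dag}^{-1}\n{{q}}_{Q}$, exactly as claimed. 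I do not expect a genuine obstacle here — this is a standard duality argument — the only points needing a little care are the identity $\sup_{{w}}\langle T{w},{q}\rangle_{Q}/\n{{w}}_{R}=\n{T^{*}{q}}_{R}$ and tracking the constant through the inversion of $A$. An alternative route avoiding $TT^{*}$ would observe that the lower bound on $T^{*}$ makes $T^{*}$ injective with closed range, so by the closed range theorem $T$ is surjective, and then pick the minimal-norm solution ${w}\in\mathfrak{Ker}(T)^{\perp}=\mathfrak{R}(T^{*})$, bounding $\n{{w}}_{R}$ by testing against $\mathfrak{R}(T^{*})$; but the $TT^{*}$ version is more self-contained and delivers the sharp constant immediately.
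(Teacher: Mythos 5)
Your proof is correct, and the first half coincides with the paper's: both you and the author pass to the Hilbert-space adjoint and rewrite the inf-sup hypothesis as the lower bound $\n{T^{*}q}_{R}\geq C_{\dag}\n{q}_{Q}$. Where you diverge is in how this bound is converted into a surjectivity statement with the quantitative norm control. The paper argues abstractly: the bound makes $T^{*}$ injective with a bounded left-inverse (of norm $\leq 1/C_{\dag}$), and then appeals to the standard duality/closed-range fact that this forces $T$ to be surjective with a right-inverse of the same norm --- a step the paper states in one line without unpacking. You instead make the right-inverse completely explicit as $q\mapsto T^{*}(TT^{*})^{-1}q$, justifying the invertibility of $A=TT^{*}$ by coercivity and Lax--Milgram and then recovering the sharp constant from $\n{w}_{R}^{2}=\langle q,A^{-1}q\rangle_{Q}\leq C_{\dag}^{-2}\n{q}_{Q}^{2}$. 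Your route is longer but self-contained and constructive (it even identifies $w$ as the minimal-norm preimage, lying in $\mathfrak{R}(T^{*})$), whereas the paper's is shorter at the cost of leaving the closed-range argument implicit. Both deliver exactly the claimed bound $\n{w}_{R}\leq C_{\dag}^{-1}\n{q}_{Q}$.
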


\begin{proof}
	We follow the proof steps of \cite[Lemma 2]{Arnold2015}.\\
	For reasons of clarification we write for the $Q$ and $R$ inner product $\langle \cdot , \cdot \rangle_{Q}$,  $\langle \cdot , \cdot \rangle_{R}$ respectively. The Hilbert space adjoint $T^*$ exists and satisfies
	
	\begin{equation*}
	C_{\dag} \n{{q}}_{Q}  \leq \underset{{w} \in R}{\sup}   \ \ \frac{ \langle  T {w}, {q}\rangle_{Q}}{\n{{w}}_{R} } = \underset{{w} \in R}{\sup}   \ \ \frac{ \langle   {w}, T^*{q}\rangle_{R}}{\n{{w}}_{R} } = \n{T^* {q}}_{R}, \ \ q \in Q.
	\end{equation*} 
	Thus we have injectivity of $T^*$ and the left-inverse  of $T^*$ is bounded with operator norm $ \leq 1/C_{\dag}$. This implies the surjectivity of $T$ and  the right-inverse $T^{\dag}$ is bounded by $1/C_{\dag}$. 
\end{proof}

\begin{corollary}[Auxiliary inf-sup condition]
	If there is a constant $C_{IS}>0$ s.t. 	
\begin{align}
\label{eq:aux_inf_sup}
	\underset{{q} \in \bar{V}^n }{\inf} \ \underset{{w} \in V^{n-2}}{\sup}   \ \ \frac{ \langle  \bar{d}^{n-1}S_{n-2} {w}, {q}\rangle}{\n{{w}}_{V} \ \n{{q}}} \geq C_{IS} >0,
	\end{align}
	then the property III is valid.
\end{corollary}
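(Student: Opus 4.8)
The claim is an immediate consequence of Lemma \ref{Lemma:right-inverse}, so the plan is simply to cast the hypotheses of the Corollary into the form required there. Concretely, I would set $R \coloneqq V^{n-2}$ equipped with the graph inner product $\langle\cdot,\cdot\rangle_V$, $Q \coloneqq \bar V^n$ equipped with the inner product inherited from $\bar W^n$, and $T \coloneqq \bar d^{n-1} S_{n-2}$. Note that $Q$ is indeed a Hilbert space with the norm $\n{\cdot}$ appearing in \eqref{eq:aux_inf_sup}: since $(\bar W^k,\bar d^k)$ is a closed Hilbert complex and $\bar d^{n-1}\colon \bar V^{n-1}\to\bar V^n$ is surjective by Assumption \ref{Assumption:1}.I, the space $\bar V^n = \mathfrak{R}(\bar d^{n-1})$ is closed in $\bar W^n$, hence complete.

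Next I would check that $T$ maps $R$ into $Q$ and is bounded. Well-definedness is clear: $S_{n-2}$ maps $V^{n-2}$ into $\bar V^{n-1}$ by Assumption \ref{Assumption:1}.II, and $\bar d^{n-1}$ maps $\bar V^{n-1}$ into $\bar V^n = Q$. For boundedness I would combine the boundedness of $S_{n-2}$ (again Assumption \ref{Assumption:1}.II) with the elementary estimate $\n{\bar d^{n-1} v} \le \n{v}_{\bar V}$, which follows directly from the definition of the graph norm. This gives $\n{T w} = \n{\bar d^{n-1} S_{n-2} w} \le \n{S_{n-2} w}_{\bar V} \le \n{S_{n-2}}\, \n{w}_V$ for all $w \in V^{n-2}$, so $T$ is a bounded linear operator $R \to Q$ with respect to the stated norms.

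Finally, the auxiliary inf-sup condition \eqref{eq:aux_inf_sup} is precisely the hypothesis of Lemma \ref{Lemma:right-inverse} for this choice of $R$, $Q$, $T$, with $C_{\dag} = C_{IS}$. The lemma therefore provides, for every $q \in \bar V^n$, an element $w \in V^{n-2}$ with $\bar d^{n-1} S_{n-2} w = q$ (and even the quantitative bound $\n{w}_V \le C_{IS}^{-1}\n{q}$); in particular $\bar d^{n-1} S_{n-2}\colon V^{n-2}\to\bar V^n$ is surjective, which is exactly property III of Assumption \ref{Assumption:1}. There is no genuine obstacle here; the only points needing (routine) care are verifying that $\bar V^n$ carries a true Hilbert space structure under the norm used in \eqref{eq:aux_inf_sup} and that the composition $T$ is bounded with respect to the graph norm on the source — both of which follow directly from Assumption \ref{Assumption:1} together with the definition of a closed Hilbert complex.
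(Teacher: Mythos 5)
Your proof is correct and follows exactly the paper's route: the paper likewise disposes of the corollary by invoking Lemma \ref{Lemma:right-inverse} with $T = \bar{d}^{n-1}S_{n-2} \colon V^{n-2} \rightarrow \bar{V}^n$. You merely spell out the routine verifications (completeness of $\bar{V}^n$ under $\n{\cdot}$ and boundedness of $T$ in the graph norm) that the paper leaves implicit.
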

\begin{proof}
	This follows directly from Lemma \ref{Lemma:right-inverse} with $T = \bar{d}^{n-1}S_{n-2} \colon V^{n-2} \rightarrow \bar{V}^n$.
\end{proof}

In Fig. \ref*{Fig:1} the relations between the different spaces are illustrated.
	\begin{figure}[h!]
	\centering
	\begin{tikzpicture}
	{
		
		\node at (4,1.4) {$V^{n-2} $};
		\node at (4,3) {$\bar{V}^{n-2} $};
		\node at (2,3) {$\dots $};
		\node at (2,1.4) {$\dots $};
		\node at (9.85,3) {$\{0\} $};
		\node at (9.85,1.4) {$\{0\} $};
		\node at (3.05-0.2,1.65) {$ {d}^{n-3} $};
		\node at (3.05-0.2,3.25) {$ \bar{d}^{n-3} $};
		
		\draw[->] (2.6-0.2,3) to (3.55-0.2,3);
		\draw[->] (2.6-0.2,1.4) to (3.55-0.2,1.4);

		{	\node at (8.1,1.4) {$V^{n}$};}

		{\node at (6+0.2,1.4) {$V^{n-1}$};}

		{		\node at (8.1,3) {$\bar{V}^{n}$};}

		\node at (8.9,1.6) {$0$};
		
		\node at (8.9,3.2) {$0$};
		\node at (5+0.1,1.65) {$d^{n-2} $};
		\node at (5+0.1,3.25) {$\bar{d}^{n-2} $};
		
		\node at (7.3,1.65) {$d^{n-1} $};

		\draw[->] (4.55+0.1,3) to (5.5+0.1,3);
		
		\draw[->] (4.55+0.1,1.4) to (5.5+0.1,1.4);

		\draw[->] (8.5,3) to (9.4,3);
		
		\draw[->] (8.5,1.4) to (9.4,1.4);

	}
	
	\node at (6+0.2,3) {$\bar{V}^{n-1}$};
	\node at (7.3,3.25) {$\bar{d}^{n-1}  $};
	From de Rham to linear elasticity (3D ) - 1

	\draw[->] (6.5+0.35,3) to (7.4+0.3,3);
	
	{	\draw[->] (3.96,1.7) to (6-0.4,2.6);
		\node at (4.35,2.35) {$S_{n-2}$};	 }

	{		\draw[->] (6.2,1.7) to (7.6,2.6);}

	{				\draw[->] (6.5+0.35,1.4) to (7.4+0.3,1.4);}



	\node at (6.5,2.35) {$S_{n-1}$};

	
	\end{tikzpicture}
	\caption{The diagram commutes and relates the different spaces.}
	\label{Fig:1}
\end{figure}
For the spaces and operators in Assumption \ref{Assumption:1}  we can introduce an abstract mixed weak form.
\begin{definition}[Abstract mixed weak form]
	\label{Def:1}
	Find $(\sigma,u,p) \in V^{n-1} \times V^n \times \bar{V}^n$ s.t. 
		\begin{alignat}{5}
		\label{eq:saddle_point_problem_cont}
	& \mathfrak{A}(  {\sigma} , {\tau} ) \ + \ &&\langle {u}, d^{n-1} {\tau}  \rangle \  + \ && \langle {p}, S_{n-1}{\tau} \rangle  \color{black}  &&= l_{n-1}(\tau), \ \hspace{0.2cm} &&\forall {\tau} \in V^{n-1}, \nonumber \\
	&\langle d^{n-1} {\sigma} , {v} \rangle &&  && &&= l_{n}(v), \ \hspace{0.4cm} && \forall {v} \in V^n, \\
	&\langle S_{n-1}{\sigma}, {q} \rangle &&  &&  &&= 0,  \ \hspace{0.4cm} &&\forall {q} \in \bar{V}^n, \nonumber
	\end{alignat}
	where $l_{k} \colon W^{k} \rightarrow \mathbb{R}$ are continuous linear forms and $\mathfrak{A} \colon W^{n-1} \times W^{n-1} \rightarrow \mathbb{R}$ is a continuous and coercive bilinear form.
\end{definition} 

To have a well-defined problem above  the obvious question arises: When is the weak form from above well-posed, i.e. which conditions lead to a unique solution? Since we observe a saddle-point problem structure, we just have to apply the Brezzi conditions from \cite{Brezzi} which can be summarized in our context  as follows.

  \begin{definition}[Brezzi stability conditions] There are positive constants
  	\label{Brezzi:inf-sup-condition_abstract} $C_{S1}, \ C_{S2}$ s.t. 
	\begin{align*}
	&\textup{(S1)}	\hspace*{0.5cm}	\mathfrak{A}(  \f{\tau}, \f{\tau} ) \geq C_{S1} \n{\f{\tau}}_{V}^2, \ \forall \f{\tau} \in kB,  	 \\	 		
	& \hspace*{1.4cm}	kB  \coloneqq \{ \f{\tau} \in V^{n-1} \ | \  \langle  d^{n-1} \f{\tau}, \f{v} \rangle + \langle  S_{n-1}\f{\tau}, \f{q} \rangle  =0,  \ \forall (\f{v} , \f{q}) \in {V}^n \times \bar{V}^n \}, \\
	&\textup{(S2)} \hspace*{0.5cm} \underset{(\f{v},\f{q} ) \in \f{V^n} \times \bar{V}^n }{\inf}    \ \ \  \underset{\f{\tau} \in \f{V}^n}{\sup} \hspace{0.1cm}\frac{\langle S_{n-1}\f{\tau}, \f{q} \rangle + \langle d^{n-1} \f{\tau}, \f{v} \rangle }{\norm{\f{\tau}}_{V} \big(\norm{\f{v}}+ \norm{\f{q}} \big)}\geq C_{S2} >0	.
	\end{align*}
\end{definition}
The first condition (S1) is obviously fulfilled  by Assumption  \ref{Assumption:1}, i.e. by  the surjectivity of $d^{n-1} \colon V^{n-1} \rightarrow V^n$, i.e.
$$\mathfrak{A}(  \f{\tau}, \f{\tau} )  \geq C \n{\f{\tau}}^2 =  C \n{\f{\tau}}_{V}^2, \ \  \forall \tau \in kB.$$
The inf-sup condition (S2) seems more problematic and deserves a closer look.
\begin{lemma}[Well-posedness]
	In view of Assump. \ref{Assumption:1} and Def.  \ref{Def:1} we obtain the well-posedness of the saddle-point problem \eqref{eq:saddle_point_problem_cont} in the mentioned definition. 
\end{lemma}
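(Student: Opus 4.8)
The plan is to deduce well-posedness from the classical saddle-point theory of Brezzi \cite{Brezzi}, i.e.\ to check the two conditions (S1) and (S2) of Definition \ref{Brezzi:inf-sup-condition_abstract}; once both hold, existence, uniqueness and continuous dependence of the solution $(\sigma,u,p)$ on the data $l_{n-1},l_n$ follow at once. Condition (S1) has in fact already been argued in the text: $\mathfrak{A}$ is coercive on all of $W^{n-1}$, and on the kernel $kB$ one has $d^{n-1}\tau=0$, so that $\n{\tau}_V=\n{\tau}$. Hence the whole work lies in the inf-sup condition (S2), where (reading the statement with the supremum correctly taken over $\tau\in V^{n-1}$) I must, for every pair $(v,q)\in V^n\times\bar V^n$, produce a single $\tau\in V^{n-1}$ making the quotient bounded below by an absolute constant.

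First I would record two bounded right inverses coming from the surjectivity hypotheses of Assumption \ref{Assumption:1}. The map $d^{n-1}\colon V^{n-1}\to V^n$ is bounded for the graph norms (because $d^nd^{n-1}=0$ gives $\n{d^{n-1}\tau}_V=\n{d^{n-1}\tau}\le\n{\tau}_V$) and surjective by point I, so the open mapping theorem yields a constant $c_1>0$ and an assignment $v\mapsto\tau_v\in V^{n-1}$ with $d^{n-1}\tau_v=v$ and $\n{\tau_v}_V\le c_1\n{v}$. Likewise $\bar d^{n-1}S_{n-2}\colon V^{n-2}\to\bar V^n$ is a composition of bounded maps and is surjective by point III, hence there are $c_2>0$ and $r\mapsto w_r\in V^{n-2}$ with $\bar d^{n-1}S_{n-2}w_r=r$ and $\n{w_r}_V\le c_2\n{r}$.

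The core step is to assemble the test function. Given $(v,q)$, set $\tau_1\coloneqq\tau_v$, let $r\coloneqq q-S_{n-1}\tau_1$ (so $\n{r}\le\n{q}+\n{S_{n-1}}\,c_1\n{v}$ by boundedness of $S_{n-1}$), put $w\coloneqq w_r$, $\tau_2\coloneqq d^{n-2}w$, and finally $\tau\coloneqq\tau_1+\tau_2\in V^{n-1}$. Using $d^{n-1}d^{n-2}=0$ and the commutation relation $S_{n-1}d^{n-2}=\bar d^{n-1}S_{n-2}$ of point II one gets $d^{n-1}\tau=v$ and $S_{n-1}\tau=S_{n-1}\tau_1+\bar d^{n-1}S_{n-2}w=S_{n-1}\tau_1+r=q$, while $\n{\tau_2}_V=\n{d^{n-2}w}\le\n{w}_V\le c_2\n{r}$ leads to $\n{\tau}_V\le C(\n{v}+\n{q})$ with $C\coloneqq c_1(1+c_2\n{S_{n-1}})+c_2$. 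Since then $\langle S_{n-1}\tau,q\rangle+\langle d^{n-1}\tau,v\rangle=\n{q}^2+\n{v}^2\ge\tfrac12(\n{v}+\n{q})^2$, dividing by $\n{\tau}_V(\n{v}+\n{q})$ gives (S2) with $C_{S2}=1/(2C)$, and Brezzi's theorem finishes the argument.

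The part I expect to require the most care is this middle step. First, one must be sure that point III really delivers a \emph{bounded} right inverse of $\bar d^{n-1}S_{n-2}$: surjectivity of a bounded operator between Hilbert spaces does give this through the open mapping (or closed range) theorem, but it should be spelled out — alternatively one can route the argument through Lemma \ref{Lemma:right-inverse}, noting that the closed range theorem turns surjectivity into precisely the auxiliary inf-sup condition \eqref{eq:aux_inf_sup}. Second, the coupling must be handled in the right order: $\tau_2$ has to repair the $p$-component $S_{n-1}\tau$ without perturbing the already-fixed component $d^{n-1}\tau=v$, which is exactly what $d^{n-1}d^{n-2}=0$ together with $S_{n-1}d^{n-2}=\bar d^{n-1}S_{n-2}$ guarantees. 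Everything beyond that is just bookkeeping of the constants $c_1$, $c_2$ and $\n{S_{n-1}}$.
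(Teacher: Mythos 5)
Your proof is correct and follows essentially the same route as the paper: the identical decomposition $\tau=\tau_v+d^{n-2}w$, exploiting $d^{n-1}d^{n-2}=0$ and the commutation $S_{n-1}d^{n-2}=\bar d^{n-1}S_{n-2}$ to fix the two components independently. The only cosmetic difference is that you extract bounded right inverses of the two surjections $d^{n-1}$ and $\bar d^{n-1}S_{n-2}$ separately at the outset and track the constants by hand, whereas the paper first establishes surjectivity of the combined map $\kappa\mapsto(d^{n-1}\kappa,S_{n-1}\kappa)$ and invokes the open mapping theorem once at the end; both give the same quantitative form of (S2).
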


\begin{proof}
	With the remarks above, it is enough to check the validity of (S2). We orient ourselves towards the proof of Theorem 7.2 in \cite{Falk}. \\ Let $(v,q) \in V^n \times \bar{V}^n \backslash \{0 \times 0\}$ arbitrary, but fixed. By the surjectivity assumption regarding $d^{n-1}$ we find $\tau \in V^{n-1}$ with $v = d^{n-1}\tau$. 
	 Then, in view of the surjectivity of $\bar{d}^{n-1}S_{n-2}$ one can choose $\rho \in V^{n-2}$ s.t. $$\bar{d}^{n-1}S_{n-2}\rho = q - S_{n-1}\tau \in \bar{V}^n.$$
	Finally, we set $$\sigma \coloneqq d^{n-2}\rho + \tau \in V^{n-1}.$$ 
	In particular, we see $d^{n-1} \sigma =0 + d^{n-1} \tau =v$ and \begin{align*}
	S_{n-1}\sigma &= S_{n-1}d^{n-2}\rho + S_{n-1}\tau  =  
	\bar{d}^{n-1}S_{n-2}\rho + S_{n-1} \tau \\ &=  q - S_{n-1}\tau + S_{n-1}\tau \\ & = q.
	\end{align*}
		By the arbitrariness of $(v,q)$ we get that  $$T \colon V^{n-1} \rightarrow V^n \times \bar{V}^{n} \ , \ \kappa \mapsto d^{n-1}\kappa  \times S_{n-1}\kappa$$ is a continuous and surjective mapping between Banach spaces.
		By the open mapping theorem we have the existence of a $0 < \delta$ s.t.
		$$B_{\delta}(0 \times 0) \subset T(B_1(0)),$$
		where $B_r( \cdot )$ denote the respective open $r$-neighborhoods.\\
		Let $\n{v} + \n{q} = C >0$. Then we know from above the existence of $\sigma \in V^{n-1}$ with $T\sigma = (v,q)$. Hence, 
		$$\frac{\delta}{2C} (v,q) \in B_{\delta}(0 \times 0) \subset T(B_1(0)).$$ And consequently, there is a $\tilde{\sigma} \in V^{n-1}, \ \n{\tilde{\sigma}}_V < 1$ with $T \tilde{\sigma} = \frac{\delta}{2C} (v,q)$ and $$T \frac{2C}{\delta} \tilde{\sigma} = (v,q), \ \ \n{\frac{2C}{\delta} \tilde{\sigma}}_V \leq \frac{2C}{\delta} \leq \frac{2}{\delta} (\n{v} + \n{q}).$$ So we see the existence of a bounded right-inverse of the auxiliary mapping $T$ which implies directly the inf-sup condition (S2).

\end{proof}

After we justified the well-posedness of \eqref{eq:saddle_point_problem_cont}, we look at structure-preserving discretizations of the continuous bottom complex in Fig. \ref{Fig:1}. Using the theoretical foundations derived in \cite{ArnoldBook}, we  clarify the term \emph{structure-preserving} in the context of Hilbert complexes.

\subsection{Discretization}
For the next definition we follow \cite[Section 5.2]{ArnoldBook}
\begin{definition}[Structure-preserving discretization of Hilbert complexes]
A family of finite-dimensional spaces $(V_h^k)_h$ discretize the domain complex $(V^k,d^k)$ associated to the closed Hilbert complex $(W^k,d^k)$  structure-preserving, if the next three conditions are fulfilled:
\begin{itemize}
	\item \textbf{Approximation property}: $\lim_{h \rightarrow 0} \ \underset{v_h \in V_h^j}{\inf}\n{w - v_h}_V = 0 , \ \forall w \in V^j, \  \forall j,$
	\item \textbf{Subcomplex property}: $V_h^j \subset V^j$ and $d^{j} V_h^j \subset V_h^{j+1}$ for all $j$,
	\item \textbf{Bounded cochain projections}: There are projections $\Pi_h^j \colon V^j \rightarrow V_h^j$ that are  uniformly $W$-bounded w.r.t. $h$ and that are compatible with the operators $d^j$, i.e. it holds $d^j \Pi_h^j = \Pi_h^{j+1}d^j$.  
\end{itemize}
\end{definition}
Such special discretizations preserve the key properties of the continuous pendant s.t. proofs and properties regarding the continuous complex can be mimicked in the discrete setting. This is the starting point for stable mixed methods. For more details we again refer to \cite{ArnoldBook}. \\
 That is the reason why we state the next assumption. 
  \begin{assumption}[]
  	\label{Assumption:2}
  	In following let $V_h^k \subset V^k$  define structure-preserving discretizations of the bottom complex from 
  	 Fig \ref{Fig:1}   and let $\bar{V}_h^n \subset \bar{V}^n$.
  \end{assumption}
  For the $W$-orthogonal projection from $\bar{V}^n$ onto $\bar{V}_h^n$ we write $\bar{\Pi}_h^n$, i.e. 
  we get the subsequent commuting diagram:
	\begin{figure}[h!]
	\centering
	\begin{tikzpicture}
	
	\node at (4,1.4-1.6) {$V_h^{n-2} $};
		\node at (2,1.4-1.6) {$\dots $};		
		\node at (3.05-0.2,1.65-1.6) {$ {d}^{n-3} $};
		\draw[->] (2.6-0.2,1.4-1.6) to (3.55-0.2,1.4-1.6);
		\node at (8.1,1.4-1.6) {$V^{n}_h$};
		\node at (6+0.2,1.4-1.6) {$V^{n-1}_h$};	
	\node at (5+0.1,1.65-1.6) {$d^{n-2} $};	
	\node at (7.3,1.65-1.6) {$d^{n-1} $};			
	\draw[->] (4.55+0.1,3-1.6) to (5.5+0.1,3-1.6);	
	\draw[->] (4.55+0.1,1.4-1.6) to (5.5+0.1,1.4-1.6);	
	\draw[->] (6.5+0.35,1.4-1.6) to (7.4+0.3,1.4-1.6);

	
\node at (8.1,1.4+3.2) {$\bar{V}^{n}_h$};


\draw[<-] (8.1,1.4+2.8) to (8.1,1.4+2);	

\draw[->] (8.1,1.4+2.8-3.2) to (8.1,1.4+2-3.2);	
\draw[->] (6+0.2,1.4+2.8-3.2) to (6+0.2,1.4+2-3.2);	
\draw[->] (4,1.4+2.8-3.2) to (4,1.4+2-3.2);

\node[left] at (8.2,1.4+2.8-3.2-0.3) {${\Pi}_h^{n}$};	
\node[left] at (6+0.3,1.4+2.8-3.2-0.3) {${\Pi}_h^{n-1} $};	
\node[left] at (4+0.1,1.4+2.8-3.2-0.3) {${\Pi}_h^{n-2} $};

\node[left] at (8.2,1.4+2.8-0.4) {$\bar{\Pi}_h^{n}$};	

	{
		
		\node at (4,1.4) {$V^{n-2} $};
		\node at (4,3) {$\bar{V}^{n-2} $};
		\node at (2,3) {$\dots $};
		\node at (2,1.4) {$\dots $};
		\node at (9.85,3) {$\{0\} $};
		\node at (9.85,1.4) {$\{0\} $};
		\node at (3.05-0.2,1.65) {$ {d}^{n-3} $};
		\node at (3.05-0.2,3.25) {$ \bar{d}^{n-3} $};
		
		\draw[->] (2.6-0.2,3) to (3.55-0.2,3);
		\draw[->] (2.6-0.2,1.4) to (3.55-0.2,1.4);

		{	\node at (8.1,1.4) {$V^{n}$};}

		{\node at (6+0.2,1.4) {$V^{n-1}$};}

		{		\node at (8.1,3) {$\bar{V}^{n}$};}

		\node at (8.9,1.6) {$0$};
		
		\node at (8.9,3.2) {$0$};
		\node at (5+0.1,1.65) {$d^{n-2} $};
		\node at (5+0.1,3.25) {$\bar{d}^{n-2} $};
		
		\node at (7.3,1.65) {$d^{n-1} $};

		\draw[->] (4.55+0.1,3) to (5.5+0.1,3);
		
		\draw[->] (4.55+0.1,1.4) to (5.5+0.1,1.4);

		\draw[->] (8.5,3) to (9.4,3);
		
		\draw[->] (8.5,1.4) to (9.4,1.4);

	}
	
	\node at (6+0.2,3) {$\bar{V}^{n-1}$};
	\node at (7.3,3.25) {$\bar{d}^{n-1}  $};

	\draw[->] (6.5+0.35,3) to (7.4+0.3,3);
	
	{	\draw[->] (3.96,1.7) to (6-0.4,2.6);
		\node at (4.35,2.35) {$S_{n-2}$};	 }

	{		\draw[->] (6.2,1.7) to (7.6,2.6);}

	{				\draw[->] (6.5+0.35,1.4) to (7.4+0.3,1.4);}



	\node at (6.5,2.35) {$S_{n-1}$};

	
	\end{tikzpicture}
	\caption{The diagram commutes.}
	\label{Fig:2}
\end{figure}

It is natural to consider the discretized version of the weak form in Def.  \ref{Def:1}.
\begin{definition}[Discrete weak form]
	\label{Def:2}
Find $(\sigma_h,u_h,p_h) \in V_h^{n-1} \times V_h^n \times \bar{V}^n_h$ s.t. 
\begin{alignat}{5}
\label{eq:abstract_mixed_form_discrete}
& \mathfrak{A}(  {\sigma}_h , {\tau}_h ) \ + \ &&\langle {u}_h, d^{n-1} {\tau}_h  \rangle \  + \ && \langle {p}_h, S_{n-1}{\tau}_h \rangle  \color{black}  &&= l_{n-1}(\tau_h), \ \hspace{0.2cm} &&\forall {\tau_h} \in V_h^{n-1}, \nonumber\\
&\langle d^{n-1} {\sigma_h} , {v}_h \rangle &&  && &&= l_{n}(v_h), \ \hspace{0.4cm} && \forall {v_h} \in V_h^n, \\
&\langle S_{n-1}{\sigma}_h, {q_h} \rangle &&  &&  &&= 0,  \ \hspace{0.4cm} &&\forall {q}_h \in \bar{V}_h^n. \nonumber
\end{alignat}
\end{definition}

Now the question arises, whether we get well-posedness in the discrete setting. Unfortunately, it is not enough to just use the spaces defined by  two separate structure-preserving discretizations of both complexes in Fig. \ref{Fig:1}. The connecting mappings $S_k$ claim  a suitable connection of the three spaces $V_h^{n-1} \times V_h^n \times \bar{V}^n_h$. 
By the next lemma it is clear, that we need also the discrete version of \eqref{eq:aux_inf_sup}.
\begin{lemma}[Well-posedness in the discrete setting]
	\label{Lemma:well-posedness_discrete_abstract}
	Let the Assump. \ref{Assumption:2} hold true. Further, we assume the next auxiliary inf-sup condition
		\begin{align}
		\label{eq:aux_inf_sup_discrete}
	\underset{{q}_h \in \bar{V}_h^n }{\inf} \ \underset{{w}_h \in V_h^{n-2}}{\sup}   \ \ \frac{ \langle  \bar{d}^{n-1}S_{n-2} {w}_h, {q}_h\rangle}{\n{{w}_h}_{V} \ \n{{q}_h}} \geq C_{IS} >0, \ \ \forall h,
	\end{align}
	where $C_{IS}$ does not dependent on $h$. \\
	Then the discrete formulation   \eqref{eq:abstract_mixed_form_discrete} is well-posed for all $h$.
\end{lemma}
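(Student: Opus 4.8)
The plan is to verify the two Brezzi conditions (S1) and (S2) of Definition \ref{Brezzi:inf-sup-condition_abstract} for the discrete spaces $V_h^{n-1}\times V_h^n\times\bar V_h^n$, with constants \emph{independent of $h$}; the abstract saddle-point theory then gives well-posedness exactly as in the continuous case. Condition (S1) is immediate from the subcomplex property: since $d^{n-1}V_h^{n-1}\subset V_h^n$, any $\tau_h$ in the discrete kernel $kB_h$ may be tested against $v_h=d^{n-1}\tau_h\in V_h^n$, forcing $d^{n-1}\tau_h=0$; hence $\n{\tau_h}_V=\n{\tau_h}$ and the coercivity of $\mathfrak A$ on $W^{n-1}$ yields (S1) with the same constant as in the continuous setting.

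The heart of the matter is (S2), which I would obtain by transcribing the explicit construction from the continuous well-posedness lemma into the discrete subcomplex, this time tracking all norms quantitatively (the open mapping theorem is avoided, as its constant would depend on $h$). Fix $(v_h,q_h)\in V_h^n\times\bar V_h^n$. Since $d^{n-1}\colon V^{n-1}\to V^n$ is surjective with a bounded right inverse, pick $\tau\in V^{n-1}$ with $d^{n-1}\tau=v_h$ and $\n{\tau}_V\le C\n{v_h}$ (using that the bottom complex terminates, so $\n{v_h}_V=\n{v_h}$), and set $\tau_h\coloneqq\Pi_h^{n-1}\tau\in V_h^{n-1}$. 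The commuting relation gives $d^{n-1}\tau_h=\Pi_h^n d^{n-1}\tau=\Pi_h^n v_h=v_h$, and the uniform $W$-boundedness of the cochain projections together with this same relation forces uniform $V$-boundedness, so $\n{\tau_h}_V\le C\n{v_h}$ uniformly in $h$.

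Next define $T\coloneqq\bar\Pi_h^n\,\bar d^{n-1}S_{n-2}\colon V_h^{n-2}\to\bar V_h^n$. For $q_h\in\bar V_h^n$ one has $\langle Tw_h,q_h\rangle=\langle\bar d^{n-1}S_{n-2}w_h,q_h\rangle$ because $\bar\Pi_h^n$ is the orthogonal projection, so hypothesis \eqref{eq:aux_inf_sup_discrete} is exactly the inf-sup condition of Lemma \ref{Lemma:right-inverse} for $T$. Hence $T$ has a right inverse bounded by $1/C_{IS}$, and I choose $\rho_h\in V_h^{n-2}$ with $T\rho_h=q_h-\bar\Pi_h^n S_{n-1}\tau_h$ and $\n{\rho_h}_V\le C_{IS}^{-1}\big(\n{q_h}+\n{S_{n-1}}\,\n{\tau_h}_V\big)\le C(\n{v_h}+\n{q_h})$. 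Put $\sigma_h\coloneqq d^{n-2}\rho_h+\tau_h\in V_h^{n-1}$ (using $d^{n-2}V_h^{n-2}\subset V_h^{n-1}$). Then $d^{n-1}\sigma_h=d^{n-1}\tau_h=v_h$, and by Assumption point II, $S_{n-1}d^{n-2}=\bar d^{n-1}S_{n-2}$, so $\bar\Pi_h^n S_{n-1}\sigma_h=\bar\Pi_h^n\bar d^{n-1}S_{n-2}\rho_h+\bar\Pi_h^n S_{n-1}\tau_h=(q_h-\bar\Pi_h^n S_{n-1}\tau_h)+\bar\Pi_h^n S_{n-1}\tau_h=q_h$. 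Since $q_h\in\bar V_h^n$, this gives $\langle S_{n-1}\sigma_h,q_h\rangle=\n{q_h}^2$ and $\langle d^{n-1}\sigma_h,v_h\rangle=\n{v_h}^2$, while $\n{\sigma_h}_V\le\n{\rho_h}_V+\n{\tau_h}_V\le C(\n{v_h}+\n{q_h})$ (here $\n{d^{n-2}\rho_h}_V=\n{d^{n-2}\rho_h}\le\n{\rho_h}_V$ since $d^{n-1}d^{n-2}=0$). Testing the supremum in (S2) with $\sigma_h$ yields $C_{S2}\ge 1/(2C)$, uniform in $h$.

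The main obstacle I anticipate is precisely the $h$-uniform bookkeeping: one must observe that a merely $W$-bounded commuting cochain projection is automatically uniformly $V$-bounded (the commuting property $d^{n-1}\Pi_h^{n-1}=\Pi_h^n d^{n-1}$ is what makes this work), and one must route the connecting map through $\bar\Pi_h^n$, because the naive demand $S_{n-1}\sigma_h=q_h$ is infeasible ($S_{n-1}\sigma_h$ need not lie in $\bar V_h^n$). Everything else is a faithful discrete copy of the continuous argument.
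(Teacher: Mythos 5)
Your proposal is correct, and the core of the argument --- routing the connecting map through the orthogonal projection $\bar{\Pi}_h^n$, applying Lemma \ref{Lemma:right-inverse} to $T=\bar{\Pi}_h^n\bar d^{n-1}S_{n-2}$ on the discrete spaces, and building $\sigma_h = d^{n-2}\rho_h+\tau_h$ --- is exactly the paper's construction. The one genuine difference is how you control the $V$-norm of the lifted element. The paper picks an arbitrary continuous preimage $\tau$ of $v_h$, projects it, splits $\tau_h=\tau_h^{\mathfrak{Ker}}+\tau_h^{\perp}$ inside $V_h^{n-1}$, and invokes the \emph{discrete} Poincar\'e inequality of Arnold--Falk--Winther to get $\n{\tau_h^{\perp}}_V\le C_P\n{v_h}$, then uses $\tau_h^{\perp}$ (rather than $\tau_h$) in $\sigma_h$. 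You instead choose a \emph{good} continuous preimage with $\n{\tau}_V\le C\n{v_h}$ (bounded right inverse of $d^{n-1}$ at the continuous level, where the open-mapping constant is $h$-independent) and observe that a $W$-bounded commuting cochain projection is automatically uniformly $V$-bounded, since $\n{d^{n-1}\Pi_h^{n-1}\tau}=\n{\Pi_h^{n}d^{n-1}\tau}\le\n{\Pi_h^n}\,\n{d^{n-1}\tau}$; this makes the kernel decomposition unnecessary. Both routes yield $h$-independent constants; yours is slightly more self-contained (no appeal to the discrete Poincar\'e inequality), while the paper's keeps the lifted element orthogonal to the discrete kernel, which is the more standard FEEC bookkeeping. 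Your remaining computations --- the identity $\langle S_{n-1}\sigma_h,q_h\rangle=\n{q_h}^2$ via the orthogonality of $\bar\Pi_h^n$, the bound $\n{\sigma_h}_V\le C(\n{v_h}+\n{q_h})$, and the treatment of (S1) via the subcomplex property --- match the paper and are sound.
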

\begin{proof}
	We have well-posedness if the Brezzi stability conditions are fulfilled.
	In fact, the proof here is  similar to the continuous setting. Hence, on the one hand we look at the discrete analogous of (S1).
Set $kB_h \coloneqq \{ \f{\tau}_h \in V_h^{n-1} \ | \  \langle  d^{n-1} \f{\tau}_h, \f{v}_h \rangle + \langle  S_{n-1}\f{\tau}_h, \f{q}_h \rangle  =0,  \ \forall (\f{v}_h , \f{q}_h) \in {V}^n_h \times \bar{V}^n_h \}$. The subcomplex property implies $d^{n-1} \f{\tau}_h=0$ and consequently the coercivity relation $\mathfrak{A}(  \f{\tau}_h, \f{\tau}_h ) \geq C \n{\f{\tau}_h}_{V}^2, \ \forall \f{\tau}_h \in kB_h$ is obvious. \\
On the other hand, we check next the inf-sup condition (S2) for the finite-dimensional spaces. In the following, let $(v_h,q_h) \in V_h^n \times \bar{V}_h^n \backslash \{0 \times 0 \}$ arbitrary but fixed. By the surjectivity assumption regarding $d^{n-1}$ we find $\tau \in V^{n-1}$ with $v_h = d^{n-1}\tau$, i.e. for $\tau_h \coloneqq \Pi_h^{n-1}\tau$ we see $$d^{n-1}\tau_h = d^{n-1}\Pi_h^{n-1}\tau= \Pi_h^{n} d^{n-1}\tau = v_h.$$  Furthermore we find $\tau^{\mathfrak{Ker}}_h \in \mathfrak{Ker}(d^{n-1}) \cap V_h^{n-1}, \ \tau^{\perp}_h \in \big(\mathfrak{Ker}(d^{n-1})\cap V_h^{n-1} \big)^{\perp}$ s.t. $\tau_h= \tau^{\mathfrak{Ker}}_h + \tau^{\perp}_h$, where the orthogonal complement is taken within $V_h^{n-1}$. Since $(V_h^k,d^k)$ define a structure-preserving complex discretization we can apply the discrete Poincar\'{e}  inequality \cite[Theorem 3.6]{Arnold2010FiniteEE}, i.e.  
$$C_P \n{d^{n-1}\tau_h} = C_P \n{d^{n-1}\tau^{\perp}_h} \geq  \n{\tau^{\perp}_h}_V.$$
Above, $C_P$ does not dependent on $h$, but on the operator norm  $\n{\Pi_h^{n-1}}$. \\ Then, obviously it is $$q_h-\bar{\Pi}_h^nS_{n-1} \tau_h^{\perp} \in \bar{V}_h^{n},$$
where $\bar{\Pi}_h^n \colon \bar{W}^n \rightarrow \bar{V}_h^n$ is the $\bar{W}^n$-orthogonal projection onto $\bar{V}_h^n$. 
Due to the assumed inf-sup stability, we can apply Lemma \ref{Lemma:right-inverse}, where we choose $$ T \colon V_h^{n-2} \rightarrow \bar{V}_h^n \ , \ w \mapsto \bar{\Pi}_h^n\bar{d}^{n-1}S_{n-2}w.$$ 
One notes the fact $\langle  \bar{\Pi}_h^n\bar{d}^{n-1}S_{n-2} {w}, {q}\rangle= \langle  \bar{d}^{n-1}S_{n-2} {w}, {q}\rangle, \ \forall w \in V_h^{n-2}, \ \forall q \in \bar{V}_h^n$.
Hence, there is a $w_h \in V_h^{n-2}$ with $\n{w_h}_V \leq \frac{1}{C_{IS}} \n{q_h-\bar{\Pi}_h^nS_{n-1} \tau_h^{\perp} }$  and $$\bar{\Pi}_h^n\bar{d}^{n-1}S_{n-2}w_h =T w_h =q_h-\bar{\Pi}_h^nS_{n-1} \tau_h^{\perp}   .$$
Next we define $$\sigma_h \coloneqq d^{n-2}w_h + \tau_h^{\perp} \in V_h^{n-1}.$$ We have  $d^{n-1} \sigma_h = d^{n-1} \tau_h^{\perp}= d^{n-1} \tau_h = v_h$. Besides, it is \begin{align*}
\langle S_{n-1}\sigma_h, q_h \rangle  &=  \langle \bar{\Pi}_h^nS_{n-1}\sigma_h, q_h \rangle \\&= \langle \bar{\Pi}_h^nS_{n-1}d^{n-2}w_h, q_h \rangle + \langle \bar{\Pi}_h^nS_{n-1}\tau_h^{\perp}, q_h \rangle \\
&= \langle \bar{\Pi}_h^n\bar{d}^{n-1}S_{n-2}w_h, q_h \rangle + \langle \bar{\Pi}_h^nS_{n-1}\tau_h^{\perp}, q_h \rangle \\
&= \langle q_h-\bar{\Pi}_h^nS_{n-1} \tau_h^{\perp} , q_h \rangle + \langle \bar{\Pi}_h^nS_{n-1}\tau_h^{\perp}, q_h \rangle = \n{q_h}^2.
\end{align*} 
Thus, we obtain $\langle S_{n-1}\f{\sigma}_h, \f{q}_h \rangle + \langle d^{n-1} \f{\sigma}_h, \f{v}_h \rangle  = \n{q_h}^2 + \n{v_h}^2$. In view of the next  inequality chain \begin{align*}
\n{d^{n-1}\sigma_h} +  \n{\sigma_h}  &\leq \n{v_h} + \n{w_h}_V + \n{\tau_h^{\perp}} \\ 
& \leq  \n{v_h} +  \ \frac{1}{C_{IS}} \n{q_h } + \frac{1}{C_{IS}} \n{\bar{\Pi}_h^nS_{n-1} \tau_h^{\perp}} +  C_P\n{v_h} \\
& \leq \n{v_h} +  \ \frac{1}{C_{IS}} \n{q_h } + \frac{1}{C_{IS}} C_{S_{n-1}} \n{ \tau_h^{\perp}} +  C_P\n{v_h} \\ 
& \leq C_2 \Big( \n{v_h}+ \n{q_h} \Big),
\end{align*} we see the validity of (S2) in the discrete case. We remark that in the last line we can choose $C_2= 1+C_P+\frac{1}{C_{IS}}+\frac{C_P C_{S_{n-1}}}{C_{IS}}$, with $C_{S_{n-1}}$ denoting the continuity constant of $S_{n-1}$. This finishes the proof.
\end{proof}

The derived well-posedness result gives us directly a quasi-optimal estimate.
\begin{corollary}[Quasi-optimality]
	\label{Corollary:estimate}
	We assume the requirements of the last lemma. Furthermore, let $(\sigma,u,p)$ be the solution of the continuous problem \eqref{eq:saddle_point_problem_cont}.  Then, there is a constant \\
	$C_{QO} = C_{QO}(C_{S1},C_{S2},C_{\mathfrak{A}},C_{S_{n-1}}) < \infty$ independent of $h$ s.t. we have the error estimate
	   \begin{equation*}
	\norm{(\f{\sigma}-\f{\sigma}_h,  \f{u}-\f{u}_h,\f{p}-\f{p}_h)}_\mathcal{B} \leq C_{QO} \underset{\tau_h \in V_h^{n-1},  \ v_h \in V_h^{n}, \ q_h \in \bar{V}_h^n}{\inf}\Big(  \norm{(\f{\sigma}-\f{\tau}_h,\f{u}-\f{v}_h,\f{p}-\f{q}_h)}_\mathcal{B}\Big),
	\end{equation*}
	where
	$\norm{(\f{\sigma},\f{u},\f{p})}_\mathcal{B}^2 \coloneqq \norm{\f{\sigma}}_{V}^2 + \norm{\f{u}}^2 + \norm{\f{p}}^2$ and $C_{\mathfrak{A}}, \ C_{S_{n-1}}$ denote the continuity constants of $\mathfrak{A}$ and $S_{n-1}$ and $C_{S1}, \ C_{S2}$ are the constants from Def. \ref{Brezzi:inf-sup-condition_abstract}. 
	
\end{corollary}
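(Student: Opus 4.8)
The plan is to collapse the saddle--point system \eqref{eq:saddle_point_problem_cont} into a single variational equation and then run the classical quasi--optimality argument for saddle--point problems. Define on $\big(V^{n-1}\times V^n\times \bar{V}^n\big)^2$ the bilinear form
\begin{align*}
\mathcal{B}\big((\sigma,u,p),(\tau,v,q)\big) \coloneqq \mathfrak{A}(\sigma,\tau) + \langle u, d^{n-1}\tau\rangle + \langle p, S_{n-1}\tau\rangle + \langle d^{n-1}\sigma, v\rangle + \langle S_{n-1}\sigma, q\rangle ,
\end{align*}
together with its obvious restriction to the discrete product space $V_h^{n-1}\times V_h^n\times\bar{V}_h^n$. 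With the norm $\norm{(\sigma,u,p)}_{\mathcal{B}}$ from the statement, a Cauchy--Schwarz estimate shows that $\mathcal{B}$ is continuous with a constant $C_{\mathcal{B}}$ depending only on $C_{\mathfrak{A}}$, $C_{S_{n-1}}$ and the unit bound of $d^{n-1}\colon V^{n-1}\to W^n$ in the graph norm; in particular it is independent of $h$.

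The heart of the matter is a \emph{global} discrete inf--sup estimate: there is a constant $\gamma=\gamma(C_{S1},C_{S2},C_{\mathfrak{A}},C_{S_{n-1}})>0$, independent of $h$, such that
\begin{align*}
\underset{(\sigma_h,u_h,p_h)}{\inf}\ \ \underset{(\tau_h,v_h,q_h)}{\sup}\ \ \frac{\mathcal{B}\big((\sigma_h,u_h,p_h),(\tau_h,v_h,q_h)\big)}{\norm{(\sigma_h,u_h,p_h)}_{\mathcal{B}}\,\norm{(\tau_h,v_h,q_h)}_{\mathcal{B}}} \ \geq\ \gamma .
\end{align*}
This is the standard equivalence between the Brezzi conditions (S1)--(S2) of Def.~\ref{Brezzi:inf-sup-condition_abstract} and inf--sup stability of the full saddle--point operator; cf. \cite{Brezzi,ArnoldBook}. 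Concretely, given $(\sigma_h,u_h,p_h)$ one decomposes $\sigma_h=\sigma_h^0+\sigma_h^\perp$ with $\sigma_h^0\in kB_h$ and $\sigma_h^\perp$ in the $V$--orthogonal complement of $kB_h$ inside $V_h^{n-1}$; the test choice $(\tau_h,0,0)$ together with (S2) controls $\norm{\sigma_h^\perp}_V$, $\norm{u_h}$ and $\norm{p_h}$, the test choice $(\sigma_h^0,d^{n-1}\sigma_h,S_{n-1}\sigma_h)$ together with (S1) and continuity controls $\norm{\sigma_h^0}_V$, and a Young--type inequality combines the two with all constants tracked explicitly. The only thing one has to watch is that this bookkeeping keeps $\gamma$ free of $h$; this is automatic here because Lemma~\ref{Lemma:well-posedness_discrete_abstract} supplies $C_{S1},C_{S2}$ uniformly in $h$ and the argument uses nothing about the discrete spaces beyond those two constants and the continuity constants. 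I expect this verification of $h$--uniformity to be the main (though essentially routine) obstacle.

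Finally I would combine the global discrete inf--sup with Galerkin orthogonality. Since the discretization is conforming ($V_h^k\subset V^k$, $\bar{V}_h^n\subset\bar{V}^n$) and the forms in \eqref{eq:abstract_mixed_form_discrete} coincide with those in \eqref{eq:saddle_point_problem_cont}, subtracting the two systems tested against discrete functions gives
\begin{align*}
\mathcal{B}\big((\sigma-\sigma_h,\,u-u_h,\,p-p_h),\,(\tau_h,v_h,q_h)\big)=0,\qquad \forall\,(\tau_h,v_h,q_h)\in V_h^{n-1}\times V_h^n\times\bar{V}_h^n .
\end{align*}
Now fix an arbitrary discrete triple $(\tau_h,v_h,q_h)$. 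Applying the global discrete inf--sup to $(\sigma_h-\tau_h,u_h-v_h,p_h-q_h)$, rewriting the numerator via Galerkin orthogonality as $\mathcal{B}\big((\sigma-\tau_h,u-v_h,p-q_h),(\cdot)\big)$ and bounding it by continuity with $C_{\mathcal{B}}\norm{(\sigma-\tau_h,u-v_h,p-q_h)}_{\mathcal{B}}$, one gets $\norm{(\sigma_h-\tau_h,u_h-v_h,p_h-q_h)}_{\mathcal{B}}\le (C_{\mathcal{B}}/\gamma)\,\norm{(\sigma-\tau_h,u-v_h,p-q_h)}_{\mathcal{B}}$. The triangle inequality then yields
\begin{align*}
\norm{(\sigma-\sigma_h,u-u_h,p-p_h)}_{\mathcal{B}} \le \Big(1+\tfrac{C_{\mathcal{B}}}{\gamma}\Big)\,\norm{(\sigma-\tau_h,u-v_h,p-q_h)}_{\mathcal{B}} ,
\end{align*}
and taking the infimum over $(\tau_h,v_h,q_h)\in V_h^{n-1}\times V_h^n\times\bar{V}_h^n$ gives the assertion with $C_{QO}=1+C_{\mathcal{B}}/\gamma$, which by construction depends only on $C_{S1},C_{S2},C_{\mathfrak{A}},C_{S_{n-1}}$.
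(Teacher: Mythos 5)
The paper's own proof of this corollary is a one-line appeal to the general theory of saddle-point problems in \cite{Brezzi,Brezzi2}, and your argument --- collapsing the system into the combined form $\mathcal{B}$, deducing the $h$-uniform global discrete inf-sup from (S1)--(S2), and concluding via Galerkin orthogonality and the C\'ea-type estimate --- is exactly the standard proof behind that citation, with the constants correctly traced to $C_{S1},C_{S2},C_{\mathfrak{A}},C_{S_{n-1}}$. So your proposal is correct and takes essentially the same approach as the paper, merely spelling out the details the paper delegates to the references.
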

\begin{proof}
	This follows from the general theory on saddle-point problems; see \cite{Brezzi,Brezzi2}.
\end{proof}

In the next part we focus on a weak formulation with weakly imposed symmetry in the context of linear elasticity and show how it fits to the above abstract setting.
\section{Application to linear elasticity}
\label{Sec:application_lin_elas}
Before we use the connections between Hilbert complexes and the theory of linear elasticity as established in \cite{Arnold2007MixedFE,Arnold2015,Falk}, we briefly explain the elasticity formulation with weakly imposed symmetry.

\subsection{Linear elasticity with weakly imposed symmetry}
\label{Sec:conncetion_deRham_elasticity}
 In linear elasticity a domain $\o \subset \mathbb{R}^{n}, \ n \in \{2,3 \}$ is identified with some elastic body that is loaded with some force distribution ${f}$.  In our case $\o$ will be  a bounded Lipschitz domain. The state of the body can be expressed through the displacement field $\f{u} \colon \o \rightarrow \mathbb{R}^n$ that measures the difference between current configuration $\o + \f{u}$ and initial configuration, namely $\o$.  Governing equations of the underlying model guaranteeing a physically reasonable description of the deformation process are 
\begin{alignat}{2}
\label{linear_elas_1}
\f{A} \f{\sigma} &= \f{\varepsilon}(\f{u}) \hspace{0.3cm}  &&\textup{in} \ \ \o, \\
\nabla \cdot \sigma &= \f{f} \hspace{0.1cm} &&\textup{in} \ \ \o.
\label{linear_elas_2}
\end{alignat}
Above $\f{\varepsilon}(\f{u})= \frac{1}{2} (\nabla\f{u}+(\nabla \f{u})^T) $ denotes the symmetric gradient and  corresponds to the mechanical strain in the linear theory. Further, $ \f{A} \colon \mathbb{S} \rightarrow\mathbb{S} $ is the  compliance tensor, which comprises the material  properties and $\f{\sigma} \colon  \o \rightarrow \mathbb{S}$ is the symmetric stress tensor. $\mathbb{S}$ stands for the space of symmetric matrices. 
  A classical conservation of momentum argument relates the  stress tensor and the force $\f{f}$ and as a result one obtains equation \eqref{linear_elas_2}. An important, often considered special case are homogeneous and isotropic materials. Then the compliance tensor simplifies to $$\f{A} \sigma = \frac{1}{2 \mu} \Big( \f{\sigma}- \frac{\lambda}{n\lambda+2\mu} \textup{tr}(\f{\sigma}) \f{I}\Big),$$ and the material properties are condensed in form of  Lamé coefficients $\lambda \geq 0, \ \mu>0$.  Within this document we assume that $A$ defines a bounded and coercive  bilinear form $\langle A \ \cdot, \cdot \rangle  $ on $L^2(\o,\mathbb{M})$. Actually, we have to specify some boundary conditions to complete the model, where we first consider the easy case of pure Dirichlet conditions, i.e. we assume $\f{u}= \f{u}_D $ on $\Gamma = \partial \o$ for some given $\f{u}_D \in H^{1/2}(\Gamma,\mathbb{R}^n)$\footnote{$H^{1/2}(\Gamma,\mathbb{R}^n)$ is the image of the trace mapping $\gamma \colon H^1(\o,\mathbb{R}^n) \rightarrow H^{1/2}(\Gamma,\mathbb{R}^n)$. For more information about trace spaces we refer to \cite{steinbach}.}. Supposing the invertibility of $\f{A}$ one gets a formulation only dependent on the displacement variable as combination of \eqref{linear_elas_1} and \eqref{linear_elas_2}, namely \begin{equation}
\label{primal_formulation_Dirichlet}
\nabla \cdot (\f{C} \f{\varepsilon}(\f{u})) = \f{f} ,  \ \ \ \f{C} \coloneqq \f{{A}}^{-1} , 
\end{equation}where $\f{C}$ is the so-called  elasticity tensor.  
One standard approach to solve the latter equation in case of homogeneous boundary conditions  weakly, is the primal formulation: $$ \textup{Find} \ \ \f{u}\in \f{H}_0^1(\o,\mathbb{R}^n)  \ \ \textup{s.t.} \ \  \langle \f{C} \f{\varepsilon}(u), \varepsilon(\f{v}) \rangle = \langle \f{f}, \f{v}\rangle,  \ \ \forall \f{v} \in \f{H}_0^1(\o,\mathbb{R}^n), $$  
 In general the latter formulation is  unstable in the (nearly) incompressible regime, i.e.  if $\lambda$ is large. Another point is the non-locality of stress-strain relations in more general material situations as viscoelasticity. Consequently, the primal weak formulation is not optimal also in view of generalizability. Therefore one can find in literature  the Hellinger-Reissner mixed formulation  that reads: Find $\f{\sigma} \in {H}(\Omega,\textup{div},\s),$  $\f{v} \in {L}^2(\Omega,\mathbb{R}^n)$ s.t. 
\begin{alignat}{4}
\label{weak_form_strong_symmetry_contiuous_1}
&\langle {A} {\sigma} , {\tau} \rangle \ + \ &&\langle u,  \nabla \cdot {\tau}  \rangle \  \color{black}  &&= \langle {u}_D, {\tau} \cdot \nu \rangle_{\Gamma}, \ \hspace{0.4cm} &&\forall {\tau} \in {H}(\Omega,\textup{div},\s), \nonumber\\
&\langle \nabla \cdot {\sigma} , {v} \rangle &&  &&= \langle \f{f},{v}\rangle, \ \hspace{0.4cm} && \forall {v} \in {L}^2(\Omega,\mathbb{R}^n),
\end{alignat}
where $\nu$  in $\langle {u}_D, {\tau} \cdot \nu \rangle_{\Gamma}$ stands for the outer unit normal to the boundary $\Gamma$.

	\label{section_weak_form_weak_symmetry}
To obtain a mixed weak formulation for the elasticity system that does not require test spaces with strong symmetry, i.e. subspaces of  $H(\o,\d,\mathbb{S}) $, Arnold et al. introduced in \cite{Arnold2007MixedFE} a weak form with weakly imposed symmetry. It can be rewritten equivalently as: \emph{Find}	${\sigma} \in {H}(\Omega,\textup{div},\mathbb{M}), \ {u} \in {L}^2(\Omega,\mathbb{R}^n)$ and $\f{p} \in {{{L}}}^2(\Omega,\mathbb{R}^{s(n)}),\  s(n)= 2n-3$ s.t. 
\begin{alignat}{5}
\label{weak_form_wek_symmetry_contiuous}
&\langle {A} {\sigma} , {\tau} \rangle \ + \ &&\langle u,\nabla \cdot {\tau}  \rangle \  + \ && \langle   {p}, \textup{Skew} \, {\tau} \rangle  \color{black}  &&= \langle {u}_D, {\tau} \cdot \nu \rangle_{\Gamma}, \ \hspace{0.4cm} &&\forall {\tau} \in {H}(\Omega,\textup{div},\mathbb{M}), \nonumber\\
&\langle \nabla \cdot {\sigma} , {v} \rangle &&  && &&= \langle {f},{v}\rangle, \ \hspace{0.4cm} && \forall {v} \in {L}^2(\Omega,\mathbb{R}^n), \\
&\langle \textup{Skew} \, {\sigma}, {q} \rangle &&  &&  &&= 0,  \ \hspace{0.4cm} &&\forall {q} \in L^2(\Omega,\mathbb{R}^{s(n)}). \nonumber
\end{alignat}
We get the last form if one drops  the symmetry condition  and replaces it by a variational statement, namely we assume 
\begin{equation*}
\langle \textup{Skew} \, \t{\sigma} , q \rangle  = 0 , \  \forall q \in L^2(\Omega,\mathbb{R}^{s(n)}), 
\end{equation*}	 
which is equivalent to 
$$  \int_{\Omega}  q_i \ (\sigma_{ij}-\sigma_{ji}) dx =0, \  \ \forall q_i \in L^2(\Omega), \ i \neq j \ .$$
The mentioned constraint is incorporated by means of a Lagrange multiplier
and this explains  the additional variable $\f{p}$  and the last line in \eqref{weak_form_wek_symmetry_contiuous}. It is easy to see the equivalence of the two formulations in the continuous setting, where the Lagrange multiplier is related to the solution $(\f{\sigma},\f{u})$ through $\f{p}= \frac{1}{2}\textup{Skew}(\nabla\f{u}) $. 
 Main advantage of the latter problem statement is the obtained flexibility for discretization approaches compared to \eqref{weak_form_strong_symmetry_contiuous_1}, but the price to pay is the increased number of degrees of freedom.

We observe the similar structure of \eqref{weak_form_wek_symmetry_contiuous} and \eqref{eq:saddle_point_problem_cont}. In fact, the former is a special case of the abstract problem as we show below. First we look at the two-dimensional case ($n=2$) and define the spaces 
\begin{alignat*}{3}
V^{0} & ={H}^1(\o, \mathbb{R}^2), \  \    V^1 &&= {H}(\o,\d, \mathbb{M}), \  \ V^2 &&= {L}^2(\o, \mathbb{R}^2),\\
W^{0} & =L^2(\o, \mathbb{R}^2), \   \    \ W^1 &&= L^2(\o, \mathbb{M}), \  \ \ \ \  \ W^2 &&= {L}^2(\o, \mathbb{R}^2),\\
\bar{V}^{0} & ={H}^1(\o, \mathbb{R}), \  \  \ \, \bar{V}^1 &&= {H}(\o,\d,\mathbb{R}^2), \    \, \bar{V}^2 &&= {L}^2(\o, \mathbb{R}),\\
\bar{W}^{0} & =L^2(\o, \mathbb{R}), \   \  \,  \ \bar{W}^1 &&= L^2(\o, \mathbb{R}^2), \  \ \ \ \  \ \bar{W}^2 &&= {L}^2(\o, \mathbb{R}).
\end{alignat*}
For $n=3$, the three-dimensional case, we choose:
\begin{alignat*}{4}
V^{0} & ={H}^1(\o, \mathbb{R}^3), \ V^1 && ={H}(\o,\c, \mathbb{M}), \   V^2 &&= {H}(\o,\d, \mathbb{M}), \   V^3 &&= {L}^2(\o, \mathbb{R}^3),\\
W^{0} & =L^2(\o, \mathbb{R}^3), \ W^1 && =L^2(\o, \mathbb{M}), \   \ \ \  \ \ W^2 &&= L^2(\o, \mathbb{M}), \  \ \ \ \  W^3 &&= {L}^2(\o, \mathbb{R}^3).
\end{alignat*}
And further we set $\bar{V}^i = V^i, \ \bar{W}^i=W^i$ for the $3D$ case.
Then, with the above choice of spaces we can construct closed Hilbert complexes with underyling $L^2$ inner products. More precisely, we obtain the vector-valued de Rham chains in Fig.  \ref{Fig:3} and Fig. \ref{Fig:4}, where \begin{equation*}
\Xi M \coloneqq M^T - \textup{tr}(M)I, \ \textup{i.e.}   \ \ \ \ \ \ \Xi^{-1} = M^T - \frac{1}{2}\textup{tr}(M)I, \ \ \textup{see} \ \cite[\textup{Sec. 4.1}]{Falk}.
\end{equation*}

	\begin{figure}[h!]
	\centering
	\begin{tikzpicture}
	
		\node at (2.15,1.4) {$\mathbb{R}^2 $};
		\node at (4,1.4) {$H^1(\o,\mathbb{R}^2) $};
		\node at (7.1,1.4) {${H}(\o,\d, \mathbb{M})$};
		\node at (10.1,1.4) {${L}^2(\o, \mathbb{R}^2)$};		
		\node at (12,1.4) {$\{0\} $};
		
		\node at (2.7,1.6) {$\iota $};
		\node at (5.4,1.65) {$\textup{curl} $};
		\node at (8.7,1.65) {$\nabla \cdot  $};		
		\node at (11.2,1.6) {$0$};
	    			
     \draw[->] (2.4,1.4) to (3,1.4);
     \draw[->] (4.55+0.4,1.4) to (5.5+0.4,1.4);
     \draw[->] (8.3,1.4) to (9.25,1.4);
     \draw[->] (11,1.4) to (11.6,1.4);

	\node at (2.2,3) {$\mathbb{R} $};
	\node at (4,3) {$H^1(\o,\mathbb{R}) $};
	\node at (7.1,3) {${H}(\o,\d, \mathbb{R}^2)$};
	\node at (10.1,3) {${L}^2(\o, \mathbb{R})$};		
	\node at (12,3) {$\{0\} $};
	
	\node at (2.7,3.2) {$\iota $};
	\node at (5.4,3.25) {$\textup{curl} $};
	\node at (8.7,3.25) {$\nabla \cdot  $};		
	\node at (11.2,3.2) {$0$};
	
	\draw[->] (2.4,3) to (3,3);
	\draw[->] (4.55+0.4,3) to (5.5+0.4,3);
	\draw[->] (8.3,3) to (9.25,3);
	\draw[->] (11,3) to (11.6,3);

		{\draw[->] (6.2+1.5,1.7) to (7.6+1.5,2.6);		
		\node at (8,2.35) {$\textup{Skew}$};}
	
		{	\draw[->] (3.96+0.5,1.7) to (6.1,2.6);
		\node at (5.05,2.35) {$\textup{I}$};	 }
			
	\end{tikzpicture}
	\caption{Here we display the diagram in Fig. \ref{Fig:1} adapted to the $2D$ Hellinger-Reissner formulation with weak symmetry.}
	\label{Fig:3}
\end{figure}

	\begin{figure}[h!]
	\centering
	\begin{tikzpicture}
	
	\node at (2.15,1.4) {$\mathbb{R}^3 $};
	\node at (4,1.4) {$H^1(\o,\mathbb{R}^3) $};
	\node at (7.1,1.4) {${H}(\o,\c, \mathbb{M})$};
	\node at (10.45,1.4) {${H}(\o,\d, \mathbb{M})$};	
	\node at (13.5,1.4) {$L^2(\o,\mathbb{R}^3) $};	
	\node at (15.3,1.4) {$\{0\} $};
	
	\node at (2.7,1.6) {$\iota $};
	\node at (5.4,1.65) {$ \nabla $};
	\node at (8.7,1.65) {$\nabla \times  $};	
	\node at (12,1.6) {$\nabla \cdot $};	
	\node at (14.6,1.6) {$0$};
	
	\draw[->] (2.4,1.4) to (3,1.4);
	\draw[->] (4.55+0.4,1.4) to (5.5+0.4,1.4);
	\draw[->] (8.3,1.4) to (9.25,1.4);
	\draw[->] (11.65,1.4) to (12.6,1.4);
	\draw[->] (14.4,1.4) to (14.9,1.4);

\node at (2.15,3) {$\mathbb{R}^3 $};
\node at (4,3) {$H^1(\o,\mathbb{R}^3) $};
\node at (7.1,3) {${H}(\o,\c, \mathbb{M})$};
\node at (10.45,3) {${H}(\o,\d, \mathbb{M})$};	
\node at (13.5,3) {$L^2(\o,\mathbb{R}^3) $};	
\node at (15.3,3) {$\{0\} $};

\node at (2.7,3.2) {$\iota $};
\node at (5.4,3.25) {$ \nabla $};
\node at (8.7,3.25) {$\nabla \times  $};	
\node at (12,3.2) {$\nabla \cdot $};	
\node at (14.6,3.2) {$0$};

\draw[->] (2.4,3) to (3,3);
\draw[->] (4.55+0.4,3) to (5.5+0.4,3);
\draw[->] (8.3,3) to (9.25,3);
\draw[->] (11.65,3) to (12.6,3);
\draw[->] (14.4,3) to (14.9,3);

	{\draw[->] (6.2+5,1.7) to (7.6+5,2.6);		
		\node at (11.5,2.35) {$\textup{Skew}$};}
	
	{	\draw[->] (3.96+3.7,1.7) to (6.1+3.2,2.6);
		\node at (8.2,2.35) {$\Xi$};	 }
	
	\end{tikzpicture}
	\caption{The relevant complexes for the $3D$ weak symmetry Hellinger-Reissner form.}
	\label{Fig:4}
\end{figure}

Since the differential operators act row-wise, we can think of row-wise classical de Rham sequences and thus the next lemma is clear. 

\begin{lemma}[Hilbert complexes for linear elasticity]
	\label{Lemma:Commutativty_double_complex_lin_ealsticity}
	The top and bottom sequences in Fig. \ref{Fig:3} and Fig. \ref{Fig:4} define closed Hilbert complexes with underlying $L^2$ inner products. Furthermore, the mentioned diagrams commute, i.e. we have the relations	
	\begin{equation*}
	\nabla \cdot v = \textup{Skew}(\textup{curl}(v)), \ \forall v \in H^1(\o,\mathbb{R}^2) \ \ \ \textup{and} \ \ \ \nabla \cdot\big( \Xi w\big) = \textup{Skew}(\nabla \times w), \ \forall w \in H(\o,\c,\mathbb{M}).
	\end{equation*} 
\end{lemma}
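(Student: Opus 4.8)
The plan is to split the statement into its two assertions and reduce each to the classical de Rham situation. For \emph{closedness}, I would first note that every differential operator appearing in Fig.~\ref{Fig:3} and Fig.~\ref{Fig:4} acts row-wise, so — after the obvious identification of $\mathbb{M}$ with $(\mathbb{R}^{2})^{2}$, resp. $(\mathbb{R}^{3})^{3}$, via the rows — each sequence is a finite direct sum of copies of the $L^{2}$ de Rham complex on the bounded Lipschitz domain $\o$: in $2D$, two copies of $\mathbb{R}\xrightarrow{\iota}H^{1}(\o)\xrightarrow{\textup{curl}}H(\o,\d,\mathbb{R}^{2})\xrightarrow{\nabla\cdot}L^{2}(\o)\to 0$, and in $3D$, three copies of $\mathbb{R}\xrightarrow{\iota}H^{1}(\o)\xrightarrow{\nabla}H(\o,\c,\mathbb{R}^{3})\xrightarrow{\nabla\times}H(\o,\d,\mathbb{R}^{3})\xrightarrow{\nabla\cdot}L^{2}(\o)\to 0$. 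The complex property $d^{k+1}\circ d^{k}=0$ is then just the row-wise form of the classical identities $\nabla\times\nabla\varphi=0$, $\nabla\cdot(\nabla\times F)=0$ (and $\nabla\cdot\textup{curl}(\varphi)=0$ in $2D$), valid for smooth functions and extended by density. Closedness of the ranges of the de Rham operators on a bounded Lipschitz domain is classical — it follows from the compactness of the embeddings $V^{k}\hookrightarrow W^{k}$ together with the abstract criterion for bounded Hilbert complexes recalled in \cite{ArnoldBook} — and a finite direct sum of closed Hilbert complexes is again a closed Hilbert complex, since kernels, ranges and their closures decompose orthogonally.

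For the \emph{commutativity} identities I would simply compute. In $2D$, writing $v=(v_{1},v_{2})^{T}\in H^{1}(\o,\mathbb{R}^{2})$, the definition of the $2D$ matrix curl gives
\begin{align*}
\textup{curl}(v)=\begin{bmatrix}\partial_{2}v_{1}&-\partial_{1}v_{1}\\\partial_{2}v_{2}&-\partial_{1}v_{2}\end{bmatrix},
\end{align*}
whence $\textup{Skew}(\textup{curl}(v))=\partial_{2}v_{2}-(-\partial_{1}v_{1})=\partial_{1}v_{1}+\partial_{2}v_{2}=\nabla\cdot v$, first for $v\in C^{\infty}$ and then for all $v\in H^{1}(\o,\mathbb{R}^{2})$ by density; this simultaneously shows that the inclusion $\textup{I}\colon H^{1}(\o,\mathbb{R}^{2})\to H(\o,\d,\mathbb{R}^{2})$ is a bounded map and serves as the connecting operator $S_{n-2}$ making the left triangle of Fig.~\ref{Fig:3} commute. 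In $3D$, writing $w=(w_{ij})$ with rows $w_{i}$, I would compute $\nabla\times w$ row-wise, apply $\textup{Skew}$ entrywise, and compare with the row-wise divergence of $\Xi w=w^{T}-\textup{tr}(w)\,I$; for instance the first component of $\textup{Skew}(\nabla\times w)$ is $(\nabla\times w)_{32}-(\nabla\times w)_{23}=\partial_{3}w_{31}-\partial_{1}w_{33}-\partial_{1}w_{22}+\partial_{2}w_{21}$, which equals $\nabla\cdot(-(w_{22}+w_{33}),\,w_{21},\,w_{31})$, and the latter vector is precisely the first row of $\Xi w$, since $(\Xi w)_{11}=w_{11}-\textup{tr}(w)=-(w_{22}+w_{33})$; the remaining two components follow by cyclic permutation of the indices. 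This identity also shows that $\Xi$ maps $H(\o,\c,\mathbb{M})$ boundedly into $H(\o,\d,\mathbb{M})$ (it is a bounded pointwise linear operation, and $\nabla\cdot(\Xi w)=\textup{Skew}(\nabla\times w)\in L^{2}$), so that it is a legitimate choice of $S_{n-2}$ for Fig.~\ref{Fig:4}.

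The only non-elementary ingredient is the closed-range property of the $L^{2}$ de Rham complex on a bounded Lipschitz domain; everything else — the complex property, the two commutativity identities, and the boundedness of $\textup{I}$ and $\Xi$ — is an immediate pointwise or row-wise computation. Accordingly, I expect the write-up to be short, the only real care being the bookkeeping of row and column indices in the $3D$ identity $\nabla\cdot(\Xi w)=\textup{Skew}(\nabla\times w)$, and I would cite the de Rham closedness rather than reproving it.
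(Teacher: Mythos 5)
Your proposal is correct and follows essentially the same route as the paper: closedness is reduced to row-wise copies of the classical $L^2$ de Rham complex on $\o$ (with the closed-range property cited rather than reproved, exactly as the paper cites \cite{Falk}), and the two commutativity identities are verified by the same direct componentwise computation, your cyclic-permutation shortcut merely replacing the paper's explicit writing-out of all three components of $\nabla\cdot(\Xi w)$ and $\textup{Skew}(\nabla\times w)$ in the appendix. The only cosmetic imprecision is that the compact embedding responsible for closed ranges is that of $V^k\cap V_k^*$ into $W^k$ rather than of $V^k$ alone, but since you defer to the literature for this point it does not affect the argument.
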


\begin{proof}
	First regarding the situation $n=2$. Then, using suitable  proxy fields,  the top complex in Fig. \ref{Fig:3} corresponds to the standard $2D$ de Rham chain and hence it is a closed Hilbert complex; see \cite[Section 3]{Falk}. The bottom complex is built up of two row-wise de Rham sequences and the closedness is clear, too. 
	On the other hand, in  Fig. \ref{Fig:4} we have in both rows  the classical vector-valued de Rham chain. This means, we have indeed closed Hilbert complexes.\\
	 It remains to show the last two commutativity relations. Since they are explained and derived in \cite[Section 4.1]{Falk}, we postpone the calculations to the appendix; see Sec. \ref{Proof1_appendix}.
\end{proof}

\begin{lemma}[Inf-sup condition]
		\label{Lemma:lin_ealsticity_inf_sup_cont}
	For $n \in \{2, \, 3 \}$ it holds
\begin{equation*}
	\underset{{q} \in L^2(\o) }{\inf} \ \underset{{w} \in H^1(\o,\mathbb{R}^n)}{\sup}   \ \ \frac{ \langle  \nabla \cdot  {w}, {q}\rangle}{\n{{w}}_{H^1} \ \n{{q}}_{L^2}}  >0,
\end{equation*}
as well as for $n=3$ it is also 
\begin{equation*}
\underset{{v} \in L^2(\o,\mathbb{R}^n) }{\inf} \ \underset{{m} \in H^1(\o,\mathbb{M})}{\sup}   \ \ \frac{ \langle  \nabla \cdot (\Xi {m}), {v}\rangle}{\n{{m}}_{\c} \ \n{{v}}_{L^2}}  >0,
\end{equation*}
We note that we have a domain $\o \subset \mathbb{R}^n$.
\end{lemma}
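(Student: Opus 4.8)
The plan is to deduce both estimates from one classical fact — the surjectivity of the divergence operator with a bounded right inverse — together with the elementary converse of Lemma \ref{Lemma:right-inverse}: if a bounded linear $T\colon R\to Q$ between Hilbert spaces admits a bounded linear right inverse $S$, then $\inf_{q}\sup_{w}\langle Tw,q\rangle_Q/(\n{w}_R\,\n{q}_Q)\ge 1/\n{S}>0$, as one sees by testing with $w=Sq$, for which $\langle Tw,q\rangle_Q/(\n{w}_R\,\n{q}_Q)=\n{q}_Q/\n{Sq}_R\ge 1/\n{S}$.

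First I would record that $\nabla\cdot\colon H^1(\o,\mathbb{R}^n)\to L^2(\o)$ is onto and has a bounded linear right inverse $\mathcal{R}$, i.e. $\nabla\cdot(\mathcal{R}q)=q$ and $\n{\mathcal{R}q}_{H^1}\le C_{\mathcal{R}}\n{q}_{L^2}$ for all $q\in L^2(\o)$. This is standard on a bounded Lipschitz domain: the Bogovskii operator furnishes a bounded right inverse of $\nabla\cdot\colon H_0^1(\o,\mathbb{R}^n)\to L^2_0(\o)$, and fixing one field $w_0\in H^1(\o,\mathbb{R}^n)$ with $\int_\o\nabla\cdot w_0\,dx=1$ allows one to also reach the one-dimensional complement of $L^2_0(\o)$ in $L^2(\o)$; see e.g. \cite{Falk} and the references therein. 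Combined with the converse above this already yields the first inequality for $n\in\{2,3\}$.

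For the second ($n=3$) estimate I would peel off the pointwise algebraic isomorphism $\Xi$. Since $\nabla\times$ involves only first-order derivatives acting row-wise, $\n{m}_{\c}\le C\,\n{m}_{H^1}$ for every $m\in H^1(\o,\mathbb{M})$, so it suffices, given $v\in L^2(\o,\mathbb{R}^3)$, to find $m\in H^1(\o,\mathbb{M})$ with $\nabla\cdot(\Xi m)=v$ and $\n{m}_{H^1}\lesssim\n{v}_{L^2}$. As $\nabla\cdot$ acts row-wise, applying the first step to each of the three components of $v$ produces $N\in H^1(\o,\mathbb{M})$ with $\nabla\cdot N=v$ and $\n{N}_{H^1}\le C_{\mathcal{R}}\n{v}_{L^2}$; setting $m\coloneqq\Xi^{-1}N$ and using that $\Xi^{-1}$ is a constant-coefficient (hence $H^1$-bounded, say with constant $C_\Xi$) linear map on $\mathbb{M}$, one gets $m\in H^1(\o,\mathbb{M})$, $\nabla\cdot(\Xi m)=\nabla\cdot N=v$ and $\n{m}_{\c}\le C\,\n{m}_{H^1}\le C\,C_\Xi\,C_{\mathcal{R}}\,\n{v}_{L^2}$. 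Hence $v\mapsto\Xi^{-1}N$ is a bounded right inverse of the operator $m\mapsto\nabla\cdot(\Xi m)$ from $H^1(\o,\mathbb{M})$ to $L^2(\o,\mathbb{R}^3)$, and the converse to Lemma \ref{Lemma:right-inverse} delivers the claimed bound.

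The only ingredient that is not mere bookkeeping is the bounded right inverse of the divergence in the first step; once this is granted the matrix-valued case costs nothing, since $\Xi$ and the differential operators act componentwise and row-wise. I expect the points requiring the most care to be the passage from the mean-zero result on $L^2_0(\o)$ to all of $L^2(\o)$ — so that the right inverse is controlled uniformly on the whole space — and the handling of norms in the $3D$ case, where only the weaker $\n{\cdot}_{\c}$ norm, not $\n{\cdot}_{H^1}$, appears in the denominator.
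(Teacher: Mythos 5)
Your proposal is correct and follows essentially the same route as the paper: the paper dispatches the first inequality as "a classical result within PDE theory" (the surjectivity of the divergence from $H^1(\o,\mathbb{R}^n)$ onto $L^2(\o)$ with a bounded right inverse, which you make explicit via Bogovskii plus a mean-value correction) and obtains the second by observing that $\Xi$ is a bounded bijection of $H^1(\o,\mathbb{M})$ onto itself, exactly as in your substitution $m=\Xi^{-1}N$ combined with $\n{m}_{\c}\le C\n{m}_{H^1}$. You have merely filled in the details the paper leaves implicit, including the elementary converse of Lemma \ref{Lemma:right-inverse}.
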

\begin{proof}
	The first part of the assertion is a classical result within PDE theory. And since $\Xi$ defines a bounded bijection from $H^1(\o,\mathbb{M})$ to itself, the second statement follows by the first one. 
\end{proof}

Thus in view of the Lemmata \ref{Lemma:Commutativty_double_complex_lin_ealsticity}, \ref{Lemma:lin_ealsticity_inf_sup_cont} and the two diagrams  in Figs. \ref{Fig:3}-\ref{Fig:4}, we obtain the wanted result, namely the mixed linear elasticity formulation with weakly imposed symmetry fits to the abstract setting of the previous chapter.
In particular we see the well-posedness of the continuous problem \eqref{weak_form_wek_symmetry_contiuous}.
Besides, there is a guideline for the choice of discretization spaces. We need structure-preserving de Rham chain discretizations and we have to check the inf-sup condition. In the next sections we define suitable approximation spaces exploiting the concept of isogeometric de Rham spaces. 

\subsection{Discretization}
Our main reference for this section is the paper \cite{Buffa2011IsogeometricDD}, where structure-preserving de Rham discretizations are developed and applied in the context of Maxwell's equations.\\
We begin with a brief explanation of B-splines and the corresponding isogeometric de Rham spaces in three and two dimensions. For details on IGA we refer to \cite{IGA1,IGA2,IGA3}.
\subsubsection{B-splines and isogeometric de Rham spaces}

	\label{section:splines}
Here, we state a short overview of B-spline functions, spaces respective, and some basic results in the univariate as well as in the multivariate case. \\
Following \cite{IGA1,IGA3} for a brief
exposition, we call an  increasing sequence of real numbers $\Xi \coloneqq \{ \xi_1 \leq  \xi_2  \leq \dots \leq \xi_{k+\textsf{p}+1}  \}$ for some $\textsf{p} \in \mathbb{N}$   \emph{knot vector}, where we assume  $0=\xi_1=\xi_2=\dots=\xi_{\textsf{p}+1}, \ \xi_{k+1}=\xi_{k+2}=\dots=\xi_{k+\textsf{p}+1}=1$, and call such knot vectors $\textsf{p}$-open. 
Further the multiplicity of the $j$-th knot is denoted by $m(\xi_j)$.
Then  the univariate B-spline functions $\widehat{B}_{j,\textsf{p}}(\cdot)$ of degree $\textsf{p}$ corresponding to a given knot vector $\Xi$ is defined recursively by the \emph{Cox-DeBoor formula}:
\begin{align*}
\widehat{B}_{j,0}(\zeta) \coloneqq \begin{cases}
1, \ \ \textup{if}  \ \zeta \in [\xi_{j},\xi_{j+1}) \\
0, \ \ \textup{else},
\end{cases}
\end{align*}
\textup{and if }  $\textsf{p} \in \mathbb{N}_{\geq 1} \ \textup{we set}$ 
\begin{align*}
\widehat{B}_{j,\textsf{p}}(\zeta)\coloneqq \frac{\zeta-\xi_{j}}{\xi_{j+\textsf{p}}-\xi_j} \widehat{B}_{j,\textsf{p}-1}(\zeta)  +\frac{\xi_{j+\textsf{p}+1}-\zeta}{\xi_{j+\textsf{p}+1}-\xi_{j+1}} \widehat{B}_{j+1,\textsf{p}-1}(\zeta),
\end{align*}
where one puts $0/0=0$ to obtain  well-definedness. The knot vector $\Xi$ without knot repetitions is denoted by $\{ \psi_1, \dots , \psi_N \}$. \\
The multivariate extension of the last spline definition is achieved by a tensor product construction. In other words, we set for a given  knot vector   $\boldsymbol{\Xi} \coloneqq \Xi_1 \times   \dots \times \Xi_d $, where the $\Xi_{l}=\{ \xi_1^{l}, \dots , \xi_{k_l+\textsf{p}_l+1}^{l} \}, \ l=1, \dots , d$ are $\textsf{p}_l$-open,   and a given \emph{degree vector}   $\p{\textsf{p}} \coloneqq (\textsf{p}_1, \dots , \textsf{p}_d)$ for the multivariate case
\begin{align}
\widehat{B}_{\p{i},\p{\textsf{p}}}(\boldsymbol{\zeta}) \coloneqq \prod_{l=1}^{d} \widehat{B}_{i_l,\textsf{p}_l}(\zeta_l), \ \ \ \ \forall \, \p{i} \in \mathit{\mathbf{I}}, \ \  \boldsymbol{\zeta} \coloneqq (\zeta_1, \dots , \zeta_d),
\end{align}
with  $d$ as  the underlying dimension of the parametric domain $\widehat{\Omega} \coloneqq (0,1)^d$ and $\textup{\p{I}}$ the multi-index set $\textup{\p{I}} \coloneqq \{ (i_1,\dots,i_d) \  | \  1\leq i_l \leq k_l, \ l=1,\dots,d  \}$.\\
B-splines  fulfill several properties and for our purposes the most important ones are:
\begin{itemize}
	\item If  for all internal knots the multiplicity satisfies $1 \leq m(\xi_j) \leq m \leq \textsf{p} , $ then the B-spline basis functions $\widehat{B}_{i,\textsf{p}}({\cdot})$ are globally $C^{\textsf{p}-m}$-continuous. Therefore we define  in this case the regularity integer $r \coloneqq \textsf{p}-m$. Obviously, by the product structure, we get splines $\widehat{B}_{\p{i},\p{\textsf{p}}}(\cdot)$ which are $C^{r_l}$-smooth  w.r.t. the $l$-th coordinate direction if the internal multiplicities fulfill $1 \leq m(\xi_j^l) \leq m_l  \leq \textsf{p}_l, \ \forall j, \ r_l \coloneqq \textsf{p}_l-m_l,$ in the multivariate case. 
	In case of $r_i <0$ we have discontinuous splines w.r.t. the $i$-th coordinate direction. 
	\item The B-splines $ \{\widehat{B}_{\p{i},\p{\textsf{p}}} \ | \ \ \p{i} \in \p{I} \}$ are linearly independent.
	\item For univariate splines $\widehat{B}_{i,\textsf{p}}, \ \textsf{p} \geq 1, r\geq 0$ we have
	\begin{align}
	\label{eq:soline_der}
	\hat{\partial}_{\zeta} \widehat{B}_{i,\textsf{p}}(\zeta) = \frac{\textsf{p}}{\xi_{i+\textsf{p}}-\xi_i}\widehat{B}_{i,\textsf{p}-1}(\zeta) -  \frac{\textsf{p}}{\xi_{i+\textsf{p}+1}-\xi_{i+1}}\widehat{B}_{i+1,\textsf{p}-1}(\zeta),
	\end{align} 
	with $\widehat{B}_{1,\textsf{p}-1}(\zeta)\coloneqq \widehat{B}_{k+1,\textsf{p}-1}(\zeta) \coloneqq 0$.
	\item  The support of the spline $\widehat{B}_{i,\textsf{p}} $ is in the interval $[\xi_i,\xi_{i+\textsf{p}+1}]$. Moreover,  the knots $\psi_j$ define a subdivision of the interval $(0,1)$ and for each element $I = (\psi_j,\psi_{j+1})$ we find a $i$ with $(\psi_j,\psi_{j+1})= (\xi_i,\xi_{i+1})$.
\end{itemize}
The space spanned by all univariate splines $\widehat{B}_{i,\textsf{p}}$ corresponding to a given knot vector with degree $\textsf{p}$ and global regularity $r$  is denoted by $$S_\textsf{p}^r \coloneqq \textup{span}\{ \widehat{B}_{i,\textsf{p}} \ | \ i = 1,\dots , k \}.$$
For the multivariate case we just define the spline space as the product space $$S_{\textsf{p}_1, \dots , \textsf{p}_d}^{r_1,\dots,r_d} \coloneqq S_{\textsf{p}_1}^{r_1} \otimes \dots \otimes S_{\textsf{p}_d}^{r_d} = \textup{span} \{\widehat{B}_{\p{i},\p{\textsf{p}}} \ | \  \p{i} \in \mathit{\mathbf{I}}  \}$$ of proper univariate spline spaces.\\
To define discrete spaces based on splines we require a parametrization mapping $$\p{F} \colon \widehat{\Omega} \coloneqq (0,1)^n \rightarrow \o \subset \mathbb{R}^n$$ which parametrizes the computational domain.
The knots stored in the knot vector $  \boldsymbol{\Xi} $, corresponding to  the underlying  splines, determine a mesh in the parametric domain $\widehat{\Omega} $, namely  $\widehat{\mathcal{T}} \coloneqq \{ K_{\p{j}}\coloneqq (\psi_{j_1}^1,\psi_{j_1+1}^1 ) \times \dots \times (\psi_{j_{n}}^{n},\psi_{j_{n}+1}^{n} ) \ | \  \p{j}=(j_1,\dots,j_{n}), \ \textup{with} \ 1 \leq j_i <N_i\},$ and
with ${\boldsymbol{\Psi}}= \{\psi_1^1, \dots, \psi _{N_1}^1\}  \times \dots \times \{\psi_1^{n}, \dots, \psi _{N_{n}}^{n}\}$  \  \textup{as  the knot vector} \ ${\boldsymbol{\Xi}}$ \ 
\textup{without knot repetitions}.   
The image of this mesh under the mapping $\p{F}$, i.e. $\mathcal{T} \coloneqq \{{\p{F}}(K) \ | \ K \in \widehat{\mathcal{T}} \}$, gives us a mesh structure in the physical domain. By inserting knots without changing the parametrization  we can refine the mesh, which is the concept of $h$-refinement \cite{IGA2,IGA1}.
For a mesh $\mathcal{T}$ we define the global mesh size $h \coloneqq \max\{h_{{K}} \ | \ {K} \in \mathcal{T} \}$, where for ${K} \in \mathcal{T}$ we denote with $h_{{K}} \coloneqq \textup{diam}({K}) $ the \emph{element size}.

For our IGA spaces we have the next underlying basic assumptions.
\begin{assumption}{(Regular mesh)}\\
		\label{Assumption:regular_triangul}
	We denote with $\mathcal{T}_h$ the mesh with global mesh size $h$ and assume shape-regularity and quasi-uniformity. In particular, there exists a constant $c_u $ independent from the mesh size such that $h_{{K}} \leq h \leq c_u \, h_{{K}}$ for all mesh elements ${K} \in \mathcal{T}_h$. 
	Let the parametrization mapping $\p{F}$ be diffeomorphic with $\p{F} \in C^2(\overline{\widehat{\o}}), \ \p{F}^{-1} \in C^2(\overline{\o})$ and let the restrictions of $\p{F}$ to mesh elements be smooth. Hence the Jacobian $\p{J} \coloneqq D\p{F}$ is a uniformly continuous mapping.
\end{assumption}
\begin{remark}[Isogeometric parametrization]
Following the idea of isogeometric analysis, we assume $\p{F}$ to be defined utilizing B-splines or NURBS, where \emph{NURBS} (Non-uniform rational B-splines) are a generalization of the mentioned B-splines by incorporating a B-spline weight function.  In other words we use the B-splines for both, the discrete function space definition  and the geometry description.
\end{remark}

Basis for the stable discretization of the mixed linear elasticity formulation with weak symmetry is the connection between the de Rham complex 
 as described in \cite{Arnold2007MixedFE} and in the previous Sec \ref{Sec:conncetion_deRham_elasticity}.
Therefore we will exploit later structure-preserving discretizations of the de Rham chain utilizing proper spline spaces. A detailed discussion on underlying  isogeometric discrete differential forms can be found in \cite{Buffa2011IsogeometricDD} and below we state very shortly some  results from the latter reference. They point out to be useful for approximation methods.

Firstly, for  $\Omega \subset \mathbb{R}^3$ and $\textsf{p}_l > r_l \geq 0$,  the spaces 
\begin{alignat*}{3}
V_{h,3}^0 &\coloneqq  \mathcal{Y}_0^{-1}(\widehat{V}^0_{h,3}) \ \ \  \textup{with}  	\hspace{0.3cm}&&\widehat{V}^0_{h,3} \coloneqq  S_{\textsf{p}_1,\textsf{p}_2,\textsf{p}_3}^{r_1,r_2,r_3}  \ \ \ \textup{and}  \ \ \ \mathcal{Y}_0(q)\coloneqq  q \circ \p{F},\nonumber \\
{V}_{h,3}^1 &\coloneqq \mathcal{Y}_1^{-1}(\widehat{{V}}^1_{h,3})\ \ \  \textup{with}  	\hspace{0.3cm} &&\widehat{{V}}^1_{h,3} \coloneqq  \big(S_{\textsf{p}_1-1,\textsf{p}_2,\textsf{p}_3}^{r_1-1,r_2,r_3} \times S_{\textsf{p}_1,\textsf{p}_2-1,\textsf{p}_3}^{r_1,r_2-1,r_3} \times S_{\textsf{p}_1,\textsf{p}_2,\textsf{p}_3-1}^{r_1,r_2,r_3-1} \big)^T   \\
& &&  \ \ \ \textup{and}  \ \ \ \mathcal{Y}_1({v})\coloneqq  \p{J}^T \big({v} \circ \p{F}) ,\nonumber\\
{V}_{h,3}^2 &\coloneqq \mathcal{Y}_2^{-1}(\widehat{{V}}^2_{h,3})\ \ \  \textup{with}  	\hspace{0.3cm} &&\widehat{{V}}^2_{h,3} \coloneqq  \big(S_{\textsf{p}_1,\textsf{p}_2-1,\textsf{p}_3-1}^{r_1,r_2-1,r_3-1} \times S_{\textsf{p}_1-1,\textsf{p}_2,\textsf{p}_3-1}^{r_1-1,r_2,r_3-1} \times S_{\textsf{p}_1-1,\textsf{p}_2-1,\textsf{p}_3}^{r_1-1,r_2-1,r_3} \big)^T   \\
& &&  \ \ \ \textup{and}  \ \ \ \mathcal{Y}_2({v})\coloneqq  \textup{det}(\p{J}) \p{J}^{-1} \big({v} \circ \p{F} \big) ,\nonumber \\
V_{h,3}^3 &\coloneqq  \mathcal{Y}_3^{-1}(\widehat{V}^3_{h,3}) \ \ \  \textup{with}  	\hspace{0.3cm}&&\widehat{V}^3_{h,3} \coloneqq  S_{\textsf{p}_1-1,\textsf{p}_2-1,\textsf{p}_3-1}^{r_1-1,r_2-1,r_3-1}  \ \ \ \textup{and}  \ \ \ \mathcal{Y}_3(q)\coloneqq  \textup{det}(\p{J}) \big(q \circ \p{F} \big),\nonumber 
\end{alignat*}
form an exact subcomplex of the de Rham chain, i.e. there are  continuous projections 
\begin{align*}
{\Pi}^i_{h,3} \colon L^2(\Omega) \rightarrow {V}_{h,3}^i, \ i=0,3\ \ \textup{and} \ {\Pi}^j_{h,3} \colon {L}^2(\Omega,\mathbb{R}^3) \rightarrow {{V}}_{h,3}^j, \ j=1,2, 
\end{align*} such that we obtain the commuting diagram in Fig. \ref{Fig:Neu2}.
\begin{figure}[h!]
	\centering
	\begin{tikzpicture}
	\node at (0,1.4) {$H^1(\Omega)$};
	
	\node at (3,1.4) {${H}(\Omega,\textup{curl})$};

	\node at (6.5,1.4) {${H}(\Omega,\textup{div})$};

	\node at (9.5,1.4) {${L}^2(\Omega)$};

	\node at (1.3,1.7) {$\nabla$};

	\node at (4.7,1.7) {$\nabla \times $};

	\node at (8.2,1.7) {$\nabla \cdot $};

	\draw[->] (0.8,1.4) to (1.8,1.4);

	\draw[->] (4.2,1.4) to (5.3,1.4);
	
	\draw[->] (7.7,1.4) to (8.75,1.4);
	\draw[->] (10.2,1.4) to (10.9,1.4);
	\draw[->] (-1.6,1.4) to (-0.72,1.4);
	\node at (10.55,1.7) {$0$};
	\node at (-1.09,1.65) {$\iota$};
	\node at (11.45,1.4) {$\{0\}$};
	\node at (-2.25,1.4) {$\mathbb{R}$};

	\draw[->] (3,1.1) -- (3,0.2);
	\draw[->] (-2.25,1.1) -- (-2.25,0.2);
	\draw[->] (11.45,1.1) -- (11.45,0.2);
	\draw[->] (0,1.1) -- (0,0.2);
	\draw[->] (6.5,1.1) -- (6.5,0.2);
	\draw[->] (9.5,1.1) -- (9.5,0.2);

	\node at (0,-0.1) {${V}_{h,3}^{0}$};
	
	\node at (3,-0.1) {${V}_{h,3}^{1}$};

	\node at (6.5,-0.1) {${V}_{h,3}^{2}$};

	\node at (9.5,-0.1) {${V}_{h,3}^{3}$};

	\node at (1.3,0.2) {${\nabla}$};

	\node at (4.7,0.2) {${\nabla} \times $};

	\node at (8.2,0.2) {${\nabla} \cdot $};

	\draw[->] (0.8,-0.1) to (1.8,-0.1);

	\draw[->] (4.2,-0.1) to (5.3,-0.1);
	
	\draw[->] (7.7,-0.1) to (8.75,-0.1);
	\draw[->] (10.2,-0.1) to (10.9,-0.1);
	\draw[->] (-1.6,-0.1) to (-0.72,-0.1);
	\node at (10.55,0.2) {$0$};
	\node at (-1.09,0.15) {$\iota$};
	\node at (11.57,-0.1) {$\{0\} \ .$};
	\node at (-2.25,-0.1) {$\mathbb{R}$};
	
	\node[left] at (-2.2,0.7) { \small $\textup{id}$};	
	\node[left] at (0.05,0.7) { \small $\Pi_{h,3}^{0}$};
	\node[left] at (3.05,0.7) { \small $\Pi_{h,3}^{1}$};
	\node[left] at (6.55,0.7) { \small $\Pi_{h,3}^{2}$};
	\node[left] at (9.55,0.7) { \small $\Pi_{h,3}^{3}$};
	\node[left] at (11.45,0.7) { \small $\textup{id}$};
	\end{tikzpicture}
		\caption{Discretization of the $3D$ de Rham chain.}
		\label{Fig:Neu2}
\end{figure} 

Further due to the smoothness properties of the splines we get indeed a subcomplex meaning the discrete spaces are subspaces of the continuous pendants. And the pull-backs $\mathcal{Y}_i $ are compatible with the chain, too. In other words the diagram in Fig. \ref{Fig:Neu3}
\begin{figure}[h!]
	\centering
	\begin{tikzpicture}
	\node at (0,1.4) {$H^1(\Omega)$};
	
	\node at (3,1.4) {${H}(\Omega,\textup{curl})$};

	\node at (6.5,1.4) {${H}(\Omega,\textup{div})$};

	\node at (9.5,1.4) {${L}^2(\Omega)$};

	\node at (1.3,1.7) {$\nabla$};

	\node at (4.7,1.7) {$\nabla \times $};

	\node at (8.2,1.7) {$\nabla \cdot $};

	\draw[->] (0.8,1.4) to (1.8,1.4);

	\draw[->] (4.2,1.4) to (5.3,1.4);
	
	\draw[->] (7.7,1.4) to (8.75,1.4);
	\draw[->] (10.2,1.4) to (10.9,1.4);
	\draw[->] (-1.6,1.4) to (-0.72,1.4);
	\node at (10.55,1.7) {$0$};
	\node at (-1.09,1.65) {$\iota$};
	\node at (11.45,1.4) {$\{0\}$};
	\node at (-2.25,1.4) {$\mathbb{R}$};

	\draw[->] (3,1.1) -- (3,0.2);
	\draw[->] (-2.25,1.1) -- (-2.25,0.2);
	\draw[->] (11.45,1.1) -- (11.45,0.2);
	\draw[->] (0,1.1) -- (0,0.2);
	\draw[->] (6.5,1.1) -- (6.5,0.2);
	\draw[->] (9.5,1.1) -- (9.5,0.2);

	\node at (0,-0.1) {$H^1(\widehat{\Omega})$};
	
	\node at (3,-0.1) {${H}(\widehat{\Omega},\textup{curl})$};

	\node at (6.5,-0.1) {${H}(\widehat{\Omega},\textup{div})$};

	\node at (9.5,-0.1) {$L^2(\widehat{\Omega})$};

	\node at (1.3,0.2) {$\widehat{\nabla}$};

	\node at (4.7,0.2) {$\widehat{\nabla} \times $};

	\node at (8.2,0.2) {$\widehat{\nabla} \cdot $};

	\draw[->] (0.8,-0.1) to (1.8,-0.1);

	\draw[->] (4.2,-0.1) to (5.3,-0.1);
	
	\draw[->] (7.7,-0.1) to (8.75,-0.1);
	\draw[->] (10.2,-0.1) to (10.9,-0.1);
	\draw[->] (-1.6,-0.1) to (-0.72,-0.1);
	\node at (10.55,0.2) {$0$};
	\node at (-1.09,0.15) {$\iota$};
	\node at (11.57,-0.1) {$\{0\} \ .$};
	\node at (-2.25,-0.1) {$\mathbb{R}$};
	
	\node[left] at (-2.2,0.7) { \small $\textup{id}$};	
	\node[left] at (0.05,0.7) { \small $\mathcal{Y}_0$};
	\node[left] at (3.05,0.7) { \small $\mathcal{Y}_1$};
	\node[left] at (6.55,0.7) { \small $\mathcal{Y}_2$};
	\node[left] at (9.55,0.7) { \small $\mathcal{Y}_3$};
	\node[left] at (11.45,0.7) { \small $\textup{id}$};
	\end{tikzpicture}
		\caption{One relates the original complex with the complex in the reference domain.}
		\label{Fig:Neu3}
\end{figure} 
commutes. The  hat notation $\widehat{\nabla}$ shows us that we take the derivatives w.r.t. the parametric coordinates.\\
For reasons of simplification we assume in the following that $ \textsf{p}\coloneqq \textsf{p}_1=\textsf{p}_2=\textsf{p}_3$ and $r \coloneqq r_1 =r_2=r_3$. Moreover, as shown in  \cite{Buffa2011IsogeometricDD}, we can define the mentioned $\Pi_{h,3}^i$ projections in such a way that the next approximation estimates are valid.

\begin{lemma}[Spline approximation results in $3D$]
	\label{lemma:spline_approx_estimates_3D}
	Let $0 \leq r < \textsf{p}$. Assuming sufficient regularity for $\phi$ and ${v}$, it is
	\begin{alignat*}{3}
	& \norm{{\phi}-\Pi^0_{h,3}{\phi}}_{H^{l}} \leq C \ h^{s-l} \ \norm{{\phi}}_{H^{s}},  \ &&0  \leq l \leq s \leq \textsf{p}+1, \\
	& \norm{{{v}}-\Pi^1_{h,3}{{v}}}_{\c} \leq C \ h^{s},  \ \norm{{{v}}}_{{H}^s(\textup{curl})}, \ \ \  &&0   \leq s \leq \textsf{p},  \hspace{0.5cm}\\
	& \norm{{{v}}-\Pi^2_{h,3}{{v}}}_{\d} \leq C \ h^{s} \ \norm{{{v}}}_{{H}^s(\textup{div})},  \ &&0   \leq s \leq \textsf{p},\\
	& \norm{{\phi}-\Pi^3_{h,3}{\phi}}_{L^2} \leq C \ h^{s} \ \norm{{\phi}}_{{H}^s},  \ &&0  \leq s \leq \textsf{p}.
	\end{alignat*}
	We use the notation  $\norm{{{v}}}_{{H}^s(\textup{curl})}^2 \coloneqq \norm{ {{v}}}_{{H}^s}^2 + \norm{ \nabla \times {{v}}}_{{H}^s}^2$ and $\norm{{{v}}}_{{H}^s(\textup{div})}^2 \coloneqq \norm{ {{v}}}_{{H}^s}^2 + \norm{ \nabla \cdot {{v}}}_{{H}^s}^2$.	
\end{lemma}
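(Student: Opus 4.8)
The plan is to reduce everything to the parametric cube $\widehat{\Omega}=(0,1)^3$, where the spaces $\widehat{V}^i_{h,3}$ are genuine tensor-product spline spaces, prove the estimates there, and then transport them back through the pull-backs $\mathcal{Y}_i$. First I would recall from \cite{Buffa2011IsogeometricDD} the explicit construction of the commuting projections on the reference domain: one assembles operators $\widehat{\Pi}^i_{h,3}$ out of univariate spline quasi-interpolants — an interpolation-type operator for each factor of degree $\textsf{p}$ and a histopolation-type (dual) operator for each factor of degree $\textsf{p}-1$ — tensored across the three directions and composed so that $\widehat{\nabla}\,\widehat{\Pi}^0_{h,3}=\widehat{\Pi}^1_{h,3}\,\widehat{\nabla}$, $\widehat{\nabla}\times\widehat{\Pi}^1_{h,3}=\widehat{\Pi}^2_{h,3}\,\widehat{\nabla}\times$ and $\widehat{\nabla}\cdot\widehat{\Pi}^2_{h,3}=\widehat{\Pi}^3_{h,3}\,\widehat{\nabla}\cdot$; one then sets $\Pi^i_{h,3}\coloneqq\mathcal{Y}_i^{-1}\circ\widehat{\Pi}^i_{h,3}\circ\mathcal{Y}_i$, so that the diagrams in Figs.~\ref{Fig:Neu2} and \ref{Fig:Neu3} commute by construction and each $\Pi^i_{h,3}$ is a projection onto $V^i_{h,3}$.

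Second, on $\widehat{\Omega}$ I would invoke the standard local tensor-product spline approximation bounds (the univariate quasi-interpolation estimates of Schumaker, extended to the multivariate quasi-uniform setting as in \cite{IGA1,Bressan,BressanandSangalli}): for the scalar space one gets $\norm{\widehat{\phi}-\widehat{\Pi}^0_{h,3}\widehat{\phi}}_{H^l(\widehat{\Omega})}\le C\,h^{s-l}\,\norm{\widehat{\phi}}_{H^s(\widehat{\Omega})}$ for $0\le l\le s\le\textsf{p}+1$, and component-wise $L^2$-bounds of order $h^s$, $0\le s\le\textsf{p}$, for $\widehat{\Pi}^1_{h,3},\widehat{\Pi}^2_{h,3},\widehat{\Pi}^3_{h,3}$ (the order dropping to $\textsf{p}$ because each vector component carries degree $\textsf{p}-1$ in one direction). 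The graph-norm estimates for $\widehat{\Pi}^1_{h,3}$ and $\widehat{\Pi}^2_{h,3}$ then come from the commutation relations: $\widehat{\nabla}\times(\widehat{v}-\widehat{\Pi}^1_{h,3}\widehat{v})=\widehat{\nabla}\times\widehat{v}-\widehat{\Pi}^2_{h,3}\widehat{\nabla}\times\widehat{v}$ and $\widehat{\nabla}\cdot(\widehat{v}-\widehat{\Pi}^2_{h,3}\widehat{v})=\widehat{\nabla}\cdot\widehat{v}-\widehat{\Pi}^3_{h,3}\widehat{\nabla}\cdot\widehat{v}$, so each derivative term is controlled by the $L^2$-estimate applied one level up, and adding the two contributions gives the stated $\norm{\cdot}_{\c}$- and $\norm{\cdot}_{\d}$-bounds.

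Third, I would transfer the estimates to $\Omega$. By Assumption~\ref{Assumption:regular_triangul} the parametrization $\p{F}$ is a $C^2$-diffeomorphism with $\p{F}^{-1}\in C^2$ and element-wise smooth restrictions, so each pull-back $\mathcal{Y}_i$ and its inverse is a bounded isomorphism on the relevant $L^2$, $H^l$, $H(\textup{curl})$ and $H(\textup{div})$ spaces, with constants depending only on $\p{F}$, $\p{J}=D\p{F}$ and $\det(\p{J})$; composing the reference estimates with these bounds and using the chain rule (element-wise for the higher-order norms) to dominate $\norm{\widehat{\phi}}_{H^s}$ by $\norm{\phi}_{H^s}$ yields the claimed inequalities on the physical domain, the $h$-uniformity of all constants being guaranteed by the shape-regularity and quasi-uniformity of $\mathcal{T}_h$ in Assumption~\ref{Assumption:regular_triangul}.

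The main obstacle is not any individual estimate but arranging that the reference operators $\widehat{\Pi}^i_{h,3}$ simultaneously (i) project onto the spline spaces, (ii) commute with $\widehat{\nabla},\widehat{\nabla}\times,\widehat{\nabla}\cdot$, and (iii) remain locally $L^2$-bounded with quasi-optimal order; this is precisely the delicate construction carried out in \cite{Buffa2011IsogeometricDD} via a de Rham–compatible choice of univariate building blocks, and I would cite it rather than reproduce it. A secondary technical point is that the pull-backs mix vector components (e.g.\ $\mathcal{Y}_2(v)=\det(\p{J})\,\p{J}^{-1}(v\circ\p{F})$), so transferring the graph-norm estimates genuinely needs the $C^2$-regularity of $\p{F}$ together with the element-wise smoothness stated in the assumption; with only a Lipschitz parametrization the argument would not close.
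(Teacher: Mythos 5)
Your proposal is correct and follows exactly the route the paper takes: the paper's proof is simply a citation to \cite[Theorem 5.3 and Remark 5.1]{Buffa2011IsogeometricDD}, and your outline (commuting quasi-interpolants on the parametric cube, graph-norm bounds via the commutation relations, transfer through the pull-backs $\mathcal{Y}_i$ under Assumption \ref{Assumption:regular_triangul}) is precisely the argument carried out in that reference. No gap to report.
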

\begin{proof}
	See \cite[Theorem 5.3 and Remark 5.1]{Buffa2011IsogeometricDD}.
\end{proof}

Here and in the rest of the article $C<\infty$ denotes a constant only depending on the domain $\Omega, \ \p{F}$ and the polynomial degrees as well as on the regularity parameter $r$ and may vary at different occurrences.

If we consider the case of $\Omega \subset \mathbb{R}^2$ one can introduce different spline spaces which also yield an exact subcomplex of the  $2D$ de Rham complex. To be more precise the approach in \cite{Buffa2011IsogeometricDD} is based on the product structure of the B-splines and it is used for definitions of  the projections  and the discrete function spaces. That is way we can easily adapt the procedure and the steps to the planar case. In other words, 
we obtain  another commuting diagram, 

	\begin{center}
		\begin{tikzpicture}
		\node at (0,1.4) {$H^1(\Omega)$};			
		\node at (3,1.4) {${H}(\Omega,\textup{div})$};
		\node at (6.5,1.4) {$L^2(\Omega)$};			
		\node at (9.5,1.4) {$\{0 \}$};						
		\node at (1.3,1.7) {$\textup{curl}$};			
		\node at (4.7,1.7) {$\nabla \cdot $};			
		\node at (8.2,1.7) {$0 $};			
		\draw[->] (0.8,1.4) to (1.8,1.4);			
		\draw[->] (4.2,1.4) to (5.3,1.4);			
		\draw[->] (7.7,1.4) to (8.75,1.4);			
		\draw[->] (-1.6,1.4) to (-0.72,1.4);			
		\node at (-1.09,1.65) {$\iota$};			
		\node at (-2.25,1.4) {$\mathbb{R}$};			
		\draw[->] (3,1.1) -- (3,0.2);
		\draw[->] (-2.25,1.1) -- (-2.25,0.2);			
		\draw[->] (0,1.1) -- (0,0.2);
		\draw[->] (6.5,1.1) -- (6.5,0.2);
		\draw[->] (9.5,1.1) -- (9.5,0.2);			
		\node at (0,-0.1) {${V}_{h,2}^{0}$};			
		\node at (3,-0.1) {${V}_{h,2}^{1}$};		
		\node at (6.5,-0.1) {${V}_{h,2}^{2}$};			
		\node at (9.62,-0.1) {$\{0 \} \ ,$};			
		\node at (1.3,0.2) {${\textup{curl}}$};			
		\node at (4.7,0.2) {${\nabla} \cdot $};			
		\node at (8.2,0.2) {$0 $};		
		\draw[->] (0.8,-0.1) to (1.8,-0.1);		
		\draw[->] (4.2,-0.1) to (5.3,-0.1);
		\draw[->] (7.7,-0.1) to (8.75,-0.1);		
		\draw[->] (-1.6,-0.1) to (-0.72,-0.1);			
		\node at (-1.09,0.15) {$\iota$};			
		\node at (-2.25,-0.1) {$\mathbb{R}$};			
		\node[left] at (-2.2,0.7) { \small $\textup{id}$};	
		\node[left] at (0.05,0.7) { \small $\Pi_{h,2}^{0}$};
		\node[left] at (3.05,0.7) { \small $\Pi_{h,2}^{1}$};
		\node[left] at (6.55,0.7) { \small $\Pi_{h,2}^{2}$};
		\node[left] at (9.55,0.7) { \small $\textup{id}$};
		\end{tikzpicture}
	\end{center}
where  \begin{alignat*}{3}
V_{h,2}^0 &\coloneqq  \mathcal{Y}_0^{-1}(\widehat{V}^0_{h,2}) \ \ \  \textup{with}  	\hspace{0.3cm}&&\widehat{V}^0_{h,2} \coloneqq  S_{\textsf{p},\textsf{p}}^{r,r},  \\
{V}_{h,2}^1 &\coloneqq \mathcal{Y}_2^{-1}(\widehat{{V}}^1_{h,2})\ \ \  \textup{with}  	\hspace{0.3cm} &&\widehat{{V}}^1_{h,2} \coloneqq  \big(S_{\textsf{p},\textsf{p}-1}^{r,r-1} \times S_{\textsf{p}-1,\textsf{p}}^{r-1,r}  \big)^T  
,\nonumber\\
V_{h,2}^2 &\coloneqq  \mathcal{Y}_3^{-1}(\widehat{V}^2_{h,2}) \ \ \  \textup{with}  	\hspace{0.3cm}&&\widehat{V}^2_{h,2} \coloneqq  S_{\textsf{p}-1,\textsf{p}-1}^{r-1,r-1},  \ \ \ 
\end{alignat*}
and with $L^2$-continuous projections $\Pi_{h,2}^i$ onto the respective spaces. One notes the same notation for the pull-backs in  $2D$ as for $3D$ and we mean the straightforward restriction to the two-dimensional setting. An important aspect showing the appropriateness of  the $\mathcal{Y}_i$ are the properties \begin{align}
\label{eq_trans_compa}
\widehat{\nabla} \cdot \mathcal{Y}_2(\f{v}) = \mathcal{Y}_3(\nabla \cdot \f{v}) , \ \ \ \ \widehat{\textup{curl}}(\mathcal{Y}_0(\phi)) = \mathcal{Y}_2(\textup{curl}(\phi)), \ \ \textup{in} \ 2D,
\end{align}
which can be proven easily.
Completely analogously to the $3D$ case one can derive approximation estimates for the latter spaces. 
\begin{lemma}[Spline approximation results in $2D$]
	\label{lemma_approx_spline_spaces_2D}
	Let $0 \leq r < \textsf{p}$. It holds 
	\begin{alignat*}{3}
	& \norm{{\phi}-\Pi^0_{h,2}{\phi}}_{H^{l}} \leq C \ h^{s-l} \ \norm{{\phi}}_{H^{s}},  &&0  \leq l \leq s \leq \textsf{p}+1,\\
	& \norm{{{v}}-\Pi^1_{h,2}{{v}}}_{\d} \leq C \ h^{s} \ \norm{{v}}_{{H}^s(\textup{div})},  \ \ \  &&0   \leq s \leq \textsf{p},\\
	& \norm{\phi-\Pi^2_{h,2}{\phi}}_{L^2} \leq C \ h^{s} \ \norm{\phi}_{{H}^s},  &&0   \leq s \leq \textsf{p},
	\end{alignat*}	
	for sufficiently smooth $\phi,v$.
\end{lemma}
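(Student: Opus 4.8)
The plan is to mimic the proof of Lemma~\ref{lemma:spline_approx_estimates_3D}: transport the estimates to the parametric domain $\po$ and there invoke standard tensor-product spline approximation theory. By Assumption~\ref{Assumption:regular_triangul} the parametrization $\p{F}$ is a $C^2$-diffeomorphism whose restrictions to mesh cells are smooth, so the pull-backs appearing in the $2D$ diagram (the identity $\mathcal{Y}_0$, the Piola transform and the weighted composition) together with their inverses are bounded isomorphisms between the Sobolev-type spaces on $\o$ and those on $\po$, with constants depending only on $\p{F}$ and $\o$. Hence it suffices to realise each $\Pi^i_{h,2}$ by conjugating a suitable reference projector $\widehat{\Pi}^i$ onto $\widehat{V}^i_{h,2}$ with the same pull-back that was used to define $V^i_{h,2}$, and to prove the claimed bounds for $\widehat{\Pi}^i$ on $\po$; the $L^2$-continuity of the $\Pi^i_{h,2}$ and the commutation of the $2D$ diagram (stated above, together with \eqref{eq_trans_compa}) then carry over.

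On $\po=(0,1)^2$ the spaces $\widehat{V}^0_{h,2}=S^{r,r}_{\textsf{p},\textsf{p}}$ and $\widehat{V}^2_{h,2}=S^{r-1,r-1}_{\textsf{p}-1,\textsf{p}-1}$ are full tensor-product spline spaces, while $\widehat{V}^1_{h,2}=\big(S^{r,r-1}_{\textsf{p},\textsf{p}-1}\times S^{r-1,r}_{\textsf{p}-1,\textsf{p}}\big)^T$ is a tensor product of univariate ``primal'' and ``derived'' spline spaces. For such spaces the univariate quasi-interpolants of de~Boor--Fix type (see \cite{Bressan,BressanandSangalli} and the references in \cite{Buffa2011IsogeometricDD}) satisfy, in each coordinate direction, the local error bound $h^{s-l}$ in $H^l$; taking tensor products yields the stated $H^l$-estimate for $\widehat{\Pi}^0$ (with $0\le l\le s\le\textsf{p}+1$), the $L^2$-estimate for $\widehat{\Pi}^2$ (with $0\le s\le\textsf{p}$), and in exactly the same way the $L^2$-estimate $\norm{v-\widehat{\Pi}^1 v}_{L^2}\le C\,h^s\norm{v}_{H^s}$ for $0\le s\le\textsf{p}$.

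For the $H(\o,\d)$-estimate I then argue as in $3D$. Using \[\norm{v-\Pi^1_{h,2}v}_{\d}^2=\norm{v-\Pi^1_{h,2}v}_{L^2}^2+\norm{\nabla\cdot v-\nabla\cdot\Pi^1_{h,2}v}_{L^2}^2\] and the commuting relation $\nabla\cdot\Pi^1_{h,2}v=\Pi^2_{h,2}(\nabla\cdot v)$ from the displayed $2D$ diagram, the first term is controlled by the $\widehat{\Pi}^1$ $L^2$-estimate and the second by the $\widehat{\Pi}^2$ $L^2$-estimate applied to $\nabla\cdot v\in H^s$, both after the change of variables of the first paragraph. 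This gives $\norm{v-\Pi^1_{h,2}v}_{\d}\le C\,h^s\big(\norm{v}_{H^s}+\norm{\nabla\cdot v}_{H^s}\big)=C\,h^s\norm{v}_{H^s(\textup{div})}$. Transporting everything back to $\o$ with the bounded maps $\mathcal{Y}_i^{-1}$ only alters the constants, which completes the proof.

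I expect the only real point requiring care --- rather than a genuine obstacle --- to be checking that this $2D$ ``curl/div'' complex is a faithful specialisation of the construction in \cite{Buffa2011IsogeometricDD}: one must keep track of the $90^\circ$ rotation that turns a curl-conforming spline space into the div-conforming space $\widehat{V}^1_{h,2}$ and confirm that the accompanying Piola pull-back is the one making \eqref{eq_trans_compa} hold with commuting reference projectors. Once this identification is in place, all three estimates are just the componentwise tensor-product spline bounds already available for the $3D$ diagram, so no new approximation theory is needed.
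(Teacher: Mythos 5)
Your proposal is correct and follows essentially the same route as the paper, which simply observes that the $2D$ estimates follow by the analogous steps to the $3D$ case in \cite[Theorem 5.3 and Remark 5.1]{Buffa2011IsogeometricDD}; you have merely written out in detail what those steps are (bounded pull-backs to $\widehat{\Omega}$, tensor-product quasi-interpolant bounds, and the commutation $\nabla\cdot\Pi^1_{h,2}v=\Pi^2_{h,2}(\nabla\cdot v)$ for the $H(\textup{div})$-norm). No discrepancy with the paper's argument.
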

\begin{proof}
	With the analogous proof steps like for the $3D$ de Rham chain, this follows from \cite{Buffa2011IsogeometricDD}; compare \cite[Theorem 5.3 and Remark 5.1]{Buffa2011IsogeometricDD}. 
\end{proof}

\begin{remark}
	The spaces $V_{h,n}^i$ from above depend on the regularity and on the polynomial degree. This is the reason why we will write later sometimes $V_{h,n}^i(\textsf{p},r)$ to emphasize the underlying regularity $r$ and degree $\textsf{p}$.
\end{remark}
After the introduction of elementary notions and spline spaces we can  choose now proper discrete spaces for our linear elasticity formulation with weak symmetry. The choice of these spaces and  the well-posedness proof for the arising saddle-point problem is part of the next section.

\subsubsection{Choice of discrete spaces}
\label{Sec:Chioce of spces}
As already mentioned, the isogeometric de Rham spaces $V_{h,2}^j$ and $V_{h,3}^j$ from the last section lead to structure-preserving discretizations of the de Rham sequences in $2D$ and $3D$. Hence, due to Lemma \ref{Lemma:well-posedness_discrete_abstract} it is natural to choose for the discretization of $(\sigma,u,p)$ in \eqref{weak_form_wek_symmetry_contiuous} the spaces $U_h, \ \Sigma_h, \ P_h$ defined below:
	\begin{definition}[Choice of spaces]
	\label{Defnition of discrete spaces}
	For $\textsf{p} \geq 2, \ \textsf{p}-1 > r \geq 0 $ and using the abbreviations $$W_{h}^1 \coloneqq V_{h,3}^1(\textsf{p}+1,r), \ 	W_{h}^2 \coloneqq V_{h,3}^2(\textsf{p}+1,r), \ W_h^3 \coloneqq V_{h,3}^3(\textsf{p}+1,r),  \ \bar{W}_h^3 \coloneqq V_{h,3}^0(\textsf{p}-1,r)$$ we set  in the three-dimensional setting 
	\begin{alignat*}{4}	
	&\f{\Sigma}_h \coloneqq (\f{W}_{h}^2 \times \f{W}_{h}^2 \times \f{W}_{h}^2)^T,  \ \
	&&\f{U}_h \coloneqq ({W}_{h}^3 \times {W}_{h}^3 \times {W}_{h}^3)^T, \ \ 
	&&\f{P}_h \coloneqq (\bar{W}_h^3 \times \bar{W}_h^3 \times \bar{W}_h^3)^T.	
	\end{alignat*}
	If we write e.g. $(\f{W}_h^2 \times \f{W}_h^2 \times \f{W}_h^2)^T$ we mean that the rows of the matrix fields are elements of $\f{W}_h^2$. Further, we introduce the auxiliary spaces 
	\begin{align}
	\label{eq:add1}
	R_h \coloneqq ({W}_{h}^1 \times {W}_{h}^1 \times {W}_{h}^1)^T, \ R_{h,0} \coloneqq R_h \cap H_0^1(\o,\mathbb{M}), \ P_{h,0}  \coloneqq P_h \cap L_0^2(\o,\mathbb{R}^3).
	\end{align}
	For two dimensions the definition is easier, namely for $\textsf{p}> r+1\geq 1$ we choose
	\begin{align*}
	\Sigma_h \coloneqq (V_{h,2}^{1}(\textsf{p},r) \times V_{h,2}^1(\textsf{p},r))^T, \ U_h \coloneqq (V_{h,2}^{2}(\textsf{p},r) \times V_{h,2}^2(\textsf{p},r))^T, \ P_h \coloneqq V_{h,2}^0(\textsf{p}-1,r),
	\end{align*} 
	and  define the auxiliary space $R_h \coloneqq  \big(V_{h,2}^0(\textsf{p},r) \times V_{h,2}^0(\textsf{p},r) \big)^T$.
\end{definition}

Since we used spaces from the isogeometric de Rham sequences to define  $\Sigma_h, U_h, P_h$ one can introduce $L^2$-bounded cochain projections onto the mentioned spaces by means of the (row-wise) application of the projections from Sec. \ref{section:splines}. We denote them with \begin{align*}
\Pi_h^{\Sigma} \colon L^2(\o,\mathbb{M}) \rightarrow \Sigma_h, \ \ \Pi_h^U \colon L^2(\o,\mathbb{R}^n) \rightarrow U_h, \ \ \Pi_h^P \colon L^2(\o,\mathbb{R}^{s(n)}) \rightarrow P_h, \\ \Pi_h^R \colon L^2(\o,\mathbb{M}) \rightarrow R_h \ \textup{for} \ n=3, \ \ \Pi_h^R \colon L^2(\o,\mathbb{R}^2) \rightarrow R_h \ \textup{for}  \ n=2,
\end{align*}
and they lead to the commuting diagrams in Fig. \ref{Fig:5} ($2D$) and Fig. \ref{Fig:6} ($3D$).

	\begin{figure}[h!]
	\centering
	\begin{tikzpicture}

	\node at (10.1,4.5) {$P_h$};

     \draw[->] (10.1,3.4) to (10.1,3.2+1);
     \node at (5.4+4.4,1.65-1.6+3.7) {$\Pi_h^P $};

	\node at (2.15,1.4) {$\mathbb{R}^2 $};
	\node at (4,1.4) {$H^1(\o,\mathbb{R}^2) $};
	\node at (7.1,1.4) {${H}(\o,\d, \mathbb{M})$};
	\node at (10.1,1.4) {${L}^2(\o, \mathbb{R}^2)$};		
	\node at (12,1.4) {$\{0\} $};
	
	\node at (2.7,1.6) {$\iota $};
	\node at (5.4,1.65) {$\textup{curl} $};
	\node at (8.7,1.65) {$\nabla \cdot  $};		
	\node at (11.2,1.6) {$0$};
	
	\draw[->] (2.4,1.4) to (3,1.4);
	\draw[->] (4.55+0.4,1.4) to (5.5+0.4,1.4);
	\draw[->] (8.3,1.4) to (9.25,1.4);
	\draw[->] (11,1.4) to (11.6,1.4);

	\node at (2.2,3) {$\mathbb{R} $};
	\node at (4,3) {$H^1(\o,\mathbb{R}) $};
	\node at (7.1,3) {${H}(\o,\d, \mathbb{R}^2)$};
	\node at (10.1,3) {${L}^2(\o, \mathbb{R})$};		
	\node at (12,3) {$\{0\} $};
	
	\node at (2.7,3.2) {$\iota $};
	\node at (5.4,3.25) {$\textup{curl} $};
	\node at (8.7,3.25) {$\nabla \cdot  $};		
	\node at (11.2,3.2) {$0$};
	
	\draw[->] (2.4,3) to (3,3);
	\draw[->] (4.55+0.4,3) to (5.5+0.4,3);
	\draw[->] (8.3,3) to (9.25,3);
	\draw[->] (11,3) to (11.6,3);

	\node at (4,1.4-1.6) {$R_h$};
	\node at (7.1,1.4-1.6) {$\Sigma_h$};
	\node at (10.1,1.4-1.6) {$U_h$};

	\node at (5.4,1.65-1.6) {$\textup{curl} $};
	\node at (8.7,1.65-1.6) {$\nabla \cdot  $};

	\draw[->] (4.55+0.4,1.4-1.6) to (5.5+0.4,1.4-1.6);
	\draw[->] (8.3,1.4-1.6) to (9.25,1.4-1.6);

	\draw[<-] (10.1,3.4-3.2) to (10.1,3.2+1-3.2);	
	\draw[<-] (7.1,3.4-3.2) to (7.1,3.2+1-3.2);
	\draw[<-] (4,3.4-3.2) to (4,3.2+1-3.2);
	
	\node at (5.4+4.4,1.65-1.6+0.6) {$\Pi_h^U $};
	\node at (5.4+1.4,1.65-1.6+0.6) {$\Pi_h^{\Sigma} $};
	\node at (5.4-1.7,1.65-1.6+0.6) {$\Pi_h^R $};

	{\draw[->] (6.2+1.5,1.7) to (7.6+1.5,2.6);		
		\node at (8,2.35) {$\textup{Skew}$};}
	
	{	\draw[->] (3.96+0.5,1.7) to (6.1,2.6);
		\node at (5.05,2.35) {$\textup{I}$};	 }
	
	\end{tikzpicture}
	\caption{The relevant discrete spaces in $2D$.}
	\label{Fig:5}
\end{figure}

	\begin{figure}[h!]
	\centering
	\begin{tikzpicture}

	\node at (13.5,4.5) {$P_h$};

	\draw[->] (13.5,3.4) to (13.5,3.2+1);
	\node at (5.4+7.8,1.65-1.6+3.7) {$\Pi_h^P $};

	\node at (2.15,1.4) {$\mathbb{R}^3 $};
	\node at (4,1.4) {$H^1(\o,\mathbb{R}^3) $};
	\node at (7.1,1.4) {${H}(\o,\c, \mathbb{M})$};
	\node at (10.45,1.4) {${H}(\o,\d, \mathbb{R}^3)$};	
	\node at (13.5,1.4) {$L^2(\o,\mathbb{R}^3) $};	
	\node at (15.3,1.4) {$\{0\} $};
	
	\node at (2.7,1.6) {$\iota $};
	\node at (5.4,1.65) {$ \nabla $};
	\node at (8.7,1.65) {$\nabla \times  $};	
	\node at (12,1.65) {$\nabla \cdot $};	
	\node at (14.6,1.6) {$0$};
	
	\draw[->] (2.4,1.4) to (3,1.4);
	\draw[->] (4.55+0.4,1.4) to (5.5+0.4,1.4);
	\draw[->] (8.3,1.4) to (9.25,1.4);
	\draw[->] (11.65,1.4) to (12.6,1.4);
	\draw[->] (14.4,1.4) to (14.9,1.4);

	\node at (2.15,3) {$\mathbb{R}^3 $};
	\node at (4,3) {$H^1(\o,\mathbb{R}^3) $};
	\node at (7.1,3) {${H}(\o,\c, \mathbb{M})$};
	\node at (10.45,3) {${H}(\o,\d, \mathbb{R}^3)$};	
	\node at (13.5,3) {$L^2(\o,\mathbb{R}^3) $};	
	\node at (15.3,3) {$\{0\} $};
	
	\node at (2.7,3.2) {$\iota $};
	\node at (5.4,3.25) {$ \nabla $};
	\node at (8.7,3.25) {$\nabla \times  $};	
	\node at (12,3.25) {$\nabla \cdot $};	
	\node at (14.6,3.2) {$0$};
	
	\draw[->] (2.4,3) to (3,3);
	\draw[->] (4.55+0.4,3) to (5.5+0.4,3);
	\draw[->] (8.3,3) to (9.25,3);
	\draw[->] (11.65,3) to (12.6,3);
	\draw[->] (14.4,3) to (14.9,3);

	\node at (7.1,1.4-1.6) {$R_h$};
	\node at (10.45,1.4-1.6) {$\Sigma_h$};
	\node at (13.5,1.4-1.6) {$U_h$};	
	
	\draw[->] (8.3,3-3.2) to (9.25,3-3.2);
	\draw[->] (11.65,3-3.2) to (12.6,3-3.2);
	\node at (8.7,3.25-3.2) {$\nabla \times  $};	
	\node at (12,3.2-3.2) {$\nabla \cdot $};

	\draw[<-] (10.45,3.4-3.2) to (10.45,3.2+1-3.2);	
	\draw[<-] (13.5,3.4-3.2) to (13.5,3.2+1-3.2);
	\draw[<-] (7.1,3.4-3.2) to (7.1,3.2+1-3.2);
	
	\node at (5.4+7.8,1.65-1.6+0.6) {$\Pi_h^U $};
	\node at (5.4+1.4+3.35,1.65-1.6+0.6) {$\Pi_h^{\Sigma} $};
	\node at (5.4-1.7+3.1,1.65-1.6+0.6) {$\Pi_h^R $};

	{\draw[->] (6.2+5,1.7) to (7.6+5,2.6);		
		\node at (11.5,2.35) {$\textup{Skew}$};}
	
	{	\draw[->] (3.96+3.7,1.7) to (6.1+3.2,2.6);
		\node at (8.2,2.35) {$\Xi$};	 }
	
	\end{tikzpicture}
	\caption{The relevant spaces in $3D$.}
	\label{Fig:6}
\end{figure}

The approximation properties of latter projections  are easily derived with Lemma \ref{lemma_approx_spline_spaces_2D} and Lemma \ref{lemma:spline_approx_estimates_3D} and they are summarized in the next corollary.

\begin{corollary}
	\label{Corollary:Projections}
	Supposing smooth enough mappings $(\sigma,u_,q) \in L^2(\o,\mathbb{M}) \times L^2(\o,\mathbb{R}^n) \times L^2(\o,\mathbb{R}^{s(n)})$  and $w \in L^2(\o,\mathbb{M})$ ($n=3$), \ $w \in L^2(\o,\mathbb{R}^2)$ ($n=2$), \ respectively, then it holds  for $0 \leq r <\textsf{p}-1, \ 0 \leq s \leq \textsf{p}$ and proper constants $C_R, C_{\Sigma}, \ C_{U}, \ C_{P}$ in $2D$:
	\begin{align*}
		&\n{w-\Pi_h^{R}w}_{L^2} \leq C_{R} h^{s} \n{w}_{H^{s}}, \ \  \n{\sigma-\Pi_h^{\Sigma}\sigma}_{\d} \leq C_{\Sigma} h^{s} \n{\sigma}_{H^{s}(\d)}, \\ 
		&\n{u-\Pi_h^{U}u}_{L^2} \leq C_{U} h^{s} \n{u}_{H^{s}}, \ \ \n{q-\Pi_h^{P}q}_{L^2} \leq C_{P} h^{s} \n{q}_{H^{s}}.
	\end{align*}
	In $3D$ ($n=3$) we obtain analogous estimates, but in the inequalities  regarding $\Pi_h^R, \ \Pi_h^{\Sigma}, \ \Pi_h^U$ we allow for $0 \leq s \leq \textsf{p}+1$.
\end{corollary}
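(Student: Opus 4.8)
The plan is to reduce everything to the scalar and vector spline approximation results of Lemma \ref{lemma_approx_spline_spaces_2D} and Lemma \ref{lemma:spline_approx_estimates_3D}, exploiting that each of the spaces $\Sigma_h,U_h,P_h,R_h$ from Definition \ref{Defnition of discrete spaces} is, by construction, a Cartesian product (row-wise, in the matrix-valued cases) of the isogeometric de Rham spline spaces $V_{h,n}^j$, and that the projections $\Pi_h^{\Sigma},\Pi_h^U,\Pi_h^P,\Pi_h^R$ are defined by applying the corresponding componentwise, respectively row-wise, projections $\Pi_{h,n}^j$. Therefore the approximation error of any one of these projections splits as a finite sum (over $n$ rows, respectively $n$ or $s(n)$ components) of the corresponding scalar or vector-valued errors, and it suffices to bound each summand with the appropriate line of the two spline lemmata.

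In the two-dimensional case I would argue as follows. For $\Pi_h^{\Sigma}$ each row lies in $V_{h,2}^1(\textsf{p},r)$, so the second estimate of Lemma \ref{lemma_approx_spline_spaces_2D} gives $\n{\sigma_i-\Pi_{h,2}^1\sigma_i}_{\d}\leq C h^s\n{\sigma_i}_{H^s(\d)}$ for $0\leq s\leq \textsf{p}$; summing over the two rows and recalling that $\n{\cdot}_{\d}$ for matrix fields is the row-wise $H(\d)$-norm yields the claim with $C_{\Sigma}$ a fixed multiple of $C$. For $\Pi_h^U$ each component lies in $V_{h,2}^2(\textsf{p},r)$, and the third estimate of the same lemma gives the $L^2$-bound for $0\leq s\leq \textsf{p}$. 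For $\Pi_h^R$ each component lies in $V_{h,2}^0(\textsf{p},r)$, and the first estimate of Lemma \ref{lemma_approx_spline_spaces_2D} with $l=0$ gives the $L^2$-bound, valid for $0\leq s\leq \textsf{p}+1$ and hence in particular for $0\leq s\leq \textsf{p}$. For $\Pi_h^P=\Pi_{h,2}^0$ on the scalar space $V_{h,2}^0(\textsf{p}-1,r)$, the first estimate of Lemma \ref{lemma_approx_spline_spaces_2D} with $l=0$ and polynomial degree $\textsf{p}-1$ gives $\n{q-\Pi_h^Pq}_{L^2}\leq C h^s\n{q}_{H^s}$ for $0\leq s\leq (\textsf{p}-1)+1=\textsf{p}$.

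The three-dimensional case is structurally identical, the only difference being a shift of the admissible Sobolev exponent caused by the degree choices in Definition \ref{Defnition of discrete spaces}. Here $\Sigma_h,U_h,R_h$ are built from $W_h^2=V_{h,3}^2(\textsf{p}+1,r)$, $W_h^3=V_{h,3}^3(\textsf{p}+1,r)$ and $W_h^1=V_{h,3}^1(\textsf{p}+1,r)$, i.e.\ with polynomial degree $\textsf{p}+1$; applying, respectively, the third estimate, the fourth estimate, and the $L^2$-approximation property underlying the second estimate of Lemma \ref{lemma:spline_approx_estimates_3D} with $\textsf{p}$ replaced by $\textsf{p}+1$, and then summing over the three rows or components, gives the corresponding bounds for $0\leq s\leq \textsf{p}+1$. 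For $\Pi_h^P$ the multiplier space $\bar W_h^3=V_{h,3}^0(\textsf{p}-1,r)$ has degree $\textsf{p}-1$, so the first estimate of Lemma \ref{lemma:spline_approx_estimates_3D} with $l=0$ gives the $L^2$-bound only for $0\leq s\leq \textsf{p}$, which is precisely the stated range.

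There is no genuine obstacle here; the whole proof is bookkeeping. The points that require a little care are: the matrix-field norms $\n{\cdot}_{\d}$ and $\n{\cdot}_{\c}$ are defined row-wise and the projections act row-wise, so summation over rows is what produces the constants $C_R,C_{\Sigma},C_U,C_P$ as fixed (dimension-dependent) multiples of the generic spline constant $C$; the shifted degrees $\textsf{p}\pm1$ in Definition \ref{Defnition of discrete spaces} translate directly into the shifted ranges of $s$; and for $\Pi_h^R$ in $3D$ one should invoke the $L^2$-approximation estimate of the first isogeometric projection (available from \cite[Theorem 5.3]{Buffa2011IsogeometricDD}) rather than the weaker $H(\c)$-estimate quoted in Lemma \ref{lemma:spline_approx_estimates_3D}, since the right-hand side in the Corollary only carries $\n{w}_{H^s}$.
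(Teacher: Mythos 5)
Your proposal is correct and is exactly the argument the paper intends: the paper gives no written proof for this corollary and simply asserts that it is "easily derived" from Lemma \ref{lemma:spline_approx_estimates_3D} and Lemma \ref{lemma_approx_spline_spaces_2D}, which is precisely the row-wise/component-wise reduction you carry out, including the correct degree shifts $\textsf{p}\pm 1$ from Definition \ref{Defnition of discrete spaces}. Your remark that for $\Pi_h^R$ in $3D$ one should cite the pure $L^2$-estimate of the first isogeometric projection from \cite{Buffa2011IsogeometricDD} rather than the $H(\textup{curl})$-estimate quoted in the lemma (so that only $\n{w}_{H^s}$ appears on the right-hand side) is a legitimate and correctly resolved point of care.
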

For the proof of the inf-sup stability in $3D$ we also need the space $R_{h,0}$ from \eqref{eq:add1} with  zero boundary conditions. For such spaces with boundary conditions there are also proper $L^2$-bounded projections. 
\begin{definition}[Projections with BC in $3D$]
	Let us write $\widehat{\Pi}_{h,0}^1 \colon \f{H}^1_0(\widehat{\o},\mathbb{R}^3) \rightarrow \widehat{W}_h^1 \cap \f{H}^1_0(\widehat{\o},\mathbb{R}^3)$ for the classical spline projection  with zero boundary values in the parametric domain, where $\widehat{W}_h^1 \coloneqq \mathcal{Y}_1(W_h^1) $.  Compare (4.6) in \cite{Buffa2011IsogeometricDD}, \cite[Sec. 3.4]{IGA3}, respectively. 
	Then define 
	\begin{align*}
	{\Pi}_{h,0}^1 \colon \f{H}_0^1({\o},\mathbb{R}^3) \rightarrow W_{h,0}^1 \coloneqq {W}_{h}^1 \cap H_0^1(\o,\mathbb{R}^3) \ , \  
	{\Pi}_{h,0}^1 \coloneqq \mathcal{Y}_1^{-1} \circ \widehat{\Pi}_{h,0}^1  \circ \mathcal{Y}_1.\end{align*}
If we have  matrix-valued mappings we write  $\Pi_{h,0}^R$ for the row-wise application of the above projection, i.e.  $\Pi_{h,0}^R \colon H_0^1(\o,\mathbb{M}) \rightarrow R_{h,0}$, \ with $\Pi_{h,0}^R = ({\Pi}_{h,0}^1,{\Pi}_{h,0}^1,{\Pi}_{h,0}^1)^T$.  For details on spline projections we refer to \cite{IGA3}.
\end{definition}

\begin{lemma}
	\label{Lemma:spline_projection_properties}
	It exists a  constant $C_{P1}$ independent of $h$ s.t. \begin{align*}
	\n{\f{v}-\Pi_{h,0}^1\f{v}}_{{L}^2} &\leq C_{P1} h \n{\f{v}}_{{H}^1},  \ \forall \f{v} \in \f{H}^1_0(\o,\mathbb{R}^3), \\
	\n{\f{v}-\Pi_{h,0}^1\f{v}}_{{H}^1} &\leq C_{P1}  \n{\f{v}}_{{H}^1}, \ \forall \f{v} \in \f{H}^1_0(\o,\mathbb{R}^3). 
	\end{align*}
	Analogous results are valid for $\Pi_{h,0}^R$.
\end{lemma}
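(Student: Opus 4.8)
The plan is to transport everything to the parametric domain $\widehat{\o}$, where the estimates are classical tensor-product spline results, and then pull back using the regularity and quasi-uniformity hypotheses of Assumption \ref{Assumption:regular_triangul}.

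By construction $\Pi_{h,0}^1 = \mathcal{Y}_1^{-1}\circ \widehat{\Pi}_{h,0}^1 \circ \mathcal{Y}_1$, so for $\f{v}\in \f{H}_0^1(\o,\mathbb{R}^3)$ one has $\f{v}-\Pi_{h,0}^1\f{v} = \mathcal{Y}_1^{-1}\big(\widehat{\f{v}}-\widehat{\Pi}_{h,0}^1\widehat{\f{v}}\big)$ with $\widehat{\f{v}}\coloneqq \mathcal{Y}_1(\f{v}) = \p{J}^T(\f{v}\circ\p{F})$. The first step is to observe that $\mathcal{Y}_1$ is a bounded isomorphism $\f{H}^k(\o,\mathbb{R}^3)\to \f{H}^k(\widehat{\o},\mathbb{R}^3)$, $k\in\{0,1\}$, which preserves homogeneous boundary values: since $\p{F}\in C^2(\overline{\widehat{\o}})$ and $\p{F}^{-1}\in C^2(\overline{\o})$ (Assumption \ref{Assumption:regular_triangul}), the Jacobian $\p{J}=D\p{F}$ and $\p{J}^{-1}$ are bounded together with their first derivatives, so multiplication by $\p{J}^T$ (resp. $\p{J}^{-T}$) composed with the change of variables $\p{F}$ (resp. $\p{F}^{-1}$) maps $H^0$ and $H^1$ boundedly with norm constants depending only on $\p{F}$; the inverse is $\mathcal{Y}_1^{-1}(\widehat{\f{w}})=\big(\p{J}^{-T}\widehat{\f{w}}\big)\circ\p{F}^{-1}$, and $\f{v}|_{\partial\o}=0$ iff $\widehat{\f{v}}|_{\partial\widehat{\o}}=0$. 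Hence $\widehat{\f{v}}\in \f{H}_0^1(\widehat{\o},\mathbb{R}^3)$, and there is a constant $C=C(\p{F})$ with
\begin{align*}
\n{\f{v}-\Pi_{h,0}^1\f{v}}_{L^2(\o)} &\leq C\,\n{\widehat{\f{v}}-\widehat{\Pi}_{h,0}^1\widehat{\f{v}}}_{L^2(\widehat{\o})},\\
\n{\f{v}-\Pi_{h,0}^1\f{v}}_{H^1(\o)} &\leq C\,\n{\widehat{\f{v}}-\widehat{\Pi}_{h,0}^1\widehat{\f{v}}}_{H^1(\widehat{\o})},\\
\n{\widehat{\f{v}}}_{H^1(\widehat{\o})} &\leq C\,\n{\f{v}}_{H^1(\o)}.
\end{align*}

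Next I would invoke the standard approximation and $H^1$-stability properties of the boundary-adapted tensor-product spline projection $\widehat{\Pi}_{h,0}^1$ on the parametric mesh $\widehat{\mathcal{T}}_h$; these are exactly the estimates behind \cite[eq. (4.6)]{Buffa2011IsogeometricDD} and \cite[Sec. 3.4]{IGA3}, and read, with $\widehat{h}$ the parametric mesh size,
$$\n{\widehat{\f{v}}-\widehat{\Pi}_{h,0}^1\widehat{\f{v}}}_{L^2(\widehat{\o})}\leq C\,\widehat{h}\,\n{\widehat{\f{v}}}_{H^1(\widehat{\o})},\qquad \n{\widehat{\Pi}_{h,0}^1\widehat{\f{v}}}_{H^1(\widehat{\o})}\leq C\,\n{\widehat{\f{v}}}_{H^1(\widehat{\o})};$$
the second one gives, by the triangle inequality, $\n{\widehat{\f{v}}-\widehat{\Pi}_{h,0}^1\widehat{\f{v}}}_{H^1(\widehat{\o})}\leq C\,\n{\widehat{\f{v}}}_{H^1(\widehat{\o})}$. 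Since $\p{F}$ and $\p{F}^{-1}$ are Lipschitz (being $C^1$ on compact closures) and $\mathcal{T}_h$ is shape-regular and quasi-uniform, the mesh sizes are comparable, $\widehat{h}\leq C h$. Chaining these with the inequalities of the previous step yields both asserted bounds with $C_{P1}=C_{P1}(\o,\p{F},\textsf{p},r)<\infty$ independent of $h$.

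For the matrix-valued case $\Pi_{h,0}^R=({\Pi}_{h,0}^1,{\Pi}_{h,0}^1,{\Pi}_{h,0}^1)^T$ one applies the vector estimates to each of the three rows of $\f{v}\in \f{H}_0^1(\o,\mathbb{M})$ and sums the squared row-norms, which reproduces the same two estimates (the factor $\sqrt{3}$ absorbed into $C_{P1}$). The only genuinely delicate point is the mapping step: one must check that the geometric pull-back $\mathcal{Y}_1$ — multiplication by $\p{J}^T$ composed with $\p{F}$, not mere composition — is a bounded isomorphism on $H^1$ respecting homogeneous traces, and that the change of variables keeps the single power of $h$. Both are consequences of the $C^2$-regularity of $\p{F},\p{F}^{-1}$ and the quasi-uniformity in Assumption \ref{Assumption:regular_triangul}; everything else is the cited univariate/tensor-product spline theory.
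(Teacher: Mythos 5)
Your proof is correct and follows essentially the same route as the paper's: pull back to the parametric domain via the norm equivalences induced by $\mathcal{Y}_1$ (guaranteed by the $C^2$-regularity of $\p{F}$ and $\p{F}^{-1}$), apply the standard $L^2$-approximation and $H^1$-stability estimates for the boundary-adapted spline projection $\widehat{\Pi}_{h,0}^1$ from the cited IGA references, and chain the inequalities, treating $\Pi_{h,0}^R$ row by row. The only differences are cosmetic: you make explicit a few points the paper leaves implicit (preservation of homogeneous traces under $\mathcal{Y}_1$, comparability of parametric and physical mesh sizes) and you derive the $H^1$ error bound from $H^1$-stability plus the triangle inequality rather than quoting it directly.
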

\begin{proof}
	By the assumptions concerning the parametrization $\p{F}$ we have constants $C_1,C_2 >0$ s.t. $  C_1 \ \n{\f{v}}_{{H}^1(\o)} \leq  \n{\mathcal{Y}_1(\f{v})}_{{H}^1(\widehat{\o})} \leq C_2  \n{\f{v}}_{{H}^1(\o)}$ and $C_1 \ \n{\f{v}}_{{L}^2(\o)} \leq  \n{\mathcal{Y}_1(\f{v})}_{{L}^2(\widehat{\o})} \leq C_2  \n{\f{v}}_{{L}^2(\o)} $. And from standard IGA results (see \cite{IGA3}) we know that there is a constant $\tilde{C}$ with $\n{\widehat{\f{v}}-\widehat{\Pi}_{h,0}^1\widehat{\f{v}}}_{L^2} \leq \tilde{C} h \n{\widehat{\f{v}}}_{{H}^1} $ and $
	\n{\widehat{\f{v}}-\widehat{\Pi}_{h,0}^1\widehat{\f{v}}}_{{H}^1} \leq \tilde{C}  \n{\widehat{\f{v}}}_{{H}^1}$  for all $\widehat{\f{v}} \in \f{H}_0^1(\widehat{\o},\mathbb{R}^3)$. Thus, for $\f{v} \in \f{H}^1_0(\o,\mathbb{R}^3)$ we have the estimate chain
	\begin{align*}
	\n{\f{v}-\Pi_{h,0}^1\f{v}}_{{L}^2(\o)}&= \n{\f{v}-\mathcal{Y}_1^{-1} \circ \widehat{\Pi}_{h,0}^1  \circ \mathcal{Y}_1\f{v}}_{{L}^2(\o)} \leq \frac{1}{C_1} \n{\mathcal{Y}_1\f{v}-\widehat{\Pi}_{h,0}^1  \circ \mathcal{Y}_1\f{v}}_{{L}^2(\widehat{\o})} \\
	& \leq \frac{\tilde{C}}{C_1} h \n{\mathcal{Y}_1\f{v}}_{H^1(\widehat{\o})} \leq \frac{\tilde{C}C_2}{C_1} h \n{\f{v}}_{H^1(\o)}.
	\end{align*}
	Analogous estimates can be made for the ${H}^1$ norm. This finishes the proof for the projection $\Pi_{h,0}^1$. The estimates for $\Pi_{h,0}^R$ follow by similar steps.
\end{proof}

After we clarified the spaces for the application example of linear elasticity, it remains to prove the Brezzi stability conditions. We focus on this in the next section.

\subsubsection{Well-posedness}
Next we check the well-posedness of our finite-dimensional problem:
\emph{Find}	$({\sigma}_h,u_h,p_h) \in (\Sigma_h,U_h,P_h)$ s.t. \vspace*{-0.4cm}\begin{alignat}{5}
\label{weak_form_wek_symmetry_discrete}
&\langle {A} {\sigma}_h , {\tau}_h \rangle \ + \ &&\langle u_h,\nabla \cdot {\tau}_h  \rangle \  + \ && \langle   {p}_h, \textup{Skew} \, {\tau}_h \rangle  \color{black}  &&= \langle {u}_D, {\tau}_h \cdot \nu \rangle_{\Gamma}, \ \hspace{0.4cm} &&\forall {\tau}_h \in \Sigma_h, \nonumber\\
&\langle \nabla \cdot {\sigma}_h , {v}_h \rangle &&  && &&= \langle {f},{v}_h\rangle, \ \hspace{0.4cm} && \forall {v}_h \in U_h, \\
&\langle \textup{Skew} \, {\sigma}_h, {q}_h \rangle &&  &&  &&= 0,  \ \hspace{0.4cm} &&\forall {q}_h \in P_h. \nonumber
\end{alignat}
With Def. \ref{Defnition of discrete spaces} and Lemma \ref{Lemma:well-posedness_discrete_abstract} we see that the only missing part for a proof of the well-posedness in the discrete setting  is the inf-sup condition   \eqref{eq:aux_inf_sup_discrete} corresponding to the diagrams in Fig \ref{Fig:5} and Fig. \ref{Fig:6}. Since the proof of the two-dimensional case is much easier than the one for $3D$ and since for the former we use an analogous proof like in \cite{Arnold2015}, we consider the poofs for both cases 
separately.

\begin{theorem}[Well-posedness]
	\label{Lemma:well-posedness}
	For the choice of discrete spaces in Def. \ref{Defnition of discrete spaces}  there exists a  $ h_{\max}>0$ s.t. we obtain for   \eqref{weak_form_wek_symmetry_discrete} and for all $0 < h \leq h_{\max}$ a unique solution. Then, the corresponding  Brezzi stability conditions are fulfilled; cf. Def. \ref{Brezzi:inf-sup-condition_abstract}. 
\end{theorem}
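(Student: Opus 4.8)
The plan is to reduce everything to a single uniform inf--sup inequality and then verify it separately in two and three dimensions. By the discussion after Lemma~\ref{Lemma:lin_ealsticity_inf_sup_cont}, the discrete problem \eqref{weak_form_wek_symmetry_discrete} is precisely the instance of Def.~\ref{Def:2} associated with the coupled de~Rham complexes of Fig.~\ref{Fig:5} ($n=2$) and Fig.~\ref{Fig:6} ($n=3$), with $V_h^{n-1}=\Sigma_h$, $V_h^{n}=U_h$, $\bar V_h^{n}=P_h$ and $V_h^{n-2}=R_h$; Assumption~\ref{Assumption:2} then holds by the construction of the isogeometric spaces $\Sigma_h,U_h,P_h,R_h$ and the commuting diagrams of Figs.~\ref{Fig:5}--\ref{Fig:6}. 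Hence, by Lemma~\ref{Lemma:well-posedness_discrete_abstract}, it is enough to establish the discrete auxiliary inf--sup \eqref{eq:aux_inf_sup_discrete}. The coercivity (S1) of Def.~\ref{Brezzi:inf-sup-condition_abstract} is free: on the kernel appearing in (S1) the subcomplex property forces $\nabla\cdot\tau_h=0$, so $\|\tau_h\|_V=\|\tau_h\|_{L^2}$ and $\mathfrak{A}(\tau_h,\tau_h)=\langle A\tau_h,\tau_h\rangle\gtrsim\|\tau_h\|_V^2$ by coercivity of $A$. Spelling out \eqref{eq:aux_inf_sup_discrete} through the diagrams of Figs.~\ref{Fig:5}--\ref{Fig:6} and Lemma~\ref{Lemma:Commutativty_double_complex_lin_ealsticity}, the conditions to be shown, uniformly in $h$, are
\begin{equation*}
\inf_{q_h\in P_h}\ \sup_{w_h\in R_h}\ \frac{\langle\nabla\cdot w_h,q_h\rangle}{\|w_h\|_{H^1}\,\|q_h\|_{L^2}}\ \ge\ C_{IS}>0\qquad(n=2),
\end{equation*}
\begin{equation*}
\inf_{q_h\in P_h}\ \sup_{w_h\in R_h}\ \frac{\langle\textup{Skew}(\nabla\times w_h),q_h\rangle}{\|w_h\|_{\c}\,\|q_h\|_{L^2}}\ \ge\ C_{IS}>0\qquad(n=3),
\end{equation*}
the second being the discrete form of the identity $\nabla\cdot(\Xi w)=\textup{Skew}(\nabla\times w)$ of Lemma~\ref{Lemma:Commutativty_double_complex_lin_ealsticity}.

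For $n=2$ the pair $(R_h,P_h)=\bigl((V_{h,2}^0(\textsf{p},r))^2,\,V_{h,2}^0(\textsf{p}-1,r)\bigr)$ is an isogeometric Stokes pair with one extra polynomial degree in the velocities, and I would argue exactly as in \cite{Arnold2015}: given $q_h\in P_h$, use the continuous inf--sup of Lemma~\ref{Lemma:lin_ealsticity_inf_sup_cont} to pick $w\in H^1(\o,\mathbb{R}^2)$ with $\nabla\cdot w=q_h$ and $\|w\|_{H^1}\lesssim\|q_h\|_{L^2}$, and then build a Fortin operator $w\mapsto w_h\in R_h$ — either from the commuting spline quasi--interpolants of Corollary~\ref{Corollary:Projections} corrected by locally supported bubbles so that $\langle\nabla\cdot(w-w_h),q_h\rangle=0$, or directly via the spline constructions of \cite{Arnold2015,BressanandSangalli}. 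This gives $w_h$ with $\|w_h\|_{H^1}\lesssim\|q_h\|_{L^2}$ and $\langle\nabla\cdot w_h,q_h\rangle=\|q_h\|_{L^2}^2$, hence the estimate, uniformly in $h$; no restriction on $h$ is needed here.

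For $n=3$ I would invoke the \emph{macroelement technique} (cf.\ \cite{Bressan,BressanandSangalli} for the isogeometric setting). The steps are: (i) pull the spaces and the operator $\textup{Skew}\circ(\nabla\times)$ back to the parametric cube $\widehat\o$ through the transforms $\mathcal{Y}_i$, so that — by the transformation rules behind \eqref{eq_trans_compa} — the spaces become tensor--product spline spaces and the operator differs from its constant--coefficient parametric counterpart only by terms depending smoothly on $\p{F}$; (ii) split $P_h=P_{h,0}\oplus(\text{piecewise constants on the macroelements})$ and control the mean--zero part $P_{h,0}$ by a macroelement argument: cover $\widehat\o$ by macroelements built from a fixed ($O(1)$) pattern of knot cells, prove a \emph{local} inf--sup on each macroelement using the zero--boundary--value spaces $R_{h,0}$ and the projection $\Pi_{h,0}^R$ of Lemma~\ref{Lemma:spline_projection_properties}, where the generous degree gap (degree $\textsf{p}+1$ in $R_h$ versus $\textsf{p}-1$ in $P_h$) reduces local surjectivity of $w_h\mapsto\textup{Skew}(\nabla\times w_h)$ onto the local mean--zero pressures to a finite--dimensional check on a reference patch, and verify the macroelement connectivity condition so that the local estimates assemble into a uniform bound on $P_{h,0}$; (iii) treat the remaining finite--dimensional part of $P_h$ by exhibiting, with the full space $R_h$, a uniformly bounded field with prescribed constant skew--curl (a fixed low--degree spline, recall $\textsf{p}+1\ge 3$) together with Lemma~\ref{Lemma:lin_ealsticity_inf_sup_cont}; (iv) combine (ii) and (iii). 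The transfer in step (i) is where $0<h\le h_{\max}$ enters: on sufficiently fine meshes the $\p{F}$--induced variable--coefficient perturbation is $O(h)$ relative to the parametric inf--sup constant (a Verf\"urth--type argument), so the discrete inf--sup survives with a slightly smaller constant once $h$ is small.

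The main obstacle is the local step (ii) in three dimensions: one must produce, on a single macroelement and with zero boundary data, spline matrix fields whose \emph{skew--symmetric part of the curl} — not whose divergence — realizes a prescribed local pressure while keeping the $H(\c)$--norm under control. Since $\textup{Skew}$ couples the three matrix rows and $\nabla\times$ mixes the unequal tensor--product degrees of the components, the reference--patch surjectivity requires careful bookkeeping of the spline spaces involved, and one must also check that the degree gap still suffices on boundary macroelements, where fewer degrees of freedom survive. Once local surjectivity and the global connectivity condition are in place, Lemma~\ref{Lemma:right-inverse} and Lemma~\ref{Lemma:well-posedness_discrete_abstract} deliver well--posedness of \eqref{weak_form_wek_symmetry_discrete} and all Brezzi conditions of Def.~\ref{Brezzi:inf-sup-condition_abstract} for $0<h\le h_{\max}$.
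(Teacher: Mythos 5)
Your architecture coincides with the paper's: reduce to the auxiliary discrete inf--sup \eqref{eq:aux_inf_sup_discrete} via Lemma~\ref{Lemma:well-posedness_discrete_abstract}, settle $n=2$ by recognizing $(R_h,P_h)$ as an isogeometric Taylor--Hood pair, and attack $n=3$ by Verf\"urth's trick plus the macroelement technique. Two remarks on the easy parts: in $2D$ the paper simply cites \cite[Theorem 4.1]{BressanandSangalli}, and that result is itself only valid for $h\le h_{\max}$, so your claim that ``no restriction on $h$ is needed'' in $2D$ is not what the quoted theory gives you (your bubble-corrected Fortin operator is not constructed, so it cannot be credited with removing the restriction).

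The genuine gap is your step (ii) in $3D$, which you yourself flag as ``the main obstacle'': you never produce, on a macroelement with zero boundary data, matrix fields whose $\textup{Skew}(\nabla\times\cdot)$ realizes a prescribed mean-zero pressure with uniform control, and the ``finite-dimensional check on a reference patch'' is not obviously uniform in the geometry for the mixed-degree curl-conforming components. The paper's resolution is to \emph{avoid} working with $\textup{Skew}\circ(\nabla\times)$ on $R_{h,0}$ altogether. Using the commutativity $\nabla\cdot(\Xi w)=\textup{Skew}(\nabla\times w)$ of Lemma~\ref{Lemma:Commutativty_double_complex_lin_ealsticity}, it observes that the equal-degree space $H^1_0(\widehat\o,\mathbb{M})\cap (S_{\textsf{p},\textsf{p},\textsf{p}}^{r,r,r})^{3\times 3}$ is contained in $\mathcal{Y}_1(R_{h,0})$, applies the scalar macroelement inf--sup of \cite{BressanandSangalli} \emph{row-wise} to the Taylor--Hood pair $(S_{\textsf{p},\textsf{p},\textsf{p}}^{r,r,r},S_{\textsf{p}-1,\textsf{p}-1,\textsf{p}-1}^{r,r,r})$ to obtain a field $\f{\tau}_M$ with $\langle \f{\tau}_M,\nabla \f{q}\rangle\gtrsim |\f{\tau}_M|_{H^1}\n{\f{q}}_{\mathcal{P}_M}$, and then transports it by $\tilde{\f{\tau}}_M\coloneqq\big[\Xi^{-1}(\f{\tau}_M)\big]\p{J}_{\f{x}_M}\p{J}^{-1}$ so that $\Xi\tilde{\f{\tau}}_M$ equals $\f{\tau}_M$ up to a frozen-Jacobian perturbation that is uniformly $O(\varepsilon)$ once $h\le h_\varepsilon$ --- this is where $h_{\max}$ actually originates in $3D$. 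Afterwards the non-zero-mean pressures are handled exactly as in your step (iii) by explicit uniformly bounded fields $\f{M}_h^i$ with $\nabla\cdot(\Xi\f{M}_h^i)\approx \f{e}_i$ (Lemma~\ref{Lemma:aux_1} and Lemma~\ref{Lemma:inf_sup_stability}). Without the $\Xi^{-1}$-plus-frozen-coefficient device, the hardest step of the proof remains open in your write-up.
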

\begin{proof}[Proof for $2D$:]
	As already mentioned, we look only at the  inf-sup condition \eqref{eq:aux_inf_sup_discrete} which reads for Fig. \ref{Fig:5}: There is a constant $C_{IS}>0$ with \begin{equation*}
	\underset{{q}_h \in P_h }{\inf} \ \underset{{w}_h \in R_h}{\sup}   \ \ \frac{ \langle  \nabla \cdot  {w}_h, {q}_h\rangle}{\n{{w}_h}_{H^1} \ \n{{q}_h}_{L^2}}  >C_{IS}.
	\end{equation*}
	One notes that $(R_h,P_h) $ is an isogeometric Taylor-Hood space pair. From \cite[Theorem 4.1]{BressanandSangalli} we get  the validity of the above inequality for a suitable $C_{IS}$, which is independent of $h$, as long as $h \leq h_{\max}$ for a proper $h_{\max}>0$.  This finishes the first proof.
	
\end{proof}
\begin{proof}[Proof for $3D$:]
	Again it is enough to look at the following inf-sup condition	
	\begin{equation}
	\label{eq:inf_sup_3D}
	\underset{{v}_h \in P_h }{\inf} \ \underset{{w}_h \in R_h}{\sup}   \ \ \frac{ \langle  \nabla \cdot (\Xi {w}_h), {v}_h\rangle}{\n{{w}_h}_{\c} \ \n{{v}_h}_{L^2}} \geq C   >0,
	\end{equation}
	which implies the property (S2) in the Brezzi conditions, to prove inf-sup stability. Unfortunately, latter condition is not trivial to check and we follow the proof of the Taylor-Hood stability in \cite{Bressan} that is based on the so-called macroelement technique. Before we specify this technique, we need to adapt the preliminary steps made in the mentioned paper to our setting. Basic idea for preparations is to first reduce the   original inf-sup condition to a different one, that is derived utilizing Verfürth's trick. Applying this trick, we will consider an even stronger inf-sup condition involving the $H^1$-norm in the denominator. The detailed steps for the proof of the inf-sup conditions are outlined in the subsequent sections which result in Lemma \ref{Lemma:inf_sup_stability}. This lemma finally yields a $h_{\max}>0$ and a $C>0$ s.t. \eqref{eq:inf_sup_3D} is satisfied $\forall h \leq h_{\max}$.
\end{proof}
	\subsubsection{Verfürth's trick}
	Here and until Theorem \ref{Theorem:convergence estimates} we are in the $3D$ setting, i.e. $n=3$.\\
	We want to emphasize that the proof structure and steps are inspired by the stability proof of the Taylor-Hood space pair in \cite{Bressan} and \cite{BressanandSangalli}. Nevertheless, since the spaces we use differ and since in our setting matrix fields appear, we write down the important steps in a detailed fashion. We begin with a new auxiliary inf-sup condition which involves boundary conditions. \\
	First, one notes the fact that $$ H^1_0(\widehat{\o},\mathbb{M}) \cap (S_{\textsf{p},\textsf{p},\textsf{p}}^{r,r,r})^{3 \times 3} \subset \mathcal{Y}_1({R}_{h,0}), $$ with row-wise acting $\mathcal{Y}_1$. Furthermore,  it is $P_h \subset H^1(\o,\mathbb{R}^3)$.
	 Since $\Xi$ defines a $H^1$-bounded bijection  ${H}^1_0(\o,\mathbb{M}) \rightarrow {H}^1_0(\o,\mathbb{M})$ one has \begin{align*}
	\underset{\f{q} \in \f{L}_0^2(\o,\mathbb{R}^3)}{\inf} \ \ \underset{\f{\tau} \in H_0^1(\o,\mathbb{M})}{\sup}   \ \ \frac{\langle \nabla \cdot (\Xi \f{\tau}), \f{q}\rangle}{\n{\Xi \f{\tau}}_{H^1} \ \n{\f{q}}_{L^2}} =  \underset{\f{q} \in \f{L}_0^2(\o)}{\inf} \ \ \underset{\f{\tau} \in H_0^1(\o,\mathbb{M})}{\sup}   \ \ \frac{\langle \nabla \cdot  \f{\tau}, \f{q}\rangle}{\n{\f{\tau}}_{H^1} \ \n{\f{q}}_{L^2}} \geq C_{IS1} >0,
	\end{align*} 
	where the last inequality sign is clear due to the classical result of Girault and Raviart \cite{Girault}. Clearly, we obtain then \begin{equation}
	\underset{\f{q} \in \f{L}_0^2(\o,\mathbb{R}^3)}{\inf} \ \ \underset{\f{\tau} \in H_0^1(\o,\mathbb{M})}{\sup}   \ \ \frac{\langle \nabla \cdot (\Xi \f{\tau}), \f{q}\rangle}{\n{\f{\tau}}_{H^1} \ \n{\f{q}}_{L^2}} \geq C_{IS2} >0,
	\end{equation}
	where $ C_{IS2} = \frac{C_{IS1}}{C_c}$ is determined by the inequality $\n{\Xi\f{\tau}}_{H^1} \leq C_c \n{\f{\tau}}_{H^1}$.

	 Now let $\f{q} \in P_{h,0} \backslash \{0\}$ arbitrary, but fixed. Then we find a $\overline{\f{\tau}} \in H_0^1(\o,\mathbb{M})$ s.t. \begin{align}
	\label{eq:sec1_eq_1}
	\langle \nabla \cdot (\Xi \overline{\f{\tau}}), \f{q}\rangle &\geq C_{IS2} \n{\f{q}}_{{L}^2}^2 ,\\
	\n{\overline{\f{\tau}}}_{H^1} &= \n{\f{q}}_{L^2}. \nonumber
	\end{align} Integration by parts and \eqref{eq:sec1_eq_1}  lead to
	\begin{align}
	\label{eq:Verfürth_1}
	\langle \nabla \cdot (\Xi \Pi_{h,0}^R\overline{\f{\tau}}), \f{q}\rangle &= \langle \nabla \cdot (\Xi \overline{\f{\tau}}), \f{q}\rangle+ \langle \nabla \cdot (\Xi \Pi_{h,0}^R\overline{\f{\tau}}- \Xi \overline{\f{\tau}}), \f{q}\rangle  \nonumber \\
	& \geq C_{IS2} \n{\f{q}}_{{L}^2}^2 + \langle (\Xi \Pi_{h,0}^R\overline{\f{\tau}}- \Xi \overline{\f{\tau}}),  \nabla \f{q}\rangle.
	\end{align}
	And then we see for the last term in \eqref{eq:Verfürth_1}:
	\begin{align*}
	\langle (\Xi \Pi_{h,0}^R\overline{\f{\tau}}- \Xi \overline{\f{\tau}}),  \nabla \f{q}\rangle  &\leq \sum_{K \in \mathcal{T}_h} | \langle  \Xi \Pi_{h,0}^R\overline{\f{\tau}}- \Xi \overline{\f{\tau}} ,  \nabla \f{q}  \rangle_{L^2(K)} | \\
	& \leq \sum_{K \in \mathcal{T}_h} \n{\Xi \Pi_{h,0}^R\overline{\f{\tau}}- \Xi \overline{\f{\tau}}}_{L^2(K)} \n{\nabla \f{q}}_{L^2(K)} \\
	& \leq C_{\Xi} \sum_{K \in \mathcal{T}_h} h_K^{-1} \n{\Pi_{h,0}^R\overline{\f{\tau}}-  \overline{\f{\tau}}}_{L^2(K)} h_K\n{\nabla \f{q}}_{L^2(K)}\\
	& \leq C_{\Xi} \Big( \sum_{K \in \mathcal{T}_h} h_K^{-2} \n{\Pi_{h,0}^R\overline{\f{\tau}}-  \overline{\f{\tau}}}^2_{L^2(K)}\Big)^{1/2}  \Big( \sum_{K \in \mathcal{T}_h} h_K^2 \n{\nabla \f{q}}^2_{L^2(K)}\Big)^{1/2} .
	\end{align*}
	In view of Assumption \ref{Assumption:regular_triangul} and Lemma \ref{Lemma:spline_projection_properties} there is  a constant $C_{V1}< \infty$ with 
	\begin{align*}
	\langle (\Xi \Pi_{h,0}^R\overline{\f{\tau}}- \Xi \overline{\f{\tau}}),  \nabla \f{q}\rangle & \leq C_{V1} \n{\overline{\f{\tau}}}_{{H}^1(\o)}  \Big( \sum_{K \in \mathcal{T}_h} h_K^2 \n{\nabla \f{q}}_{L^2(K)}^2\Big)^{1/2} \\
	& \leq C_{V1} \n{\f{q}}_{{L}^2(\o)} \n{\f{q}}_{P,h} ,
	\end{align*}
	where $\n{\f{q}}_{P,h} \coloneqq \Big( \sum_{K \in \mathcal{T}_h} h_K^2 \n{\nabla \f{q}}_{L^2(K)}^2\Big)^{1/2}.$		
	\begin{remark}
		Indeed $\n{\cdot}_{P,h}$ defines a norm on $P_{h,0} \subset \f{L}_0^2(\o, \mathbb{R}^3)$.
	\end{remark}
The above estimate and \eqref{eq:Verfürth_1} yield directly the inequality 
\begin{align}
\langle \nabla \cdot (\Xi \Pi_{h,0}^R\overline{\f{\tau}}), \f{q}\rangle 
& \geq C_{IS2} \n{\f{q}}_{{L}^2}^2 - C_{V1} \n{\f{q}}_{{L}^2} \n{\f{q}}_{P,h} .
\end{align}
Consequently, setting $C_c \coloneqq 1+C_{P1}$ it is $ \n{{\Pi}_{h,0}^R \overline{\f{\tau}}}_{H^1} \leq C_c \n{ \overline{\f{\tau}}}_{H^1} $ (see Lemma \ref{Lemma:spline_projection_properties}) and
\begin{align}
\label{eq:inf_sup_2}
\underset{\f{\tau} \in R_{h,0}}{\sup}   \ \ \frac{\langle \nabla \cdot (\Xi \f{\tau}), \f{q}\rangle}{\n{\f{\tau}}_{H^1} } &\geq  \frac{1}{C_c \n{\overline{\f{\tau}}}_{H^1}} \big( C_{IS2} \n{\f{q}}_{{L}^2}^2 - C_{V1} \n{\f{q}}_{{L}^2(\o)} \n{\f{q}}_{P,h} \big) \nonumber \\
& =  \frac{C_{IS2}}{C_c} \n{\f{q}}_{{L}^2} - \frac{C_{V1}}{C_c}  \n{\f{q}}_{P,h}.
\end{align}
The last relation holds for all  $\f{q} \in P_{h,0}$.
Now, consider the inf-sup condition 
\begin{align}
\label{eq:inf_sup_cond}
\exists C_{IS}>0  \ \textup{s.t.} \ \forall h>0:  
\underset{\f{q} \in P_{h,0}}{\inf} \ \ \underset{\f{\tau} \in R_{h,0}}{\sup}   \ \ \frac{\langle \nabla \cdot (\Xi \f{\tau}), \f{q}\rangle}{\n{\f{\tau}}_{H^1} \n{\f{q}}_{{L}^2}}  \geq  C_{IS}.
\end{align}
The Verfürth trick (\cite{Verfuert}) can now be used to reduce latter inf-sup condition \eqref{eq:inf_sup_cond} to the new condition 
\begin{align}
\label{eq:inf_sup_cond3}
\exists C_{V}>0  \ \textup{s.t.} \ \forall h>0:  
\underset{\f{\tau} \in R_{h,0}}{\sup}   \ \ \frac{\langle \nabla \cdot (\Xi \f{\tau}), \f{q}\rangle}{\n{\f{\tau}}_{H^1} }  \geq  C_{V} \n{\f{q}}_{P,h}.
\end{align} 

Namely, using \eqref{eq:inf_sup_cond3} and \eqref{eq:inf_sup_2} we have
\begin{align*}
\underset{\f{\tau} \in R_{h,0}}{\sup}   \ \ \frac{\langle \nabla \cdot (\Xi \f{\tau}), \f{q}\rangle}{\n{\f{\tau}}_{H^1} }  &\geq \frac{C_{IS2}}{C_c} \n{\f{q}}_{{L}^2} - \frac{C_{V1}}{C_c}  \n{\f{q}}_{P,h}  \\
&\geq \frac{C_{IS2}}{C_c} \n{\f{q}}_{{L}^2} - \frac{C_{V1}}{C_cC_V}    \underset{\f{\tau} \in R_{h,0}}{\sup}   \ \ \frac{\langle \nabla \cdot (\Xi \f{\tau}), \f{q}\rangle}{\n{\f{\tau}}_{H^1} }.
\end{align*}
Hence \begin{align*}
\underset{\f{\tau} \in R_{h,0}}{\sup}   \ \ \frac{\langle \nabla \cdot (\Xi \f{\tau}), \f{q}\rangle}{\n{\f{\tau}}_{H^1} } \geq C_{IS } \n{\f{q}}_{{L}^2},
\end{align*}
with $C_{IS}= \frac{C_{IS2}}{C_c} \Big(1+ \frac{C_{V1}}{C_cC_V} \Big)^{-1}= \frac{C_{IS2}}{C_c} \Big( \frac{C_cC_V + C_{V1}}{C_cC_V} \Big)^{-1}= \frac{C_{IS2} C_V}{C_cC_{V}+C_{V1}}$.

Applying the Poincar\'e inequality  $\n{\f{\tau}}_{H^1} \leq (1+C_{Po}) |\f{\tau}|_{H^1}$ we can  simplify \eqref{eq:inf_sup_cond3} to 
\begin{align}
\label{eq:inf_sup_cond4}
\exists C_{V}^{'}>0,  \ \textup{s.t.} \ \forall h>0:  
\underset{\f{\tau} \in R_{h,0}}{\sup}   \ \ \frac{\langle \nabla \cdot (\Xi \f{\tau}), \f{q}\rangle}{|\f{\tau}|_{H^1} }  \geq  C_{V}^{'} \n{\f{q}}_{P,h}.
\end{align} 
Above $ |\cdot|_{H^1}$ stands for the standard Sobolev seminorm.

Thus it is justified to consider \eqref{eq:inf_sup_cond4} instead of  \eqref{eq:inf_sup_cond}.

\subsubsection{Macroelement technique}

Analogous to \cite{Bressan,BressanandSangalli}
we use the macroelement technique to  prove the condition  \eqref{eq:inf_sup_cond4}; compare \cite[Section 3.3]{Bressan}. Therefore we first define  the set of macroelement domains $\mathcal{M}_h^L$ corresponding to the triangulation $\mathcal{T}_h$. 
\begin{definition}[Macroelement domains]
	For $L \in \mathbb{N}_{>0}$ the set of macroelement domains $\mathcal{M}_h^L $ in a $n$-dimensional mesh $\mathcal{T}_h$ is defined as follows: If $L=1$ we have 
	\begin{align*}
	{\mathcal{M}}_h^1 &\coloneqq \{ M_K^1 \ | \ K \in \mathcal{T}_h , \ \# \{ N \in \mathcal{T}_h \ | N \subset M_K^1 \}=3^n  \}, \ \ \textup{where for }\ \ K \in \mathcal{T}_h  \ \ \textup{it is} \\
M_K^1 &\coloneqq \bigcup_{N \in \mathcal{T}_h \land \overline{N}  \cap \overline{K} \neq \emptyset} \overline{N} .  
	\end{align*}
	And recursively, for $L>1$
	\begin{align*}
	{\mathcal{M}}_h^L &\coloneqq \{ M_K^L \ | \ K \in \mathcal{T}_h , \ \# \{ N \in \mathcal{T}_h \ | N \subset M_K^L \}=(2L+1)^{n}  \}, \ \ \textup{where for }\ \ K \in \mathcal{T}_h  \ \ \textup{it is} \\
M_K^L &\coloneqq \bigcup_{N \in \mathcal{T}_h \land \overline{N}  \cap \overline{M_K^{L-1}} \neq \emptyset} \overline{N} .  
	\end{align*}
	We write $\#Q$ for the cardinality of a set $Q$.
\end{definition}

The $\mathcal{M}_h^L$ satisfy three important properties:
\begin{itemize}
	\item $\forall \mathcal{T}_h, \ \forall K \in \mathcal{T}_h, \ \exists  M \in \mathcal{M}_h^L $ s.t. $K \subset M$.
	\item $\forall \mathcal{T}_h, \ \forall K \in \mathcal{T}_h, $ there are at most $C_{\textup{overlap}} = (2L+1)^{n}$ many macroelement domains $M$ in $\mathcal{M}_h^L$ s.t. $K \subset M$.
	\item $\forall \mathcal{T}_h, \ \forall M \in \mathcal{M}_h^L, $ there are at most $C_{\textup{elem}} = (2L+1)^{n} $ different mesh elements in $M$.
\end{itemize}
An illustration of macroelements as collection of mesh elements is sketched in Fig. \ref{Fig:7} for $n=2$. Basic idea of the technique of macroelements is to reduce the global inf-sup condition  \eqref{eq:inf_sup_cond4} to a local alternative on the macroelements. Due to this localization step and a regular mesh assumption  one can then prove the new condition more easily by means of considerations on the simple parametric spline spaces and parametric meshes. 
In the following we drop the index $L$ and assume $L$ to be a fixed natural number. Later, we will come back to this issue and clarify what $L$ actually is.

\begin{figure}
	\centering
	\begin{tikzpicture}[scale=0.8]	
	
			\foreach \x in {-1,-0.5,0,0.5,1,1.5,2,2.5,3,3.5}{		
		\draw[scale=0.9] (\x,0) -- (\x,4+0.5);
		\draw[scale=0.9] (-1,\x+1,0) -- (3.5,\x+1);
	}

\draw[scale=0.9,green, very thick] (0,1)  rectangle (2.5,3.5);

\draw[->,scale=0.9] (-1.25,0)--(4.25,0);
\draw[->,scale=0.9] (-1,-0.25)--(-1,5.25);
\draw[scale=0.9] (-1.25,4.5)--(-1,4.5);
\draw[scale=0.9] (3.5,-0.25)--(3.5,0);
\node[scale=0.9,below] at (3.2,-0.2)  {$1$};
\node[below] at (-0.9,-0.2)  {$0$};
\node[below] at (-1.3,0.3)  {$0$};
\node[below] at (-1.3,4.35)  {$1$};

\node[below] at (1,4.8)  {$\widehat{\mathcal{T}}_h$};

    \node (myfirstpic) at (9,2) {\includegraphics[width=0.29\linewidth]{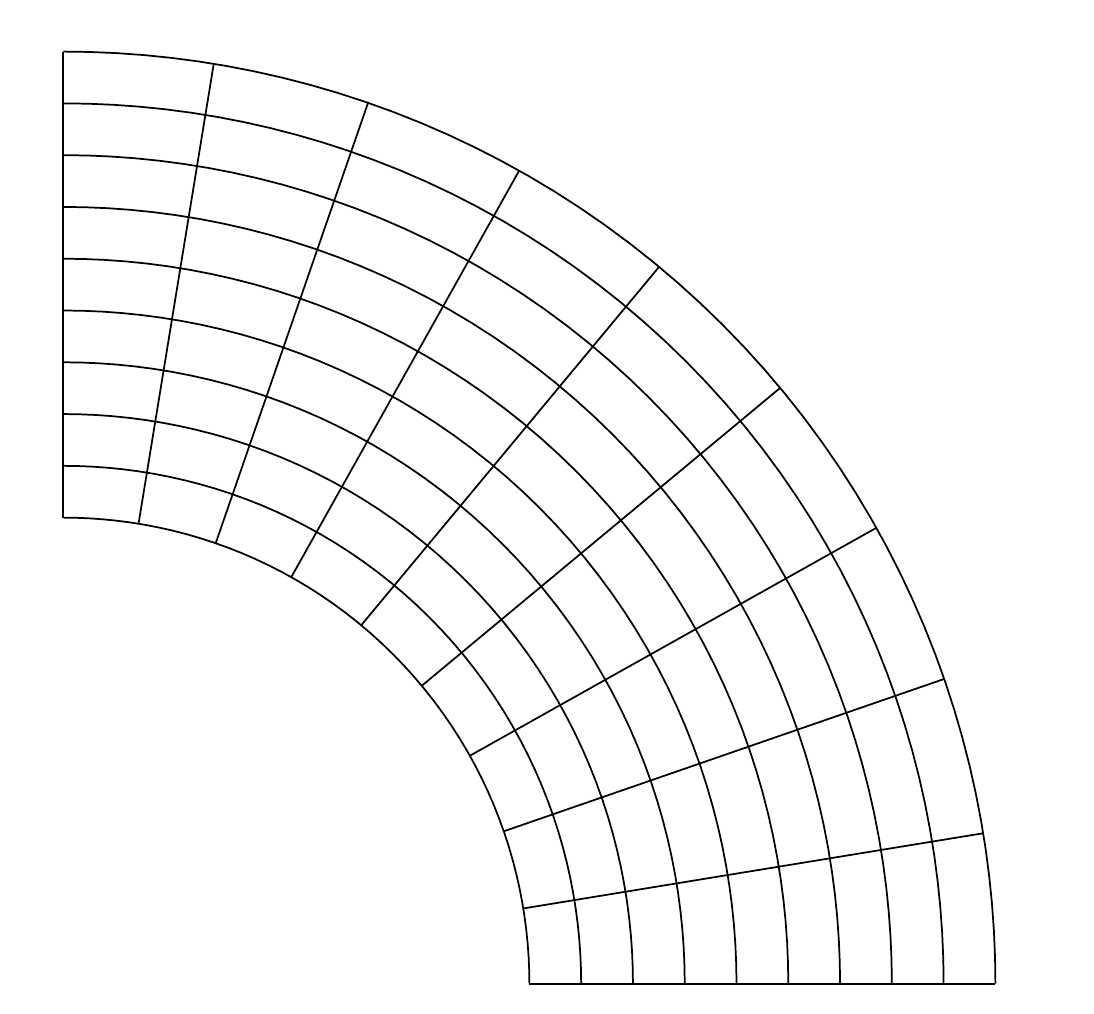}};
	\draw[scale=0.9] (-1,0)  rectangle (3.5,4.5);
	\draw[fill=blue,opacity=0.3,scale=0.9] (0.5,1.5)  rectangle (2,3);
		\draw[fill=red, opacity=0.7,scale=0.9] (1,2)  rectangle (1.5,2.5);
    \draw[scale=0.9, thick] (0.5,1.5)  rectangle (2,3);
	\draw[scale=0.9,fill=green, opacity = 0.1] (0,1)  rectangle (2.5,3.5);

	\draw [thick,->, out=30,in=150] (4,3) to (5.5,3);	
	\node at (5.25-0.5,3.5)  {$\p{F}$};

		\draw[ fill=blue,opacity=0.3, scale=2.65,shift={(2.355,-0.272)}] (30:1.33333)--(30:1.666666) 
	arc(30:60:1.66666)--(60:1.333333) arc(60:30:1.33333);
	\draw[ thick,opacity=0.7, scale=2.65,shift={(2.355,-0.272)}] (30:1.33333)--(30:1.666666) 
	arc(30:60:1.66666)--(60:1.333333) arc(60:30:1.33333);
	
	\draw[ fill=red,opacity=0.7, scale=2.65,shift={(2.355,-0.272)}] (40:1.33333+1/9)--(40:1.666666-1/9) 
	arc(40:50:1.66-1/9)--(50:1.333333+1/9) arc(50:40:1.33333+1/9);

		\draw[fill=green,opacity=0.1, scale=2.58,shift={(2.355+0.09,-0.272+0.022)}] (19:1.33333-1/9)--(19:1.666666+1/9) 
	arc(19:71:1.66666+1/9)--(71:1.333333-1/9) arc(71:19:1.33333-1/9);

		\draw[green, very thick,opacity=1, scale=2.58,shift={(2.355+0.09,-0.272+0.022)}] (19:1.33333-1/9)--(19:1.666666+1/9) 
	arc(19:71:1.66666+1/9)--(71:1.333333-1/9) arc(71:19:1.33333-1/9);

	\draw[->] (7-0.5+0.25,-0.5+0.25)--(8.5-0.5+0.25,-0.5+0.25);
	\draw[->] (6.5+0.25,-0.5+0.25)--(6.5+0.25,1+0.25);
	\node at (8,-0.5)  {$x$};
	\node at (6.5,1)  {$y$};

	\node[below] at (8,5)  {$\mathcal{T}_h$};
	\draw[->,dashed] (8.5,3.5) -- (11,4.3);
	\draw[->,dashed] (9.2,2.2) -- (11.4,2.5);
	\draw[->,dashed] (9.9,1.4) -- (12,1);
	\node[color=green!50!black] at (11.6,4.4)  {$M_K^2$};
	\node[color=red!80!black] at (11.8,2.6)  {$K$};
	\node[color=blue!50!black] at (12.55,0.9)  {$M_K^1$};
		\end{tikzpicture}
		\caption{Illustration of the macroelements $M_K^1$ and $M_K^2$ for the red highlighted mesh element $K \in \mathcal{T}_h$. For decreasing $h$ the macroelement shapes can be approximated by affine transformed squares. This observation together with the possibility of a covering of $\o$ by use of the macroelements yields the basic idea for the reduction of a global inf-sup condition to a localized version.}
		\label{Fig:7}
\end{figure}
Macroelement domains can be defined analogously in the parametric domain and in this case we use the notation $\widehat{\mathcal{M}}_h$ as well as  $\widehat{M}_K $ for the parametric elements.
For each $M \in \mathcal{M}_h$ we can define the spaces 
\begin{align}
\t{V}_{M} &\coloneqq \{ \f{\tau}_{| M} \ | \ \f{\tau} \in R_{h,0}, \ \textup{supp}(\f{\tau}) \subset M   \}, \\
\mathcal{P}_{M} &\coloneqq \{ \f{v}_{| M} \ | \ \f{v} \in P_{h,0}, \ \int_{M}\f{v} d \f{x}=0    \}
\end{align}
and interpret mappings in latter spaces as mappings on $\o$  through a straight-forward zero extension. We equip  $\mathcal{P}_{M}$ with the norm \begin{equation*}
\n{\f{q}}_{\mathcal{P}_{M}} \coloneqq \Big( \sum_{K \subset M} h_K^2 \n{\nabla \f{q}}_{L^2(K)}^2\Big)^{1/2}
\end{equation*}
and note that $$\Pi_{\mathcal{P}_{M}}\f{q} \coloneqq \f{q}_{|M} - \frac{1}{|M|} \int_{M} \f{q} d \f{x} $$ defines a projection from $P_{h,0}$  onto $\mathcal{P}_{M}$. The integral is meant in a component-wise manner.

As already mentioned, the idea of the macroelement technique is to reduce the global inf-sup condition \eqref{eq:inf_sup_cond4}  to a local version on macroelements.
To be more precise, assume that
\begin{align}
\label{eq:inf_sup_macro}
\exists C_{\textup{macro}}>0, \ \ \textup{s.t.}  \ \ \forall h, \ \forall M \in \mathcal{M}_h: \ \  \ \underset{\f{q} \in \mathcal{P}_{M}}{\inf}  \ \underset{\f{\tau} \in \t{V}_M}{\sup}   \ \ \frac{\langle  \Xi \f{\tau}, \nabla \f{q}\rangle}{|\f{\tau}|_{H^1} \n{\f{q}}_{\mathcal{P}_M} }  \geq  C_{\textup{macro}}.
\end{align}
Then this  inf-sup stability on the macroelements leads to  stability  in the sense of \eqref{eq:inf_sup_cond4}; cf. \cite[Section 3.3]{Bressan}.  This can be seen as follows. Let $\f{q}  \in P_{h,0} $ arbitrary, but fixed. Further let $\f{\tau}_M \in \t{V}_M$ chosen such that \begin{align*}
\langle  \Xi \f{\tau}_M, \nabla \Pi_{\mathcal{P}_{M}}\f{q}\rangle  &\geq C_{\textup{macro}} \n{\Pi_{\mathcal{P}_{M}}\f{q}}_{\mathcal{P}_{M}}^2 \\
| \f{\tau}_M|_{H^1(\o)} &= \n{\Pi_{\mathcal{P}_{M}} \f{q}}_{\mathcal{P}_{M}}.
\end{align*}
On the one hand, one obtains
\begin{align*}
\underset{\f{\tau} \in R_{h,0}}{\sup}   \ \ {\langle  \Xi \f{\tau}, \nabla \f{q}\rangle}  &\geq \langle \sum_{M \in \mathcal{M}_h}  \Xi \f{\tau}_M , \nabla q \rangle =  \sum_{M \in \mathcal{M}_h} \langle  \Xi \f{\tau}_M , \nabla q \rangle  \\
& \geq \sum_{M \in \mathcal{M}_h} C_{\textup{macro}} \n{\Pi_{\mathcal{P}_{M}}\f{q}}_{\mathcal{P}_{M}}^2 \\
&=  \sum_{M \in \mathcal{M}_h} C_{\textup{macro}} \Big( \sum_{K \subset M} h_K^2 \n{\nabla \Pi_{\mathcal{P}_{M}}\f{q}}_{L^2(K)}^2\Big)  \\
& \geq C_{\textup{macro}} \sum_{K \in \mathcal{T}_h} \Big(  h_K^2 \n{\nabla\f{q}}_{L^2(K)}^2\Big) \\
&=C_{\textup{macro}} \n{\f{q}}_{P,h}^2.
\end{align*}
On the other hand, one can estimate
\begin{align*}
| \sum_{M \in \mathcal{M}_h}  \f{\tau}_M |^2_{H^1(\o)} & =   \sum_{K \in \mathcal{T}_h}| \sum_{M \in \mathcal{M}_h}  \f{\tau}_M |^2_{H^1(K)} \\
& \leq \sum_{K \in \mathcal{T}_h}  \Big(\sum_{ \underset{K \subset M}{M \in \mathcal{M}_h} }  |\f{\tau}_M| _{H^1(K)}  \Big)^2 \\
& \leq C_{\textup{overlap}}\sum_{K \in \mathcal{T}_h} \sum_{ \underset{K \subset M}{M \in \mathcal{M}_h} }  |\f{\tau}_M|^2 _{H^1(K)}  \\
& \leq C_{\textup{overlap}} \sum_{ {M \in \mathcal{M}_h} }  |\f{\tau}_M|^2 _{H^1(\o)}  \\
& \leq C_{\textup{overlap}} \sum_{ {M \in \mathcal{M}_h} }  \n{\Pi_{\mathcal{P}_{M}} \f{q}}_{\mathcal{P}_{M}}^2 \\
& \leq  C_{\textup{overlap}}\sum_{M \in \mathcal{M}_h} \sum_{ K \subset M }  h_K^2 \n{\nabla\f{q}}_{L^2(K)}^2  \\
& \leq   C_{\textup{overlap}}^2  \n{\f{q}}_{P,h}^2.
\end{align*}

Then, it is easy to see that the both inequality chains from above lead to wanted result \eqref{eq:inf_sup_cond4}. One can set $C_V^{'} = C_{\textup{macro}} C_{\textup{overlap}}^{-1}$.

Thus to show  the inf-sup stability \eqref{eq:inf_sup_cond} it  remains to prove  \eqref{eq:inf_sup_macro}.

\subsubsection{Proof of the auxiliary inf-sup condition  \eqref{eq:inf_sup_macro}}
Now we apply a result from \cite{BressanandSangalli}, where the inf-sup stability of a similar space pair is proven.
In this section we exploit several times the \emph{Einstein summation convention}.

\begin{lemma}
	There is a constant $C_{\textup{macro}}>0$ and a mesh size $h_{\textup{max}}>0$ s.t. for all $\mathcal{T}_h$ and $  \forall M \in \mathcal{M}_h$ it holds
	
	\begin{align}
	\underset{q \in {P}_{M}}{\inf} \ \  \underset{\f{v} \in \f{V}_M}{\sup}   \ \ \frac{\langle  \f{v}, \nabla q \rangle}{|\f{v}|_{H^1} \n{{q}}_{{P}_{M}} }  \geq  C_{\textup{macro}}, \ \ \textup{if}  \ h \leq h_{\max},
	\end{align}
	where 
	\begin{align}
	\f{V}_{M} &\coloneqq \{ \f{v}_{| M} \ | \ \f{v} \in \big(V_{h,3}^0(\textsf{p},r) \big)^3, \ \textup{supp}(\f{v}) \subset M   \}, \\
	P_{M} &\coloneqq \{ {q}_{| M} \ | \ q \in \bar{{W}}_{h}^3 = V_{h,3}^0(\textsf{p}-1,r), \ \int_{M}q dx=0  \}
	\end{align}
	and $\n{{q}}_{{P}_{M}} $ is analogously defined like $\n{{q}}_{\mathcal{P}_{M}} $, i.e. $\n{\f{q}}_{{P}_{M}} \coloneqq \Big( \small\sum_{K \subset M} h_K^2 \n{\nabla \f{q}}_{L^2(K)}^2\Big)^{1/2}$.
\end{lemma}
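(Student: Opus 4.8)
The plan is to reduce the assertion to the macroelement inf--sup estimate for isogeometric Taylor--Hood pairs established in \cite{BressanandSangalli}. First I would observe that every $\f{v} \in \f{V}_M$ has support contained in $M$, so integration by parts gives $\langle \f{v}, \nabla q\rangle = -\langle \nabla \cdot \f{v}, q\rangle$ for all $q \in P_M$. Hence, after replacing $\f{v}$ by $-\f{v}$, the quotient to be bounded below is exactly the discrete Stokes inf--sup quotient for the velocity/pressure pair $\big((V_{h,3}^0(\textsf{p},r))^3,\, V_{h,3}^0(\textsf{p}-1,r)\big)$, measured with the mesh-weighted pressure seminorm $\n{q}_{P_M}$ and with the zero-mean side constraint built into $P_M$. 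This is precisely an isogeometric Taylor--Hood configuration: velocity components are the scalar splines of degree $\textsf{p}$ and regularity $r$, the pressure is of degree $\textsf{p}-1$ and regularity $r$, and the hypothesis $0\le r<\textsf{p}-1$ of the lemma is exactly the admissibility condition $r\le \textsf{p}-2$ appearing in the cited reference.

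Next I would transport the problem to the parametric macroelement $\widehat{M}\coloneqq \p{F}^{-1}(M)$ via the pull-back $\mathcal{Y}_0$ (componentwise for $\f{v}$, and directly for $q$). Then $\mathcal{Y}_0$ maps $\f{V}_M$ onto $(\widehat{V}_M)^3$, with $\widehat{V}_M$ the subspace of $S_{\textsf{p},\textsf{p},\textsf{p}}^{r,r,r}$ supported in $\widehat{M}$, and maps $P_M$ into $S_{\textsf{p}-1,\textsf{p}-1,\textsf{p}-1}^{r,r,r}$ restricted to $\widehat{M}$; the bilinear form becomes $\int_{\widehat{M}} \widehat{\f{v}}^{\,T}\p{J}^{-T}\widehat{\nabla}\widehat{q}\,|\det \p{J}|\,d\widehat{\f{x}}$, the constraint $\int_M q\,d\f{x}=0$ becomes the weighted constraint $\int_{\widehat{M}} \widehat{q}\,|\det\p{J}|\,d\widehat{\f{x}}=0$, and by Assumption \ref{Assumption:regular_triangul} the seminorm $|\f{v}|_{H^1}$ and the weighted norm $\n{q}_{P_M}$ are uniformly (in $h$ and $M$) equivalent to their parametric counterparts. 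The crucial structural fact is that $\p{J}$, $\p{J}^{-1}$ and $|\det\p{J}|$ are uniformly continuous while $\textup{diam}(M)\le C h$, so on each macroelement these matrices deviate from their values at a fixed point $\f{x}_M\in M$ by at most $\omega(Ch)$ with $\omega$ a modulus of continuity independent of $M$ and $h$. Freezing the coefficients at $\f{x}_M$ replaces the geometry map by a constant invertible linear map, i.e., after an affine change of variables the problem becomes the reference macroelement inf--sup problem for the scalar Taylor--Hood pair on a structured Cartesian patch of at most $(2L+1)^3$ cells, for which \cite{BressanandSangalli} provides a positive constant $\widehat{C}_{\textup{macro}}$ depending only on $\textsf{p}$, $r$ and the fixed number $L$.

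Finally I would assemble the estimate: the difference between the frozen-coefficient form and the true form on $\widehat{M}$ is controlled by $\omega(Ch)\,\n{\widehat{\f{v}}}_{H^1}\,\n{\widehat{\nabla}\widehat{q}}_{L^2}$, and the weighted zero-mean constraint differs from the plain one by a term of the same order (so a re-centering of $\widehat{q}$ costs only another $\omega(Ch)$ factor). Choosing $h_{\max}$ so small that $\omega(Ch_{\max})$ is a fixed fraction of $\widehat{C}_{\textup{macro}}$, the perturbed quotient on $\widehat{M}$ stays bounded below by a positive constant for all $h\le h_{\max}$, uniformly over all macroelements $M\in\mathcal{M}_h$; transporting back to $M$ with the $\p{F}$-dependent (but $h$- and $M$-independent) equivalence constants yields the desired $C_{\textup{macro}}$. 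The step I expect to be the main obstacle is exactly this ``frozen-Jacobian'' localization: making the perturbation of the bilinear form, of the weighted mean-zero constraint, and of the two norms rigorous and uniform over all macroelements of all meshes, and matching the degrees, regularities and the precise mesh-weighted pressure seminorm to the exact hypotheses of the theorem invoked from \cite{BressanandSangalli}; once the reduction to a fixed reference patch is in place, the remainder is bookkeeping.
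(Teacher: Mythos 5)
Your proposal is correct and rests on the same key ingredient as the paper: the paper's entire proof is a direct appeal to Theorem 4.1 and Section 4.3 of \cite{BressanandSangalli} (with NURBS weight $\omega=1$), identifying $\big((V_{h,3}^0(\textsf{p},r))^3,\,V_{h,3}^0(\textsf{p}-1,r)\big)$ as an isogeometric Taylor--Hood pair and taking $L=\textsf{p}-1$ for the macroelements. The additional machinery you sketch (integration by parts, pull-back to $\widehat{M}$, frozen-Jacobian perturbation) is essentially the content already contained in that cited result -- and is the same technique the paper itself deploys in the \emph{following} subsection to pass from this scalar lemma to the $\Xi$-twisted matrix-valued condition \eqref{eq:inf_sup_macro} -- so it is consistent with, though more elaborate than, the paper's one-line citation.
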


\begin{proof}
	Since we only consider spline basis functions this follows by Section 4.3 and Theorem 4.1 in \cite{BressanandSangalli} with NURBS weight function $\omega=1$. The mentioned reference shows that the macroelements with  $L = \textsf{p}-1$ are suitable.
\end{proof}
\begin{remark}
	Our definition of macroelements depends on $L$. However, the results in \cite[Theorem 4.1]{BressanandSangalli} imply that the choice $L=\textsf{p}-1$ leads to the inf-sup inequality of the last lemma. That is why we assume $\mathcal{M}_h = \mathcal{M}_h^{\textsf{p}-1}$ in the rest of the article.
\end{remark}
A row-wise application of the last lemma  yields the  existence of a  constant $C_{\textup{macro}}^{'}>0$ s.t. for all  macroelements $M \in \mathcal{M}_h$ and $\f{q}  \in  \f\mathcal{P}_{M} $ we find a $\f{\tau}_M \in \{ \f{\tau} \in (S_{\textsf{p},\textsf{p},\textsf{p}}^{r,r,r} \circ \p{F}^{-1})^{3\times 3} \ |  \  \textup{supp}(\f{\tau}) \subset  M \}$ with \begin{align}
\label{eq:macro_2}
\langle\f{\tau}_M, \nabla \f{q} \rangle   \geq C_{\textup{macro}}^{'} |\f{\tau}_M|_{H^1(M)}  \n{\f{q}}_{\mathcal{P}_{M}},
\end{align}
if $h \leq h_{\max}$. One notes that   $C_{\textup{macro}}^{'}$ is independent from $h , \ \f{q}$ and the macroelement. Next we choose for each macroelement  one $\f{x}_M \in M $, e.g. the push-forward of the center in the corresponding parametric macroelement $\widehat{M}$. Then define $\p{J}_{\f{x}_M} \coloneqq \p{J}(\f{x}_M)$ and set \begin{align*}
\tilde{\f{\tau}}_{M} \coloneqq \big[\Xi^{-1}(\f{\tau}_M)\big] \f{J}_{\f{x}_M}  \p{J}^{-1} \in \t{V}_M.
\end{align*} 
Indeed, we have $ \overline{{\f{\tau}}}_M \coloneqq  \big[\Xi^{-1}(\f{\tau}_M) \big] \p{J}_{\f{x}_M} \in (S_{\textsf{p},\textsf{p},\textsf{p}}^{r,r,r} \circ \p{F}^{-1})^{3\times 3}$ and the rows of $\tilde{\f{\tau}}_{M}$ are elements of $ \p{J}^{-T} (S_{\textsf{p},\textsf{p},\textsf{p}}^{r,r,r} \circ \p{F}^{-1})^{3} \in {W}_h^1$. Then, one gets
\begin{align}
\label{eq:aux_line1}
\langle \Xi\tilde{\f{\tau}}_{M}, \nabla \f{q} \rangle &=  	\langle \Xi \big[\overline{\f{\tau}}_{M} \p{J}_{\f{x}_M}^{-1}\big] , \nabla \f{q} \rangle + 	\langle \Xi \big[\overline{\f{\tau}}_{M} \big( \p{J}^{-1} - \p{J}_{\f{x}_M}^{-1} \big) \big], \nabla \f{q} \rangle \nonumber  \\
&= 	\langle {\f{\tau}}_{M}  , \nabla \f{q} \rangle + 	\langle \Xi \big[\overline{\f{\tau}}_{M} \big( \p{J}^{-1} - \p{J}_{\f{x}_M}^{-1} \big) \big], \nabla \f{q} \rangle 
\end{align}
The second term in line \eqref{eq:aux_line1} can be estimated as follows.
Introduce the auxiliary matrix $\tilde{\p{J}} \coloneqq \p{J}_{\f{x}_M} \big( \p{J}^{-1} - \p{J}_{\f{x}_M}^{-1} \big).$
We set $\f{\sigma}_M \coloneqq \Xi \big[ [\Xi^{-1}(\f{\tau}_M) ] \tilde{\p{J}}  \big]$ and have
\begin{align*}
\langle \f{\sigma}_M, \nabla \f{q} \rangle_{L^2(M)} &= 	\langle \widehat{\f{\sigma}}_M  |\textup{det}(\p{J})| ,  \widehat{\nabla} \widehat{\f{q}}\p{J}^{-1} \rangle_{L^2(\widehat{M})} ,
\end{align*}
where $\widehat{\f{\sigma}}_M  \coloneqq {\f{\sigma}}_M  \circ \p{F}$, \ $\widehat{q} = q \circ \p{F}$. With $\widehat{\f{\tau}}_M =\f{\tau}_M \circ \p{F}$ we can find  bounded and continuous coefficient functions $c_{ij}^{kl}=c_{ij}^{kl}(\tilde{\p{J}},\textup{det}(\p{J}),\p{J}^{-1},\Xi,\Xi^{-1})$ s.t.
\begin{align}
\label{eq:line_1}
\langle \f{\sigma}_M, \nabla \f{q} \rangle_{L^2(M)} &= \int_{\widehat{M}} (\widehat{\f{\tau}}_M)_{kl}  \ c_{ij}^{kl}   \ \h_j \widehat{\f{q}}_id \widehat{\f{x}}  \nonumber \\
&\leq  C\n{c_{ij}^{kl}}_{L^{\infty}(\widehat{M})} \n{\widehat{\f{\tau}}_M}_{L^2(\widehat{M})}    \n{\widehat{\nabla}\widehat{\f{q}}}_{L^2(\widehat{M})} \nonumber \\
& \leq  C\tilde{C} \n{c_{ij}^{kl}}_{L^{\infty}(\widehat{M})}  h |\widehat{\f{\tau}}_M|_{H^1(\widehat{M})}  \n{\widehat{\nabla}\widehat{\f{q}}}_{L^2(\widehat{M})} \nonumber
\\ & \leq \bar{C} \n{c_{ij}^{kl}}_{L^{\infty}(\widehat{M})} |\widehat{\f{\tau}}_M|_{H^1(\widehat{M})} \Big( \sum_{ K \subset \widehat{M} } h \n{\widehat{\nabla}\widehat{\f{q}}}_{L^2(K)}\Big) \nonumber \\
& \leq \bar{C} \sqrt{C_{\textup{elem}}}  \n{c_{ij}^{kl}}_{L^{\infty}(\widehat{M})} |{\widehat{\f{\tau}}}_{{M}}|_{H^1({\widehat{M}})} \Big( \sum_{ K \subset \widehat{M} } h^2 \n{\widehat{\nabla}\widehat{\f{q}}}_{L^2(K)}^2\Big)^{1/2} 
\end{align}
To obtain the second inequality sign above we use the estimate for the Poincare constant $C_{Po}(D)$ of rectangular domains $D$:
\begin{equation*}
C_{Po}(D) \leq {\textup{diam}(D)} \ \  \Rightarrow \ \  C_{Po}(\widehat{M}) \leq \tilde{C} h, \ \textup{for a proper } \tilde{C}, \ \ \  \textup{see \cite{Braess}}.
\end{equation*}

Analogously to \cite{Bressan},  one introduces the auxiliary mapping $\widehat{\f{q}}_a  \coloneqq \widehat{\f{q}} - \frac{1}{|\widehat{M}|} \int_{\widehat{M}} \widehat{\f{q}} d \widehat{\f{x}}$.  Obviously, we have $\widehat{\nabla} \widehat{\f{q}} = \widehat{\nabla}\widehat{\f{q}}_a$. Besides, with the norm equivalence statement in line (3.37) of \cite{Bressan} we obtain for some constant $C_{H^1}$  independent of the mesh size
\begin{equation}
\label{eq:line_2}
|\widehat{\f{\tau}}_M|_{H^1(\widehat{M})} \leq C_{H^1} |{\f{\tau}}_M|_{H^1({M})}.
\end{equation}
Further, by the regularity assumption on $\p{F}$, there is for each $\varepsilon >0$ a mesh size $h_{\varepsilon}>0$ s.t. for all $h \leq h_{\varepsilon}$ we get \begin{align*}
|\tilde{\p{J}}|= \big|\p{J}_{\f{x}_M} \big(\p{J}^{-1}(\widehat{\f{x}})-\p{J}^{-1}_{\f{x}_M}\big)\big|&< \varepsilon,  \ \ \ \forall M \in \mathcal{M}_h, \forall \widehat{\f{x}} \in \widehat{M}.	
\end{align*}
Hence, since  $\f{\sigma}_M = \Xi \big[ [\Xi^{-1}(\f{\tau}_M) ] \tilde{\p{J}}  \big]$ there is a constant $C_{\textup{para}}< \infty$ which is independent of $h$ and the macroelement such that \begin{equation} \label{eq:line_3}
|c_{ij}^{kl}| < C_{\textup{para}} \varepsilon \ \ \textup{on} \ \widehat{M},
\end{equation}
for $h$ small enough.
Combining  \eqref{eq:line_2} and \eqref{eq:line_3} we find a constant $C_{\textup{para}}^{'} < \infty$ with 
\begin{align}
\label{eq:line_4}
|\langle \f{\sigma}_M, \nabla \f{q} \rangle_{L^2(M)}| & \leq C_{\textup{para}}^{'} \varepsilon   |{\f{\tau}}_M|_{H^1({M})}   \Big( \sum_{ K \subset \widehat{M} } h^2 \n{\widehat{\nabla}\widehat{\f{q}}_a}_{L^2(K)}^2\Big)^{1/2} .
\end{align}
Applying the inequality chain (3.47) in \cite{Bressan} we obtain a constant $C_{\textup{est}}$ s.t.
\begin{equation}
\label{eq:line_5}
\Big( \sum_{ K \subset \widehat{M} } h^2 \n{\widehat{\nabla}\widehat{\f{q}}_a}_{L^2(K)}^2\Big)^{1/2} \leq C_{\textup{est}}  \n{\f{q}}_{\mathcal{P}_M} .
\end{equation}
In view of  \eqref{eq:line_4} and \eqref{eq:line_5} it is
\begin{align}
	\label{eq:estimate_tilde_1}
\langle \Xi\tilde{\f{\tau}}_{M}, \nabla \f{q} \rangle \geq C_{\textup{macro}}^{'} |\f{\tau}_M|_{H^1(M)}  \n{\f{q}}_{\mathcal{P}_M} - C_{\textup{para}}^{'}   \varepsilon   C_{\textup{est}} 	|{\f{\tau}}_M|_{H^1({M})}  \n{  {\f{q}}}_{\mathcal{P}_M}.
\end{align}
Hence, if we choose $h \leq h_{\max} $ for some suitable $h_{\max}>0$ we get  $  C_{\textup{macro}}^{''} = C_{\textup{macro}}^{'} - C_{\textup{para}}^{'}   \varepsilon   C_{\textup{est}}  >0 $.
And since $\p{J}$ is $C^1$ smooth by assumption, we obtain for some $C_{\textup{est},1}, C_{\textup{est},2}$ and with the Poincar\'e inequality:
\begin{align*}
|\tilde{\f{\tau}}_M|_{H^1(M)} &=  \bigg| \big[\Xi^{-1}(\f{\tau}_M)\big] \p{J}_{\f{x}_M}  \p{J}^{-1}  \bigg|_{H^1(M)} \leq  C_{\textup{est},2} \n{ {\f{\tau}}_M}_{H^1(M)} \\
& \leq C_{\textup{est},2} (1+C_{Po}(\o)) |\f{\tau}_M|_{H^1(M)}
\end{align*}

One notes $M \subset \o$, i.e. the Poincar\'{e} constant can be chosen independently from $M$. \\
With the last  estimate and \eqref{eq:estimate_tilde_1} we can conclude now 
\begin{align}
\langle \Xi\tilde{\f{\tau}}_{M}, \nabla \f{q} \rangle &\geq C_{\textup{macro}}^{''} |{\f{\tau}}_M|_{H^1({M})}  \n{  {\f{q}}}_{\mathcal{P}_M} \geq  \frac{C_{\textup{macro}}^{''}}{C_{\textup{est},2} (1+C_{Po}(\o))} | \tilde{\f{\tau}}_M|_{H^1(M)} \n{  {\f{q}}}_{\mathcal{P}_M}.
\end{align}

In other words with $C_{\textup{macro}} = \frac{C_{\textup{macro}}^{''}}{C_{\textup{est},2} (1+C_{Po}(\o))}>0$ and for $h  \leq h_{\textup{max}}$ we have shown the property \eqref{eq:inf_sup_macro}.

\subsubsection{Inf-sup condition for spaces without boundary conditions}

From the last section together with  the Verfürth trick we get the validity of  \eqref{eq:inf_sup_cond} if $h \leq h_{\max}$.  However, there we assumed special boundary conditions and the property of vanishing mean value for the variable $q$. Thus, we want to prove the next lemma.
\begin{lemma}
	\label{Lemma:inf_sup_stability}
	There is a mesh size $h_{\max}>0$ and a constant $C_{IS}^{'}>0$ s.t. 
	\begin{align}
	\label{eq:inf_sup_cond_without_BC}
	\ \forall 0< h \leq h_{\max}:  
	\underset{\f{q} \in \f{P}_h}{\inf} \ \underset{\f{\tau} \in R_h \cap H^1(\o,\mathbb{M})}{\sup}   \ \ \frac{\langle \nabla \cdot (\Xi \f{\tau}), \f{q}\rangle}{\n{\f{\tau}}_{H^1} \n{\f{q}}_{{L}^2}}  \geq  C_{IS}^{'}.
	\end{align}
\end{lemma}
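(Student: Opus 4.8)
The plan is to bootstrap from the inf-sup estimate \eqref{eq:inf_sup_cond}, which has already been established on the constrained spaces $R_{h,0}=R_h\cap H^1_0(\o,\mathbb{M})$ and $P_{h,0}=P_h\cap L^2_0(\o,\mathbb{R}^3)$, and to remove simultaneously the homogeneous boundary condition on the test fields and the zero-mean constraint on $q$ by a two-term argument. Fix $q\in P_h$ and write $q=q_0+\bar{q}$, where $\bar{q}\coloneqq\frac{1}{|\o|}\int_\o q\,dx\in\mathbb{R}^3$ is the componentwise mean value and $q_0\coloneqq q-\bar{q}$. Since $\bar W_h^3=V_{h,3}^0(\textsf p-1,r)$ contains the constants (because $\textsf p-1\ge 1$), we have $\bar{q}\in P_h$, hence $q_0\in P_h\cap L^2_0(\o,\mathbb{R}^3)=P_{h,0}$, and by orthogonality $\n{q}_{L^2}^2=\n{q_0}_{L^2}^2+\n{\bar{q}}_{L^2}^2$.

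For the zero-mean part, \eqref{eq:inf_sup_cond} (the supremum being attained on the finite-dimensional sphere) yields, after rescaling, a $\tau_0\in R_{h,0}\subset R_h\cap H^1(\o,\mathbb{M})$ with $\langle\nabla\cdot(\Xi\tau_0),q_0\rangle\ge C_{IS}\n{q_0}_{L^2}^2$ and $\n{\tau_0}_{H^1}=\n{q_0}_{L^2}$. Because $\tau_0\in H^1_0(\o,\mathbb{M})$, so is $\Xi\tau_0$; testing against the constant $\bar{q}$ and integrating by parts, both the volume term (the gradient of $\bar{q}$ vanishes) and the boundary term (the trace of $\Xi\tau_0$ vanishes) are zero, so $\langle\nabla\cdot(\Xi\tau_0),\bar{q}\rangle=0$ and $\tau_0$ only interacts with $q_0$.

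For the mean part I would introduce the fixed linear matrix polynomials $m^{(k)}\coloneqq\Xi^{-1}\big(\tfrac{1}{3|\o|}\,e_k\,x^T\big)\in H^1(\o,\mathbb{M})$, $k=1,2,3$, which satisfy $\nabla\cdot(\Xi m^{(k)})=\tfrac{1}{|\o|}e_k$ and $\n{m^{(k)}}_{H^1}\le C_\o$. By the approximation property of the isogeometric space $R_h\cap H^1(\o,\mathbb{M})$ — which holds by standard IGA theory, cf. Lemma \ref{Lemma:spline_projection_properties} — there exist $\tau_1^{(k)}\in R_h\cap H^1(\o,\mathbb{M})$ with $\n{m^{(k)}-\tau_1^{(k)}}_{H^1}\to 0$ as $h\to 0$; in particular $\n{\tau_1^{(k)}}_{H^1}\le C$ uniformly, and, by the trace theorem, $\int_{\partial\o}\big(\Xi(\tau_1^{(k)}-m^{(k)})\big)\nu\,ds\to 0$, so that for $h$ below a suitable $h_{\max}$ the $3\times3$ matrix whose $k$-th column is $\int_{\partial\o}(\Xi\tau_1^{(k)})\nu\,ds$ lies within $\tfrac12$ of the identity. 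Putting $\tau_1\coloneqq\sum_{k}\bar{q}_k\,\tau_1^{(k)}\in R_h\cap H^1(\o,\mathbb{M})$ and using Gauss' theorem, one obtains $\langle\nabla\cdot(\Xi\tau_1),\bar{q}\rangle=\bar{q}\cdot\int_{\partial\o}(\Xi\tau_1)\nu\,ds\ge c_1\n{\bar{q}}_{L^2}^2$ and $\n{\tau_1}_{H^1}\le C_1\n{\bar{q}}_{L^2}$, with $c_1,C_1$ independent of $h$ and $q$.

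Finally I would test with $\tau\coloneqq\tau_0+\delta\tau_1$ for a small fixed $\delta>0$. Combining the relations above with $|\langle\nabla\cdot(\Xi\tau_1),q_0\rangle|\le\n{\Xi\tau_1}_{H^1}\n{q_0}_{L^2}\le C_\Xi C_1\n{\bar{q}}_{L^2}\n{q_0}_{L^2}$ and Young's inequality, one gets $\langle\nabla\cdot(\Xi\tau),q\rangle\ge c_2\big(\n{q_0}_{L^2}^2+\n{\bar{q}}_{L^2}^2\big)=c_2\n{q}_{L^2}^2$ once $\delta$ is chosen small enough depending only on $C_{IS},c_1,C_\Xi,C_1$, while $\n{\tau}_{H^1}\le\n{q_0}_{L^2}+\delta C_1\n{\bar{q}}_{L^2}\le C_3\n{q}_{L^2}$; dividing proves \eqref{eq:inf_sup_cond_without_BC} with $C_{IS}^{'}=c_2/C_3$ and $h_{\max}$ the minimum of the two thresholds. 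I expect the mean-handling step to be the main obstacle: one must know that $R_h\cap H^1(\o,\mathbb{M})$ is already rich enough to approximate the polynomials $m^{(k)}$ in $H^1$ (this is exactly where "$h$ small enough" enters, so $h_{\max}$ cannot be dropped), and the comparison must be routed through boundary integrals, since the defect $\langle\nabla\cdot(\Xi(\tau_1^{(k)}-m^{(k)})),\bar{q}\rangle$ tested against a constant is a pure boundary term, controllable only via the trace theorem.
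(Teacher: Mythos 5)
Your proposal is correct and follows essentially the same route as the paper's proof: the same splitting $\f{q}=\f{q}_0+\bar{\f{q}}$ into fluctuation and componentwise mean, the same use of the boundary-condition inf-sup result \eqref{eq:inf_sup_cond} for the zero-mean part, and the same key device of approximating explicit linear matrix polynomials (with $\nabla\cdot(\Xi\,\cdot\,)$ equal to the constant vectors $\f{e}_k$) by elements of $R_h\cap H^1(\o,\mathbb{M})$ — this is precisely Lemma \ref{Lemma:aux_1}, which the paper realizes via the higher-regularity spline projection onto $V_{h,3}^1(\textsf{p}+1,r+1)\subset W_h^1\cap H^1(\o,\mathbb{R}^3)$, the point you correctly flag as the main obstacle. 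The only differences are technical: the paper measures the defect $\nabla\cdot(\Xi\f{M}_h^i)-\f{e}_i$ directly in $L^2$ and builds the required quantitative smallness into the constant $C_{L,1}$, whereas you route the mean-part coercivity through boundary integrals via the divergence theorem and absorb the cross terms with a $\delta$-weighting and Young's inequality.
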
 
Before proving this lemma we first look at a useful auxiliary result.

\begin{lemma}
	\label{Lemma:aux_1}
	It exists a $\tilde{h}>0$ such that   $ \forall h \leq \tilde{h}$, we find $\f{M}_h^i  \in R_h \cap H^1(\o,\mathbb{M}), \ i = 1,2,3$ with 
	\begin{align*}
	\n{\f{M}_h^i}_{H^1} &\leq C_i < \infty , \\
	\n{\nabla \cdot (\Xi\f{M}_h^i) - \f{e}_i}_{L^2} &\leq \min\{ \frac{C_{IS}}{2 \sqrt{C_{\o}}}, \frac{1}{5 \sqrt{C_{\o}}}\} \eqqcolon C_{L,1}, \ \ \forall i, \ \ C_{\o} \coloneqq 1/|\o|. 
	\end{align*}
	Above the $C_i$ are independent of $h$ and $\f{e}_i$ denote the canonical basis vectors in $\mathbb{R}^3$. 
\end{lemma}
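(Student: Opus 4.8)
The idea is to solve $\nabla\cdot(\Xi\f{M}^i)=\f{e}_i$ \emph{exactly} at the continuous level with a fixed, $h$-independent smooth matrix field, and then transport it into $R_h$ by an $H^1$-stable spline interpolation; the resulting error in $\nabla\cdot(\Xi\,\cdot\,)$ then goes to $0$ with $h$, hence eventually lies below the fixed threshold $C_{L,1}$.

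\textbf{The reference field.} Since $\Xi M=M^T-\textup{tr}(M)I$ is a constant-coefficient bijection with $\Xi^{-1}M=M^T-\tfrac12\textup{tr}(M)I$, and since the matrix field $\f{N}^i$ whose $i$-th row equals $(x_1,0,0)$ and whose remaining rows vanish satisfies $\nabla\cdot\f{N}^i=\f{e}_i$ (the divergence acting row-wise), the polynomial matrix field $\f{M}^i\coloneqq\Xi^{-1}\f{N}^i\in C^\infty(\overline\o,\mathbb{M})$ satisfies $\nabla\cdot(\Xi\f{M}^i)=\f{e}_i$, and $c_i\coloneqq\n{\f{M}^i}_{H^1(\o)}<\infty$ is obviously independent of $h$. (Alternatively one may invoke Lemma \ref{Lemma:lin_ealsticity_inf_sup_cont} together with Lemma \ref{Lemma:right-inverse} to produce such an $\f{M}^i$.)

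\textbf{An $H^1$-conforming piece of $R_h$ and an interpolant.} Although $R_h\not\subset H^1(\o,\mathbb{M})$ in general (its rows live in $W_h^1=V_{h,3}^1(\textsf{p}+1,r)$, which is only $H(\c)$-conforming), the standard knot-insertion and degree-elevation inclusions for tensor-product B-splines give, in the parametric domain, $(S_{\textsf{p},\textsf{p},\textsf{p}}^{r,r,r})^{3}\subset \widehat{W}_h^1=\mathcal{Y}_1(W_h^1)$ — the same inclusion already used in the Verfürth-trick step, now without the boundary constraint — and hence row-wise $(S_{\textsf{p},\textsf{p},\textsf{p}}^{r,r,r})^{3\times 3}\subset\mathcal{Y}_1(R_h)$. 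Every element of $S_{\textsf{p},\textsf{p},\textsf{p}}^{r,r,r}$ is globally $C^{r}\subset C^0$, hence lies in $H^1(\widehat\o)$, and by Assumption \ref{Assumption:regular_triangul} (see also the norm equivalence in the proof of Lemma \ref{Lemma:spline_projection_properties}) the map $\f{v}\mapsto\mathcal{Y}_1^{-1}\f{v}$ (acting row-wise) sends $H^1(\widehat\o,\mathbb{M})$ boundedly into $H^1(\o,\mathbb{M})$ and $\mathcal{Y}_1(R_h)$ onto $R_h$; therefore $\mathcal{Y}_1^{-1}\big((S_{\textsf{p},\textsf{p},\textsf{p}}^{r,r,r})^{3\times 3}\big)\subset R_h\cap H^1(\o,\mathbb{M})$. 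Let $\widehat\Pi_h$ be a spline interpolation/projection operator onto $(S_{\textsf{p},\textsf{p},\textsf{p}}^{r,r,r})^{3\times 3}$ which is uniformly $H^1$-stable (available since $\mathcal{T}_h$ is quasi-uniform; cf.\ \cite{IGA3,Buffa2011IsogeometricDD}) and reproduces that space.

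\textbf{Conclusion.} Put $\widehat{\f{M}}^i\coloneqq\mathcal{Y}_1(\f{M}^i)$, which belongs to $C^1(\overline{\widehat\o},\mathbb{M})\subset H^1(\widehat\o,\mathbb{M})$ because $\f{M}^i$ is polynomial and $\p{J}\in C^1(\overline{\widehat\o})$, and set $\f{M}_h^i\coloneqq\mathcal{Y}_1^{-1}(\widehat\Pi_h\widehat{\f{M}}^i)\in R_h\cap H^1(\o,\mathbb{M})$. Combining the $\mathcal{Y}_1$-norm equivalences with $H^1$-stability of $\widehat\Pi_h$ and the density of smooth functions in $H^1$,
\begin{align*}
\n{\f{M}^i-\f{M}_h^i}_{H^1(\o)}\ \le\ C\,\n{\widehat{\f{M}}^i-\widehat\Pi_h\widehat{\f{M}}^i}_{H^1(\widehat\o)}\ \xrightarrow{\ h\to 0\ }\ 0 .
\end{align*}
Since $\nabla\cdot(\Xi\,\cdot\,)\colon H^1(\o,\mathbb{M})\to L^2(\o,\mathbb{R}^3)$ is bounded ($\Xi$ being $H^1$-bounded) and $\nabla\cdot(\Xi\f{M}^i)=\f{e}_i$, we obtain $\n{\nabla\cdot(\Xi\f{M}_h^i)-\f{e}_i}_{L^2}=\n{\nabla\cdot(\Xi(\f{M}_h^i-\f{M}^i))}_{L^2}\le C'\n{\f{M}_h^i-\f{M}^i}_{H^1}\to 0$, while $\n{\f{M}_h^i}_{H^1}\le c_i+\n{\f{M}_h^i-\f{M}^i}_{H^1}$ stays bounded. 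Choosing $\tilde h>0$ as the minimum of the three thresholds arising for $i=1,2,3$ yields $\n{\f{M}_h^i}_{H^1}\le c_i+1\eqqcolon C_i$ and $\n{\nabla\cdot(\Xi\f{M}_h^i)-\f{e}_i}_{L^2}\le C_{L,1}$ for all $0<h\le\tilde h$, which is the claim.

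\textbf{Main obstacle.} Everything except the third paragraph is bookkeeping built on the already established continuous surjectivity of $\nabla\cdot(\Xi\,\cdot\,)$ and the regularity of $\p{F}$. The one non-routine point is carving an $H^1$-conforming subspace out of $R_h$ — which is \emph{not} $H^1$-conforming as a whole when $r$ is small — namely the full-regularity spline block transported by the covariant pull-back $\mathcal{Y}_1$, together with a spline interpolation that is simultaneously $H^1$-stable, $H^1$-convergent, and compatible with $\mathcal{Y}_1$.
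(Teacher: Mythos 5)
Your proposal is correct and follows essentially the same strategy as the paper: take the fixed smooth field $\Xi^{-1}\f{X}_i$ (with $\f{X}_i$ the linear monomial matrix whose row-wise divergence is $\f{e}_i$), map it into an $H^1$-conforming subspace of $R_h$ by an $H^1$-stable spline projection, and let the $O(h)$ (or at least $o(1)$) approximation error in $H^1$ drive $\n{\nabla\cdot(\Xi\f{M}_h^i)-\f{e}_i}_{L^2}$ below the fixed threshold $C_{L,1}$. The only difference is the choice of conforming subspace and projector: the paper uses the commuting projection $\Pi^1_{h,3}(\textsf{p}+1,r+1)$ onto the regularity-elevated curl-conforming space $V_{h,3}^1(\textsf{p}+1,r+1)\subset W_h^1$, whereas you use the full-regularity block $(S^{r,r,r}_{\textsf{p},\textsf{p},\textsf{p}})^{3\times 3}$ transported by $\mathcal{Y}_1$ together with a generic $H^1$-stable quasi-interpolant — both are legitimate realizations of the same idea.
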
 

\begin{proof}
In this proof we write $\Pi_h = (\Pi_h^1,\Pi_h^1,\Pi_h^1)^T$, i.e. we have a row-wise application of $\Pi_h^1$, where $\Pi_h^{1} \coloneqq \Pi_{h,3}^1(\textsf{p}+1,r+1) $ is the spline projection from Sec. \ref{section:splines} onto $V_{h,3}^1(\textsf{p}+1,r+1)$ with degree $\textsf{p}+1$ and regularity $r+1$. In particular it holds $$\Pi_h \tau \in R_h \cap H^1(\o,\mathbb{M}), \ \ \forall \tau \in H^1(\o,\mathbb{M}).$$ Now	introduce the matrix $\f{M}_h^1 \coloneqq \Pi_h \bigg[  \Xi^{-1}\bigg($ \small $\begin{pmatrix}
	x_1 & 0 & 0  \\
	0 & 0 & 0 \\
	0 & 0 & 0
	\end{pmatrix}$ \normalsize $ \bigg)   \bigg] \in R_h$	
	and $\f{X}_1 \coloneqq $  \small $\begin{pmatrix}
	x_1 & 0 & 0  \\
	0 & 0 & 0 \\
	0 & 0 & 0
	\end{pmatrix}$.
	Exploiting the approximation properties of the splines, cf. \cite[Theorem 5.3 and Remark 5.1]{Buffa2011IsogeometricDD}, one obtains
	\begin{align*}
	\n{\nabla \cdot \Xi\f{M}_h^1 - \f{e}_1}_{L^2}  &\leq \n{\Xi \f{M}_h^1 - \Xi \Xi^{-1} \f{X}_1 }_{H^1} \leq C_{\Xi} \n{ \f{M}_h^1 -  \Xi^{-1} \f{X}_1 }_{H^1} \\
	& \leq C_{\Xi} \n{ \Pi_h \Xi^{-1} \f{X}_1 -  \Xi^{-1} \f{X}_1 }_{H^1} \\
	& \leq C_{\Xi} C_{R} h \n{\Xi^{-1} \f{X}_1}_{H^2} 
	\end{align*}
	fo some constant $C_R$.
	And by the continuity of the projection operators it is
	\begin{align*}
	\n{ \f{M}_h^1 }_{H^1} = 	\n{ \Pi_h \Xi^{-1} \f{X}_1 }_{H^1} \leq (1+C_R) \n{\Xi^{-1} \f{X}_1}_{H^1} \eqqcolon C_1.
	\end{align*}
	Since $\f{X}_1$ is fixed we get directly the assertion for $i=1$. The cases $i=2, \ i=3$ can be treated similarly.
\end{proof}

\begin{proof}[Proof of Lemma \ref{Lemma:inf_sup_stability}]
	Let $\f{q} \in \f{P}_h$ arbitrary, but fixed. Further we write $\tilde{\f{q}} \coloneqq \f{q} - \bar{\f{q}},$ where \begin{equation*}
	\bar{\f{q}}_i \coloneqq \frac{1}{|\o|} \int_{\o} \f{q}_i d \f{x} = \textup{const.}
	\end{equation*}
	Then we define \begin{equation*}
	\overline{\f{M}}_h^i \coloneqq \bar{\f{q}}_i \ \f{M}_h^i, \ \ \  \ 
	\end{equation*}
	with the $\f{M}_h^i$ from Lemma  \ref{Lemma:aux_1}.
	By the previous section we know that there is a $\tilde{\f{\tau}}  \in R_h \cap \f{H}_0^1(\o,\mathbb{M})$ with \begin{align*}
	\langle \nabla \cdot(\Xi \tilde{\f{\tau}}) , \tilde{\f{q}} \rangle &\geq C_{IS}  \n{\tilde{\f{q}}}_{L^2}^2 ,\\
	\n{\tilde{\f{\tau}}}_{H^1} & = \n{\tilde{\f{q}}}_{L^2}.
	\end{align*}
	Set $ \f{\tau} \coloneqq \tilde{\f{\tau}} + \sum_{ i}  \overline{\f{M}}_h^i$. This means	
	\begin{align*}
	\langle \nabla \cdot  ( \Xi {\f{\tau}}) , {\f{q}} \rangle & = \langle  \nabla \cdot( \Xi \tilde{\f{\tau}} ), \tilde{\f{q}} \rangle +\underbrace{ \sum_{ i} \langle  \nabla \cdot (\Xi \overline{\f{M}}_h^i) , \tilde{\f{q}} \rangle}_{\eqqcolon (\star,1)} \\
	&+ \underbrace{\langle  \nabla \cdot( \Xi \tilde{\f{\tau}} ), \bar{\f{q}} \rangle}_{\eqqcolon (\star,2)} + \underbrace{\sum_{ i} \langle  \nabla \cdot (\Xi \overline{\f{M}}_h^i) , \bar{\f{q}} \rangle}_{\eqqcolon (\star,3)}.
	\end{align*}
	Using Lemma \ref{Lemma:aux_1} the three last terms can be estimated as follows:
	\begin{align*}
	|(\star,1)|  \leq \sum_{ i} |\langle  \nabla \cdot (\Xi \overline{\f{M}}_h^i) , \tilde{\f{q}} \rangle| &\leq \sum_{ i} |\langle  \nabla \cdot (\Xi \overline{\f{M}}_h^i)- \bar{\f{q}}_i \f{e}_i , \tilde{\f{q}} \rangle| + |\langle  \bar{\f{q}}_i \f{e}_i , \tilde{\f{q}} \rangle| \\
	& \leq  \sum_{ i} |\bar{\f{q}}_i| |\langle  \nabla \cdot (\Xi {\f{M}}_h^i)- \f{e}_i , \tilde{\f{q}} \rangle| + 0  \\
	& \leq \sum_{ i} C_{L,1} |\bar{\f{q}}_i| \n{\tilde{\f{q}}}_{L^2} \\
	&  \leq \sum_{ i} C_{L,1} \sqrt{C_{\o}} \n{\bar{\f{q}}_i}_{L^2} \n{\tilde{\f{q}}}_{L^2}  \\
	&  \leq  2C_{L,1}  \sqrt{C_{\o}} \n{\bar{\f{q}}}_{L^2} \n{\tilde{\f{q}}}_{L^2}\\	 
	& \leq C_{L,1}  \sqrt{C_{\o}}  \big( \n{\bar{\f{q}}}_{L^2}^2 + \n{\tilde{\f{q}}}_{L^2}^2 \big).\\	
	(\star,2)  = \langle   \Xi \tilde{\f{\tau}} , \nabla \bar{\f{q}} \rangle &=0.\\ (\star,3)  = \sum_{ i}  \langle  \nabla \cdot (\Xi \overline{\f{M}}_h^i) , \bar{\f{q}} \rangle   	& = 	\sum_{ i,j}  \langle  \nabla \cdot (\Xi \overline{\f{M}}_h^i) , \bar{\f{q}}_j \f{e}_j \rangle   \\
	& = 	\sum_{ i,j}  \langle  \nabla \cdot (\Xi \overline{\f{M}}_h^i)-\bar{\f{q}}_i \f{e}_i , \bar{\f{q}}_j \f{e}_j \rangle   + 	\sum_{ i,j}  \langle  \bar{\f{q}}_i \f{e}_i , \bar{\f{q}}_j \f{e}_j \rangle  
	\\	& \geq \sum_{i} \n{\bar{\f{q}}_i}_{L^2}^2- \sum_{i,j} \sqrt{C_{\o}} C_{L,1} \ \n{\bar{\f{q}}_i}_{L^2} \n{\bar{\f{q}}_j}_{L^2} \\
	& \geq \sum_{i} \n{\bar{\f{q}}_i}_{L^2}^2- 3 \sqrt{C_{\o}} C_{L,1} \ \n{\bar{\f{q}}_i}_{L^2}^2 \\
	& \geq (1- 3\sqrt{C_{\o}} C_{L,1}) \n{\bar{\f{q}}}_{L^2}^2 .
	\end{align*}
	Consequently, we have
	\begin{align}
		\label{eq:non-BC_estimate_1}
	\langle \nabla \cdot  ( \Xi {\tau}) , {\f{q}} \rangle & \geq C_{IS} \n{\tilde{\f{q}}}_{L^2}^2 + (1- 3\sqrt{C_{\o}} C_{L,1}) \n{\bar{\f{q}}}_{L^2}^2  - C_{L,1}  \sqrt{C_{\o}}  \big( \n{\bar{\f{q}}}_{L^2}^2 + \n{\tilde{\f{q}}}_{L^2}^2 \big) \nonumber \\
	& =  \big( C_{IS}-C_{L,1}  \sqrt{C_{\o}} \big)\n{\tilde{\f{q}}}_{L^2}^2 + \big(1- 4 C_{L,1}  \sqrt{C_{\o}} ) \n{\bar{\f{q}}}_{L^2}^2 \nonumber\\
	& \geq C_{IS,2} (\n{\tilde{\f{q}}}_{L^2}+ \n{\bar{\f{q}}}_{L^2})^2, 
	\end{align}
	for a suitable $C_{IS,2}>0$ if $C_{L,1}$ is small enough, see Lemma \ref{Lemma:aux_1}.\\
	And on the other hand we can estimate 
	\begin{align}
			\label{eq:non-BC_estimate_2}
	\n{\f{\tau}}_{H^1} & \leq \n{\tilde{\f{\tau}}}_{H^1} + \sum_{ i}  \n{\overline{\f{M}}_h^i}_{H^1} \nonumber \\
	&  \leq  \n{\tilde{\f{q}}}_{L^2} + \sum_{ i} |\bar{\f{q}}_i| \n{{\f{M}}_h^i}_{H^1} \nonumber \\
	& \leq \n{\tilde{\f{q}}}_{L^2} + \sum_{ i} \sqrt{C_{\o}} C_i \n{\bar{\f{q}}}_{L^2} \nonumber \\
	& \leq C_{IS,3} (\n{\tilde{\f{q}}}_{L^2} + \n{\bar{\f{q}}}_{L^2}), 
	\end{align}
where $C_{IS,3} \coloneqq \max\{1,3\sqrt{C_{\o}}C_i \}$.
	Finally, we can combine \eqref{eq:non-BC_estimate_1} and \eqref{eq:non-BC_estimate_2} to see 
	\begin{align*}
	\frac{\langle \nabla \cdot  ( \Xi {\f{\tau}}) , {\f{q}} \rangle}{\n{\f{\tau}}_{H^1} } & \geq \frac{ C_{IS,2}}{ C_{IS,3}} (\n{\tilde{\f{q}}}_{L^2} + \n{\bar{\f{q}}}_{L^2})  \geq \frac{ C_{IS,2}}{ C_{IS,3}} ( \n{{\f{q}}}_{L^2}).
	\end{align*}
	The arbitrariness of $\f{q}$ finishes the proof.
	
\end{proof}

With Lemma \ref{Lemma:inf_sup_stability} it follows the well-posedness of the discrete method also for three dimensions. Thereby, we obtain the next error estimate which corresponds to Corollary \ref{Corollary:estimate} in the abstract setting.

\begin{theorem}
	\label{Theorem:convergence estimates}
	Let $\textsf{p}  \geq 2, \ \textsf{p}-1>r \geq 0, \  \textsf{p} \geq s \geq 0$.
	And let $(\sigma,u,p)$ be the exact solution of the continuous problem \eqref{weak_form_wek_symmetry_contiuous} and $(\sigma_h,u_h,p_h)$ the approximate solutions obtained utilizing the discrete spaces from Def. \ref{Defnition of discrete spaces}.
	Then, assuming enough regularity for $(\sigma,u,p)$, it holds for a constant $C_{conv}$ independent of $h$:\begin{align*}
	\norm{[\f{\sigma}-\f{\sigma}_h,\f{u}-\f{u}_h,\f{p}-\f{p}_h]}_\mathcal{B} &\leq C_{conv} h^{s} \ \big( \norm{\f{\sigma}}_{H^s(\d)}  + \norm{\f{u}}_{H^s} + \norm{\f{p}}_{H^s} \big),
	\end{align*} 
	where $\norm{[\tau,\f{v},\f{q}]}_\mathcal{B}^2 = \norm{\tau}_{\d}^2  + \norm{\f{v}}_{L^2}^2 + \norm{\f{q}}_{L^2}^2.$
	For the spaces in the $3D$ setting we further have the improved estimate:
	$$\n{\nabla \cdot \f{\sigma}-\nabla \cdot \f{\sigma}_h}_{L^2} \leq C_{conv} h^{\textsf{p}+1} \n{\nabla \cdot \f{\sigma}}_{H^{\textsf{p}+1}}.$$
\end{theorem}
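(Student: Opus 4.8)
The plan is to obtain the first bound directly from the abstract quasi-optimality of Corollary~\ref{Corollary:estimate} combined with the spline approximation rates of Corollary~\ref{Corollary:Projections}, and to derive the improved divergence estimate by identifying $\nabla\cdot\f{\sigma}_h$ with an $L^2$-projection. By Theorem~\ref{Lemma:well-posedness} the Brezzi conditions hold for all $0<h\le h_{\max}$, so Corollary~\ref{Corollary:estimate} applies. In the elasticity setting of Figs.~\ref{Fig:3}--\ref{Fig:4} the abstract triple norm coincides with the norm $\n{[\cdot,\cdot,\cdot]}_{\mathcal B}$ of the theorem, since $V^{n-1}=H(\o,\d,\mathbb M)$ carries the graph norm $\n{\cdot}_{\d}$ and $W^n,\bar W^n$ are $L^2$-spaces. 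Thus
\begin{equation*}
\n{[\f{\sigma}-\f{\sigma}_h,\f{u}-\f{u}_h,\f{p}-\f{p}_h]}_{\mathcal B}\ \le\ C_{QO}\!\!\inf_{\tau_h\in\Sigma_h,\,v_h\in U_h,\,q_h\in P_h}\!\!\n{[\f{\sigma}-\tau_h,\f{u}-v_h,\f{p}-q_h]}_{\mathcal B}.
\end{equation*}
I would then insert $\tau_h=\Pi_h^{\Sigma}\f{\sigma}$, $v_h=\Pi_h^{U}\f{u}$, $q_h=\Pi_h^{P}\f{p}$, bound the root of the sum of squares by the sum of the three norms, and apply Corollary~\ref{Corollary:Projections} with the admissible exponent $s\le\textsf p$; this yields the first assertion with a constant depending only on $C_{QO}$ and the projection constants, and the argument is identical in $2D$ and $3D$.

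For the improved divergence estimate ($3D$ only) the central observation is that $\nabla\cdot\f{\sigma}_h$ equals the $L^2(\o,\mathbb R^3)$-orthogonal projection of $\nabla\cdot\f{\sigma}$ onto $U_h$. Indeed, the subcomplex property (row-wise $\nabla\cdot V_{h,3}^2\subset V_{h,3}^3$) yields $\nabla\cdot\f{\sigma}_h\in U_h$; the second line of \eqref{weak_form_wek_symmetry_discrete} gives $\langle\nabla\cdot\f{\sigma}_h,v_h\rangle=\langle\f{f},v_h\rangle$ for all $v_h\in U_h$; and the second line of \eqref{weak_form_wek_symmetry_contiuous}, tested against $v_h\in U_h\subset L^2(\o,\mathbb R^3)$, gives $\langle\nabla\cdot\f{\sigma},v_h\rangle=\langle\f{f},v_h\rangle$. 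Subtracting shows $\nabla\cdot\f{\sigma}_h-\nabla\cdot\f{\sigma}\perp U_h$, and together with $\nabla\cdot\f{\sigma}_h\in U_h$ this identifies $\nabla\cdot\f{\sigma}_h$ as the best $L^2$-approximation of $\nabla\cdot\f{\sigma}$ in $U_h$. Hence
\begin{equation*}
\n{\nabla\cdot\f{\sigma}-\nabla\cdot\f{\sigma}_h}_{L^2}\ \le\ \n{\nabla\cdot\f{\sigma}-\Pi_h^{U}(\nabla\cdot\f{\sigma})}_{L^2}\ \le\ C\,h^{\textsf p+1}\,\n{\nabla\cdot\f{\sigma}}_{H^{\textsf p+1}},
\end{equation*}
the last step using that the rows of $U_h$ lie in $V_{h,3}^3(\textsf p+1,r)$, so the $L^2$-estimate of Corollary~\ref{Corollary:Projections} (i.e.\ the last line of Lemma~\ref{lemma:spline_approx_estimates_3D}) holds up to $s=\textsf p+1$.

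There is no deep obstacle: once Theorem~\ref{Lemma:well-posedness} is available the whole proof is bookkeeping. The two points needing a little care are the norm identification between the abstract and the concrete setting, and --- for the superconvergent divergence bound --- the fact that the divergence image space $U_h$ in the $3D$ construction is built from splines of degree $\textsf p+1$ rather than $\textsf p$, which is precisely what yields the extra power of $h$; one must also keep $s$ within the admissible smoothness range of each projection.
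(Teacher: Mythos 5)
Your proposal is correct and follows essentially the same route as the paper: the first bound is quasi-optimality (Corollary \ref{Corollary:estimate}, available via Theorem \ref{Lemma:well-posedness}) combined with the projection estimates of Corollary \ref{Corollary:Projections}, and the improved divergence bound rests on the same two facts the paper uses, namely the Galerkin orthogonality $\langle\nabla\cdot\f{\sigma}-\nabla\cdot\f{\sigma}_h,w_h\rangle=0$ for $w_h\in U_h$ and the inclusion $\nabla\cdot\Sigma_h\subset U_h$. Your phrasing of $\nabla\cdot\f{\sigma}_h$ as the exact $L^2$-projection onto $U_h$ is a marginally cleaner packaging of the paper's two-term splitting with $\Pi_h^U\nabla\cdot\f{\sigma}$, but it is the same argument.
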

\begin{proof}
	We look at the proof in $3D$. The estimate for $2D$ follows by similar steps. \\
	By the well-posedness, cf. Theorem \ref{Lemma:well-posedness}, we have a quasi-optimal estimate like in Corollary \ref{Corollary:estimate}. Thus due to projection properties in Corollary \ref{Corollary:Projections} one obtains easily the first estimate. More precisely, there is a constant $C_{QO}$ with
	\begin{align*}
	\norm{[\f{\sigma}-\f{\sigma}_h,\f{u}-\f{u}_h,\f{p}-\f{p}_h]}_\mathcal{B}  &\leq C_{QO} \norm{[\f{\sigma}-\Pi_{h}^{\Sigma} \f{\sigma},\f{u}-\Pi_{h}^{U}\f{u},\f{p}-\Pi_{h}^P\f{p}]}_\mathcal{B}  \\
& \leq  C_{QO} \big( \norm{\f{\sigma}-\Pi_{h}^{\Sigma} \f{\sigma}}_{\d}  + \norm{\f{u}-\Pi_{h}^{U}\f{u}}_{L^2} + \norm{\f{p}-\Pi_{h}^P\f{p}}_{L^2}  \big) \\
	& \leq C h^{s} \ \big( \norm{\f{\sigma}}_{H^s(\d)}  + \norm{\f{u}}_{H^s} + \norm{\f{p}}_{H^s} \big),
	\end{align*}
where $C$ in the last line depends on $C_{QO}$ and the  projections.
	Th second estimate of the assertion can be obtained by a standard approach. We observe the fact $ \langle \nabla \cdot \sigma - \nabla \cdot \sigma_h , w_h \rangle=0, \ \forall w_h \in U_h$ and that $\nabla \cdot \tau_h \in U_h, \forall \tau_h \in \Sigma_h$. This means we have the equality chain
	\begin{align*}
	\n{\nabla \cdot \sigma-\nabla \cdot \sigma_h}^2_{L^2} &= \langle \nabla \cdot \sigma-\nabla \cdot \sigma_h  , \nabla \cdot \sigma-\nabla \cdot \sigma_h \rangle \\
	&= \langle \nabla \cdot \sigma-\Pi_h^U \nabla \cdot \sigma, \nabla \cdot \sigma-\nabla \cdot \sigma_h \rangle + \langle \Pi_h^U\nabla \cdot \sigma-\nabla \cdot \sigma_h , \nabla \cdot \sigma-\nabla \cdot \sigma_h \rangle \\
	& \leq \n{\nabla \cdot \sigma- \Pi_h^U \nabla \cdot \sigma}_{L^2} \ \n{ \nabla \cdot \sigma - \nabla \cdot \sigma_h}_{L^2}+0
	\end{align*}
	Thus, again in view of Corollary  \ref{Corollary:Projections}, we get \newline  \hspace{1cm} \ \ \ $ \hspace{1cm} \n{\nabla \cdot \sigma -\nabla \cdot \sigma_h}_{L^2} \leq \n{\nabla \cdot \sigma- \Pi_h^U \nabla \cdot \sigma}_{L^2} \leq C h^{\textsf{p}+1}  \n{\nabla \cdot \sigma}_{H^{\textsf{p}+1}}$. Since we use $V_{h,3}^2(\textsf{p}+1,r)$ to define $\Sigma_h$ the last estimate is clear. \\
	The statement for $2D$ can be proven similarly and we omit this part here.
\end{proof}

With the last theorem and with Theorem \ref{Lemma:well-posedness} we can summarize that the   discretization  of the mixed formulation of linear elasticity with weakly  imposed symmetry utilizing  the proposed method is possible and from a theoretical point of view we should obtain arbitrarily high convergence orders, provided regular enough exact solutions. Indeed, we showed the well-posedness only for $h \leq h_{\max}$  for some proper $h_{\max}>0$. But in all the numerical experiments we carried out, we do not see problems for large $h$. Maybe a closer study of the proofs in order to specify $h_{\max}$ would be advisable. Another point one should mention are the assumed boundary conditions. In the sections above we considered the easy case of pure displacement boundary conditions. Clearly, in applications the case of mixed boundary conditions involving traction forces is more interesting. This is why we   give some remarks concerning generalizations of our approach including such mixed boundary conditions in the next section.

\subsection{Brief remarks on generalizations and stability}

\subsubsection{Mixed boundary conditions}

Up to now we studied the simple case of pure displacement boundary conditions specified by $u_D \in H^{1/2}(\partial \o,\mathbb{R}^n)$. However another important type of boundary conditions are the so-called traction forces. More precisely, one has  a disjoint partition of the boundary  $ \overline{\Gamma_t}   \cup \overline{\Gamma_D} = \partial \o, \ \Gamma_t \cap \Gamma_D = \emptyset$ together with a suitable mapping $t_n $ and the requirement $$\sigma \cdot \nu = t_n  \ \textup{on} \   \Gamma_t  \ \ \ \textup{and} \ \ \ u = u_D \ \textup{on} \ \Gamma_D.$$
If the traction forces are prescribed by a given auxiliary stress mapping $\tilde{\sigma}$, i.e. $\tilde{\sigma} \cdot \nu = t_n$, we can reduce the mixed boundary conditions to the case $t_n =0$ by considering $\sigma-\tilde{\sigma}$. Latter condition can be implemented in the discrete setting very easily by adapting  $\Sigma_h$ to $$\Sigma_{h,\Gamma_t} \coloneqq \{\tau \in \Sigma_h \ | \ \tau \cdot \nu=0 \ \textup{on} \  \Gamma_t \}.$$ Of course, the mixed boundary conditions lead also to a small change on the right-hand side of \eqref{weak_form_wek_symmetry_contiuous}. For smooth enough $u_D$ one considers the integral only over $\Gamma_D$ instead of $\langle u_D,  \tau \cdot \nu \rangle_{\Gamma}$. Finally, we remark that in the discrete setting we assume for reasons of simplification that $\Gamma_t$ and $\Gamma_D$ are the union of full mesh boundary elements.

\subsubsection{Multi-patch geometries}
If one considers more complicated geometries, it might be reasonable to describe the overall domain $\o$ by means of a collection of  patches $\o_k, \ k=1, \dots,N, \ N \in \mathbb{N} $. To be more precise, if we are in the field of multi-patch IGA, we assume $$ \overline{\o} = \bigcup_{k=1}^N\overline{ \o_k} \ \  \ \ , \ \p{F}_k \colon [0,1]^n \rightarrow \overline{\o_k }\ \ ,$$
for a suitable number $N$. On the $k$-th patch we can define the discrete spaces $\Sigma_h^{(k)},U_h^{(k)}, \  $ etc. as described in Sec. \ref{Sec:Chioce of spces}.  The global spaces are then straightforwardly defined as $$\Sigma_h \coloneqq \{\sigma \in H(\o,\d,\mathbb{M}) \ | \ \sigma_{|\o_k} \in \Sigma_h^{(k)} \}, \ U_h = \{u \in L^2(\o,\mathbb{R}^n) \ | u_{|\o_k} \in U_h^{(k)} \}, \ \ \textup{etc.} \ \ \ \ \ .$$
The implementation of the coupling conditions for the stress spaces can then be achieved rather easily by exploiting the divergence preserving property of the transformation $\mathcal{Y}_2$. One further notes the fact that also in the multi-patch setting $\nabla\cdot \Sigma_h \subset U_h$. Numerical tests suggest the well-posedness of the mixed formulation for multi-patch parametrizations, although we can not give a proof. Besides,  a combination of mixed boundary conditions and multi-patch IGA is  possible.

\subsubsection{Stability in the (nearly) incompressible regime}

Within the scope of an isotropic and homogeneous elastic material we already mentioned  the two Lam\'{e} coefficients $\lambda,\mu$ determining the compliance tensor $A$. A special case  are (nearly) incompressible materials that correspond to large $\lambda$ values, $\lambda \rightarrow \infty$, respectively. An important motivation to consider mixed formulations is to obtain reasonable approximate solutions also for this regime. Hence, we want to show here that we can achieve stability for $\lambda \rightarrow \infty$ with our discretization ansatz, at least in case of B-spline parametrizations.
To show the latter, we follow the argumentation of Arnold et al. in \cite{Arnold2015}  and we  use  \cite{Braess,Schoeberl}. Furthermore, we restrict ourselves mainly to the three-dimensional case, since the argumentation for $2D$ is in principle the same.  \\
In Theorem \ref{Theorem:convergence estimates}, we see an error estimate that depends on some constant $C_{conv}$. This constant in turn depends on various items,  namely on $ \o, \p{F}, \p{J}$, etc. But, to obtain a stable convergence behavior w.r.t. $\lambda$ we have to  check if we can define $C_{conv}$ independent of $\lambda$. Since in the Brezzi stability requirements in Def. \ref{Brezzi:inf-sup-condition_abstract} we only get a $\lambda$-dependence for (S1) it is enough to consider the coercivity estimate constant $C_{S1}$ and the continuity constant $C_{\mathfrak{A}}$ of the bilinear form $\langle A \cdot , \cdot \rangle $ to see the stability in the incompressible limit. Due to
\begin{align*}
\langle A \sigma , \tau \rangle \leq  \frac{1}{\mu} \n{\sigma}_{L^2} \n{\tau}_{L^2}, \ \  \forall \sigma, \forall\tau \in H(\o,\d,\mathbb{M}) ,
\end{align*}
it remains to look at $C_{S1}$ from (S1). As explained  in \cite{Arnold2015} we get stability for the case of homogeneous Dirichlet boundary condition if $I \in \Sigma_h$, where $I$ stands for the identity tensor. First we have a look at the three-dimensional situation. In view of the definition of $\Sigma_h$ it is easy to see that
$$I \in \Sigma_h  \ \ \Leftrightarrow \ \ \textup{adj}(\p{J})^k \in \big(S_{\textsf{p}+1,\textsf{p},\textsf{p}}^{r,r-1,r-1} \times S_{\textsf{p},\textsf{p}+1,\textsf{p}}^{r-1,r,r-1} \times S_{\textsf{p},\textsf{p},\textsf{p}+1}^{r-1,r-1,r}\big)^T, \forall k, \ $$
where $\textup{adj}(\p{J})^k$ is the $k$-th column of the adjugate matrix $\textup{adj}(\p{J}) = \textup{det}(\p{J}) \p{J}^{-1}$  of $\p{J}$. By the definition of the adjugate matrix as the transpose of the cofactor matrix, it is easy to see that for $\p{F} \in (S_{q,q,q}^{s,s,s})^3, \p{F}=(F_1,F_2,F_3)^T$ with $q> s \geq 1$ we obtain
$$\textup{adj}(\p{J})^{k} \in  \big(S_{2q,2q-1,2q-1}^{s,s-1,s-1} \times S_{2q-1,2q,2q-1}^{s-1,s,s-1} \times S_{2q-1,2q-1,2q}^{s-1,s-1,s} \big).$$ For example it is $$\textup{adj}(\p{J})_{11} = J_{22}J_{33}-J_{23}J_{32} = 
\underbrace{\h_2F_{2}}_{\in S_{q,q-1,q}^{s,s-1,s}}\ \underbrace{\h_3F_{3}}_{\in S_{q,q,q-1}^{s,s,s-1}} - \underbrace{\h_3F_{2}}_{\in S_{q,q,q-1}^{s,s,s-1}} \ \underbrace{\h_2F_{3}}_{\in S_{q,q-1,q}^{s,s-1,s}} \in S_{2q,2q-1,2q-1}^{s,s-1,s-1}.$$ Similar thoughts lead to the other entries. Thus $ \mathcal{Y}^{-1}_2(\textup{adj}(\p{J})^k) \in V_{h,3}^2(2q,s)$ for $k=1,2,3$. Consequently, if we choose the discrete spaces such that $2q \leq \textsf{p}+1$ and $r \leq s$, we have indeed $I \in \Sigma_h$.  In other words, if one wants stability for large $\lambda$ it might be advisable    to apply a  $\textsf{p}$-refinement step. \\
In the $2D$ case the condition $I \in \Sigma_h$ is  easier to see since then $$\textup{adj}(\p{J}) = \begin{bmatrix}
\h_{2}F_2 & -\h_2F_1 \\
-\h_1F_2 & \h_1F_1
\end{bmatrix}.$$ For  $\p{F} \in (S_{q,q}^{s,s})^2$  one has $I \in \Sigma_h$ if $\textsf{p} \geq q, \ s \geq r$. In particular, the isogeometric paradigm fits to the stability condition.\\
The situation for non-zero $u_D$ can be handled by first solving the original problem on the space $\Sigma_h^{tr} \coloneqq \Sigma_h \cap \{ \tau \ | \  \int_{\o} \textup{tr}(\tau) dx=0\}$ and then adding  $c I$ to the obtained solution $ \sigma_h$, where the constant $c$ is determined by the first line of the mixed formulation, by the equation $$\langle A (\sigma_h + c I),  I \rangle  = \langle u_D, I \cdot \nu \rangle_{\Gamma},$$
respectively.\\
It is natural to ask, whether a generalization to NURBS parametrizations with non-trivial weight functions is possible. Although numerical experiments indicate a generalization possibility, we do not have a proof available.\\
Another point would be the incompressibility stability for mixed boundary conditions which we address next. 
Below we follow the steps in \cite{Braess} and   \cite{Schoeberl}. Let $\sigma_h \in \Sigma_{h,\Gamma_t}$ arbitrary but fixed with $ \emptyset \neq \Gamma_t \subsetneq \partial \o$. Let $\Gamma_D \coloneqq \partial \o \backslash \Gamma_t$. Then, e.g. by the results in \cite{Pauly_2022_1}, there exists a $w \in H^1_{\Gamma_D}(\o,\mathbb{R}^n)$ s.t. $$\nabla \cdot w = \textup{tr}(\sigma_h), \ \ \textup{and} \ \ \n{w}_{H^1} \leq C \n{\textup{tr}(\sigma_h)}_{L^2},$$ where $C$ is a constant depending on $\o, \Gamma_t$ and $\Gamma_D$.
With $\textup{dev}(\sigma_h)$ denoting the deviatoric part\footnote{We have the relation  $\sigma = \frac{1}{n}\textup{tr}(\sigma)I+\textup{dev}(\sigma)$.} of $\sigma_h$, we   obtain 
\begin{align*}
\frac{1}{n}\n{\textup{tr}(\sigma_h)}^2_{L^2} &=  \frac{1}{n}\langle \textup{tr}(\sigma_h) , \nabla \cdot w \rangle = \langle \sigma_h-\textup{dev}(\sigma_h) , \nabla w \rangle \\
&= -\langle \nabla \cdot \sigma_h ,  w \rangle - \langle \textup{dev}(\sigma_h) , \nabla w \rangle \\
& \leq \n{\nabla \cdot \sigma_h}_{L^2} \n{w}_{L^2} + \n{\textup{dev}(\sigma_h)}_{L^2} \n{w}_{H^1}\\
& \leq C \big(\n{\nabla \cdot \sigma_h}_{L^2} \n{w}_{L^2} + \n{\textup{dev}(\sigma_h)}_{L^2} \n{\textup{tr}(\sigma_h)}_{L^2} \big).
\end{align*}
In view of the condition (S1) we can assume now that $\sigma_h \in kB_h \coloneqq  \{ \f{\tau} \in \Sigma_h \ | \  \langle  \nabla \cdot  \f{\tau}_h, \f{v}_h \rangle + \langle  \textup{Skew} \, \f{\tau}_h, \f{q}_h \rangle  =0,  \ \forall (\f{v}_h , \f{q}_h) \in {U}_h \times P_h \}$, which implies $\nabla \cdot \sigma_h = 0$. Hence we get with the estimate from above $$\n{\textup{tr}(\sigma_h)}_{L^2}  \leq C \n{\textup{dev}(\sigma_h)}_{L^2},$$
with a $C$ independent of $\lambda$.
With Remark 5.3 (Chapter VI.) in \cite{Braess} we finally obtain
\begin{align*}
\langle A \sigma_h , \sigma_h \rangle &\geq \frac{1}{2\mu} \n{\textup{dev}(\sigma_h)}^2_{L^2} \geq  \frac{1}{4\mu} \n{\textup{dev}(\sigma_h)}^2_{L^2}  + \frac{1}{4\mu} \n{\textup{dev}(\sigma_h)}^2_{L^2}    \\ &\geq \frac{1}{4\mu}\n{\textup{dev}(\sigma_h)}^2_{L^2}  + \frac{1}{4C^2\mu} \n{\textup{tr}(\sigma_h)}^2_{L^2}  \geq \tilde{C} \Big(\n{\textup{dev}(\sigma_h)}_{L^2}  + \n{\textup{tr}(\sigma_h)}_{L^2}  \Big)^2 \geq \tilde{C} \n{\sigma_h}^2_{L^2} \\
& = \tilde{C} \n{\sigma_h}_{\d}^2 ,
\end{align*}
with $\tilde{C} \coloneqq \min\{ \frac{1}{8\mu}, \ \frac{1}{8C^2\mu}\}.$ In particular, there is a estimate for the coercivity condition (S1) which does not dependent on $\lambda$. This suggests a stable convergence behavior in the incompressible regime also for traction boundary conditions.

\subsection{Numerical examples}
\label{Sec:Numerical_examples}
In this section we discuss the discrete method for linear elasticity introduced in the previous sections by means of several numerical tests. We look at the two-dimensional as well as at the $3D$ setting. For the computations and implementation of the method we used MATLAB together with the GeoPDEs package \cite{geopdes,geopdes3.0,MATLAB:2020}.  Below, we say the mesh has mesh size $h$ if there are $1/h$ many subdivisions in each coordinate direction, also in each patch in case of a multi-patch example. Due to the smoothness of the applied parametrizations, this notion of a mesh size is equivalent to the definition in Sec. \ref{section:splines}.

\subsubsection{Deformed square}
In the first example we consider the convergence behavior for a two-dimensional domain with a curved boundary which has the smooth parametrization $\p{F}(\zeta_1,\zeta_2)= (\zeta_1,\zeta_2-\zeta_1^2+\zeta_1)$; see Fig. \ref{Fig:Num_2} (a). Furthermore we assume homogeneous boundary conditions $u_D = 0 $ and choose the source function $f$ in the mixed formulation s.t. the exact displacement solution is $u_1 = g, u_2=-g$ with $$\boxed{g \circ \p{F}(\zeta_1,\zeta_2) = \sin(\pi \zeta_1) \sin(\pi \zeta_2)}.$$ Here we use the Lam\'{e} coefficients $\lambda=2,\mu=1$.   Due to this regular problem and the convergence statement in Theorem  \ref{Theorem:convergence estimates}, we can expect for degree $\textsf{p} \geq 2$ that the errors $\n{\sigma-\sigma_h}_{\d}, \ \n{u-u_h}_{L^2},$ $ \ \n{p-p_h}_{L^2}$ are of order $\mathcal{O}(h^\textsf{p})$. And indeed, if we look at the errors in Fig. \ref{Fig:Num_1} one observes the mentioned convergence order. In the subsequent Fig. \ref{Fig:Num_2} (b)-(c) we further show exemplarily the approximate solution for the $x$-displacement and the first stress component $\sigma_{11}$ for $\textsf{p}=6, \ r=4$ and $h=1/3$. Latter two plots illustrate that we obtain reasonable results also for higher polynomial degrees combined with high regularity  B-splines ($r=4$). This usage of globally smooth basis functions is in general hard for classical FEM approaches. The $2D$ geometry from above is utilized further to show  more aspects of our method. On the one hand, we  verify the stability for large $\lambda$ and, on the other hand the generalization to the multi-patch framework  is exemplified. For the former we apply the stability test from \cite{Rettung} at the extreme case $\lambda = \infty, \mu = 1$. This leads again to a regular source function implying the exact solution $$u_1 = (\cos(2\pi x)-1)\sin(2\pi y), \ \ u_2 = (1-\cos(2\pi y))\sin(2\pi x).$$ In this situation we apply mixed boundary conditions, where the displacement boundary  part $\Gamma_D$ is just the left edge ($x=0$) of the computational domain and on the rest of $\partial \o$ we have traction forces, cf. Fig \ref{Fig:Num_2} (a). If we then consider the resulting  errors in Fig. \ref{Fig:Num_3} we note a stable decrease that fits to the theoretical approximations.  In particular we do not see a blowing up in the errors.  

Next, in view of Fig. \ref{Fig:Num_7} (a) we divide now $\o$ in nine patches $\o_1, \dots , \o_9$ and repeat the first convergence experiment in the compressible regime, namely $\lambda=2, \ \mu=1$. Fig. \ref{Fig:Num_6} demonstrates the approximation properties in the multi-patch setting. One notes that the underlying parametrization is continuous, but  not $C^1$.   

\begin{figure}
	\begin{minipage}{0.33\textwidth}
		\begin{tikzpicture}[scale=1]
		\begin{loglogaxis} [
		        title={(a) \hspace*{0.5cm} $\n{\sigma-\sigma_h}_{\d}$ \hspace*{0.7cm}},
		width = 5.5cm,
		height=5.5cm,
		xlabel={Mesh size h},
		xmin=0.038, xmax=0.6,
		ymin=10^-5, ymax=0.9*10^1,
		xtick={1/20,1/10,0.25,0.5},
		log x ticks with fixed point,
		ytick={10^-5,10^-4,10^-3,10^-2,10^-1,10^0},
		legend pos=south east,
		xmajorgrids=true,
		ymajorgrids=true,
		grid style=dashed,
		legend columns=1,
		]
		\addplot
		coordinates {
			(1/2,  8.4378696 )
			(1/4, 	 2.3346271)
			(1/6, 1.0502569 )
			(1/8, 0.593162)
			(1/10, 	0.38032679  )
			(1/12, 	0.26438118)
			(1/14, 	0.19435722)
			(1/16, 	0.14886367)
			(1/18, 	 0.11765278)
			(1/20, 	0.095317469 )
		};
		\addlegendentry{\textsf{p}=2},
		\addplot
		coordinates {
			(1/2, 1.6638296 )
			(1/4, 	0.20239057 )
			(1/6, 0.060097256)
			(1/8, 0.025381002)
			(1/10, 	0.013002308  )
			(1/12, 	0.0075268762)
			(1/14, 	0.0047408935)
			(1/16, 	 0.0031764443)
			(1/18, 	0.002231122)
			(1/20, 	0.0016265956)
		};
		\addlegendentry{\textsf{p}=3},
		\addplot[color=black,mark=triangle*,
		mark options={solid, fill=black,scale=1.3}]
		coordinates {
			(1/2, 0.15517396)
			(1/4, 	 0.011775443)
			(1/6, 0.0023740182)
			(1/8, 0.00075613872)
			(1/10, 	0.00031063999)
			(1/12, 	0.00015004754)
			(1/14, 	8.1069907e-05)
			(1/16, 	4.7551326e-05)
			(1/18, 	2.9698744e-05)
			(1/20, 1.9491304e-05)
		};,
		\addlegendentry{\textsf{p}=4},
		\addplot[color=blue, mark size=1.8pt,mark=none, dashed]
		coordinates {
			(1/2, 22.6*0.5^2)
			(1/20, 22.6*0.05^2)
		};
		\addplot[color=red, mark size=1.8pt,mark=none, dashed]
		coordinates {
			(1/2, 6.4*0.5^3)
			(1/20, 6.4*0.05^3)};
		\addplot[color=black, mark size=1.8pt,mark=none, dashed]
		coordinates {
			(1/2, 6.1*0.5^4)
			(1/22, 6.1/22/22/22/22)};
		\end{loglogaxis}
		\end{tikzpicture}
	\end{minipage}
	\hspace*{-0.45cm}
	\begin{minipage}{0.33\textwidth}
		\begin{tikzpicture}[scale=1]
		\begin{loglogaxis} [
	  title={(b) \hspace*{0.5cm} $\n{u-u_h}_{L^2}$ \hspace*{0.7cm}},
		width = 5.5cm,
		height=5.5cm,
		xlabel={Mesh size h},
		xmin=0.038, xmax=0.6,
		ymin=0.7*10^-7, ymax=1.6*10^-1,
		xtick={1/20,1/10,0.25,0.5},
	log x ticks with fixed point,
	ytick={10^-7,10^-6,10^-5,10^-4,10^-3,10^-2},
	legend pos=south east,
		legend pos=south east,
		xmajorgrids=true,
		ymajorgrids=true,
		grid style=dashed,
		legend columns=1,
		]
			\addplot
	coordinates {
		(1/2,  0.093756549 )
		(1/4, 	 0.023379552)
		(1/6, 0.010308744 )
		(1/8, 0.0057779089)
		(1/10, 	0.0036911881  )
		(1/12, 	0.002560695)
		(1/14, 	0.0018801287)
		(1/16,  0.0014388664)
		(1/18, 	0.0011365483)
		(1/20, 	0.00092040857 )
	};
		\addlegendentry{\textsf{p}=2},
			\addplot
	coordinates {
		(1/2, 0.012713267 )
		(1/4, 	0.0015462286 )
		(1/6,  0.0004550646)
		(1/8, 0.00019148245)
		(1/10, 	9.7912176e-05  )
		(1/12, 	5.6620262e-05)
		(1/14, 	3.5639454e-05)
		(1/16, 	 2.3868316e-05)
		(1/18, 1.6759883e-05)
		(1/20, 	1.2216051e-05)
	};
		\addplot[color=black,mark=triangle*,
	mark options={solid, fill=black,scale=1.3}]
	coordinates {
	(1/2,  0.0012078353)
	(1/4, 	 7.5927791e-05)
	(1/6, 1.4968406e-05)
	(1/8, 4.7296384e-06)
	(1/10, 	1.9357876e-06)
	(1/12, 	9.3311951e-07)
	(1/14, 	5.035312e-07)
	(1/16, 	2.9510446e-07)
	(1/18, 	1.8420765e-07)
	(1/20, 1.2084692e-07)
};,
		\addplot[color=blue, mark size=1.8pt,mark=none, dashed]
	coordinates {
		(1/2, 0.22*0.5^2)
		(1/20, 0.22*0.05^2)
	};
	\addplot[color=red, mark size=1.8pt,mark=none, dashed]
	coordinates {
		(1/2, 0.045*0.5^3)
		(1/20, 0.045*0.05^3)};
	\addplot[color=black, mark size=1.8pt,mark=none, dashed]
	coordinates {
		(1/2, 0.04*0.5^4)
		(1/22, 0.04/22/22/22/22)};
		\legend{,,,\small $\mathcal{O}(h^2)$,\small $\mathcal{O}(h^3)$,\small $\mathcal{O}(h^4)$}	
		\end{loglogaxis}
		\end{tikzpicture}
	\end{minipage}
	\hspace*{-0.45cm}
	\begin{minipage}{0.33\textwidth}
		\begin{tikzpicture}[scale=1]
		\begin{loglogaxis} [
	  title={(c) \hspace*{0.5cm} $\n{p-p_h}_{L^2}$ \hspace*{0.7cm} },
		width = 5.5cm,
		height=5.5cm,
		xlabel={Mesh size h},
		xmin=0.038, xmax=0.6,
		ymin=1.7*10^-7, ymax=5*10^-1,
		xtick={1/20,1/10,0.25,0.5},
		log x ticks with fixed point,
		ytick={10^-7,10^-6,10^-5,10^-4,10^-3,10^-2},
		legend pos=south east,
		xmajorgrids=true,
		ymajorgrids=true,
		grid style=dashed,
		legend columns=1,
		]
		\addplot
		coordinates {
			(1/2, 0.24554224)
			(1/4, 	 0.052132683)
			(1/6, 0.021880047)
			(1/8,0.012032821)
			(1/10, 	0.0076185294 )
			(1/12, 0.0052595331)
			(1/14, 	0.0038504145)
			(1/16,  0.002941181)
			(1/18, 	0.0023202332)
			(1/20, 0.0018772758 )
		};
		\addplot
		coordinates {
			(1/2,0.034989007 )
			(1/4, 	0.0050593522 )
			(1/6,  0.0015850231)
			(1/8, 0.00068644401)
			(1/10, 	0.00035664023  )
			(1/12, 0.00020828654)
			(1/14, 	0.00013198087)
			(1/16, 	8.8810364e-05)
			(1/18, 6.2582082e-05)
			(1/20, 4.5739967e-05)
		};	
		\addplot[color=black,mark=triangle*,
		mark options={solid, fill=black,scale=1.3}]
		coordinates {
			(1/2,  0.0041171369)
			(1/4, 	0.00022868278)
			(1/6,4.3320474e-05)
			(1/8, 1.3451217e-05)
			(1/10, 5.4561696e-06)
			(1/12, 2.6163484e-06)
			(1/14, 	1.4071509e-06)
			(1/16, 	8.2283099e-07)
			(1/18, 5.127959e-07)
			(1/20, 3.360124e-07)
		};,
		\addplot[color=blue, mark size=1.8pt,mark=none, dashed]
		coordinates {
			(1/2, 0.48*0.5^2)
			(1/20, 0.48*0.05^2)
		};
		\addplot[color=red, mark size=1.8pt,mark=none, dashed]
		coordinates {
			(1/2, 0.182*0.5^3)
			(1/20, 0.182*0.05^3)};
		\addplot[color=black, mark size=1.8pt,mark=none, dashed]
		coordinates {
			(1/2, 0.1*0.5^4)
			(1/22, 0.1/22/22/22/22)};
		\end{loglogaxis}
		\end{tikzpicture}
	\end{minipage}
\caption{Deformed square: Here we plot the different errors w.r.t. $h$ and for the degrees $\textsf{p}=2,3,4$. For all cases we set $r=0$. Dashed lines with slopes $2,3,4$ are added to verify the convergence rates.}
\label{Fig:Num_1}
\end{figure}
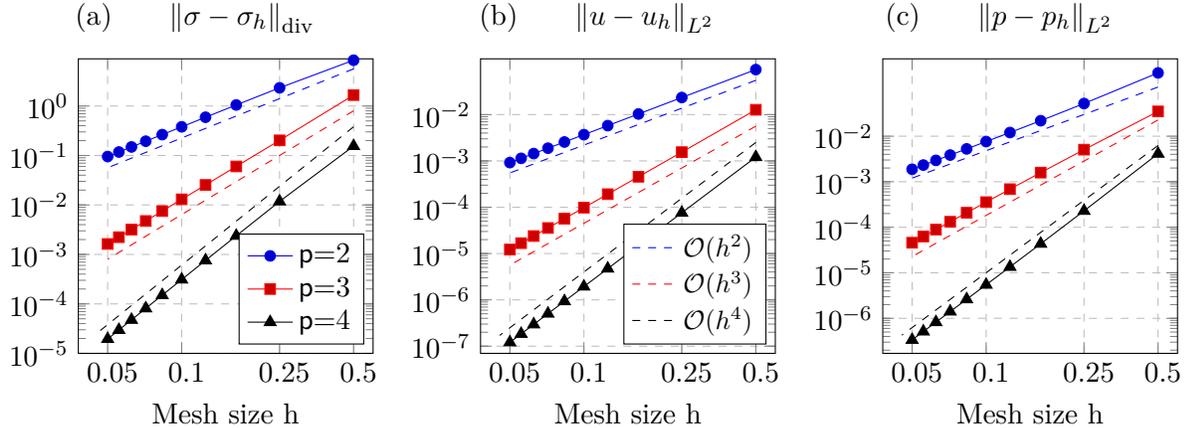

\begin{figure}[h!]
	\centering\begin{tikzpicture}
	\node (eins) at (-7.51,0) {\includegraphics[width=0.21\linewidth]{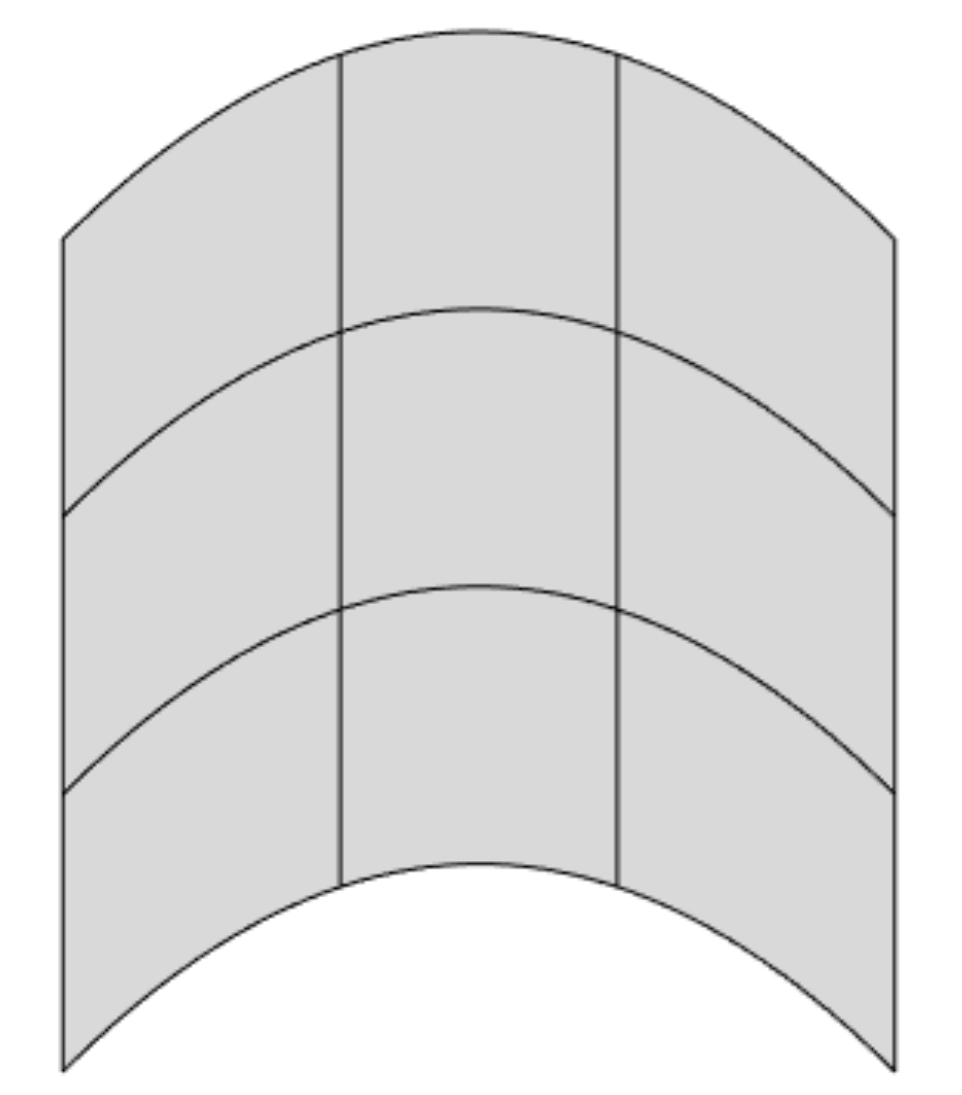}};
	\draw[->,shift={(-0.07,0.06)}] (-9.1,-2) -- (-5.5,-2);
		\draw[shift={(-0.07,-0.07)}] (-9.1,1.2) -- (-8.98,1.2);
	\draw[->,shift={(0.05,0.06)}] (-9.1,-2.1) -- (-9.1,1.8);
	\node[shift={(0,0)}] at (-9.3,1.15) {\footnotesize $1$};
		\node[shift={(0,0.5)}] at (-8.85,1.2) {\footnotesize $y$};
		\node[shift={(0,0)}] at (-9.3,-1.94) {\footnotesize $0$};
		\node[shift={(0,0.35)}] at (-9.3,-1.5) {\footnotesize $\frac{1}{4}$};
		\node[shift={(-0.6,0.15)}] at (-5,-1.9) {\footnotesize $x$};
		\node[shift={(0,0)}] at (-9.05,-2.2) {\footnotesize $0$};
		\node[shift={(0,0)}] at (-5.95,-2.2) {\footnotesize $1$};
			\draw[dashed,shift={(-0.07,0.35)}] (-9.1,-1.5) -- (-7.4,-1.5);		
			\node[shift={(2.8,0.4)}] at (-8.85,1.2) {\small $\o$};
		\draw[shift={(0.02,0.06)}] (-6,-2.1) -- (-6,-2);
	\node at (-7.55,-2.6) {\small (a) Mesh for $h=1/3$};
		\node at (-3.15,-2.6) {\small (b) \hspace{0.7cm} $u_1$};
		\node at (2.15,-2.6) {\small (c) \hspace{0.7cm} $\sigma_{11}$};
	\node[shift={(0,-0.08)}] (eins) at (-6.84+0.09+5,0)	{\includegraphics[width=4.55cm]{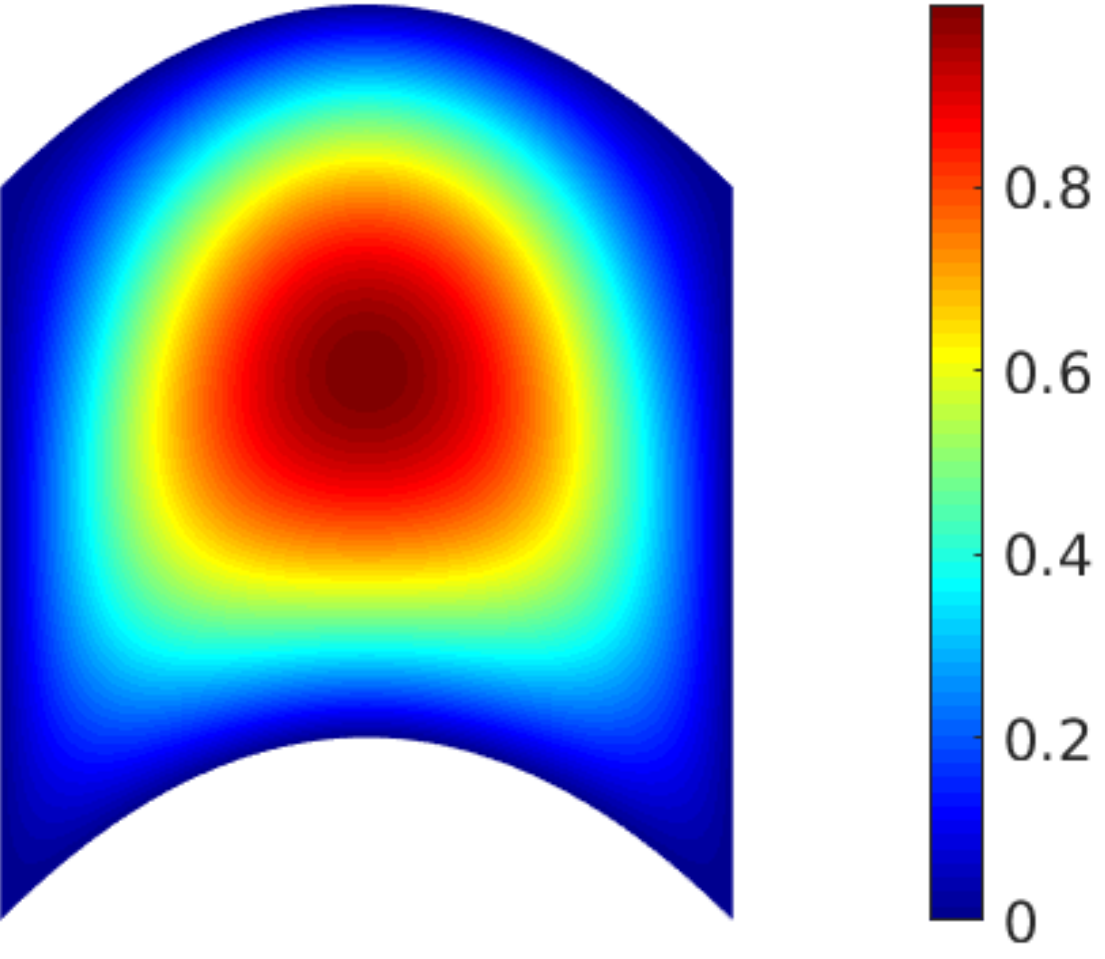}};
	
	\node[shift={(0,-0.033)}] (eins) at (-6.88+10.5,0)	{\includegraphics[width=4.55cm]{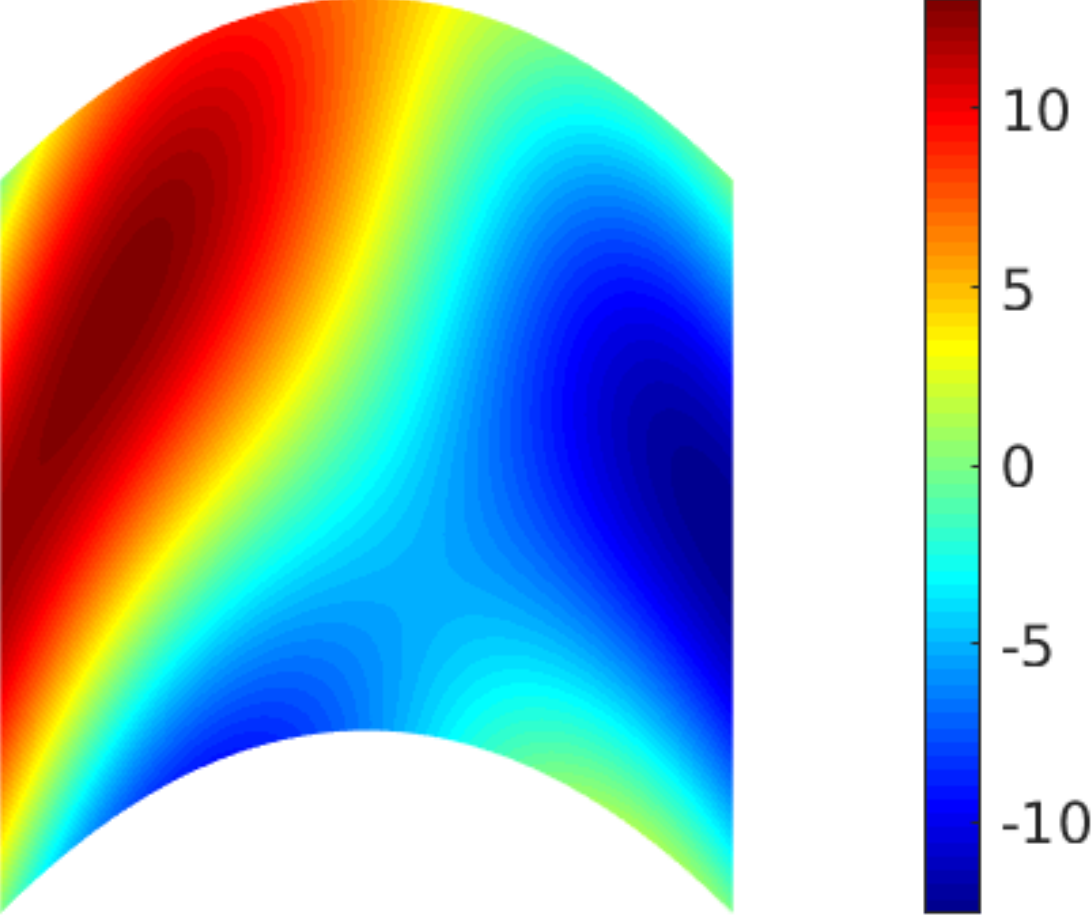}};
	\end{tikzpicture}
	\caption{On the left we see the geometry for the first test example. In the middle and on the right we see displacement and stress approximate solutions with degree $\textsf{p}=6$ and $r=4$ for the coarse mesh on the left.  Within IGA we can implement easily basis functions with high degree and high regularity. }
	\label{Fig:Num_2}
\end{figure}

\begin{figure}[h!]
	\centering\begin{tikzpicture}
		\node (eins) at (-7.5,0) {\includegraphics[width=0.21\linewidth]{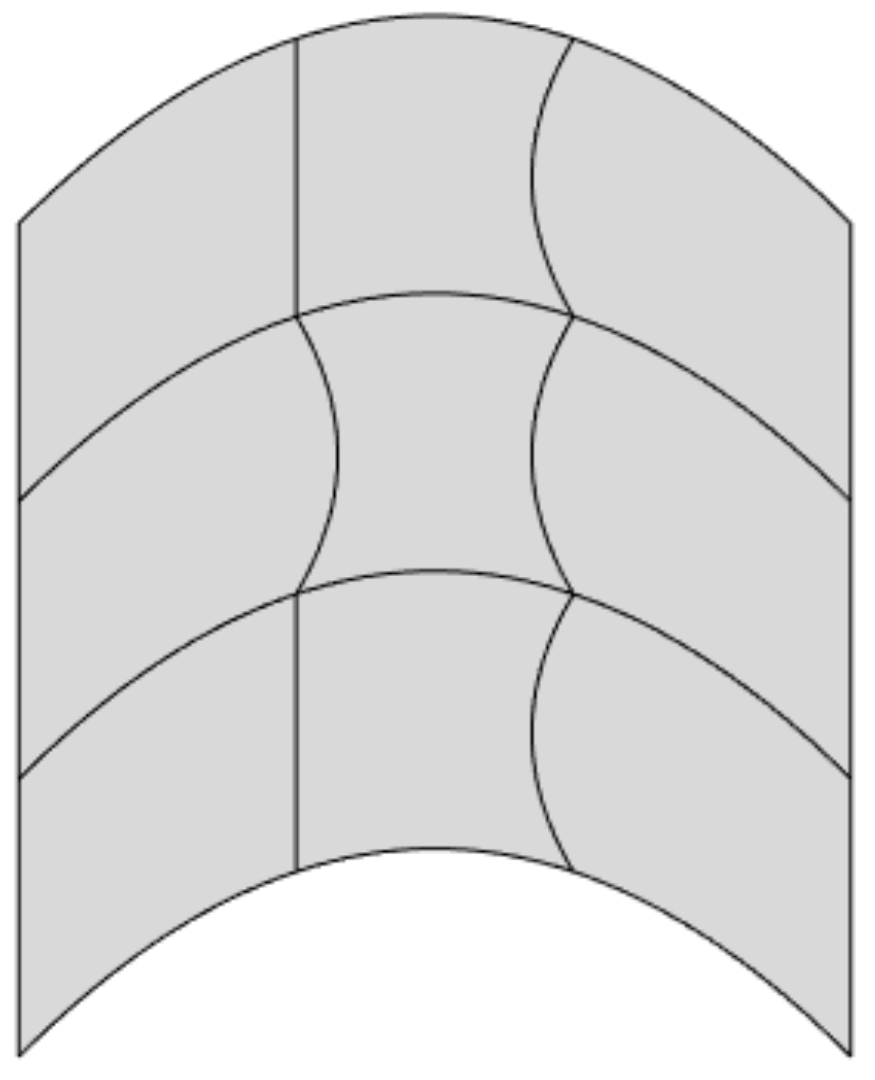}};
		\node at (-7.55,-2.5) {\small (a) Multi-patch decomposition};
		\node at (-0,-2.5) {\small (b) Multi-patch mesh for h=1/4};
		\node[shift={(0,-0.08)}] (eins) at (-6.84+7,0)	{\includegraphics[width=3.55cm]{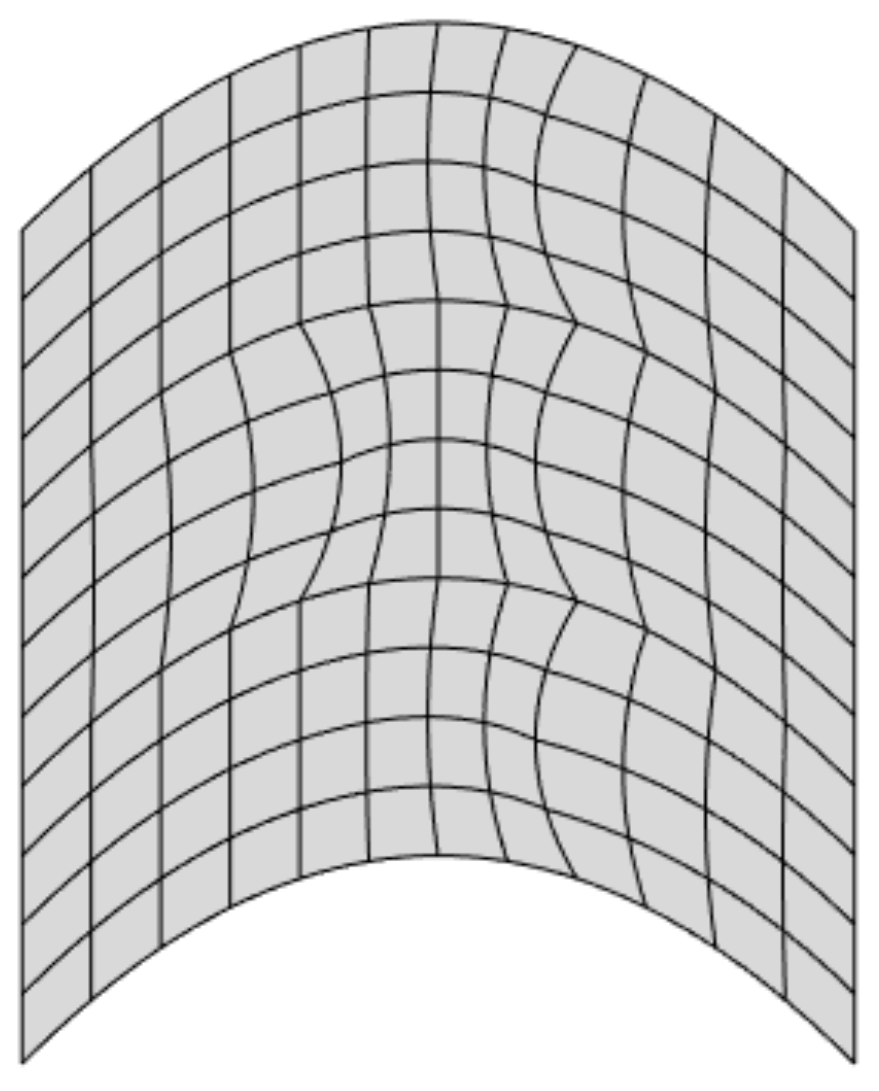}};
		\node at (-8.55,-1.2) {\small $\o_1$};
		\node at (-7.5,-0.9) {\small $\o_2$};
		\node at (-6.4,-1.2) {\small $\o_3$};
		\node at (-6.4,-0) {\small $\o_6$};
		\node at (-6.4,1.1) {\small $\o_9$};
		\node at (-7.5,0.2) {\small $\o_5$};
		\node at (-7.5,1.35) {\small $\o_8$};
		\node at (-8.55,0) {\small $\o_4$};
		\node at (-8.55,1.1) {\small $\o_7$};
	\end{tikzpicture}
	\caption{The multi-patch decomposition of the  first geometry example is pictured on the left. If we set $h=1/4$, i.e. 4 subdivision in each direction and in each patch, we get the mesh on the right.}
	\label{Fig:Num_7}
\end{figure}

\begin{figure}
	\begin{minipage}{0.33\textwidth}
		\begin{tikzpicture}[scale=1]
			\begin{loglogaxis} [
				title={(a) \hspace*{0.5cm} $\n{\sigma-\sigma_h}_{\d}$ \hspace*{0.7cm}},
				width = 5.5cm,
				height=5.5cm,
				xlabel={Mesh size h},
				xmin=0.038, xmax=0.6,
				ymin=3*10^-4, ymax=6*10^1,
				xtick={1/20,1/10,0.25,0.5},
				log x ticks with fixed point,
				ytick={10^-4,10^-3,10^-2,10^-1,10^0,10^1},
				legend pos=south east,
				xmajorgrids=true,
				ymajorgrids=true,
				grid style=dashed,
				legend columns=1,
				]
				\addplot
				coordinates {
					(1/2,  31.206416 )
					(1/4, 	 10.011948)
					(1/6, 4.6524481 )
					(1/8, 2.6575762)
					(1/10, 	1.7129303 )
					(1/12, 1.1940965)
					(1/14, 0.8793162)
					(1/16, 	0.67423039)
					(1/18, 	 0.53326887)
					(1/20, 0.43226262 )
				};
				\addlegendentry{\textsf{p}=2},
				\addplot
				coordinates {
					(1/2, 12.141817 )
					(1/4, 2.0140054 )
					(1/6, 0.63689488)
					(1/8, 0.27543609)
					(1/10,  0.14275764 )
					(1/12, 	0.083195152)
					(1/14, 	0.052623581)
					(1/16, 	 0.035359467)
					(1/18, 	0.0248871)
					(1/20, 	0.018171351)
				};
			
			'    3.9018987'
			'   0.35730121'
			'  0.076614515'
			'  0.024957971'
			'  0.010361794'
			' 0.0050337386'
			' 0.0027290851'
			' 0.0016043115'
			' 0.0010035218'
			'0.00065932964'
			
				\addlegendentry{\textsf{p}=3},
				\addplot[color=black,mark=triangle*,
				mark options={solid, fill=black,scale=1.3}]
				coordinates {
					(1/2,  3.9018987)
					(1/4, 	 0.35730121)
					(1/6, 0.076614515)
					(1/8, 0.024957971)
					(1/10, 	0.010361794)
					(1/12, 	0.0050337386)
					(1/14, 	0.0027290851)
					(1/16, 	0.0016043115)
					(1/18, 	0.0010035218)
					(1/20, 0.00065932964)
				};,
				\addlegendentry{\textsf{p}=4},
				\addplot[color=blue, mark size=1.8pt,mark=none, dashed]
				coordinates {
					(1/2, 102.6*0.5^2)
					(1/20, 102.6*0.05^2)
				};
				\addplot[color=red, mark size=1.8pt,mark=none, dashed]
				coordinates {
					(1/2, 70.4*0.5^3)
					(1/20, 70.4*0.05^3)};
				\addplot[color=black, mark size=1.8pt,mark=none, dashed]
				coordinates {
					(1/2, 60.1*0.5^4)
					(1/20, 60.1/20/20/20/20)};
			\end{loglogaxis}
		\end{tikzpicture}
	\end{minipage}
	\hspace*{-0.45cm}
	\begin{minipage}{0.33\textwidth}
		\begin{tikzpicture}[scale=1]
			\begin{loglogaxis} [
				title={(b) \hspace*{0.5cm} $\n{u-u_h}_{L^2}$ \hspace*{0.7cm}},
				width = 5.5cm,
				height=5.5cm,
				xlabel={Mesh size h},
			xmin=0.038, xmax=0.6,
			ymin=0.3*10^-5, ymax=1.6*10^0,
			xtick={1/20,1/10,0.25,0.5},
			log x ticks with fixed point,
			ytick={10^-7,10^-6,10^-5,10^-4,10^-3,10^-2,10^-1,10^0},
				legend pos=south east,
				xmajorgrids=true,
				ymajorgrids=true,
				grid style=dashed,
				legend columns=1,
				]
	      \addplot
			coordinates {
				(1/2,  0.54644753 )
				(1/4, 	 0.15357128)
				(1/6, 0.066994935 )
				(1/8, 0.037679593)
				(1/10,  0.024165792  )
				(1/12, 	 0.016811321)
				(1/14, 	0.012366772)
				(1/16,  0.009476804)
				(1/18, 	0.0074927025)
				(1/20, 	0.0060720032 )
			};
				\addlegendentry{\textsf{p}=2},
		\addplot
			coordinates {
				(1/2,  0.17417774)
				(1/4, 	 0.026282882 )
				(1/6,  0.0081933844)
				(1/8, 0.0035223289)
				(1/10, 	0.0018191277  )
				(1/12, 	0.0010575798)
				(1/14, 0.00066778847)
				(1/16, 	 0.00044812287)
				(1/18, 0.00031508556)
				(1/20, 	0.00022987763)
			};
					\addplot[color=black,mark=triangle*,
			mark options={solid, fill=black,scale=1.3}]
			coordinates {
				(1/2,  0.050325728)
				(1/4, 	0.0044077673)
				(1/6, 0.00093905052)
				(1/8, 0.00030535296)
				(1/10, 	0.00012670264)
				(1/12, 	6.1540363e-05)
				(1/14, 	3.3362525e-05)
				(1/16, 	1.9612034e-05)
				(1/18, 	1.2267621e-05)
				(1/20, 8.0600668e-06)
			};,
					\addplot[color=blue, mark size=1.8pt,mark=none, dashed]
				coordinates {
					(1/2, 1.5*0.5^2)
					(1/20, 1.5*0.05^2)
				};
				\addplot[color=red, mark size=1.8pt,mark=none, dashed]
				coordinates {
					(1/2, 0.8*0.5^3)
					(1/20, 0.8*0.05^3)};
				\addplot[color=black, mark size=1.8pt,mark=none, dashed]
				coordinates {
					(1/2, 0.65*0.5^4)
					(1/20, 0.65/20/20/20/20)};
				\legend{,,,\small $\mathcal{O}(h^2)$,\small $\mathcal{O}(h^3)$,\small $\mathcal{O}(h^4)$}	
			\end{loglogaxis}
		\end{tikzpicture}
	\end{minipage}
	\hspace*{-0.45cm}
	\begin{minipage}{0.33\textwidth}
		\begin{tikzpicture}[scale=1]
			\begin{loglogaxis} [
				title={(c) \hspace*{0.5cm} $\n{p-p_h}_{L^2}$ \hspace*{0.7cm} },
				width = 5.5cm,
				height=5.5cm,
				xlabel={Mesh size h},
				xmin=0.038, xmax=0.6,
				ymin=0.9*10^-5, ymax=5*10^0,
				xtick={1/20,1/10,0.25,0.5},
				log x ticks with fixed point,
				ytick={10^-7,10^-6,10^-5,10^-4,10^-3,10^-2,10^-1,10^0},
				legend pos=south east,
				xmajorgrids=true,
				ymajorgrids=true,
				grid style=dashed,
				legend columns=1,
				]								
				\addplot
				coordinates {
					(1/2, 2.6149952 )
					(1/4,0.77520502)
					(1/6,  0.32498624)
					(1/8, 0.17265464)
					(1/10, 0.10664215 )
					(1/12, 	0.07242433)
					(1/14, 	0.052439282)
					(1/16,  0.039750653)
					(1/18, 0.031186393)
					(1/20, 	0.025130116 )
				};									
				\addplot
				coordinates {
					(1/2, 0.70131291)
					(1/4, 	0.12312285 )
					(1/6, 0.043310884)
					(1/8, 0.020272546)
					(1/10, 	0.011096631  )
					(1/12, 0.006718895)
					(1/14, 0.0043693469)
					(1/16, 0.0029974931)
					(1/18, 0.0021437688)
					(1/20, 0.0015851928)
				};						
				\addplot[color=black,mark=triangle*,
				mark options={solid, fill=black,scale=1.3}]
				coordinates {
					(1/2,  0.23012734)
					(1/4,0.022357446)
					(1/6,0.0047695733)
					(1/8,  0.001496635)
					(1/10, 	0.00060335807)
					(1/12, 	0.00028703938)
					(1/14, 	0.00015332843)
					(1/16, 	8.9173906e-05)
					(1/18, 	5.5339073e-05)
					(1/20, 3.6141102e-05)
				};,
				\addplot[color=blue, mark size=1.8pt,mark=none, dashed]
				coordinates {
					(1/2, 6.3*0.5^2)
					(1/20, 6.3*0.05^2)
				};
				\addplot[color=red, mark size=1.8pt,mark=none, dashed]
				coordinates {
					(1/2, 5.8*0.5^3)
					(1/20, 5.8*0.05^3)};
				\addplot[color=black, mark size=1.8pt,mark=none, dashed]
				coordinates {
					(1/2, 3.2*0.5^4)
					(1/20, 3.2/20/20/20/20)};
			\end{loglogaxis}
		\end{tikzpicture}
	\end{minipage}
	\caption{We display the errors for the incompressibility test with geometry Fig. \ref{Fig:Num_2} (a). The results indicate the stability of the method even for $\lambda=\infty$.}
	\label{Fig:Num_3}
\end{figure}
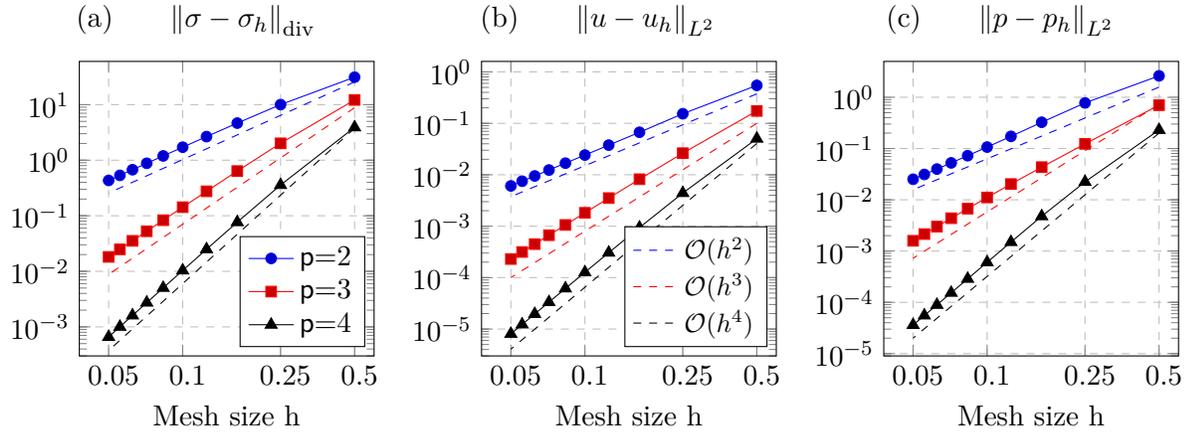

\begin{figure}
	\begin{minipage}{0.33\textwidth}
		\begin{tikzpicture}[scale=1]
		\begin{loglogaxis} [
		title={(a) \hspace*{0.5cm} $\n{\sigma-\sigma_h}_{\d}$ \hspace*{0.7cm}},
		width = 5.5cm,
		height=5.5cm,
		xlabel={Mesh size h},
		xmin=0.1, xmax=1,
		ymin=1.2*10^-5, ymax=1.6*10^1,
		xtick={1/20,1/8,0.25,0.5},
		log x ticks with fixed point,
		ytick={10^-5,10^-4,10^-3,10^-2,10^-1,10^0},
		legend pos=south east,
		xmajorgrids=true,
		ymajorgrids=true,
		grid style=dashed,
		legend columns=1,
		]
		\addplot
		coordinates {
			(1,  5.1142 )
			(1/2, 	  1.3529)
			(1/3, 0.60991 )
			(1/4,  0.34488)
			(1/5, 0.22128 )
			(1/6, 	0.15388)
			(1/7, 	0.11315)
			(1/8, 	0.086674)
		};
		\addlegendentry{\textsf{p}=2},
		\addplot
		coordinates {
			(1, 0.92335 )
			(1/2, 	 0.13147)
			(1/3,  0.040377)
			(1/4,   0.017269)
			(1/5, 0.0088996)
			(1/6, 	0.0051687)
			(1/7,0.003262)
			(1/8, 0.0021884)
		};
		\addlegendentry{\textsf{p}=3},
		\addplot[color=black,mark=triangle*,
		mark options={solid, fill=black,scale=1.3}]
		coordinates {				
			(1,   0.16379 )
			(1/2, 	  0.013183)
			(1/3,0.0027442 )
			(1/4,  0.00088359)
			(1/5, 0.00036479 )
			(1/6, 	0.00017667)
			(1/7, 	9.5604e-05)
			(1/8, 	5.6133e-05)
		};,
		\addlegendentry{\textsf{p}=4},
		\addplot[color=blue, mark size=1.8pt,mark=none, dashed]
		coordinates {
			(1, 3.5*1)
			(1/8, 3.5/8/8)
		};
		\addplot[color=red, mark size=1.8pt,mark=none, dashed]
		coordinates {
			(1, 0.6*1)
			(1/8, 0.6/8/8/8)};
		\addplot[color=black, mark size=1.8pt,mark=none, dashed]
		coordinates {
			(1, 0.119*1)
			(1/8, 0.119/8/8/8/8)};
		\end{loglogaxis}
		\end{tikzpicture}
	\end{minipage}
	\hspace*{-0.45cm}
	\begin{minipage}{0.33\textwidth}
		\begin{tikzpicture}[scale=1]
		\begin{loglogaxis} [				
		title={(b) \hspace*{0.5cm} $\n{u-u_h}_{L^2}$ \hspace*{0.7cm}},
		width = 5.5cm,
		height=5.5cm,
		xlabel={Mesh size h},
	xmin=0.1, xmax=1,
	ymin=2*10^-7, ymax=1.9*10^-1,
	xtick={1/20,1/8,0.25,0.5},
	log x ticks with fixed point,
	ytick={10^-6,10^-5,10^-4,10^-3,10^-2,10^-1,10^0},
		legend pos=south east,
		xmajorgrids=true,
		ymajorgrids=true,
		grid style=dashed,
		legend columns=1,
		]
		\addplot
	coordinates {
		(1,  0.067694 )
		(1/2, 0.017163)
		(1/3, 0.0077154 )
		(1/4,  0.0043578)
		(1/5, 0.0027944)
		(1/6, 0.0019426)
		(1/7, 	0.0014282)
		(1/8, 	0.0010939)
	};
		\addlegendentry{\textsf{p}=2},
			\addplot
		coordinates {
			(1, 0.012561 )
			(1/2,0.0019101)
			(1/3,  0.00058121)
			(1/4,   0.00024747)
			(1/5, 0.00012725)
			(1/6, 	7.3808e-05)
			(1/7,4.6545e-05)
			(1/8,  3.121e-05)
		};
		\addlegendentry{\textsf{p}=3},
			\addplot[color=black,mark=triangle*,
		mark options={solid, fill=black,scale=1.3}]
		coordinates {				
			(1,    0.0027803)
			(1/2, 	  0.00018329)
			(1/3,3.6895e-05 )
			(1/4, 1.1759e-05)
			(1/5, 4.8331e-06 )
			(1/6, 2.3353e-06)
			(1/7, 	1.262e-06)
			(1/8, 	7.4033e-07)
		};,
		\addlegendentry{\textsf{p}=4},
			\addplot[color=blue, mark size=1.8pt,mark=none, dashed]
		coordinates {
			(1, 0.045*1)
			(1/8, 0.045/8/8)
		};
		\addplot[color=red, mark size=1.8pt,mark=none, dashed]
		coordinates {
			(1, 0.0088*1)
			(1/8, 0.0088/8/8/8)};
		\addplot[color=black, mark size=1.8pt,mark=none, dashed]
		coordinates {
			(1, 0.00178*1)
			(1/8, 0.00178/8/8/8/8)};
		\legend{,,,\small $\mathcal{O}(h^2)$,\small $\mathcal{O}(h^3)$,\small $\mathcal{O}(h^4)$}	
		\end{loglogaxis}
		\end{tikzpicture}
	\end{minipage}
	\hspace*{-0.45cm}
	\begin{minipage}{0.33\textwidth}
		\begin{tikzpicture}[scale=1]
		\begin{loglogaxis} [				
		title={(c) \hspace*{0.5cm} $\n{p-p_h}_{L^2}$ \hspace*{0.7cm}},
		width = 5.5cm,
		height=5.5cm,
		xlabel={Mesh size h},
		xmin=0.1, xmax=1,
		ymin=2*10^-7, ymax=1.9*10^-1,
		xtick={1/20,1/8,0.25,0.5},
		log x ticks with fixed point,
		ytick={10^-6,10^-5,10^-4,10^-3,10^-2,10^-1,10^0},
		legend pos=south east,
		xmajorgrids=true,
		ymajorgrids=true,
		grid style=dashed,
		legend columns=1,
		]
		\addplot
		coordinates {
			(1, 0.10979708 )
			(1/2, 0.027121131)
			(1/3, 0.011549124 )
			(1/4,  0.0063692889)
			(1/5, 0.0040290762)
			(1/6, 0.0027774042)
			(1/7, 0.0020301847)
			(1/8, 0.0015486298)
		};
		\addplot
		coordinates {
			(1, 0.015629065 )
			(1/2,0.0023187183)
			(1/3, 0.00073642592)
			(1/4,   0.00032272851)
			(1/5, 0.00016925038)
			(1/6,  9.956608e-05)
			(1/7,6.3448032e-05)
			(1/8, 4.2886197e-05)
		};
		\addplot[color=black,mark=triangle*,
		mark options={solid, fill=black,scale=1.3}]
		coordinates {				
			(1,     0.0017639411)
			(1/2, 	 0.00014784948)
			(1/3,2.8994263e-05)
			(1/4,9.0567808e-06)
			(1/5,3.6653564e-06 )
			(1/6,1.7503598e-06)
			(1/7, 9.3734327e-07)
			(1/8, 	5.4592541e-07)
		};,
		\addplot[color=blue, mark size=1.8pt,mark=none, dashed]
		coordinates {
			(1, 0.06*1)
			(1/8, 0.06/8/8)
		};
		\addplot[color=red, mark size=1.8pt,mark=none, dashed]
		coordinates {
			(1, 0.011*1)
			(1/8, 0.011/8/8/8)};
		\addplot[color=black, mark size=1.8pt,mark=none, dashed]
		coordinates {
			(1, 0.00128*1)
			(1/8, 0.00128/8/8/8/8)};	
		\end{loglogaxis}
		\end{tikzpicture}
	\end{minipage}
	\caption{$2D$ Multi-patch example: Errors w.r.t. the mesh size and for the degrees $\textsf{p}=2,3,4$ and $r=0$.}
	\label{Fig:Num_6}
\end{figure}
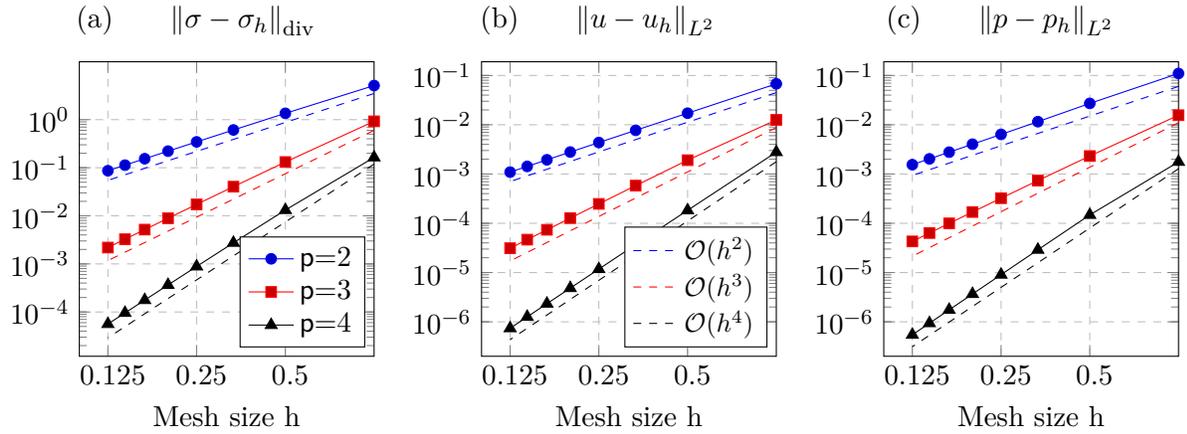

\subsubsection{Cook's membrane}
A classical benchmark problem within the scope of (nearly) incompressible elasticity theory is the \emph{Cook membrane}. It is represented by a rectangular domain that is fixed at the left side and is loaded by a uniformly distributed shear force $t_n=(0,1/16)^T$ at the right boundary; see Fig. \ref{Fig:Num_5} (a) and cf. \cite{Test_cook}. For numerical methods that suffer from locking, one often can observe  far too small approximate displacements. For comparison purposes we apply our discrete scheme with $\lambda = \infty, \  \mu=0.375$ analogous to the example in \cite{Test_cook}. In latter article a reference value for the displacement at the point $P$ in Fig.  \ref{Fig:Num_5} (a) is computed and we study how our approximations tend to these reference values. In Fig.  \ref{Fig:Num_5} (b)-(c) we see the results for the special case $\textsf{p}=r+2$ and $\textsf{p}=2,3,4$. Since for all degrees the differences between reference and approximate   values diminish,  we can conclude here the absence of locking effects.

\begin{figure}
	\begin{minipage}{0.35\textwidth}
		\hspace*{-0.2cm} \vspace*{0.1cm}
		\begin{tikzpicture}[scale=1]
		\node (eins) at (-2.7,4.16) {\includegraphics[width=0.61\linewidth]{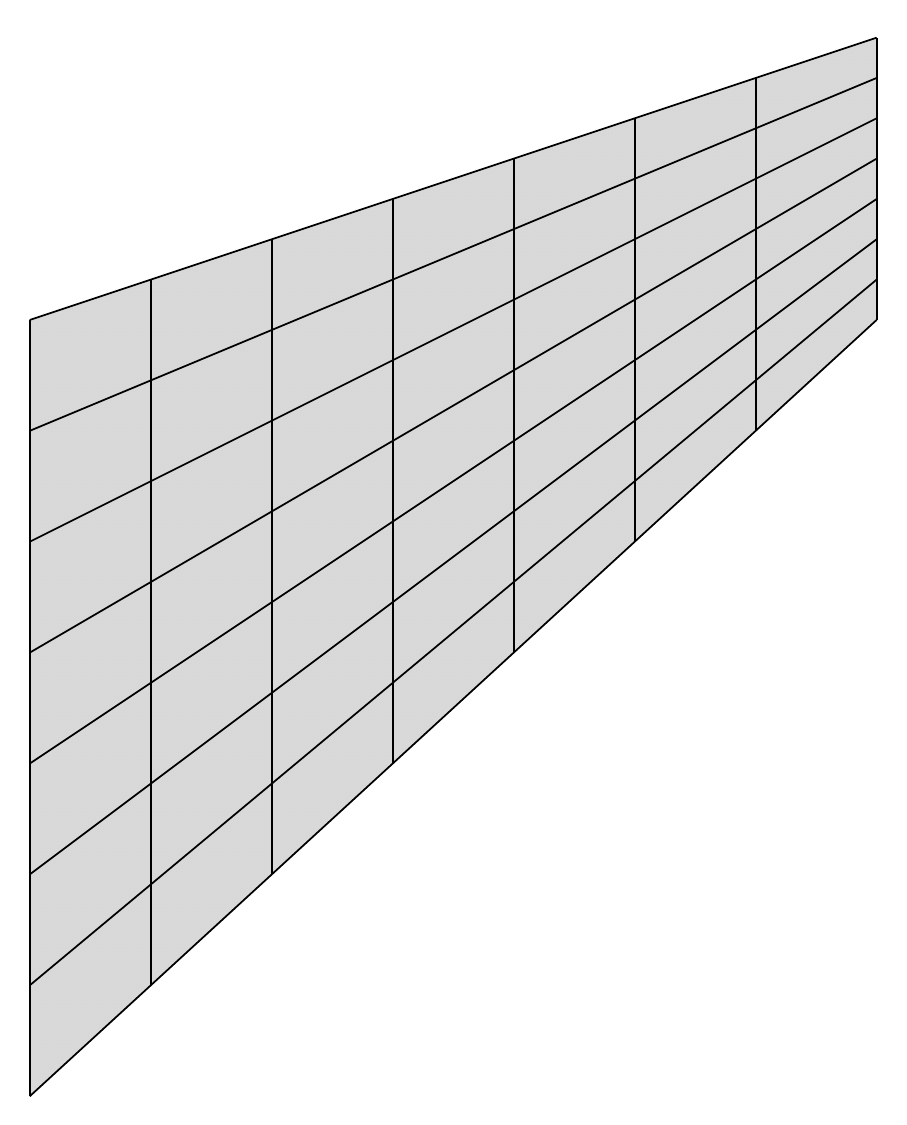}};
		\draw[|-|,scale=1.28] (-0.6,4) -- (-0.6,4.88);
		\draw[|-|,scale=1.28] (-3.6,0+1.6) -- (-3.6,2.4+1.6);
		\draw[|-|,scale=1.28] (-0.6,0+1.6) -- (-0.6,2.4+1.6);
		\draw[|-|,scale=1.28] (-3.425,1.4) -- (-0.79,1.4);		
		{	\foreach \x in {1.6,1.7,1.8,1.9,2,2.1,2.2,2.3,2.4,2.5,2.6,2.7,2.8,2.9,3,3.1,3.2,3.3,3.4,3.5,3.6,3.7,3.8,3.9}
			\draw[scale=1.28] (-3.5,\x ) to (-3.4 ,\x +0.1);}
		\draw[->,very thick,scale=1.28] (-0.7,4.3) -- (-0.7,5.3);
		\draw[fill=red,scale=1.28] (-0.8,4.43) circle(0.06);
		\node[red,scale=1.28]  at (-1.2,5.33) {$P$};
		\node[left,scale=1.28]  at (-4.5,3.6) {\footnotesize $44$};
		\node[left,scale=1.28]  at (-0.1,3.6) {\footnotesize $44$};
		\node[left,scale=1.28]  at (-0.1,5.65) {\footnotesize $16$};
		\node[left,scale=1.28]  at (-2.3,2) {\footnotesize $48$};
		\node[left,scale=1.28]  at (-0.9,6.6) {\small $t_n$};
		\node[left,scale=1]  at (-3.9,6.5) { (a) };
		\end{tikzpicture}
	\end{minipage}
	\hspace*{-0.4cm}
	\begin{minipage}{0.33\textwidth}
		\begin{tikzpicture}[scale=1]
		\begin{semilogxaxis} [
		title={(b) \hspace*{0.0cm} $y$-displacement at $P$ \hspace*{0.2cm}},
		width = 5.5cm,
		height=5.5cm,
		xlabel={Number of DoF},
		xmin=20, xmax=12000,
		ymin=15.5, ymax=22,
		xtick={10^1,10^2,10^3,10^4},
		xticklabel style={/pgf/number format/sci},
		ytick={14,16,18,20,22},
		legend pos=north east,
		xmajorgrids=true,
		ymajorgrids=true,
		grid style=dashed,
		legend columns=1,
		]				 
		\addplot[color=blue,mark=*,
		mark options={solid, fill=blue,scale=0.8}]
		coordinates {			
			(24,   21.0654 )
			(393, 		16.9836)
			(1212, 16.6973 )
			(2481, 16.5989)
			(4200, 16.5535 )
			(6369, 16.5269)
			(8988, 16.5112)
		};
		\addlegendentry{\textsf{p}=2},
		\addplot[color=red,mark=square*,
		mark options={solid, fill=red,scale=0.8}]
		coordinates {			
			(57,  18.4602 )
			(504, 	16.6112)
			(1401, 	16.5217)
			(2748, 	16.4952)
			(4545, 16.4833 )
			(6792, 16.4767)
			(9489, 16.4724)
		};
		\addplot[color=black,mark=triangle*,
		mark options={solid, fill=black,scale=0.8}]
		coordinates {			
			(104,17.1732 )
			(629, 	 16.5297)
			(1604,  16.4907 )
			(3029,  16.4785)
			(4904, 16.4723 )
			(7229, 16.4685)
			(10004, 16.4658)
		};,
		\addplot[color=brown, mark size=1.8pt,mark=none, dashed,line width=1.1]
		coordinates {
			(30, 16.442)
			(12000, 16.4420)
		};
		\legend{$\textsf{p}=2$,$\textsf{p}=3$,$\textsf{p}=4$,\small ref. value,,}	
		\end{semilogxaxis}
		\end{tikzpicture}
	\end{minipage}
	\hspace*{-0.2cm}
	\vspace*{0.28cm}
	\begin{minipage}{0.33\textwidth}
		\begin{tikzpicture}[scale=1]
		\begin{semilogxaxis} [
		title={(c) \hspace*{0.0cm} $x$-displacement at $P$ \hspace*{0.2cm}},
		width = 5.5cm,
		height=5.5cm,
		xlabel={Number of DoF},
		xmin=20, xmax=12000,
		ymin=-10.1, ymax=-6.8,
		xtick={10^1,10^2,10^3,10^4},
		xticklabel style={/pgf/number format/sci},
		ytick={-10,-9,-8,-7},
		legend pos=north east,
		xmajorgrids=true,
		ymajorgrids=true,
		grid style=dashed,
		legend columns=1,
		]		 
		\addplot[color=blue,mark=*,
		mark options={solid, fill=blue,scale=0.8}]
		coordinates {			
			(24,   -9.7186 )
			(393, -7.5050)
			(1212, -7.3813 )
			(2481, -7.3202)
			(4200, -7.3031 )
			(6369, -7.2880)
			(8988, -7.2830)
		};
		
		\addplot[color=red,mark=square*,
		mark options={solid, fill=red,scale=0.8}]
		coordinates {			
			(57,  -8.6432 )
			(504, 	-7.3319)
			(1401, 	-7.2865)
			(2748, 	-7.2767)
			(4545, -7.2719 )
			(6792, -7.2690)
			(9489, -7.2668)
		};
		\addplot[color=black,mark=triangle*,
		mark options={solid, fill=black,scale=0.8}]
		coordinates {			
			(104,-7.6030 )
			(629, -7.2887)
			(1604,  -7.2769 )
			(3029,  -7.2715)
			(4904, -7.2678)
			(7229, -7.2653)
			(10004, -7.2636)
		};,
		\addplot[color=brown, mark size=1.8pt,mark=none, dashed,line width=1.1]
		coordinates {
			(30, -7.2480)
			(12000, -7.2480)
		};
		\end{semilogxaxis}
		\end{tikzpicture}
	\end{minipage}
	\caption{For the Cook membrane we get good displacement values in $P$ although we have set $\lambda=\infty$ as well as $r=\textsf{p}-2$.}
	\label{Fig:Num_5}
\end{figure}

\subsubsection{$3D$ Ring}

Next we look at a convergence test example in $3D$, where the geometry and the mesh for $h=1/6$ are given in Fig. \ref{Fig:Num_8} (a). Again we define the source term $f$ such that the exact solution has zero displacement boundary values and the exact solution is $$u_1 = \frac{g}{2}, \ u_2=g, \ u_3 = -\frac{g}{2}, \ \ \   \textup{with} \ \ \ g \circ \p{F}(\zeta_1,\zeta_2,\zeta_3) = \sin(\pi \zeta_1) \sin(\pi \zeta_2) \sin(\pi \zeta_3).$$ Above, $\p{F} \colon (0,1)^3 \rightarrow \o$ denotes the regular parametrization of $\o$.  The weak formulation with weakly imposed symmetry incorporates besides the displacement also the stresses and the Lagrange multiplier $p \in L^2(\o,\mathbb{R}^3)$. Consequently,  if we have finer meshes or higher degrees  one runs into trouble due to a large number of degrees of freedom (DoF). Especially the  solving of the underlying large linear systems causes a lot of  computational effort. That is why we applied the iterative solver \emph{minres()} to solve the system of equations in the $3D$ case. The mentioned solver is pre-implemented in MATLAB and as relative tolerance we chose $5\cdot 10^{-8}$, i.e. $$ \frac{\n{M \bar{x} - b}}{\n{b}} \leq 5\cdot 10^{-8}  \ \footnote[2]{$\norm{\cdot}$ stands for the Euclidean norm.},$$
with $M, \ b$ denoting the system matrix and right-hand side and $\bar{x}$ is the approximate solution obtained by \emph{minres()}. 
Since the norms of relative and also of the absolute residuals are much less than the errors we computed in the convergence test, we think that the  plots are still meaningful. In Fig. \ref{Fig:Num_10} we see the error decays for $\textsf{p}=2, r=0$ and  $\textsf{p}=3,r=1$ and the decreasing basically fits to the theoretically predicted order $\mathcal{O}(h^\textsf{p})$. However, the convergence order of the displacement field seems to be one higher than the one from the theory. This indicates that the estimates from the sections above might be too pessimistic. There still are open questions whether the underlying inf-sup condition can be proven with less strict requirements.  On the other hand,  the refined error estimate $\n{\nabla \cdot \sigma- \nabla \cdot \sigma_h}_{L^2} \in \mathcal{O}(h^{\textsf{p}+1})$ can be realized well in Fig. \ref{Fig:Num_8}. Furthermore, the approximations for $u_1$ and $\sigma_{11}$ on the slice $z=0.5$ through the ring domain are pictured in Fig.   \ref{Fig:Num_12}. 
In addition to the ring example we show the feasibility of multi-patch geometries in $3D$ by considering a spherical domain of radius $R=1$. Such a shape needs a decomposition in several patches to avoid parametrization singularities. More precisely, we use the $7$-patch decomposition illustrated in Fig. \ref{Fig:Num_11} (a) that is provided by the GeoPDEs software \cite{geopdes3.0}. In order to have a reference solution, we adapted the right-hand source $f$ to   get the exact displacements
$$u_1 = \cos(0.5 \pi \rho^2), \ \ \ u_2 = -u_1, \ \ \  u_3 = 2 u_1,$$
where $\rho$ denotes here the radial coordinate. Again we set $\lambda=2, \ \mu=1$.
For the mesh  in Fig. \ref{Fig:Num_11} (a) and $p=2,r=0$ we obtain an error $\n{u-u_h}_{L^2} \approx 0.00468$ and the first approximate displacement component  is  displayed in the mentioned figure, too. One notes the advantage of the IGA ansatz that allows for the exact geometry representation.

\begin{figure}[h!]
	\hspace*{0.5cm}
	\begin{tikzpicture}
	\node (zwei) at (0,0) {\includegraphics[width=0.32\linewidth]{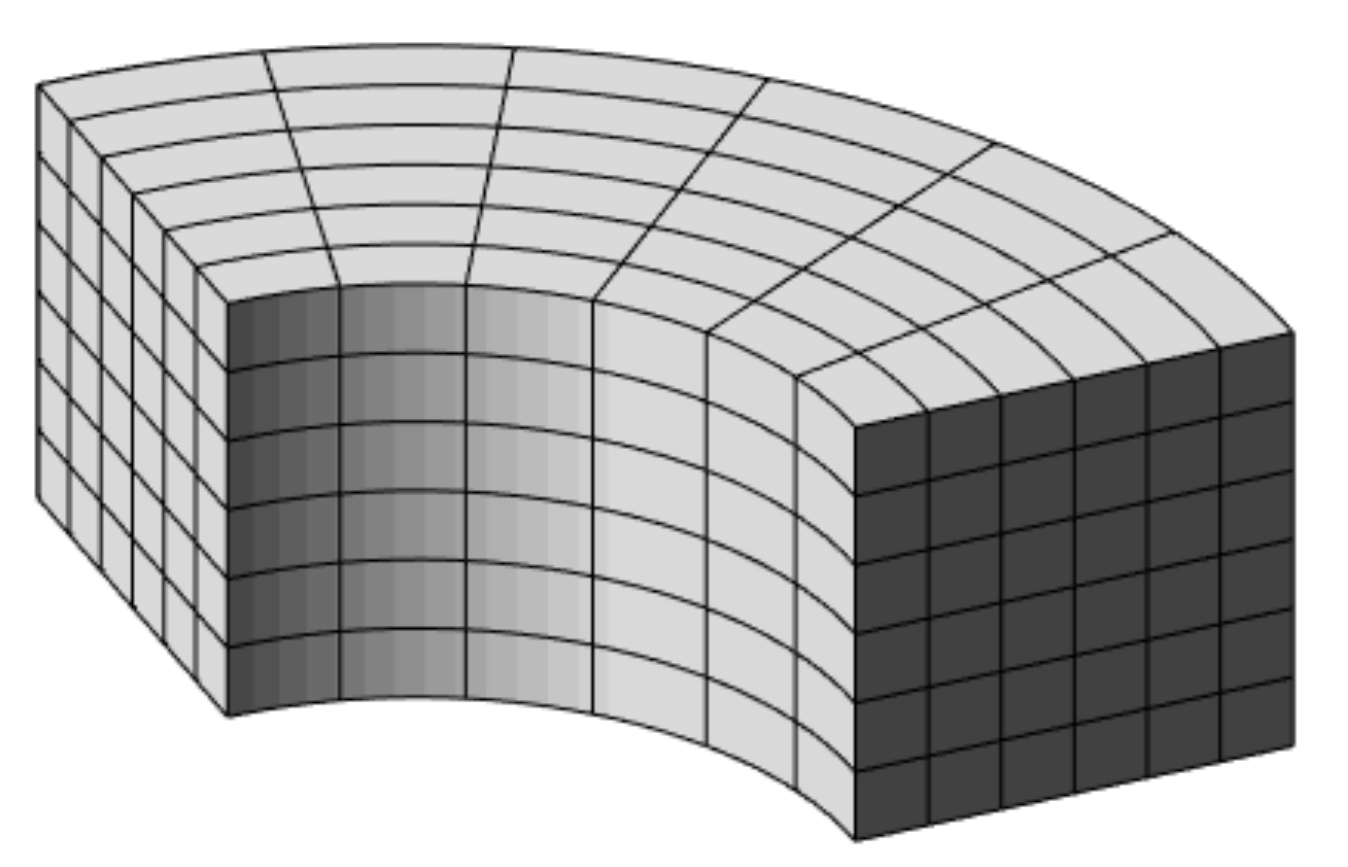}};
	\draw[scale=1,dashed,thick] (-1,-2-0.05) -- (-1+0.48*3.5,-2+0.48*0.8-0.48*0.05-0.01);
	\draw[scale=1,|-|] (-1+0.05,-2-0.05-0.2) -- (-1+0.49*3.5+0.05,-2+0.49*0.8-0.49*0.05-0.01-0.2);
	\draw[scale=1, thick,dashed] (-1,-2-0.05) -- (-1-0.39*2,-2+0.39*2.3-0.39*0.05-0.02);
	\draw[scale=1, |-|] (-1-0.15,-2-0.05-0.1) -- (-1-0.79*2-0.15,-2+0.79*2.3-0.79*0.05-0.02-0.1);
	
	\draw[scale=1, |-|] (2.7,-1.28) -- (2.7,0.38);
	\node at (0,-2.3) {\small $1$}; 
	\node at (-2.1,-1.4) {\small $2$};
	\node at (2.9,-0.45) {\small $1$};	
	\node at (-3.3,-2.3) {\small (a)};	
	
	\draw[->,shift={(-0.7,-1.7)}] (-3,0) --(-2,1.2/5);
	\draw[->,shift={(-0.7,-1.7)}] (-3,0) --(-3,1);
	\draw[->,shift={(-0.7,-1.7)}] (-3,0) --(-3-3.5/6,4/6);
	\node at (-3,-1.35) {\small $x$};
	\node at (-4.3,-1.35) {\small $y$};
	\node at (-3.55,-0.9) {\small $z$};
	\end{tikzpicture}
	\hspace*{0.8cm}
	\begin{minipage}{0.35\textwidth}
		\vspace*{-3.6cm}
		\begin{tikzpicture}[scale=1]
		\begin{loglogaxis} [
		title={$\n{\nabla \cdot \sigma - \nabla \cdot \sigma_h}_{L^2}$},
		width = 5.5cm,
		height=5.5cm,
		xlabel={Mesh size h},
		xmin=0.05, xmax=0.5,
		ymin=2*10^-4, ymax=0.5*10^1,
		xtick={1/16,1/8,0.25,0.5},
		log x ticks with fixed point,
		ytick={10^-3,10^-2,10^-1,10^0},
		legend pos= outer north east,
		xmajorgrids=true,
		ymajorgrids=true,
		grid style=dashed,
		legend columns=1,
		]
		\addplot
		coordinates {
			(1/2, 1.5490403)
			(1/4, 	0.1798428)
			(1/6, 0.053894847)
			(1/8, 0.022826837)
			(1/10, 0.011708598 )
			(1/12,0.0067824899)
			(1/14, 0.0042737332)
		};
		\addplot
		coordinates {
			(1/2, 0.256817825)
			(1/4, 0.0214768347)
			(1/6, 	0.00390787752)
			(1/8, 	0.0011883617)
			(1/10,0.000490565369)
		};
		\addplot[color=blue, mark size=1.8pt,mark=none, dashed]
		coordinates {
			(1/2, 7.3*0.5^3)
			(1/14, 7.3/14/14/14)
		};
		\addplot[color=red, mark size=1.8pt,mark=none, dashed]
		coordinates {
			(1/2, 2.9*0.5^4)
			(1/10, 2.9*0.1^4)};
		\legend{\textsf{p}=2,\textsf{p}=3,\small $\mathcal{O}(h^3)$,\small $\mathcal{O}(h^4)$}	
		\end{loglogaxis}
		\node at (0,4.45) {\small (b)};
		\end{tikzpicture}
	\end{minipage}
	\caption{A mesh for the $3D$ convergence example ($h=1/6$) is shown on the left, whereas on the right we see the corresponding errors for the divergence. For $\textsf{p}=2$ it is $r=0$ and for $\textsf{p}=3$ we set $r=1$.  }
	\label{Fig:Num_8}
\end{figure}

\begin{figure}[h!]
	\centering\begin{tikzpicture}
	\node (eins) at (-7.5,0) {\includegraphics[width=0.3\linewidth]{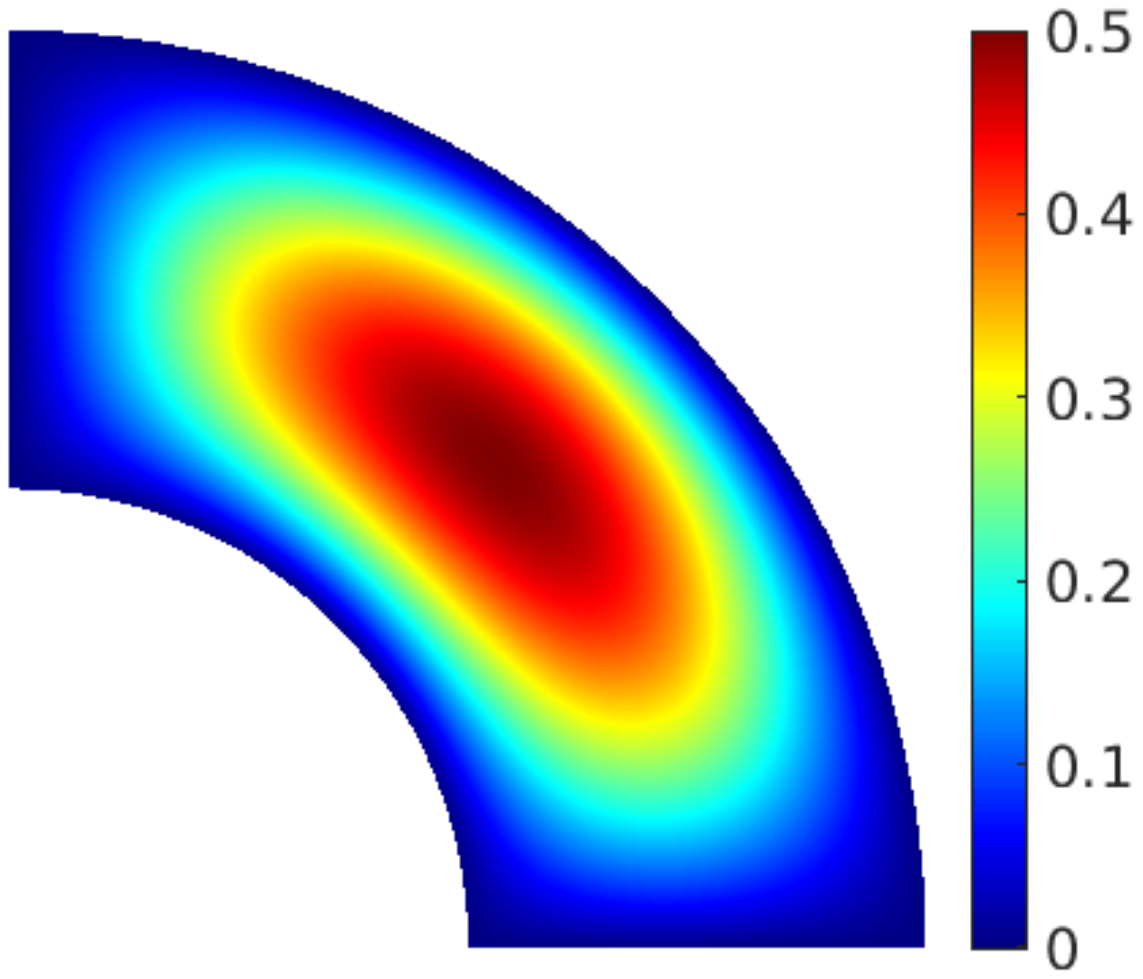}};
	\node[shift={(0.18,-0.01)}] (eins) at (0,0) {\includegraphics[width=0.289\linewidth]{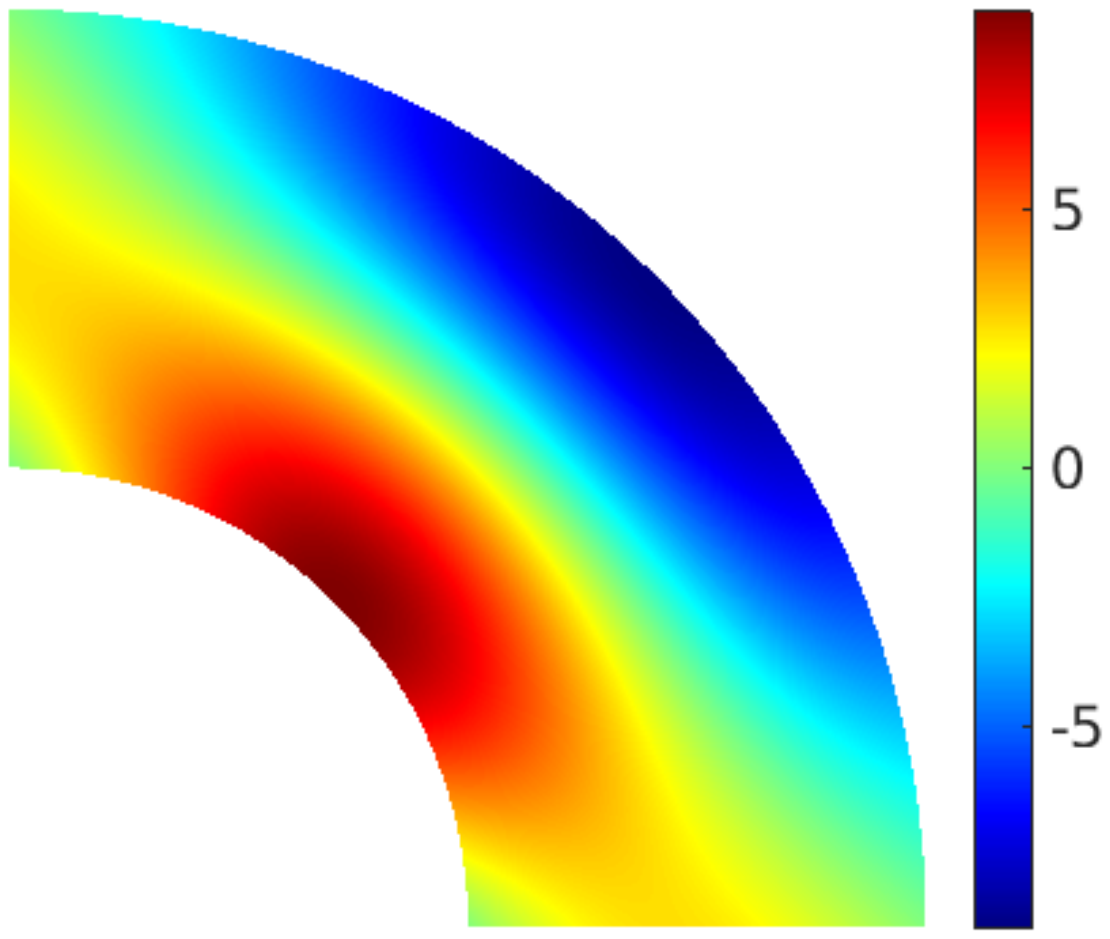}};
	\node at (-8.7,-2.5) {\small (a)  $u_1$};
	\node at (-1,-2.5) {\small (b)  $\sigma_{11}$};
	\end{tikzpicture}	
	\caption{ View on the approximate solution of the $3D$ ring on the middle slice $z=1/2$. ($\textsf{p}=3, \, r=1,\, h=1/6$) }
	\label{Fig:Num_12}
\end{figure}

\begin{figure}[h!]
	\begin{minipage}{0.33\textwidth}
		\begin{tikzpicture}[scale=1]
		\begin{loglogaxis} [
		title={(a) \hspace*{0.5cm} $\n{\sigma-\sigma_h}_{\d}$ \hspace*{0.5cm}},
		width = 5.5cm,
		height=5.5cm,
		xlabel={Mesh size h},
		xmin=0.05, xmax=0.5,
		ymin=0.5*10^-3, ymax=0.8*10^1,
		xtick={1/16,1/8,0.25,0.5},
		log x ticks with fixed point,
		ytick={10^-3,10^-2,10^-1,10^0},
		legend pos= north west,
		xmajorgrids=true,
		ymajorgrids=true,
		grid style=dashed,
		legend columns=2,
		]
		\addplot
		coordinates {
			(1/2, 1.7266104)
			(1/4,  0.23702499)
			(1/6, 0.082220698)
			(1/8, 0.040413722)
			(1/10, 	0.0238866 )
			(1/12, 	0.015774842)
			(1/14, 0.011207412)

		};
		\addlegendentry{\textsf{p}=2},
		\addplot
		coordinates {
			(1/2,  0.311456307)
			(1/4, 	0.0353651526)
			(1/6, 	0.00783834134)
			(1/8, 	0.00286270757)
			(1/10,	0.00138941098)
			
		};
		\addlegendentry{\textsf{p}=3},
		\addplot[color=blue, mark size=1.8pt,mark=none, dashed]
		coordinates {
			(1/2, 1.6*0.5^2)
			(1/14, 1.6/14/14)
		};
		\addplot[color=red, mark size=1.8pt,mark=none, dashed]
		coordinates {
			(1/2, 0.92*0.5^3)
			(1/10, 0.92*0.1^3)};
		\addlegendentry{\small $\mathcal{O}(h^2)$},
		\addlegendentry{\small $\mathcal{O}(h^3)$},		
		\end{loglogaxis}
		\end{tikzpicture}
	\end{minipage}
	\hspace*{-0.45cm}
	\begin{minipage}{0.33\textwidth}
		\begin{tikzpicture}[scale=1]
		\begin{loglogaxis} [
		title={(b) \hspace*{0.5cm} $\n{u-u_h}_{L^2}$ \hspace*{0.7cm}},
		width = 5.5cm,
		height=5.5cm,
		xlabel={Mesh size h},
		xmin=0.05, xmax=0.5,
		ymin=0.6*10^-5, ymax=0.7*10^-1,
		xtick={1/16,1/8,0.25,0.5},
		log x ticks with fixed point,
		ytick={10^-5,10^-4,10^-3,10^-2,10^-1,10},
		legend pos=north west,
		xmajorgrids=true,
		ymajorgrids=true,
		grid style=dashed,
		]			
		\addplot
		coordinates {
			(1/2,  0.025486426)
			(1/4, 0.0028760066 )
			(1/6, 0.00087007517)
			(1/8,0.00037430249)
			(1/10, 0.00019399575 )
			(1/12, 0.000113192)
			(1/14, 	7.1707176e-05)
			
		};
		\addplot
		coordinates {
		(1/2,  0.00362969937)
		(1/4, 	0.000500295061)
		(1/6, 9.85153935e-05)
		(1/8,3.16182516e-05)
		(1/10,1.31347917e-05)			
		};	
		\addplot[color=blue, mark size=2.2pt,mark=none, dashed]
		coordinates {
			(1/2, 0.12*1/8)
			(1/14, 0.12/14/14/14)
		};
		\addlegendentry[eins]{\textsf{p}=3, r=1},
		\addplot[color=red, mark size=2.2pt,mark=none, dashed]
		coordinates {
			(1/2, 0.08*1/16)
			(1/10, 0.08*1/10*1/10*1/10*1/10)};	
		\addlegendentry[zwei]{\textsf{p}=3, r=1},
		\addlegendentry[drei]{\textsf{p}=3, r=1},
		\addlegendentry[vier]{\textsf{p}=3, r=1},
		\legend{,,\small $\mathcal{O}(h^3)$,\small $\mathcal{O}(h^4)$}
		\end{loglogaxis}
		\end{tikzpicture}
	\end{minipage}
	\hspace*{-0.45cm}
	\begin{minipage}{0.33\textwidth}
		\begin{tikzpicture}[scale=1]
		\begin{loglogaxis} [
		title={(c) \hspace*{0.5cm} $\n{p-p_h}_{L^2}$ \hspace*{0.7cm}},
		width = 5.5cm,
		height=5.5cm,
		xlabel={Mesh size h},
		xmin=0.05, xmax=0.5,
		ymin=0.25*10^-3, ymax=0.6,
		xtick={1/16,1/8,1/4,0.5},
		log x ticks with fixed point,
		ytick={10^-3,10^-3,10^-2,10^-1,10},
		legend pos=north west,
		xmajorgrids=true,
		ymajorgrids=true,
		grid style=dashed,
		]			
		\addplot
		coordinates {
			(1/2,  0.24986851)
			(1/4, 	0.057731336)
			(1/6, 0.024419059)
			(1/8,0.013471312)
			(1/10, 0.0085418207)
			(1/12, 0.0059013827)
			(1/14, 	0.0043221226)

		};
		\addplot
		coordinates {
			(1/2, 0.060960086)
			(1/4, 0.0104578386)
			(1/6,0.00263148909)
			(1/8,0.00103959659)
			(1/10, 0.000516867924 )			
		};	
		\addplot[color=blue, mark size=2.2pt,mark=none, dashed]
		coordinates {
			(1/2, 0.65*1/4)
			(1/14, 0.65*1/14/14)
		};
		\addlegendentry[eins]{\textsf{p}=3, r=1},
		\addplot[color=red, mark size=2.2pt,mark=none, dashed]
		coordinates {
			(1/2, 0.36*1/8)
			(1/10, 0.36*1/10*1/10/10)};	
		\addlegendentry[zwei]{\textsf{p}=3, r=1},
		\addlegendentry[drei]{\textsf{p}=3, r=1},
		\addlegendentry[vier]{\textsf{p}=3, r=1},
		\legend{,,\small $\mathcal{O}(h^2)$,\small $\mathcal{O}(h^3)$}
		\end{loglogaxis}
		\end{tikzpicture}
	\end{minipage}
	\caption{Error decay for the $3D$ ring example. We note that we choose the regularity parameter $r=0$ if $\textsf{p}=2$, but $r=1$ for the case $\textsf{p}=3$. }
	\label{Fig:Num_10}
\end{figure}
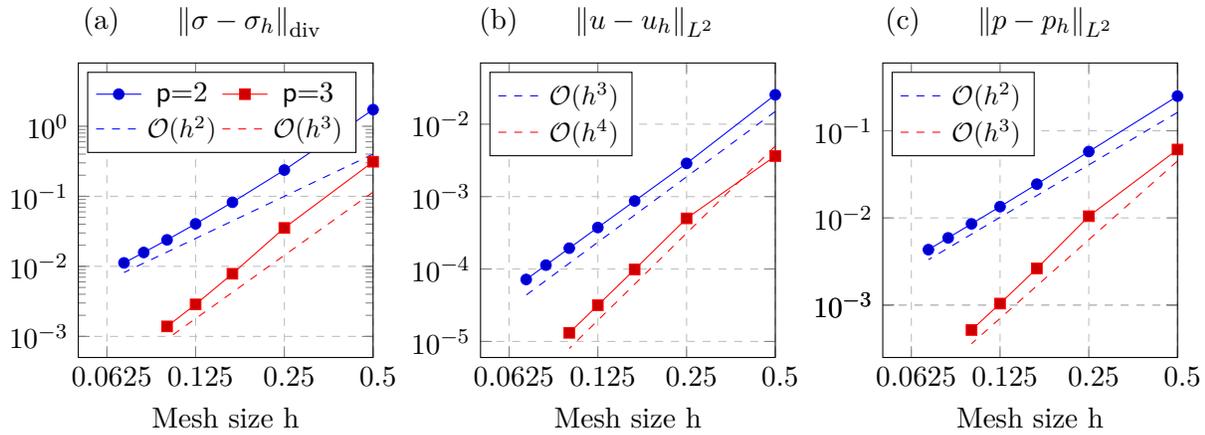

\begin{figure}[h!]
	\centering\begin{tikzpicture}
	\node (eins) at (-7.5,0) {\includegraphics[width=0.28\linewidth]{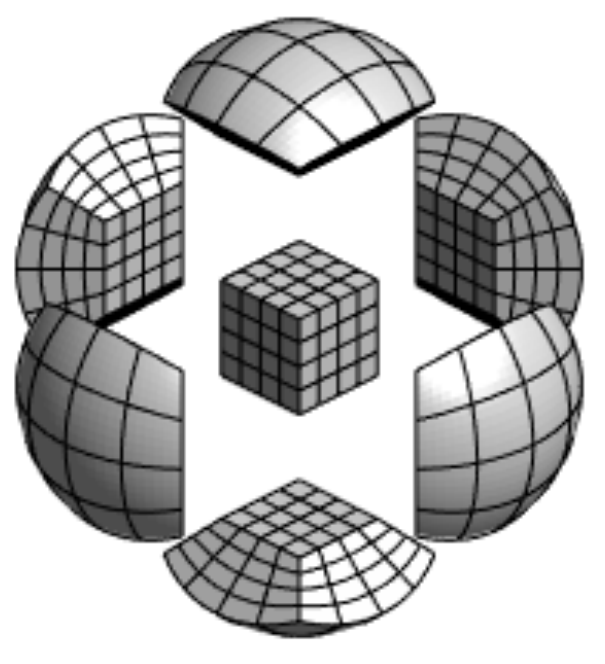}};
	\node (eins) at (0,0.05) {\includegraphics[width=0.337\linewidth]{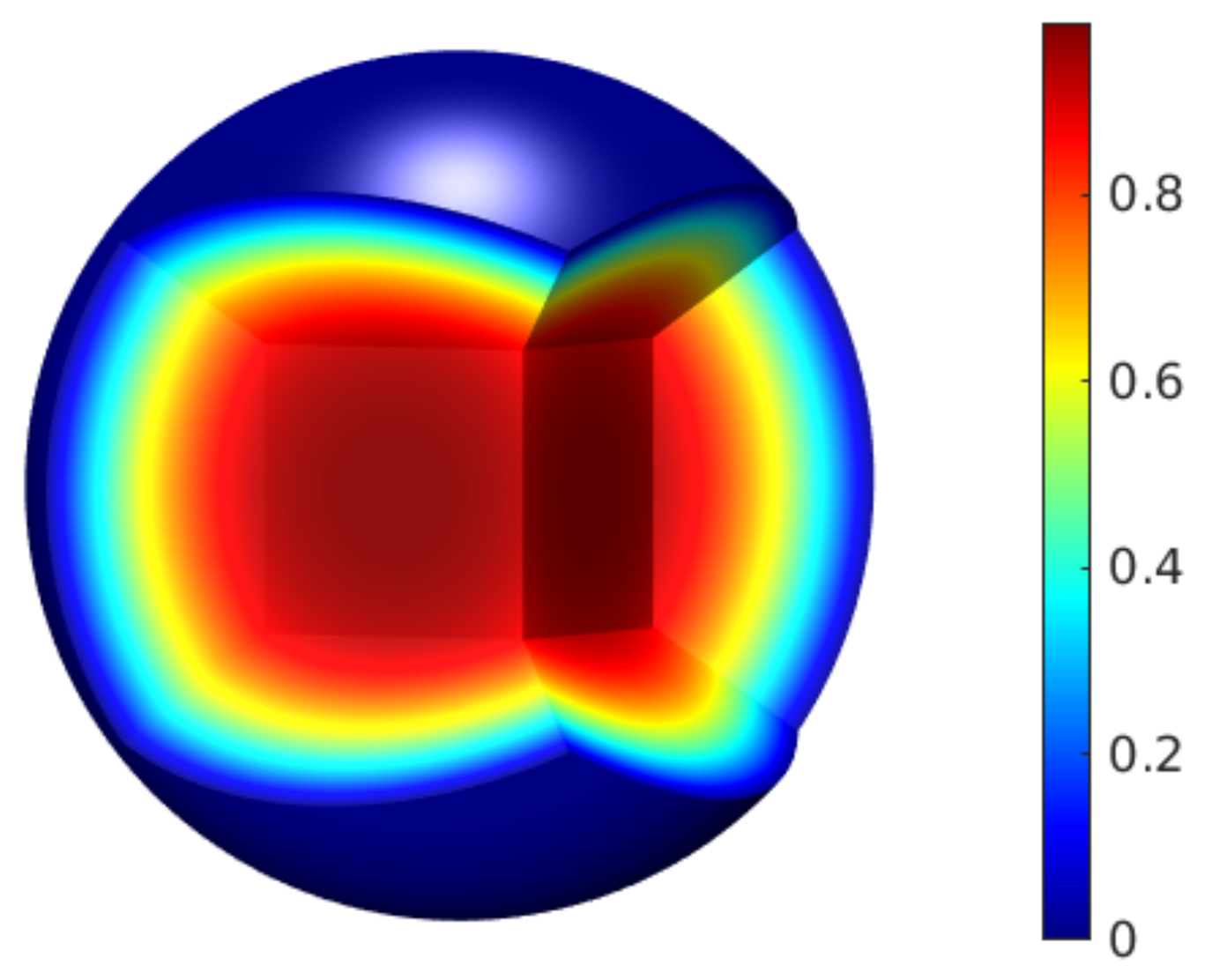}};
	\draw[|-|,shift={(-3,-2)}] (0,0) --(0,4.125);
	\draw[dashed,shift={(0,-1.99)}] (-3,0) --(-0.5,0);
	\draw[dashed,shift={(0,2.12)}] (-3,0) --(-0.5,0);
	\node at (-7.5,-2.8) {\small (a) $7$-patch mesh for the solid ball };
	\node at (-1.7,-2.8) {\small (b) $u_1$ };
	\node at (-3.1,0) {\footnotesize $2$};
	\end{tikzpicture}
	
	\caption{The mesh for the $3D$ multi-patch example ($h=1/4$) on the left. On the right we picture the approximate displacement solution $u_1$ for the mesh on the left and $\textsf{p}=3,r=1,\lambda=10,\mu=1$. }
	\label{Fig:Num_11}
\end{figure}

\newpage
After the different  examples in two and three dimensions we see that the applied mixed spaces lead to reasonable results and we have indeed a unique approximate solution. Admittedly, one has to implement several discrete spaces involving different polynomial degrees and regularity. Consequently, the question arises whether the usage of classical B-spline spaces of the form $S_{\textsf{p},\dots,\textsf{p}}^{r,\dots,r} \circ \p{F}^{-1}$ without special transformations $\mathcal{Y}_i$ and equal degrees leads to comparable results.  To answer this question and to demonstrate the need of the de Rham spaces we show in Fig. \ref{Fig:Num_14}  approximate first displacement components for the case of classical B-spline spaces for all variables $\sigma,u,p$ and  with $\textsf{p}=2, r=0$ w.r.t. each coordinate. In other words, we ignore the required Brezzi stability conditions. Then we obtain for the shown example in $2D$ a nearly singular system matrix and for both $x$-displacements we observe in Fig. \ref{Fig:Num_14}  a completely unstable behavior. Thus standard spaces fail.    

\begin{figure}[h!]
	\centering\begin{tikzpicture}
	\node (eins) at (-7.5,0) {\includegraphics[width=0.33\linewidth]{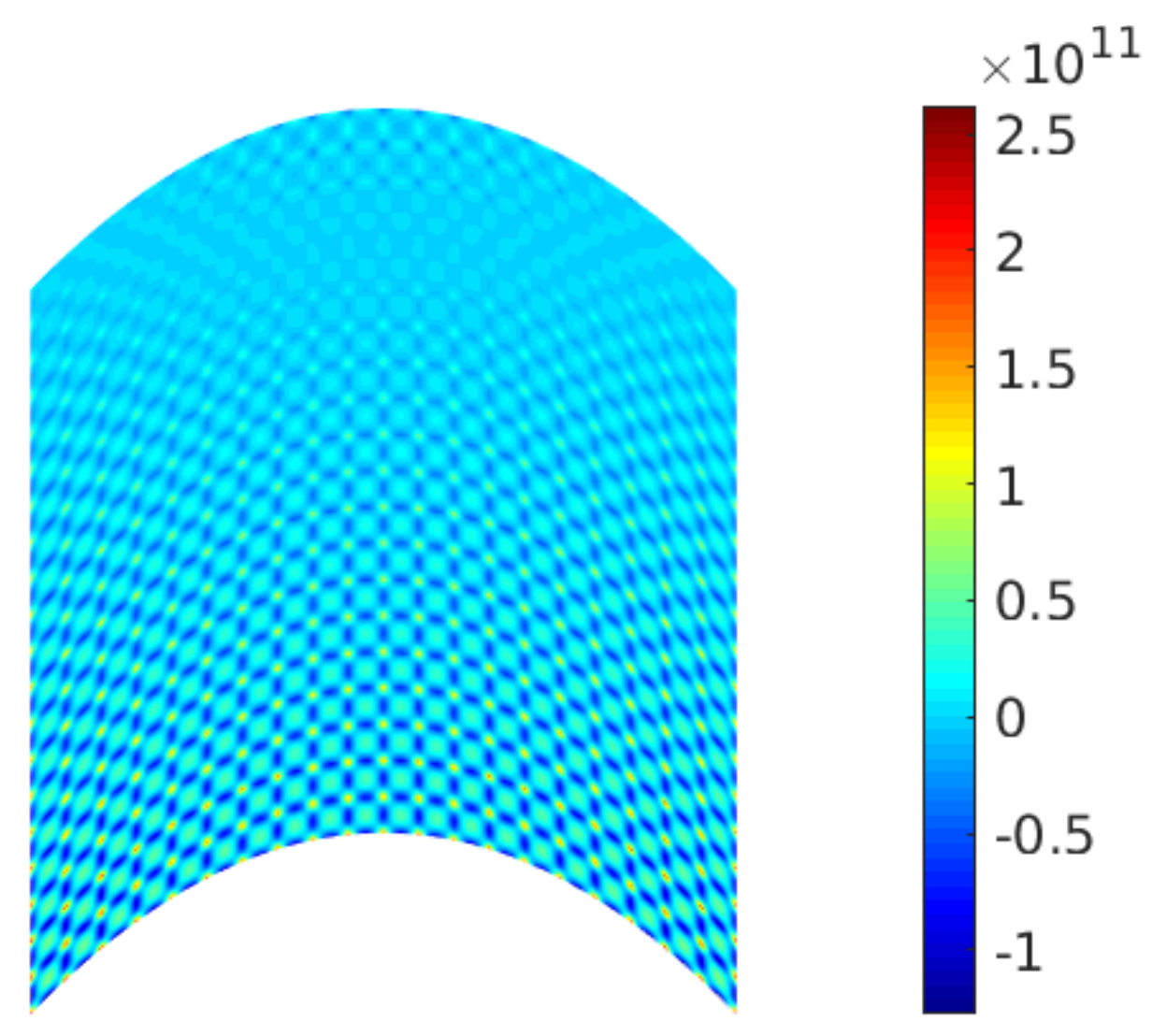}};
	\node[shift={(0.18,-0.01)}] (eins) at (0,0) {\includegraphics[width=0.33\linewidth]{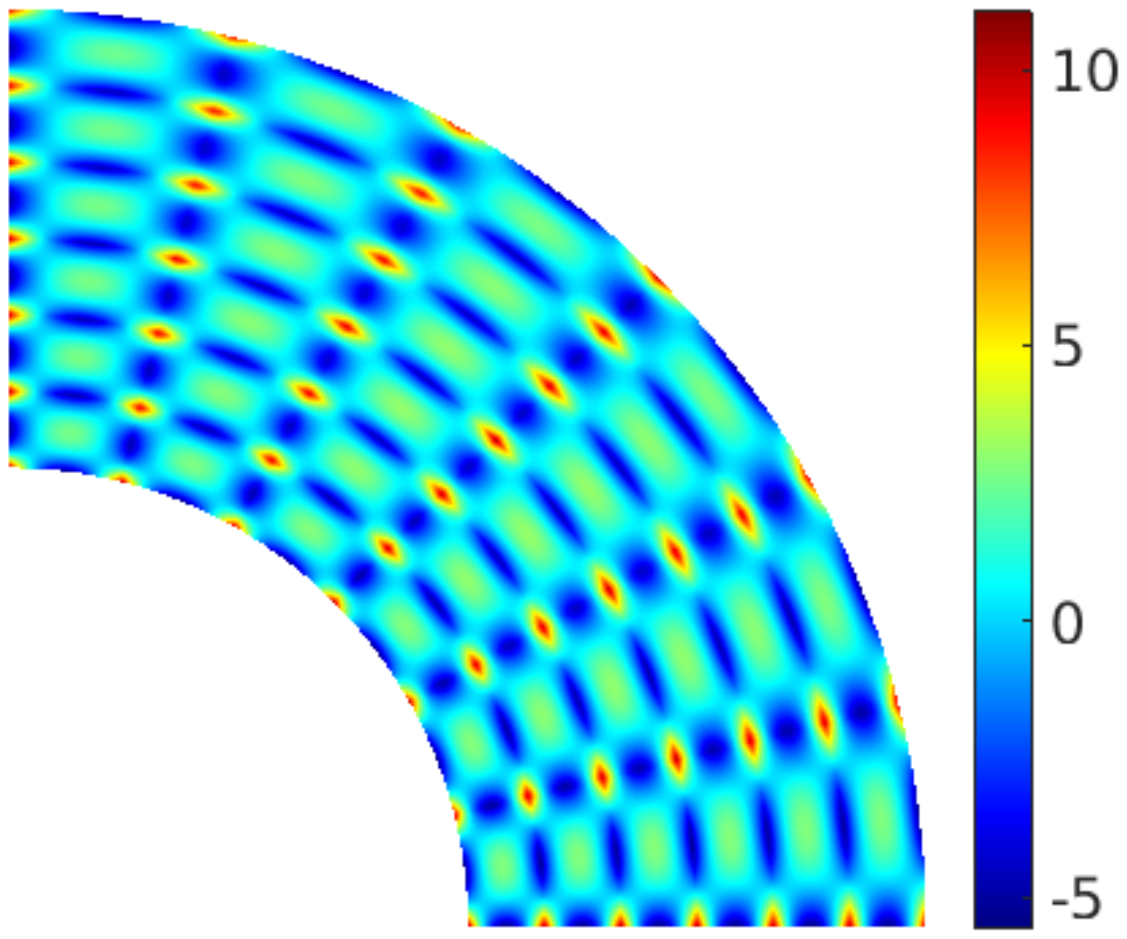}};
	\node at (-8.7,-2.5) {\small (a)  $u_1$};
	\node at (-1,-2.5) {\small (b)  $u_1$};
	\end{tikzpicture}	
	\caption{ The obtained approximate solutions for classical gradient-preserving B-spline basis functions of degree $\textsf{p}=2, \ r=0$ for each variable. For the left example we chose $h=1/20$, whereas the mesh for the right figure is the one from Fig. \ref{Fig:Num_8} (a). The  unstable behavior of the   $x$-displacement demonstrates the need of special B-spline spaces taking the underlying Hilbert complexes into account. }
	\label{Fig:Num_14}
\end{figure}

\subsubsection{Problems and outlook}
The examples from above can be seen as a proof of concept and that the mixed weak form of linear elasticity with weakly imposed symmetry can be discretized exploiting isogeometric spaces. Although different numerical experiments show promising results, there certainly arise aspects which  deserve a closer elaboration. Respectively, for reasons of completeness, we should mention  issues we observed when computing approximations. The first one is quite obvious and already mentioned, namely the large number of degrees of freedom that appear. Especially for three-dimensional domains, one readily obtains large linear systems   which require special solvers, like  iterative schemes. But then we solve such systems only by approximation and an additional error contribution has to be taken into account. \\
Another issue we noted is the presence of local stress oscillations  in case of mixed boundary conditions. To be more precise, at points where displacement boundary and traction boundary parts meet, meaning at points in $\overline{\Gamma_t} \cap \overline{\Gamma_D}$, we get in general beside the natural large stress magnitudes also local stress overshoots which seem problematic. As an example we display in Fig. \ref{Fig:Num_13} (a) the first stress component for the Cook's membrane problem and a coarse mesh $h=1/7$. In the top left corner of the membrane we can recognize the overshoots. Although this problem appears to be crucial, we see approaches to alleviate it. First of all, regions with oscillatory  stresses can be reduced by standard mesh refinements; see \ref{Fig:Num_13} (c). Second, we further observed more stable stresses if we combine high polynomial degrees with high regularity, see Fig. \ref{Fig:Num_13} (b). Furthermore, in all the tests we made, one obtains stable displacement approximations even if the stresses show overshoots. Indeed, a more rigorous explanation of the stress behavior   has to be established and more numerical experiments have to be carried out in order to understand this phenomenon. This could be the starting point of another article. 
\begin{figure}[h!]
	\centering\begin{tikzpicture}
	
		\node (eins) at (-9.5,0) {\includegraphics[width=0.27\linewidth]{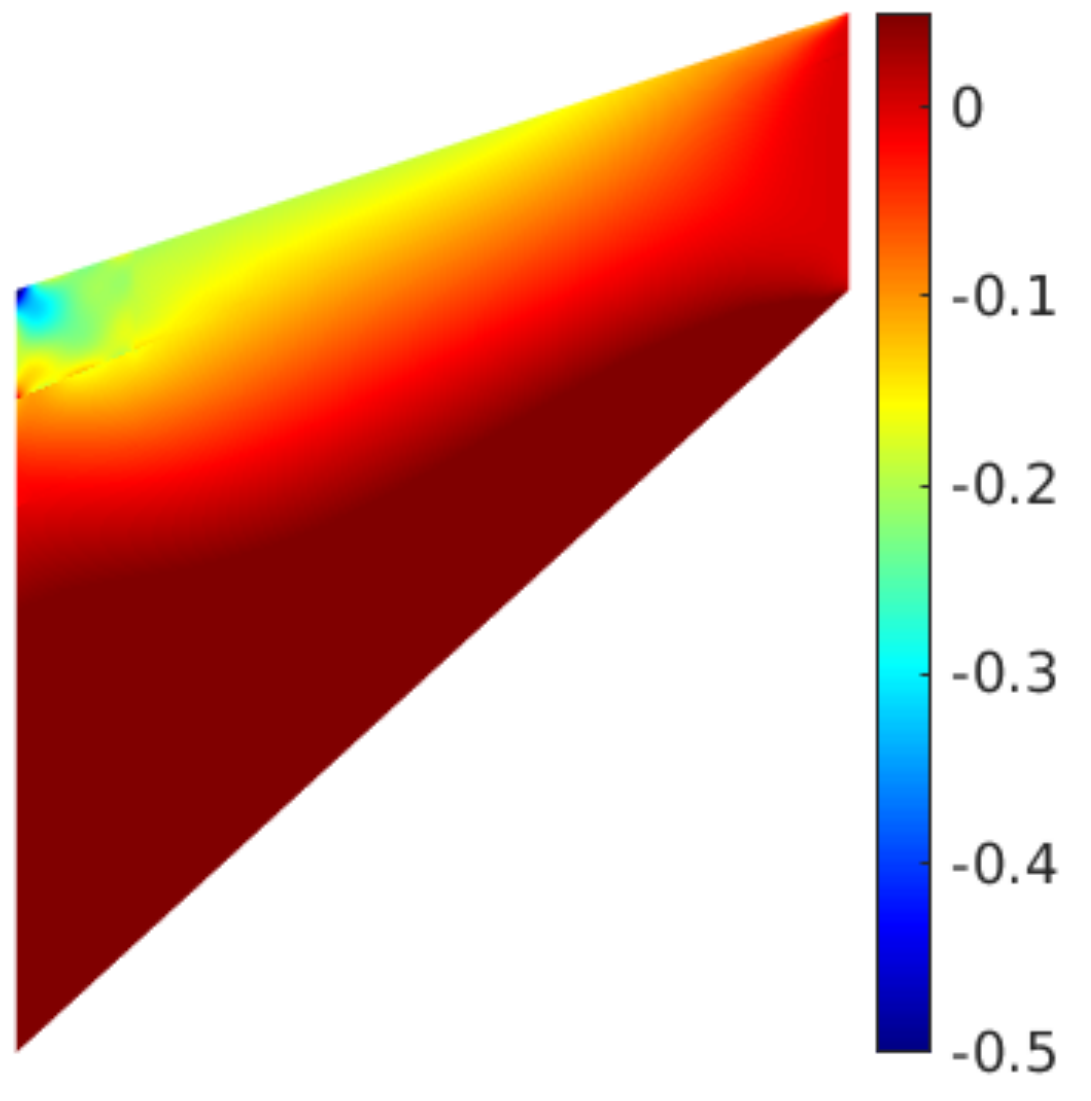}};
	
		\node[shift={(0,0)}] (eins) at (-4,0) {\includegraphics[width=0.27\linewidth]{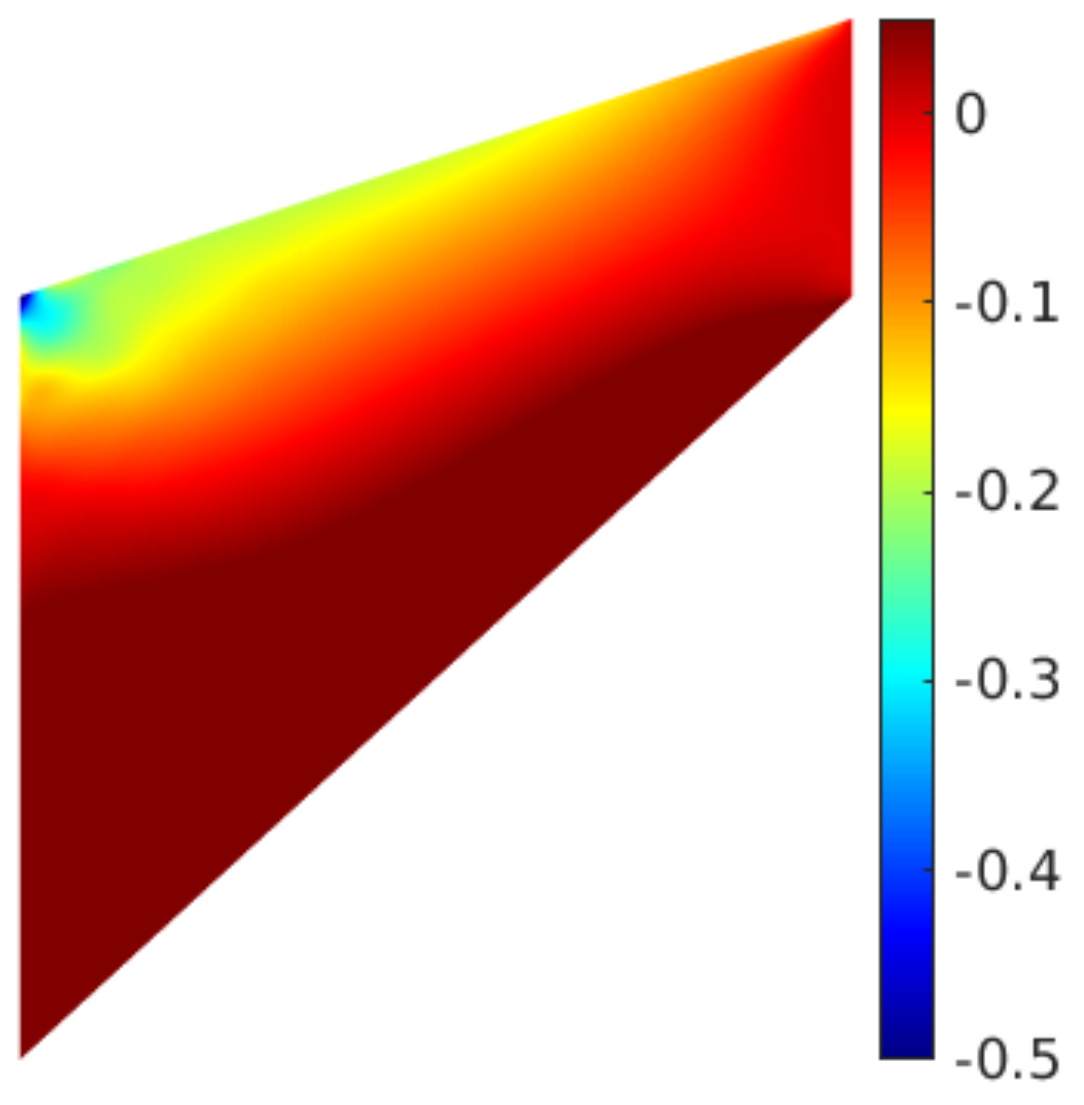}};		
	
			\node[shift={(0,0)}] (eins) at (1.5,0) {\includegraphics[width=0.27\linewidth]{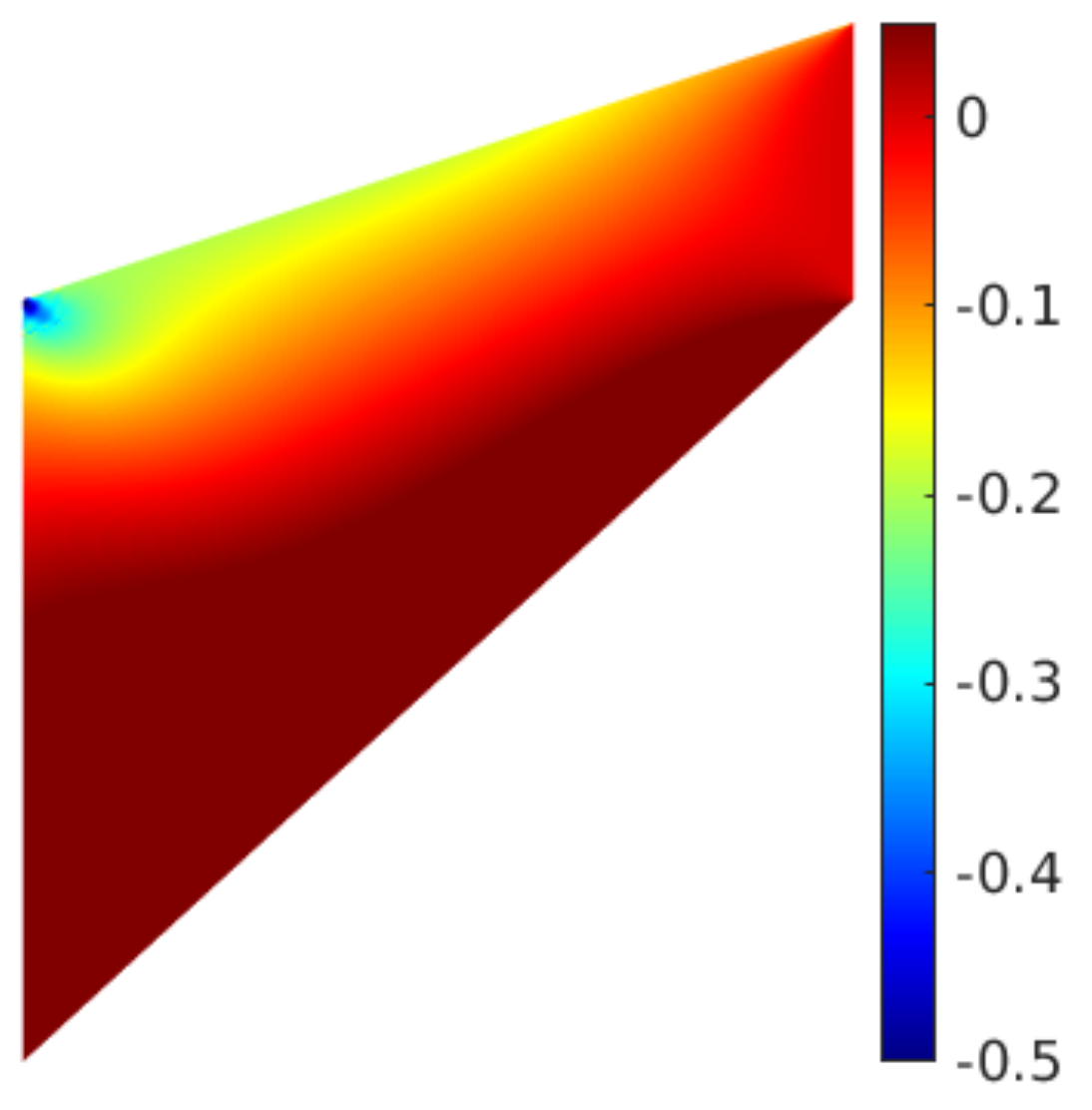}};
	
		\node at (-9.8,-2.7) {\small (a)  $\sigma_{11}$ \ ($\textsf{p}=4,r=0,h=1/7$) };
		\node at (-9.8+5.5,-2.7) {\small (b)  $\sigma_{11}$ \ ($\textsf{p}=4,r=2,h=1/7$) };
			\node at (-9.8+5.5+5.6,-2.7) {\small (c)  $\sigma_{11}$ \ ($\textsf{p}=4,r=0,h=1/21$) };

		\draw[dashed] (-11.5,0.9) circle(0.5);
		\draw[dashed] (-6,0.9) circle(0.5);
		\draw[dashed] (-0.5,0.9) circle(0.5);
	\end{tikzpicture}	
	\caption{ Mixed boundary conditions may trigger local stress oscillations like near the top left corner in Fig. (a). If one refines the mesh, or if splines with high regularity are used, then an improvement regarding these overshoots   can be observed; see Fig. (b)-(c). }
	\label{Fig:Num_13}
\end{figure}

\section{Conclusion}
\label{Sec:Conclusion}
In this article we studied approximation methods for the equations of linear elasticity based on the Hellinger-Reissner formulation with weakly imposed symmetry. Doing so, we made use of isogeometric analysis and isogeometric discrete differential forms. We showed the existence of a unique discrete solution in two and three dimensions and observed a stable behavior  in the incompressible  regime. Furthermore, a convergence estimate was derived that guarantees the possibility of arbitrary high convergence orders, provided smooth enough exact solutions. A main advantage of the approach  with IGA is the exact geometry representation  for various shapes. But there are also disadvantages regarding the proposed method and there were also points mentioned  which should be considered in more detail. First of all,  the mixed ansatz leads to a relatively large number of degrees of freedom. Further, numerical experiments indicate that the error estimation in $3D$ might be too pessimistic, since we saw for the displacement a faster convergence  than theoretically predicted. Besides, a study of the local stress overshoots appearing for mixed boundary conditions is advisable. The mentioned issues could be the starting point of future work.

\nocite{*}
\bibliographystyle{siam}
\bibliography{Literatur}

\begin{thebibliography}{10}

\bibitem{Arnold84}
{\sc D.~Arnold, F.~Brezzi, and J.~Douglas}, {\em Peers: A new mixed finite
  element for plane elasticity}, Japan Journal of Applied Mathematics, 1
  (1984), pp.~347--367.

\bibitem{Arnold2010FiniteEE}
{\sc D.~Arnold, R.~Falk, and R.~Winther}, {\em {F}inite {E}lement {E}xterior
  {C}alculus: From {H}odge {T}heory to {N}umerical {S}tability}, Bulletin of
  the American Mathematical Society, 47 (2010), pp.~281--354.

\bibitem{ArnoldBook}
{\sc D.~N. Arnold}, {\em Finite element exterior calculus}, CBMS-NSF regional
  conference series in applied mathematics 93, Philadelphia, 2018, SIAM,
  Society for Industrial and Applied Mathematics.

\bibitem{Arnold_rectangle_1}
{\sc D.~N. ARNOLD and G.~AWANOU}, {\em Rectangular mixed finite elements for
  elasticity}, Mathematical Models and Methods in Applied Sciences, 15 (2005),
  pp.~1417--1429.

\bibitem{Arnold2015}
{\sc D.~N. Arnold, G.~Awanou, and W.~Qiu}, {\em Mixed finite elements for
  elasticity on quadrilateral meshes}, Advances in Computational Mathematics,
  41 (2015), pp.~553--572.

\bibitem{Arnold2007MixedFE}
{\sc D.~N. Arnold, R.~S. Falk, and R.~Winther}, {\em Mixed finite element
  methods for linear elasticity with weakly imposed symmetry}, Math. Comput.,
  76 (2007), pp.~1699--1723.

\bibitem{Arnold2002MixedFE_1}
{\sc D.~N. Arnold and R.~Winther}, {\em Mixed finite elements for elasticity},
  Numerische Mathematik, 92 (2002), pp.~401--419.

\bibitem{Test_cook}
{\sc F.~Auricchio, L.~B. da~Veiga, C.~Lovadina, and A.~Reali}, {\em An analysis
  of some mixed-enhanced finite element for plane linear elasticity}, Computer
  Methods in Applied Mechanics and Engineering, 194 (2005), pp.~2947--2968.

\bibitem{IGA3}
{\sc Y.~Bazilevs, L.~Veiga, J.~Cottrell, T.~Hughes, and G.~Sangalli}, {\em
  {I}sogeometric {A}nalysis: {A}pproximation, {S}tability and {E}rror
  {E}stimates for h-{R}efined {M}eshes}, Mathematical Models and Methods in
  Applied Sciences, 16 (2006), pp.~1031--1090.

\bibitem{Braess}
{\sc D.~Braess}, {\em Finite Elemente: Theorie, schnelle Löser und Anwendungen
  in der Elastizitätstheorie}, Springer Berlin Heidelberg, vierte,
  überarbeitete und erweiterte auflage~ed., 2007.

\bibitem{Bressan}
{\sc A.~Bressan}, {\em Isogeometric regular discretization for the stokes
  problem}, Ima Journal of Numerical Analysis, 31 (2011), pp.~1334--1356.

\bibitem{BressanandSangalli}
{\sc A.~Bressan and G.~Sangalli}, {\em Isogeometric discretizations of the
  stokes problem: stability analysis by the macroelement technique}, Ima
  Journal of Numerical Analysis, 33 (2013), pp.~629--651.

\bibitem{Brezzi}
{\sc F.~Brezzi}, {\em On the existence, uniqueness and approximation of
  saddle-point problems arising from lagrangian multipliers}, ESAIM:
  Mathematical Modelling and Numerical Analysis - Mod\'elisation Math\'ematique
  et Analyse Num\'erique, 8 (1974), pp.~129--151.

\bibitem{Brezzi2}
{\sc F.~Brezzi and M.~Fortin}, {\em Mixed and hybrid finite element method},
  Springer Series In Computational Mathematics; Vol. 15,  (1991).

\bibitem{Buffa2010IsogeometricAN}
{\sc A.~Buffa, C.~de~Falco, and G.~Sangalli}, {\em Isogeometric analysis:
  Stable elements for the 2d stokes equation}, International Journal for
  Numerical Methods in Fluids, 65 (2011), pp.~1407--1422.

\bibitem{Buffa2011IsogeometricDD}
{\sc A.~Buffa, J.~Rivas, G.~Sangalli, and R.~V{\'a}zquez}, {\em {I}sogeometric
  {D}iscrete {D}ifferential {F}orms in {T}hree {D}imensions}, SIAM J. Numer.
  Anal., 49 (2011), pp.~818--844.

\bibitem{IGA1}
{\sc A.~Buffa and G.~Sangalli}, {\em {IsoGeometric Analysis: A New Paradigm in
  the Numerical Approximation of PDEs}}, {Lecture Notes in Mathematics} 2161,
  CIME {F}oundation {S}ubseries, Springer, Cham, Switzerland, 2016.

\bibitem{geopdes}
{\sc C.~de~Falco, A.~Reali, and R.~V{\'a}zquez}, {\em Geo{PDE}s: A research
  tool for isogeometric analysis of {PDE}s}, Advances in Engineering Software,
  42 (2011), pp.~1020--1034.

\bibitem{Falk}
{\sc R.~Falk}, {\em Finite element methods for linear elasticity}, Lecture
  Notes in Mathematics (Springer-verlag), 1939 (2008).

\bibitem{Girault}
{\sc V.~Girault and P.-A. Raviart}, {\em Finite Element Methods for
  Navier-Stokes Equations (Theory and Algorithms)}, Springer Series in
  Computational Mathematics, Springer Berlin, Heidelberg, 1986.

\bibitem{IGA2}
{\sc T.~Hughes, J.~Cottrell, and Y.~Bazilevs}, {\em {Isogeometric Analysis:
  CAD, Finite Elements, NURBS, Exact Geometry and Mesh Refinement}}, Computer
  Methods in Applied Mechanics and Engineering, 194 (2005), pp.~4135--4195.

\bibitem{MATLAB:2020}
{\sc MATLAB}, {\em Version 9.9 (R2020b)}, The MathWorks, Inc., Natick,
  Massachusetts,USA, 2020.

\bibitem{Pauly_2022_2}
{\sc D.~Pauly and M.~Schomburg}, {\em Hilbert complexes with mixed boundary
  conditions -- part 2: Elasticity complex}, arXiv:
  https://arxiv.org/abs/2108.10792,  (2021).

\bibitem{Pauly_2022_1}
{\sc D.~Pauly and M.~Schomburg}, {\em Hilbert complexes with mixed boundary
  conditions part 1: de rham complex}, Mathematical Methods in the Applied
  Sciences, 45 (2022), pp.~2465--2507.

\bibitem{Pauly2016OnCA}
{\sc D.~Pauly and W.~Zulehner}, {\em On {C}losed and {E}xact {G}rad-grad- and
  div-{D}iv-{C}omplexes, {C}orresponding {C}ompact {E}mbeddings for {T}ensor
  {R}otations, and a {R}elated {D}ecomposition {R}esult for {B}iharmonic
  {P}roblems in {3D}}, arXiv: Analysis of PDEs,  (2016).

\bibitem{QuennevilleBlair2015ANA}
{\sc V.~Quenneville-B{\'e}lair}, {\em A {N}ew {A}pproach to {F}inite {E}lement
  {S}imulations of {G}eneral {R}elativity}, Ph.D. thesis, Department of
  Mathematics, University of Minnesota, 2015.

\bibitem{Rettung}
{\sc T.~O. Quinelato, A.~F. Loula, M.~R. Correa, and T.~Arbogast}, {\em {Full
  H(div)-approximation of linear elasticity on quadrilateral meshes based on
  ABF finite elements}}, Computer Methods in Applied Mechanics and Engineering,
  347 (2019), pp.~120--142.

\bibitem{Schoeberl}
{\sc J.~Schöberl}, {\em Numerische {M}ethoden in der {K}ontinuumsmechanik}.
\newblock Lecture notes at {TU Wien} (winter term 2011/2012),
  https://www.asc.tuwien.ac.at/~schoeberl/wiki/lva/nummech/nummech.pdf.

\bibitem{steinbach}
{\sc O.~Steinbach}, {\em Numerical Approximation Methods for Elliptic Boundary
  Value Problems. Finite and Boundary Elements}, Springer, 2008.

\bibitem{geopdes3.0}
{\sc R.~V{\'a}zquez}, {\em A new design for the implementation of isogeometric
  analysis in {O}ctave and {M}atlab: Geo{PDE}s 3.0}, Computers and Mathematics
  with Applications,  (2016).
\newblock To appear.

\bibitem{Verfuert}
{\sc R.~Verf\"urth}, {\em Error estimates for a mixed finite element
  approximation of the {Stokes} equations}, ESAIM: Mathematical Modelling and
  Numerical Analysis - Mod\'elisation Math\'ematique et Analyse Num\'erique, 18
  (1984), pp.~175--182.

\end{thebibliography}

\appendix
\section{Appendix}
\subsubsection*{Proof of the commutativity relations in Lemma \ref{Lemma:Commutativty_double_complex_lin_ealsticity}}
\label{Proof1_appendix}
Let $v=(v_1,v_2)^T \in H^1(\o,\mathbb{R}^2)$. Then we get
\begin{align*}
\textup{Skew}(\textup{curl} \, v) = \textup{Skew}(\begin{bmatrix}
\partial_2 v_1& -\partial_1 v_1 \\ \partial_2 v_2& -\partial_1 v_2 
\end{bmatrix})  = \partial_1 v_1+ \partial_2 v_2.
\end{align*}
Further, choose $w \in H(\o,\c,\mathbb{M})$. On the one hand, it is \begin{align*}
\nabla \cdot (\Xi w) &= \nabla \cdot (\begin{bmatrix}
-w_{22}-w_{33} & w_{21} & w_{31} \\ w_{12}& -w_{11}-w_{33} & w_{32} \\
  w_{13} & w_{23} & -w_{11}-w_{22}
\end{bmatrix}) = \begin{bmatrix}
-\partial_1w_{22} - \partial_1 w_{33} + \partial_{2} w_{21} +  \partial_{3}w_{31} \\ 
\partial_1w_{12} - \partial_2 w_{11} - \partial_{2} w_{33} +  \partial_{3}w_{32} \\
\partial_1w_{13} + \partial_{2} w_{23} -  \partial_{3}w_{11} -\partial_{3}w_{22}
\end{bmatrix}
\end{align*}
On the other hand we obtain
\begin{align*}
\textup{Skew}(\nabla \times w) &= \textup{Skew}( \begin{bmatrix}
\partial_2 w_{13}-\partial_3 w_{12} & \partial_3 w_{11}-\partial_1 w_{13} & \partial_1 w_{12}-\partial_2 w_{11} \\ \partial_2 w_{23}-\partial_3 w_{22} & \partial_3 w_{21}-\partial_1 w_{23} & \partial_1 w_{22}-\partial_2 w_{21} \\
\partial_2 w_{33}-\partial_3 w_{32} & \partial_3 w_{31}-\partial_1 w_{33} & \partial_1 w_{32}-\partial_2 w_{31}
\end{bmatrix}) \\
&=\begin{bmatrix}
 \partial_3 w_{31}-\partial_1 w_{33}  - \big(  \partial_1 w_{22}-\partial_2 w_{21}  \big) \\
 \partial_1 w_{12}-\partial_2 w_{11} - \big( \partial_2 w_{33}-\partial_3 w_{32} \big) \\
 \partial_2 w_{23}-\partial_3 w_{22} - \big(\partial_3 w_{11}-\partial_1 w_{13}\big)
\end{bmatrix}
= \begin{bmatrix}
-\partial_1 w_{22}-\partial_1 w_{33}+ \partial_2 w_{21} + \partial_3 w_{31}\\
\partial_1 w_{12}-\partial_2 w_{11} -  \partial_2 w_{33} +\partial_3 w_{32} \\
\partial_1 w_{13}+\partial_2 w_{23} - \partial_3 w_{11}-\partial_3 w_{22}
\end{bmatrix}
\end{align*}
The comparison of the the right-hand sides of the last two equality chains finishes the proof.

\end{document}